\documentclass[12pt]{amsart}

\usepackage[margin=1.5in]{geometry}
\usepackage{tikz-cd} 
\usetikzlibrary{cd}

\usetikzlibrary{arrows.meta,calc}
\usetikzlibrary{decorations.markings} 
\tikzset{->-/.style={decoration={
      markings,
      mark=at position .6 with {\arrow{>}}},postaction={decorate}}}

\tikzset{-<-/.style={decoration={
  markings,
  mark=at position .6 with {\arrow{<}}},postaction={decorate}}} %

\newcommand{\LoopLeft}[2]{%
  \draw[-<-]
    (#1.north west)
    .. controls ($(#1)+(-\LoopW,\LoopH)$)
                and ($(#1)+(-\LoopW,-\LoopH)$)
    .. node[left] {#2}
    (#1.south west);
} 

\newcommand{\LoopRight}[2]{%
  \draw[->-]
    (#1.north east)
    .. controls ($(#1)+(\LoopW,\LoopH)$)
                and ($(#1)+(\LoopW,-\LoopH)$)
    .. node[right] {#2}
    (#1.south east);
}

\newcommand{\LoopAbove}[2]{%
  \draw[->-]
    (#1.north west)
    .. controls ($(#1)+(-\LoopH,\LoopW)$)
                and ($(#1)+(\LoopH,\LoopW)$)
    .. node[above] {#2}
    (#1.north east);
}

\newcommand{\LoopAboveDash}[2]{%
  \draw[->-, dashed]
    (#1.north west)
    .. controls ($(#1)+(-\LoopH,\LoopW)$)
                and ($(#1)+(\LoopH,\LoopW)$)
    .. node[above] {#2}
    (#1.north east);
}

\newcommand{\LoopBelow}[2]{%
  \draw[-<-]
    (#1.south west)
    .. controls ($(#1)+(-\LoopH,-\LoopW)$)
                and ($(#1)+(\LoopH,-\LoopW)$)
    .. node[below] {#2}
    (#1.south east);
}

\newcommand{\LoopLeftRotCCW}[3]{%
  \draw[->-]
    (#1.{135+#3})
    .. controls
      ($ (#1) + ({cos(#3)*(-\LoopW) - sin(#3)*(\LoopH)},
                 {sin(#3)*(-\LoopW) + cos(#3)*(\LoopH)}) $)
      and
      ($ (#1) + ({cos(#3)*(-\LoopW) - sin(#3)*(-\LoopH)},
                 {sin(#3)*(-\LoopW) + cos(#3)*(-\LoopH)}) $)
    .. node[pos=0.5, left] {#2}
    (#1.{225+#3});
}

\newcommand{\LoopBelowRotCCW}[3]{%
  \draw[->-]
    (#1.{225+#3})
    .. controls
      ($ (#1) + ({cos(#3)*(-\LoopH) - sin(#3)*(-\LoopW)},
                 {sin(#3)*(-\LoopH) + cos(#3)*(-\LoopW)}) $)
      and
      ($ (#1) + ({cos(#3)*(\LoopH) - sin(#3)*(-\LoopW)},
                 {sin(#3)*(\LoopH) + cos(#3)*(-\LoopW)}) $)
    .. node[pos=0.5, below] {#2}
    (#1.{315+#3});
} 

\usepackage[utf8]{inputenc}
\usepackage[english]{babel}
\usepackage{graphicx}
\usepackage{amsmath,amssymb}
\usepackage{amsthm}
\usepackage{xcolor} 
\usepackage[T1]{fontenc}
\usepackage{etoolbox}
\usepackage{mathtools} 

\mathtoolsset{showonlyrefs=true,showmanualtags=true}

\usepackage{enumitem}
    \setlist[itemize]{leftmargin=5mm} 
\usepackage[normalem]{ulem} 
\usepackage{verbatim} 

\usepackage{stix} 

\usepackage[bookmarks=false]{hyperref}
\hypersetup{
    colorlinks,
    citecolor=black,
    filecolor=black,
    linkcolor=black,
    urlcolor=black
}

\numberwithin{equation}{section} 

\numberwithin{equation}{section}
\newtheorem{theo}{Theorem}[section] 

\newtheorem{thm}[theo]{Theorem}
\newtheorem{cor}[theo]{Corollary}
\newtheorem{lem}[theo]{Lemma}
\newtheorem{prop}[theo]{Proposition}

\newtheorem{mainthm}{Theorem}

\theoremstyle{definition}
\newtheorem{defn}[theo]{Definition}

\newtheorem*{standingassumptions*} 
{Standing Assumptions}
\newtheorem{notation}[theo]{Notation}

\theoremstyle{remark}
\newtheorem{rem}[theo]{Remark} 
 
\theoremstyle{definition}
\newtheorem{exa}[theo]{Example}

\newcommand{\defeq}{\vcentcolon=}

\newcommand{\N}{\mathbb{N}}

\newcommand{\T}{\mathbb{T}}
\newcommand{\Z}{\mathbb{Z}}

\newcommand{\calG}{\mathcal{G}} 
\newcommand{\cGss}[1][G]{\mathcal{G}_{#1,E}}

\newcommand{\calS}{\mathcal{S}}
\newcommand{\ssym}{\calS_{\text{\normalfont sym}}} 
\newcommand{\salt}{\calS_{\text{\normalfont alt}}}
\newcommand{\ssymdual}{\widehat{\calS}_{\text{\normalfont sym}}}
\newcommand{\calT}{\mathcal{T}} 
\newcommand{\calZ}{\mathcal{Z}}
\newcommand{\cs}{C^*}
\newcommand{\csr}{\cs_{r}}

\newcommand{\Iso}{\operatorname{Iso}}

\newcommand{\mfkg}{\mathfrak{g}}
\newcommand{\mfkh}{\mathfrak{h}}
\newcommand{\mfkt}{\mathfrak{t}}
\newcommand{\mfks}{\mathfrak{s}}

\newcommand{\z}{^{(0)}}

\newcommand{\comp}{^{(2)}}
\newcommand{\inv}{^{-1}}

\makeatletter
\def\bign#1{\mathclose{\hbox{$\left#1\vbox to8.5\p@{}\right.\n@space$}}\mathopen{}}
\makeatother

\newcommand{\HleftX}{
        \mathchoice
              {\displaystyle\mathbin\smalltriangleright}
              {\textstyle\mathbin\smalltriangleright}
              {\scriptstyle\mathbin\smalltriangleright}
              {\scriptscriptstyle\mathbin\smalltriangleright}
        } 
        
\newcommand\mvisiblespace[1][.7em]{%
        	\makebox[#1]{%
        		\kern.07em
        		\vrule height.3ex
        		\hrulefill
        		\vrule height.3ex
        		\kern.07em
        	} 
        }

\title{Cartan subalgebras in self-similar graph $C^*$-algebras}
\author[D. Archey]{Dawn Archey} 
    \address{Department of Mathematics, University of Detroit Mercy, 4001 W McNichols Rd., Detroit, MI 48221, United States}
    \email{archeyde@udmercy.edu}
\author[A. Duwenig]{Anna Duwenig}
\address{School of Mathematics and Statistics, UNSW Sydney, Sydney NSW 2052, Australia}
\email{a.duwenig@unsw.edu.au} 
\author[S. Hua]{Shanshan Hua}
\address{Mathematisches Institut, Fachbereich Mathematik und
Informatik der Universität Münster, Einsteinstrasse 62, 48149 Münster, Germany.} 
\email{shua@uni-muenster.de} 
\author[K. McCormick]{Kathryn McCormick} 
    \address{California State University Long Beach, 1250 Bellflower Blvd, Long Beach CA 90840, United States}
    \email{kathryn.mccormick@csulb.edu}
\author[R. Norton]{Rachael Norton}
    \address{St. Olaf College, 1520 St. Olaf Ave, Northfield, MN 55057}
    \email{norton10@stolaf.edu}
\author[D. Yang]{Dilian Yang}
    \address{Department of Mathematics and Statistics, University of Windsor, ON N9B
3P4, Canada}
    \email{dyang@uwindsor.ca}

\begin{document} 
\begin{abstract}
For a self-similar graph $(G, E)$, we find a distinguished subgroupoid of the associated path groupoid $\cGss $ -- the symmetric cycline subgroupoid $\ssym$. If the acting group $G$ is abelian, we show that $\ssym$ is open, abelian, and normal. For $G=\mathbb{Z}$, we describe the dual bundle $\ssymdual$ of $\ssym$ which can be used to provide a different groupoid model for the self-similar graph $C^*$-algebra $\mathcal{O}_{\mathbb{Z}, E}\cong \csr(\cGss[\Z])$. For a large class of self-similar graphs $(\mathbb{Z}, E)$, we further prove that $\ssym$ is maximal among open abelian subgroupoids of $\Iso(\cGss[\Z])^{\circ}$ and closed in $\cGss[\Z]$, so that it gives rise to a Cartan subalgebra of $\mathcal{O}_{\mathbb{Z}, E}$. This result seems new even for genuine actions. Our proofs heavily rely on careful studies of dynamical behaviours of cycline triples of $(\mathbb{Z}, E)$ and on a dynamical-flavour classification for the vertices of $E$. Some results hold in more general settings and may be of independent interest.

\end{abstract} 

\maketitle

\section{Introduction} 

Self-similar graph $C^*$-algebras $\mathcal{O}_{G, E}$ were  introduced by Exel and Pardo in~\cite{Self-similar:EP17} as a unifying framework encompassing both Katsura algebras $\mathcal{O}_{A,B}$ (\cite{Katsura_08}) and Nekrashevych algebras arising from self-similar groups (\cite{Nekrashevych_generic, Nekrashev2009, Nek05}). 
A self-similar graph is a pair $(G, E)$ consisting of a discrete countable group $G$ and a directed graph $E$ such that $G$ acts on $E$ by graph automorphisms from the left, while $E$ `acts' on $G$ from the right through  the \emph{restriction map} $(g, \mu)\mapsto g|_{\mu}$, in such a way that for all $g\in G$  
and finite paths $\mu, \nu$ in $E$,  
the formula 
\[
    g\cdot (\mu \nu) = (g\cdot \mu) (g|_{\mu}\cdot \nu)
\]
holds. Under the additional assumptions that $G$ is amenable and the action is 
\emph{pseudo-free} (see Definition~\ref{defn:pseudo-free}), the 
self-similar graph $C^*$-algebra $\mathcal{O}_{G,E}$ admits a natural groupoid model $\cGss$, meaning that $\mathcal{O}_{G,E}\cong C^*(\cGss)\cong C^*_r(\cGss)$ (\cite[Theorem 9.6, Corollary 10.18]{Self-similar:EP17}); the groupoid  $\cGss$ is called the {\em self-similar path groupoid}. This places these algebras within the broader framework of {\'e}tale groupoid $C^*$-algebras. Such constructions have since been extended to settings such as topological graphs (\cite{BKQ:2017}) and higher-rank graphs (\cite{Self-similar:LY19,Self-similar:LY21}). 

Self-similar graph $C^*$-algebras provide a rich source of examples, bridging combinatorial, dynamical, and operator-algebraic perspectives. A special case is 
where $E$ `acts' trivially on $G$ meaning that $g|_{\mu}=g$ for all $\mu\in E$; we then 
say that $G$ acts on $E$ {\em genuinely} (rather than self-similarly). Here, the algebra $\mathcal{O}_{G, E}$ is just the crossed product $C^{*}(E)\rtimes G$, where the action is induced by the given action of $G$ on $E$. 

Another fundamental example of self-similar graph $\cs$-algebras are the so-called {\em Katsura algebras $\mathcal{O}_{A, B}$}, associated to pairs $A,B$ of matrices. These all fall into the special case when $G=\mathbb{Z}$, and they play an important role in the classification program: every UCT Kirchberg algebra (i.e., any $C^*$-algebra in the UCT class that is simple, separable, nuclear, purely infinite) can be realized as a Katsura algebra (\cite{Katsura_08}), and hence admits a groupoid model. This perspective is central to Katsura’s result that actions of cyclic groups on $K$-theory can be lifted to genuine actions on UCT Kirchberg algebras \cite[Theorem 3.5]{Katsura_08}. 

Given the breadth of examples and applications of self-similar graph $C^*$-algebras, substantial effort has been devoted to understanding their structural properties (see, for example, \cite{KK_duality_ss, Self-similar:EP17, Self-similar:LY19, Self-similar:LY21, Valente-Yang25}). In \cite{Self-similar:LY19}, the authors (including the last author of the present paper) study the KMS-states of self-similar $k$-graph $C^*$-algebras. Under hypotheses ensuring well-behaved periodicity of the underlying self-similar $k$-graph (formulated via a notion called \emph{local
faithfulness}), they identify a Cartan subalgebra arising from the interior of the isotropy subgroupoid of the associated self-similar path groupoid model. Recall that a \emph{Cartan subalgebra} of a $C^*$-algebra is a maximal abelian subalgebra that is regular and admits a faithful conditional expectation onto it. The Cartan subalgebra identified in \cite{Self-similar:LY19} plays a central role in describing equilibrium states: the KMS-simplex is affinely isomorphic to a distinguished compact convex subset of the tracial state space of the Cartan subalgebra. 
 
More broadly, Cartan subalgebras have attracted considerable attention in recent years (see \cite{Duwe:2025:Diag-pp, DGNRW, pitts2021normalizers} and references therein) due to their deep connections with the classification program (\cite{BarlakLi, Li:EveryClass, Tu:BCConj}), rigidity phenomena (\cite{LiRenault}), and the structure of intermediate subalgebras (\cite{BCF25, BEFPR:2021}). In particular, the presence of a Cartan subalgebra realizes the ambient $C^*$-algebra as the reduced twisted groupoid $C^*$-algebra of an effective groupoid (\cite{Raad, Renault:2008}). 

Cartan subalgebras are far from unique, even when one restricts to a fixed spectrum (\cite{Xin_non-unique, LiRenault, Renault:2008}, etc.). On the other hand, the existence is not guaranteed in general, though it is often more accessible in settings where a non-effective groupoid model is available (\cite{BNRSW:Cartan, DWZ:Twist}). 
Motivated by these considerations, it is natural to investigate Cartan subalgebras  of self-similar graph algebras $\mathcal{O}_{G, E}$. Our main goal is to identify, for a broad class of such $C^*$-algebras, a subgroupoid $\mathcal{S}$ of $\cGss$ such that $C^{*}_{r}(\mathcal{S})$ is a Cartan subalgebra of $\mathcal{O}_{G, E}$. Note that asking for $C^*_{r}(\mathcal{S})$ to be a $C^*$-subalgebra
forces $\mathcal{S}$ to be open, and commutativity of $C^*_{r}(\mathcal{S})$  
is equivalent to the open $\mathcal{S}$ being an abelian subgroupoid of $\Iso(\cGss)^{\circ}$, 
the interior of the isotropy in $\cGss$. 

One of the most natural subgroupoids to consider in any Hausdorff \'etale groupoid $\mathcal{G}$ is $\Iso(\calG)^{\circ}$ itself.
It is shown in \cite{BNRSW:Cartan} that,
provided $\Iso(\calG)^\circ$ is abelian and closed, the associated subalgebra is a Cartan subalgebra of $C^*_r(\calG)$. Closedness
of $\Iso(\cGss)^{\circ}$ can be characterized in terms of a dichotomy of dynamical properties of infinite paths in $E$, namely Condition~\ref{mainthm_cond1} in Theorem~\ref{mainthm_genuine_action} below; see Subsection~\ref{ssec:closedness} for definitions and proofs. Moreover, in
the special case that $(G, E)$ is a genuine action and $G$ is abelian, it turns out that 
$\Iso(\cGss)^{\circ}$ is always abelian 
(Corollary~\ref{C:can-act}). In this setting, we
thus obtain the following natural and tractable characterization of when $C^*_r(\Iso(\cGss)^{\circ})$ is a Cartan subalgebra of $C^*_{r}(\cGss)\cong C^{*}(E)\rtimes G$. 

\begin{mainthm}[Theorem \ref{Thm_genuine_action_Cartan}] 
\label{mainthm_genuine_action}  
Let $E$ be a source-free, finite directed graph and let $G$ be an abelian group. If $(G, E)$ is a genuine action, then the following are equivalent: 
\begin{enumerate}[label=\textup{(\ref{mainthm_genuine_action}.\arabic*)}] 
\item 
\label{mainthm_cond1}
For any $x\in E^{\infty}$ and $g\in G$, $x$ is either $g$-generic or $g$-rare.
\item $C^*_r(\Iso(\cGss)^\circ)$ is a Cartan subalgebra in 
$C^{*}(E)\rtimes G$.
\end{enumerate} 
\end{mainthm}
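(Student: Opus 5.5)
The plan is to reduce Theorem~\ref{mainthm_genuine_action} to the criterion of \cite{BNRSW:Cartan}, assembled from the pieces the introduction says are available. For a second-countable Hausdorff étale groupoid $\calG$ with $\Iso(\calG)^\circ$ abelian, \cite{BNRSW:Cartan} shows that $C^*_r(\Iso(\calG)^\circ)$ is Cartan in $C^*_r(\calG)$ if and only if $\Iso(\calG)^\circ$ is closed in $\calG$. Both directions matter here: for the converse, one uses that a Cartan subalgebra has a faithful conditional expectation, and restriction of functions to $\Iso(\calG)^\circ$ gives such an expectation exactly when $\Iso(\calG)^\circ$ is clopen. So the proof will consist of three verifications.

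First, I verify the standing hypotheses. In a genuine action every restriction is trivial, $g|_\mu = g$, so pseudo-freeness (Definition~\ref{defn:pseudo-free}) reduces to the statement that $g\cdot\mu=\mu$ with $g|_\mu=g=e$ forces $g=e$, which is automatic. Moreover $G$ is abelian, hence amenable. Therefore $\cGss$ is a Hausdorff étale groupoid with $C^*_r(\cGss)\cong\mathcal{O}_{G,E}\cong C^*(E)\rtimes G$ by \cite[Theorem 9.6, Corollary 10.18]{Self-similar:EP17}. Since $E$ is finite and source-free, the unit space $E^\infty$ is compact and the groupoid is second countable.

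Second, I apply Corollary~\ref{C:can-act}: for a genuine action of an abelian group, $\Iso(\cGss)^\circ$ is abelian. It is also open, so $C^*_r(\Iso(\cGss)^\circ)$ is an abelian $C^*$-subalgebra of $C^*_r(\cGss)$. The criterion of \cite{BNRSW:Cartan} then says that (2) holds if and only if $\Iso(\cGss)^\circ$ is closed in $\cGss$.

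Third, I invoke the characterization of closedness from Subsection~\ref{ssec:closedness}: $\Iso(\cGss)^\circ$ is closed exactly when every $x\in E^\infty$ is $g$-generic or $g$-rare for each $g\in G$. This is precisely Condition~\ref{mainthm_cond1}, and it completes the equivalence.

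The main obstacle is to confirm that the closedness characterization in Subsection~\ref{ssec:closedness} is stated under hypotheses covering this setting, namely finite, source-free $E$ and pseudo-free action. The other point to check is that the version of \cite{BNRSW:Cartan} being cited really gives both implications rather than only sufficiency. If only sufficiency is available, I will prove necessity directly. Suppose $\Iso(\cGss)^\circ$ is not closed and take a limit point $\gamma$ of $\Iso(\cGss)^\circ$ lying outside it. Then $\gamma$ is isotropy, since isotropy is closed in a Hausdorff groupoid, but not interior isotropy. Choose a compactly supported function on an open bisection around $\gamma$ that is nonzero at $\gamma$. Its image under any conditional expectation onto $C^*_r(\Iso(\cGss)^\circ)$ is forced to be its restriction to $\Iso(\cGss)^\circ$, and that restriction is discontinuous at $\gamma$, so it cannot lie in $C_0$ of the groupoid. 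This contradicts the existence of a conditional expectation, so a Cartan subalgebra of this form forces closedness.
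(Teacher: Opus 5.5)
Your proposal follows the paper's proof: abelianness of $\Iso(\cGss)^\circ$ from Corollary~\ref{C:can-act}, the criterion of \cite[Corollary~4.5]{BNRSW:Cartan}, and the closedness characterization of Theorem~\ref{thm:closedness-equivalent}, with the direction ``closed $\Rightarrow$ dichotomy'' for $\calS=\Iso(\cGss)^\circ$ coming from Lemma~\ref{lem:g-generic-iso}. The paper uses \cite[Corollary~4.5]{BNRSW:Cartan} as an equivalence, so your fallback necessity argument is not needed; as written it would also be incomplete, since its key step (that every conditional expectation onto $C^*_r(\Iso(\cGss)^\circ)$ is restriction) is asserted, not proved. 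In your hypothesis check you missed that Theorem~\ref{thm:closedness-equivalent} assumes $G$ has an element of infinite order, which the body version Theorem~\ref{Thm_genuine_action_Cartan} includes but the statement you were given omits. This is harmless for $\calS=\Iso(\cGss)^\circ$: infinite order enters only through Corollary~\ref{cor:the green corollary} in the case $\mfkg\in\Iso(\cGss)^\circ$, where $\mfkg\in\calS$ holds trivially.
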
 

To the best knowledge of the authors, this characterization of Cartan subalgebras in crossed products is new. Beyond the cases where 
Condition~\ref{mainthm_cond1} is satisfied, it remains unclear how to identify Cartan subalgebras in $C^{*}(E)\rtimes G$. 

For an arbitrary self-similar graph $(G,E)$, the open subgroupoid $\Iso(\cGss)^{\circ}$ is not necessarily abelian, especially when $(G, E)$ is not assumed to be locally faithful. 
This is in contrast to the setting of \cite{Self-similar:LY19}. In order to identify a Cartan subalgebra of $C^*_r(\cGss)$, we need to instead find maximal abelian open subgroupoids in $\Iso(\cGss)^{\circ}$. To better understand the structure of $\Iso(\cGss)^{\circ}$, we classify vertices in terms of their dynamical properties (Corollary~\ref{conclusion}) and systematically study \emph{cycline triples} of $(G,E)$, that is, triples $(\alpha, g, \beta)$ for which the associated open compact bisections $\calZ(\alpha,g ,\beta)$ are contained in $\Iso(\cGss)^{\circ}$. These bisections collectively form an open cover of $\Iso(\cGss)^{\circ}$, when $(G, E)$ is pseudo-free (see the discussion following 
Definition~\ref{defn:cycline_triple}).
We let $\ssym$ denote the union of $\calZ(\alpha, g, \alpha)$ ranging over all symmetric cycline triples $(\alpha, g, \alpha)$. This leads to the following concrete description of $\Iso(\cGss)^{\circ}$, which encapsulates the major original insights of the paper. 

\begin{mainthm}[Structure of $\Iso(\cGss)^\circ$; Proposition~\ref{prop:Ssym:open,normal,abelian,subgpd}, Corollary~\ref{cor:the green corollary}] 
\label{mainthm_structure} 
Let $(G, E)$ be a pseudo-free self-similar graph, where $E$ is a source-free, finite directed graph and $G$ is a discrete countable group with an element of infinite order. Then $\ssym$ is an open abelian normal subgroupoid of $\Iso(\cGss)^\circ$ and 
\begin{equation} 
\label{Iso:concrete_description} 
\Iso(\cGss)^\circ = \ssym \bigsqcup \left(
\bigcup_{\substack{(\alpha, g, \beta) \text{ non-symmetric} \\
\text{cycline triple}}} 
\calZ(\alpha, g, \beta)\right). 
\end{equation} 
Moreover, $\calZ(\alpha, g, \beta)$ is a singleton for every non-symmetric cycline triple. 
\end{mainthm}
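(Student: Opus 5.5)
We will work throughout with the standard basis of compact open bisections $\calZ(\alpha,g,\beta)=\{[\alpha y, g, \beta z]: \ldots\}$ of $\cGss$, for $s(\alpha)=g\cdot s(\beta)$. Elements are germs $[\alpha x, g, \beta x']$ with $\alpha x = $ range and $g\cdot x'=x$ up to the usual conventions. The first step is to identify when $\calZ(\alpha,g,\beta)\subseteq \Iso(\cGss)$, i.e.\ when $(\alpha,g,\beta)$ is cycline. For this, $\alpha (g\cdot y)=\beta y$ must hold for all $y\in s(\beta)E^\infty$. Pseudo-freeness then gives that such bisections cover $\Iso(\cGss)^\circ$: any open bisection in the isotropy contains a basic $\calZ(\alpha,g,\beta)$ around each point, and that basic set is cycline.

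\emph{Non-symmetric triples.} Suppose $(\alpha,g,\beta)$ is cycline with $\alpha\neq\beta$, and say $|\alpha|\le|\beta|$. Then $\beta=\alpha\mu$ for some $\mu$ with $|\mu|>0$ (if $|\alpha|=|\beta|$ we would get $\alpha=\beta$). The identity becomes $g\cdot y=\mu y$ for every $y\in s(\beta)E^\infty$. Iterating along $y$ using self-similarity gives
\[
y=g^{-1}\cdot(\mu y)=(g^{-1}\cdot\mu)(g^{-1}|_{\mu}\cdot y).
\]
Comparing first letters forces $y$ to begin with a fixed word, then the next block, and so on. So $y$ is uniquely determined, and $s(\beta)E^\infty$ is a singleton. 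Hence $\calZ(\alpha,g,\beta)$ is a singleton. This settles the final claim and shows that the non-symmetric part consists of isolated points of the isotropy.

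\emph{Properties of $\ssym$.} For symmetric cycline triples $(\alpha,g,\alpha)$, the condition says that $g$ fixes $s(\alpha)E^\infty$ pointwise. Products and inverses of elements of $\ssym$ at the same unit $\alpha y$ can be rewritten via a common extension $\alpha\nu$ as $[\alpha\nu z, h, \alpha\nu z]$, where $h$ is a product of restrictions that again fix the cylinder. This shows that $\ssym$ is an open subgroupoid.

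Normality follows from conjugation. For $\gamma=[\beta w,k,\alpha w']$, conjugating $\calZ(\alpha,g,\alpha)$ gives $\calZ(\beta,k|\cdots g\cdots,\beta)$ after a further extension, which is again symmetric. Here we use that $k\cdot$ intertwines the cylinders.

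Abelianness is the subtle point. The isotropy group of $\ssym$ at $x$ is the group of germs of elements $g$ that fix a tail cylinder of $x$ pointwise. Two such germs $[x,g,x]$ and $[x,h,x]$ commute as germs: the product $gh$ and the product $hg$ act identically (trivially) on a neighbourhood, and pseudo-freeness identifies elements by their action together with their restrictions. The restrictions of $gh$ and $hg$ along paths in that cylinder need to agree after passing deeper. Here I would use the infinite-order element hypothesis and the earlier classification of vertices (Corollary~\ref{conclusion}) to show that restrictions of pointwise-fixing elements eventually lie in a cyclic/abelian subgroup. I expect this to be the main obstacle, and would try it first via pseudo-freeness: if $g\cdot\mu=\mu$ and $g|_\mu=e$ then $g=e$ on $s(\mu)$.

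\emph{Disjointness.} If a point lay both in $\ssym$ and in a singleton $\calZ(\alpha,g,\alpha\mu)$, then the element $g$ would fix a cylinder pointwise while also shifting by $\mu$. Restricting the shift relation to that cylinder contradicts $|\mu|>0$. Hence the union is disjoint, which gives \eqref{Iso:concrete_description}.
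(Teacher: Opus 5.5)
\paragraph{The gap: abelianness of $\ssym$.} You leave this open and propose to derive it from the element of infinite order and the vertex classification of Corollary~\ref{conclusion}. That route cannot succeed. Under the hypotheses as you read them, with no commutativity assumed on $G$, the claim is false.

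Take a non-abelian group containing an element of infinite order, e.g.\ the free group $F_2$. Let it act trivially (hence genuinely, hence pseudo-freely) on the graph with one vertex $v$ and one loop $e$. Then:
\begin{itemize}
\item every $(v,g,v)$ is a symmetric cycline triple;
\item $\calZ(v,g,v)=\{(e^\infty;[g],0;e^\infty)\}$, where $[g]$ is the class of the constant function;
\item $(e^\infty;[g],0;e^\infty)(e^\infty;[h],0;e^\infty)=(e^\infty;[gh],0;e^\infty)$, which differs from the product in the other order whenever $gh\neq hg$.
\end{itemize}
So the theorem silently carries the hypothesis of Proposition~\ref{prop:Ssym:open,normal,abelian,subgpd} (and of the abstract) that $G$ is abelian.

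Your germ heuristic is also misleading here. Elements of $\cGss$ are not germs of the action on $E^\infty$; they carry the class $[g|_x]\in Q(\N,G)$. So agreement of $gh$ and $hg$ near $x$ decides nothing. What must be compared is $g|_{x(0,n)}h|_{x(0,n)}$ against $h|_{x(0,n)}g|_{x(0,n)}$ for large $n$. Nothing forces restrictions of pointwise-fixing elements into an abelian subgroup: in the example above they range over all of $F_2$.

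\paragraph{The fix once $G$ is abelian.} The step becomes one line and needs neither infinite order nor vertex types. Every element of $\ssym$ has lag $0$, so two composable ones multiply as $(x;[f],0;x)(x;[f'],0;x)=(x;[f][f'],0;x)$. Multiplication in $Q(\N,G)$ is pointwise, hence commutative. This is exactly Lemma~\ref{lem:comm_rela} as specialised in Corollary~\ref{cor:cycline comm}.

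\paragraph{The rest of your outline.} It is sound.
\begin{itemize}
\item Your singleton argument is Lemma~\ref{lem:one path}\ref{it:one path:unique infinite path}--\ref{it:one path:non-symmetric cycline triple}.
\item Your normality idea works: after refining to a common middle path, $\calZ(\nu,g^{-1},\mu)\,\calZ(\mu,h,\mu)\,\calZ(\mu,g,\nu)\subseteq\calZ(\nu,g^{-1}hg,\nu)$, and $g^{-1}hg$ fixes $s(\nu)E^\infty$ pointwise. This is a cleaner route than the paper's coordinate computation, which uses commutativity of $G$, but you should write it out.
\item Disjointness is simplest via the lag coordinate. It is $0$ on $\ssym$ and $|\alpha|-|\beta|\neq 0$ on a non-symmetric cycline cylinder, because $|\alpha|=|\beta|$ together with $\alpha(g\cdot y)=\beta y$ forces $\alpha=\beta$.
\end{itemize}
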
 

Theorem~\ref{mainthm_structure}
provides a natural candidate $\ssym$ for obtaining a Cartan subalgebra. Note further that $\ssym$ is maximal abelian in $\Iso(\cGss)^{\circ}$ precisely when all singletons in $\Iso(\cGss)^{\circ}$ arising from non-symmetric cycline triples fail to commute with $\ssym$. 

We illustrate the efficacy of the concrete description of $\Iso(\cGss)^{\circ}$ in the case $G = \mathbb{Z}$, where sufficient conditions for maximal abelianness of $\ssym$ in $\Iso(\cGss[\Z])^{\circ}$ are obtained in Section~\ref{sec: Cartan subalgebras}. Closedness of $\ssym$ in $\cGss$ is shown to be characterized again by condition~\ref{mainthm_cond1}. Combining these ingredients, an application of \cite[Corollary 4.5]{DWZ:Twist} leads to the main result on Cartan subalgebras of $\mathcal{O}_{\mathbb{Z}, E}$. 

\begin{mainthm}[Theorem~\ref{Thm_Ssym_Cartan}] 
\label{mainthm_Cartan_ss} 
Let $(\mathbb{Z}, E)$ be a pseudo-free self-similar graph, where $E$ is a source-free, finite directed graph. Assume additionally that the following conditions hold: 
\begin{enumerate}[label=\textup{(\ref{mainthm_Cartan_ss}.\arabic*)}] 
\item
For any $x\in E^{\infty}$ and $g\in \Z$, $x$ is either $g$-generic or $g$-rare. 
\item
\label{mainthm3_cond2} 
For any cycle $C$ in $E$ with no entrance, if $p$ is the minimal positive integer such that $a^p$ fixes a vertex on $C$, we have $a^p|_{C} \neq a^{\pm p}$. 
\end{enumerate} 
Then $C^*_{r}(\ssym)$ is a Cartan subalgebra of $C^*_{r}(\cGss[\Z])$. 
\end{mainthm}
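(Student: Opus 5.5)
The plan is to apply \cite[Corollary 4.5]{DWZ:Twist}. For a Hausdorff étale groupoid $\calG$ with an open subgroupoid $\calS\subseteq \Iso(\calG)^\circ$, that result says $C^*_r(\calS)$ is a Cartan subalgebra of $C^*_r(\calG)$ provided three things hold: $\calS$ is abelian and normal, $\calS$ is closed in $\calG$, and $\calS$ is maximal among open abelian subgroupoids of $\Iso(\calG)^\circ$. The groupoid $\cGss[\Z]$ is Hausdorff and étale because $(\Z,E)$ is pseudo-free and $E$ is finite. I will verify the three hypotheses for $\calS=\ssym$ in order.

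The first hypothesis is already available. Since $\Z$ contains an element of infinite order, Theorem~\ref{mainthm_structure} (Proposition~\ref{prop:Ssym:open,normal,abelian,subgpd}) gives that $\ssym$ is an open, abelian, normal subgroupoid of $\Iso(\cGss[\Z])^\circ$. For closedness, I will show that condition~(\ref{mainthm_Cartan_ss}.1) implies $\ssym$ is closed. Take a net in $\ssym$ converging to $\gamma=[\alpha x, g, \beta x]$. By discreteness of $\Z$ we may assume the net lies in a single basic bisection $\calZ(\mu,h,\nu)$ with fixed $h$. Then $\gamma$ lies in the closure of the isotropy of $h$-type, and $g$-genericity or $g$-rareness of the unit $r(\gamma)$ should force $\gamma$ into the interior of the isotropy. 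By the decomposition \eqref{Iso:concrete_description}, $\gamma$ then lies either in $\ssym$ or in a singleton $\calZ(\alpha,g,\beta)$ with $(\alpha,g,\beta)$ non-symmetric. A singleton bisection is an isolated point of $\cGss[\Z]$, so it cannot be a limit of a net avoiding it; this leaves $\gamma\in\ssym$. The argument should mirror the closedness characterization of $\Iso(\cGss)^\circ$ in Subsection~\ref{ssec:closedness}.

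The main obstacle is maximality. By the remark after Theorem~\ref{mainthm_structure}, it suffices to show that no point $\eta$ coming from a non-symmetric cycline triple $(\alpha,a^n,\beta)$ commutes with all of $\ssym$ at its unit. I will use the classification of vertices (Corollary~\ref{conclusion}) to locate $\eta$. A non-symmetric cycline triple with a singleton bisection should force the tail of the infinite path $x=s(\eta)$ to be eventually periodic around a cycle $C$ with no entrance, with $\alpha$ and $\beta$ differing by windings around $C$. The isotropy of $\ssym$ at $x$ is then generated by elements of the form $[\mu, a^{p}, \mu]$, where $p$ is minimal with $a^p$ fixing a vertex of $C$. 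Computing the commutator with $\eta$ via the self-similar relations, commutation would amount to $a^p|_C=a^{\pm p}$. The sign depends on the orientation of the shift, which is where both signs arise. Condition~\ref{mainthm3_cond2} excludes exactly this, so $\eta$ fails to commute with $\ssym$, giving maximality. The hard part is the case analysis guaranteeing that every non-symmetric cycline triple reduces to this cycle-without-entrance configuration, and tracking the restrictions $a^k|_{C^m}$ precisely. With the three hypotheses verified, \cite[Corollary 4.5]{DWZ:Twist} yields that $C^*_r(\ssym)$ is a Cartan subalgebra of $C^*_r(\cGss[\Z])$.
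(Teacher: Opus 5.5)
\textbf{Gap: a missing hypothesis in the cited criterion.} Your steps for openness and normality, closedness, and maximality follow the paper's route: Proposition~\ref{prop:Ssym:open,normal,abelian,subgpd}, Theorem~\ref{thm:closedness-equivalent} via Lemma~\ref{lem:g-generic-iso}, and the Type~\ref{TypeII}/\ref{TypeIII} computations of Lemmas~\ref{lem:TypeIITypeIDontCommute} and~\ref{lem:TypeIIITypeIDontCommute}. However, you have dropped a hypothesis from \cite[Corollary 4.5]{DWZ:Twist}. Being abelian, normal, closed and maximal among open abelian subgroupoids of $\Iso(\calG)^\circ$ does not make $C^*_r(\calS)$ a masa. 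Groupoid-level maximality only controls which elements of $\Iso(\calG)^\circ$ can be adjoined to $\calS$. It says nothing about elements of $C^*_r(\calG)$ such as finite sums of point masses.

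For a concrete counterexample to your version, take $\calG=S_3$ and $\calS=A_3=\langle(1\,2\,3)\rangle$. This $\calS$ is open, closed, normal, abelian and maximal abelian. Yet the central element $\delta_{(1\,2)}+\delta_{(1\,3)}+\delta_{(2\,3)}$ commutes with $C^*(A_3)$ without lying in it, since $(1\,2)$ has exactly three $A_3$-conjugates. This is why Theorem~\ref{thm:DWZ} additionally requires \eqref{eq:ricc,stronger}: no $\mfkg\in\Iso(\calG)^\circ$ may have finitely many but more than one $\calS$-conjugates (see Remark~\ref{rmk:ricc,stronger}).

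\textbf{How to close it.} You still have to verify \eqref{eq:ricc,stronger} for $\ssym$. The paper does this in Lemma~\ref{lem:Ssym (almost) immediately centralizing}, using that $\Z$ is torsion-free. Let $\mfkt=(x;\calT_l([t|_y]),0;x)\in\ssym(x)$, with $y=\sigma^l(x)$ and $t\in\Z_{x(l,l)}$. Suppose $\mfkg=(x;[f],k;x)$ satisfies $\mfkt^{-1}\mfkg\mfkt\neq\mfkg$.
\begin{enumerate}
\item Since $t\cdot y=y$, Lemma~\ref{it:powers go in} gives $\mfkt^p=(x;\calT_l([t|_y])^p,0;x)$.
\item Since $Q(\N,\Z)$ is abelian, $\mfkt^{-p}\mfkg\mfkt^{p}$ differs from $\mfkg$ exactly by the factor $u^p$ in the $Q(\N,\Z)$-coordinate, where $u=\calT_l([t|_y])^{-1}\calT_{k+l}([t|_y])$.
\item The case $p=1$ shows $u\neq 1$.
\item Because $Q(\N,\Z)$ is torsion-free, the conjugates $\mfkt^{-p}\mfkg\mfkt^{p}$ for $p\in\Z$ are pairwise distinct.
\end{enumerate}
Hence every $\mfkg$ has either one or infinitely many $\ssym$-conjugates.

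\textbf{A minor point on closedness.} Genericity or rareness is a property of the tail $x$ of a lag-zero element $(\mu x;\calT_{|\mu|}([g|_x]),0;\mu x)$, not of $r(\gamma)$. Rareness puts that element in the interior of the complement of $\ssym$, rather than in $\Iso(\cGss[\Z])^\circ$. Nonzero-lag elements of $\Iso(\cGss[\Z])\setminus\Iso(\cGss[\Z])^\circ$ also need separate treatment, via Lemma~\ref{lem:one path}\ref{it:one path:unique infinite path}, as in the proof of Theorem~\ref{thm:closedness-equivalent}.
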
 

Condition~\ref{mainthm3_cond2} may appear 
{\em ad hoc}; indeed, it arises from explicit computations in the special case of $(\mathbb{Z}, E)$ ensuring that $\ssym$ is maximal abelian (see Lemmas~\ref{lem:TypeIITypeIDontCommute} and~\ref{lem:TypeIIITypeIDontCommute}). Although the assumptions in Theorem~\ref{mainthm_Cartan_ss} seem restrictive, they cover a substantially larger class than those treated in the existing literature; see for example Example~\ref{exa:closed_g_generic}.
In fact, Theorem~\ref{mainthm_Cartan_ss} should be regarded as a concrete application of the more structural result, Theorem~\ref{mainthm_structure}. This same general strategy also allows for case-by-case analyses tailored to specific examples, 
illustrated for instance in Appendix~\ref{sec:AnotherCandidate} when Condition~\ref{mainthm3_cond2}
in Theorem~\ref{mainthm_Cartan_ss} is relaxed. 

Another consequence of Theorem~\ref{mainthm_structure} is the existence of `easier models' for $\mathcal{O}_{G, E}$: The `standard' groupoid model $\calG_{G,E}$ for $\mathcal{O}_{G, E}$ that we mentioned earlier
is considerably more complicated than those associated naturally to, say, 
the $C^*$-algebra $C^*(E)$ of the directed graph $E$. In graph groupoids, elements encode shift equivalence between pairs of infinite paths together with an integer lag. By contrast, elements of $\calG_{G,E}$ 
keep track not only of the lag between infinite paths but also the self-similarity of the action of $G$. This amounts to the lag-group in $\calG_{G,E}$ being vastly more complicated than the integers: it is a semi-direct product of $\mathbb{Z}$ acting on a quotient 
space of functions $\mathbb{N}\to G$ (more details can be found in Section~\ref{defn_gpd}). Hence, it is of interest to find other, hopefully more tractable, groupoid models for $\mathcal{O}_{G, E}$. 

Our approach to constructing such `easier models’ is to leverage a result of 
Renault’s (\cite[Theorem 5.5(ii)]{Renault:2023:Ext}, quoted as Theorem~\ref{thm:Renaults5.5} below), which is related to his Weyl groupoid construction (\cite{Renault:2008}) via \cite{Duwe:2025:Diag-pp}. The idea is that the reduced 
$C^*$-algebra $\csr(\calG)$ of a groupoid can be realized as the twisted reduced $C^*$-algebra of a transformation groupoid $(\calG/\calS) \ltimes \widehat{\calS}$, where $\calS$ is a normal abelian subgroupoid of $\calG$ that is sufficiently well-behaved topologically, and $\widehat{\calS}$ denotes its dual groupoid, naturally acted upon by $\calG/\calS$. In our setting, $\ssym$ provides a good candidate for $\calS$, given the nice properties established in Theorem~\ref{mainthm_structure}. As illustrated by examples in Appendix~\ref{app:Weyl examples}, the resulting new transformation groupoid $(\calG/\ssym) \ltimes \ssymdual$ is significantly more tractable than $\cGss[\Z]$, capturing only the essential features of the underlying self-similar graph $(\mathbb{Z},E)$. 

\subsection*{Structure of the paper} 
The preliminaries section,
Section~\ref{S:Pre},
recalls the basic definitions of self-similar graph $C^*$-algebras and groupoid models. In Subsection~\ref{subsec:ct_lf}, we review cycline triples and refine the notion of (non)local faithfulness for our purposes. The section concludes with two corollaries identifying candidates for abelian subgroupoids in the genuine action case and the general setting (Corollaries~\ref{C:can-act} and~\ref{cor:cycline comm}). 

In Section~\ref{S:sym}, we show that $\ssym$ is an open, abelian, normal subgroupoid of $\Iso(\cGss)^{\circ}$ when $G$ is abelian (Proposition~\ref{prop:Ssym:open,normal,abelian,subgpd}), establishing the first part of Theorem~\ref{mainthm_structure}. In the case $G = \mathbb{Z}$, the dual bundle $\ssymdual$ is described explicitly in Section \ref{sec:Renault's reconstruction} (Theorem~\ref{thm:ssymdual}). 
The second part of Section~\ref{sec:Renault's reconstruction} uses this description to give a more concrete account of the action of $\cGss[\mathbb{Z}]/\ssym$ on $\ssymdual$. 

Section~\ref{sec: Cartan subalgebras} contains the main results of the paper. In Subsection~\ref{ssec:cycline triples and masa}, we analyze cycline triples at four types of vertices with distinct dynamical behaviors (Corollary~\ref{conclusion}). This leads to Corollary~\ref{cor:the green corollary}
and
establishes the second part of Theorem~\ref{mainthm_structure}. A detailed study of commutativity of elements in compact open bisections associated to cycline triples yields sufficient conditions for $\ssym$ to be maximal among open abelian subgroupoids of $\Iso(\cGss)^{\circ}$ (Theorem~\ref{thm:Ssym is maximal}). Subsection ~\ref{ssec:closedness} characterizes the closedness of subgroupoids $\calS$ of $\Iso(\cGss)^{\circ}$ containing $\ssym$ (Theorem ~\ref{thm:closedness-equivalent}). 
Combined 
with Corollary~\ref{C:can-act}, one direct consequence is the equivalent characterization of when $\Iso(\cGss)^{\circ}$ gives a Cartan subalgebra for genuine actions (Theorem~\ref{mainthm_genuine_action}). Finally, in Subsection~\ref{ssec:Ssym satisfies ricc}, we prove Theorem~\ref{mainthm_Cartan_ss} by assembling the necessary ingredients to apply \cite[Corollary 4.5]{DWZ:Twist}, including Theorems~\ref{mainthm_structure},
~\ref{thm:Ssym is maximal}, 
~\ref{thm:closedness-equivalent}
and Lemma~\ref{lem:Ssym (almost) immediately centralizing}. 

The paper concludes with two appendices. In Appendix~\ref{app:Weyl examples}, we compute the transformation groupoids $(\cGss[\Z]/\ssym) \ltimes \ssymdual$ introduced in Section~\ref{sec:Renault's reconstruction} for several explicit examples of self-similar graphs $(G, E)$. In Appendix~\ref{sec:AnotherCandidate}, we identify an alternative subgroupoid $\salt$ of
$\Iso(\cGss[\Z])^{\circ}$, which produces a Cartan subalgebra $C^*(\salt) \subseteq C^*(\cGss[\Z])$ for a certain class of self-similar graphs not covered by the results of Section~\ref{sec: Cartan subalgebras}. 

\subsection*{Notation and 
standing assumptions} 
\label{Notation and conventions}
We let $\N$ denote the set of all non-negative integers, and write $\N^+$ for all strictly positive integers. 

In this paper, $G$ is always a discrete countable group. Its identity is denoted by~$1_{G}$. If $G=\Z$, we will write $\Z=\langle a \rangle$ and its operation multiplicatively. 

Our graphs are always assumed to be directed, finite, and source-free.

\subsection*{Acknowledgements}
This collaboration was initiated at the third {\em Women in Operator Algebras} conference, held at the Banff International Research Station in June 2023. 
AD was partially supported by a Senior Postdoctoral Fellowship of the FWO (fellowship number 1206124N).  
SH was partially supported by the Mathematical Institute Scholarship during her DPhil studies at the University of Oxford, and was partially funded by the Deutsche Forschungsgemeinschaft (DFG, German Research Foundation) under Germany's Excellence Strategy EXC 2044-390685587, Mathematics M\"unster: Dynamics–Geometry–Structure, by the SFB 1442 of the DFG. 
DY was partially supported by an NSERC Discovery Grant 823065.
The authors would further like to thank Stuart White for hosting the authors for a collaborative week at the University of Oxford (grant number EPSRC EP/X026647/1), the Fields Institute for Research in Mathematical Sciences,
and the mathematical research institute MATRIX in Australia where part of this research was performed. 

\section{Preliminaries}
\label{S:Pre} 

\subsection{Self-similar graphs}
\label{SS:ssg} 

A directed graph $E=(E^{0} ,E^1,r,s)$ consists of a set of vertices, $E^{0} $, a set of directed edges, $E^1$, and the maps $r,s\colon  E^1\to E^{0} $, which specify the range and the source of each edge, respectively. 
For each $n\in \mathbb N\cup \{\infty\}$, we use $E^n$ to denote directed paths in $E$ with length $n$. We also use $|\mu|$ to denote the length of a finite path $\mu$. We use $E^*\defeq \cup_{n\geq 0} E^n$ to denote the set of all finite directed paths. 
For $v\in E^{0} $ and $n\in \mathbb N\cup\{\infty\}$, $vE^n$ (respectively $E^nv$) is the set of all paths of length $n$ with range $v$ (respectively source $v$). We also use $vE^*$ and $E^*v$ to denote all finite paths with range and source $v$ respectively.

So, by definition, $E^\infty$ is the set of all infinite paths of $E$. 
If $x=e_1e_2\dots\in E^\infty$, for natural numbers $m<n$, take
$x(m,n)\coloneqq e_{m+1}\dots e_n$, and define $x(n,n)\coloneqq r(e_{n+1})$. 
For $p \in \mathbb{N}$, define the left shift map $\sigma^p: E^\infty \to E^\infty$ by $\sigma^p(x)\coloneqq e_{p+1}e_{p+2} \dots \in E^\infty$.
For $\mu \in E^*$, we denote $\calZ(\mu)\defeq \{\mu x: \: x \in E^\infty, s(\mu)=r(x) \}$. We can give $E^\infty$ the topology generated by $\calZ(\mu)$, and since $\calZ(\mu)$ will be compact this will make $E^\infty$ into a second-countable locally compact Hausdorff space. 

\begin{defn}\label{D:action}
    Given a graph $E$, we call a bijective map $E^* \to E^*$ an \emph{automorphism} of the finite path space $E^*$ if it maps $E^n$ to $E^n$ for $n\in\N$ (i.e., the map preserves lengths of finite paths) and if it commutes with the source and range maps. We use $\operatorname{Aut}(E^*)$ to denote the set of all such automorphisms. 
    
    For a (discrete) group $G$, we say that \textit{$G$ acts on $E$} if there is a group homomorphism $\varphi\colon G\to \operatorname{Aut}(E^*)$.\footnote{This is a slight abuse of terminology; see Remark~\ref{rmk:abuse of terminology}.} We will usually just write $g\cdot \mu$ for the action of $g \in G$ on $\mu \in E^{*}$, with no reference to the map $\varphi$. We call the action {\em genuine} if it is entirely determined by the action of $G$ on $E$ in the sense that the equality
    \[
    \label{genuine_extend} 
        g\cdot (e_0\dots e_n) = (g\cdot e_0)\dots (g\cdot e_n)
    \]
    holds for all $g\in G$ and all finite paths $e_0\dots e_n\in E^*$.
\end{defn}

We first give the definition of self-similar graphs in terms of group actions on finite paths, where self-similarity 
explicitly appears in Condition \ref{item:S?} of Definition \ref{D:ss}. An alternative way of defining self-similar actions, more in the spirit of \cite{Self-similar:EP17}, is to extend the action self-similarly from that on edges (see Remark~\ref{rmk:abuse of terminology} for a detailed exposition).  
 
\begin{defn}\label{D:ss}
    Let $G$ be a (discrete) group. We call action of $G$ on $E$ \emph{self-similar} if there is a restriction map $G\times E^* \to G, \ (g, \mu)\mapsto g|_\mu$, such that the following conditions hold.
    \begin{enumerate}[label=\textup{(\arabic*)}]
    \item
    \label{item:S5}
    $(gh)\vert_\mu=g \vert_{h \cdot \mu} h \vert_\mu$ for all $g,h \in G,\mu \in E^*$;

    \item
    \label{item:S2}
    $g \vert_v =g$ for all $g \in G,v \in E^{0} $;
    
    \item
    \label{item:S4}
    $1_{G} \vert_{\mu}=1_{G}$ for all $\mu \in E^*$;
    
    \item
    \label{item:S3}
    $g \vert_{\mu\nu}=g \vert_\mu \vert_\nu$ for all $g \in G,\mu,\nu \in E^*$ with $s(\mu)=r(\nu)$;
    
    \item
    \label{item:S?}
    $g\cdot (\mu\nu)=(g \cdot \mu)(g \vert_\mu \cdot \nu)$ for all $g \in G,\mu,\nu \in E^*$ with $s(\mu)=r(\nu)$.
    \end{enumerate}
    In this case, we call $E$ a \textit{self-similar graph over $G$}, or 
    we say that $(G,E)$ is a \textit{self-similar graph}.
\end{defn}

\begin{exa}
    A genuine action of $G$ on $E$ in the sense of Definition~\ref{D:action} is self-similar with the restriction map
    $g|_\mu:=g$ for all $g\in G$ and all $\mu\in E^*$.
\end{exa}

\begin{rem}\label{rmk:abuse of terminology}
    The data of a self-similar action in our sense is already contained in the data on the level of the graph $E$ instead of its finite path space $E^*$. To be more precise, suppose we only know how $G$ acts on the set of edges and on the set of vertices (still preserving ranges and sources), and we only know how to restrict elements of $G$ by vertices and edges. Suppose further that this restriction map satisfies the following axioms:
    \begin{enumerate}[label=\textup{(\arabic*')}]
    \item \label{cond:S1}
    $(gh)|_e = g|_{h\cdot e} h|_e$ for all $g,h\in G$ and $e \in E^1$;
    \item\label{cond:S2} $g|_v=g$ for all $g\in G$ and $v\in E^{0}$;
    \item\label{cond:S3} $1_{G}|_e=1_{G}$ for all $e\in E^1$;
    \item\label{cond:S4} 
     $g|_e\cdot s(e) =g\cdot s(e)$ for all $g\in G$ and $e\in E^{1}$. 
    \end{enumerate}
    Then there is a
    natural
way of extending the action and restriction maps to action and restriction maps on the level of $E^*$ which fall into the scope of Definition~\ref{D:ss}, namely by inductively defining
    \[
        g\cdot (e_0e_1\dots e_n) \defeq (g\cdot e_0) \, g|_{e_{0}}\cdot (e_1\dots e_n)
        \quad\text{ and }\quad
        g|_{e_0e_1\dots e_n} \defeq (g|_{e_0})|_{e_1} \dots |_{e_n}.
    \]
    And again, only if the restriction map satisfies $g|_{\mu}=g$ for all 
    $\mu\in E^1$ and all $g\in G$ does the induced action $G\to \operatorname{Aut}(E^*)$ become a genuine action in the sense of Definition~\ref{D:action}.
\end{rem} 

A direct consequence of Definition \ref{D:action} is that loops are mapped to loops by group actions, as they respect $r, s$. This and some additional easy facts are recorded in Lemma~\ref{lem:cycles to cycles} below; they will be useful in Section \ref{ssec:cycline triples and masa} and Appendix~\ref{sec:AnotherCandidate} for instance. 

Recall that a path $C=\mu_1\dots \mu_k\in E^k$ is called a \textit{cycle} if $r(C)=s(C)$, and that a cycle $C$ is said to \textit{have no entrance} if $e\in E^1$ with $r(e)=r(\mu_i)$ for some $1\le i\le k$, then $e=\mu_i$. 

\begin{lem}\label{lem:cycles to cycles}
Suppose $G$ acts on 
$E$, 
and let $C$ be a cycle in $E$. 
Then for any fixed $h\in G$, the following hold.
\begin{enumerate}[label=\textup{(\arabic*)}]
        \item\label{it:cycles to cycles:length}
        $h\cdot C$ is a cycle of the same length as $C$. \item\label{it:cycles to cycles:entrance} $h\cdot C$ has no entrance if and only if $C$ has no entrance.
        \item\label{it:cycles to cycles:self-similarity} Suppose the action of $G$ is self-similar. If $C$ has no entrance, then $h \cdot C^\infty = h|_{C^k} \cdot C^\infty = (h \cdot C)^\infty$ for all $k \in \mathbb{N}$.
\end{enumerate}
\end{lem}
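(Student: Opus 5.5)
Write $C=\mu_1\dots\mu_k$ with $\mu_i\in E^1$ and $r(\mu_i)=s(\mu_{i-1})$. The proof uses only that $h$ acts by a length-preserving bijection of $E^*$ commuting with $r$ and $s$ and with concatenation, in the sense that $h\cdot(\mu\nu)=(h\cdot\mu)(h|_\mu\cdot\nu)$. For genuine actions this is just the formula in Definition~\ref{D:action}. I handle the three items in order.

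\textbf{Item~\ref{it:cycles to cycles:length}.} Since $h$ preserves $E^k$, the path $h\cdot C$ lies in $E^k$. Since $h$ commutes with $r$ and $s$,
\[
r(h\cdot C)=h\cdot r(C)=h\cdot s(C)=s(h\cdot C).
\]
Hence $h\cdot C$ is a cycle of length $k$.

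\textbf{Item~\ref{it:cycles to cycles:entrance}.} By symmetry (apply the argument to $h^{-1}$ and the cycle $h\cdot C$), it suffices to show: if $C$ has no entrance, then neither does $h\cdot C$.

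First I identify the edges of $h\cdot C$. I would write $h\cdot C=\nu_1\dots\nu_k$. Iterating the concatenation rule gives $\nu_i=g_i\cdot\mu_i$ for some $g_i\in G$; in the self-similar case $g_i=h|_{\mu_1\dots\mu_{i-1}}$. The vertices on $h\cdot C$ are therefore $r(\nu_i)=g_i\cdot r(\mu_i)$.

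Next, suppose $e\in E^1$ satisfies $r(e)=r(\nu_i)$. Then $g_i^{-1}\cdot e$ is an edge with range $r(\mu_i)$. Because $C$ has no entrance, $g_i^{-1}\cdot e=\mu_i$, so $e=g_i\cdot\mu_i=\nu_i$. Thus $h\cdot C$ has no entrance.

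\textbf{Item~\ref{it:cycles to cycles:self-similarity}.} Here I use that a cycle without entrance is the only path of its length ending at its base vertex.

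First claim: if $r(x)=r(C)$ with $x\in E^\infty$, then $x=C^\infty$. Indeed, no-entrance forces each successive edge of $x$ to be the next edge of $C$.

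Extend the action to infinite paths edgewise, via restrictions, so that $h\cdot(\mu x)=(h\cdot\mu)(h|_\mu\cdot x)$. Then $h\cdot C^\infty$ has range $h\cdot r(C)=r(h\cdot C)$. By item~\ref{it:cycles to cycles:entrance}, $h\cdot C$ has no entrance, so the claim applied to $h\cdot C$ gives $h\cdot C^\infty=(h\cdot C)^\infty$.

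Likewise, $h|_{C^k}\cdot C^\infty$ has range
\[
h|_{C^k}\cdot s(C^k)=h\cdot s(C^k)=h\cdot r(C),
\]
using axiom (4') extended to paths. By the same uniqueness, $h|_{C^k}\cdot C^\infty=(h\cdot C)^\infty$ as well. Alternatively, $h\cdot C^\infty=h\cdot(C^kC^\infty)=(h\cdot C)^k\,(h|_{C^k}\cdot C^\infty)$, and comparing this with $(h\cdot C)^\infty$ also gives the equality.

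\textbf{Main obstacle.} The delicate point is purely bookkeeping. One must make precise the extension of the action to $E^\infty$ and the identity $g|_\mu\cdot s(\mu)=g\cdot s(\mu)$ for paths. Both follow inductively from Remark~\ref{rmk:abuse of terminology} and Definition~\ref{D:ss}.
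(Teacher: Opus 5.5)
Your proof is correct for self-similar actions, which include genuine ones and every setting in which the paper applies the lemma. For items (2) and (3) it follows the paper's route.

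\textbf{Item (2).} Like the paper, you pull a would-be entrance edge of $h\cdot C$ back to an edge into a vertex of $C$. Your version is actually more careful. You pull back by $g_i=h|_{\mu_1\cdots\mu_{i-1}}$, the element that really produces the $i$-th edge of $h\cdot C$. The paper instead pulls back by $h^{-1}$ and tacitly treats the edges of $h\cdot C$ as $h\cdot\mu_i$.

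\textbf{Item (3).} Both arguments rest on the fact that the only infinite path ending at a vertex of an entrance-free cycle is the infinite repetition of that cycle. The paper decomposes $h\cdot C^\infty=(h\cdot C^k)(h|_{C^k}\cdot C^\infty)$ and compares it with $(h\cdot C)^\infty$. Your main argument instead compares ranges, using $g|_\mu\cdot s(\mu)=g\cdot s(\mu)$. Both work.

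\textbf{Item (1).} Your direct observation is all the literal statement needs: $h$ preserves $E^k$ and commutes with $r$ and $s$. The paper's contradiction argument comparing $C(0,0)$ with $C(m,m)$ is unnecessary.

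\textbf{Scope caveat.} Items (1) and (2) are stated for arbitrary actions in the sense of Definition~\ref{D:action}. Automorphisms of $E^*$ there need not respect concatenation, and no restriction map exists. So your argument for (2), which relies on $h\cdot(\mu\nu)=(h\cdot\mu)(h|_\mu\cdot\nu)$, does not cover that case. The fix is short:
\begin{itemize}
\item Being bijective on $E^*$ and length-preserving, $h$ restricts to a graph automorphism of $(E^0,E^1)$.
\item Since $C$ has no entrance, each vertex $h\cdot r(\mu_i)$ then has exactly one incoming edge, namely $h\cdot\mu_i$.
\item This forces the length-$k$ path $h\cdot C$ ending at $h\cdot r(\mu_1)$ to equal $(h\cdot\mu_1)\cdots(h\cdot\mu_k)$.
\item Each vertex of this path has its own edge as its unique incoming edge, so it has no entrance.
\end{itemize}
The converse direction then follows, as in your proof, by applying this to $h^{-1}$.
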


\begin{proof}
    Fix $h\in G$. Let $C$ be a cycle of length $n$ and define $C' := h \cdot C$.
    
    \ref{it:cycles to cycles:length}
    Since
    $r(h \cdot C) = h \cdot r(C)= h \cdot s(C) = s(h \cdot C)$, 
    we see that
    $C'$ is a cycle of length at most $n$.  If $|C'| = m < n$, then $C(0,0) = (h^{-1} \cdot C')(0,0) = h^{-1} \cdot C'(0,0) = h^{-1} \cdot C'(m,m) = (h^{-1} \cdot C')(m,m) = C(m,m)$, which is a contradiction. Thus $|C'| = n$. 
    
    \ref{it:cycles to cycles:entrance}
    Now assume $C$ has no entrance, and assume for a contradiction that $C'$ does have an entrance, say 
    $e'$. Then $r(e') \in C'$, so $r(h^{-1} \cdot e') = h^{-1} \cdot r(e') \in C$. This implies that $h^{-1} \cdot e'$ is an edge in $C$, since $C$ has no entrance. But then $h \cdot (h^{-1} \cdot e') = e'$ must be an edge in $C'$ since $h \cdot C = C'$, a contradiction. Thus $C'$ has no entrance.
    By symmetry, we deduce that $C'$ has no entrance if and only if $C$ has no entrance.

    \ref{it:cycles to cycles:self-similarity}
    If $C'$ has no entrance, then  $|v'E^\infty| = 1$ for all $v' \in C'$. 
   Since $h \cdot C = C'$, then by self-similarity, $h \cdot C^\infty = (h \cdot C)(h|_C \cdot C^\infty) = C'(h|_C \cdot C^\infty)$. Since $s(C')$ has only one incoming path, namely $(C')^\infty$, then we must have $h|_C \cdot C^\infty = (C')^\infty$. Therefore $h \cdot C^\infty = (C')^\infty$. Finally, since $h \cdot C^\infty = (C')^\infty$, then $(h \cdot C^k)(h|_{C^k} \cdot C^\infty) = (C')^k(C')^\infty$. Since paths $h \cdot C^k$ and $(C')^k$ have the same length, then they are equal. So we must have $h|_{C^k} \cdot C^\infty = (C')^\infty$. 
\end{proof}

Note from \cite[Section~3]{Self-similar:LY21}
that a self-similar action $(G, E)$ defined above induces a canonical action of $G$ on the infinite path space $E^\infty$: for any $g\in G$ and infinite path $x$, we take 
\begin{align}
(g \cdot x)(p,q) & \defeq  g|_{x(0,p)} \cdot x(p,q)\quad \text{for all } 0\le p<q\in \mathbb N, 
\end{align} 
so that $g\cdot x\in E^\infty$. Moreover, for any $g\in G$ and $x\in E^{\infty}$, a map $g|_{x}\colon \N\to G$ is induced by the restriction in the following way: 
\begin{align}
g|_x(p) & \defeq  g|_{x(0,p)} \quad \text{for all } p \in \N.
\end{align}
The map records how the group element $g$ changes when traversing through $x$ according to the restriction map. Note that our definition of $g|_{x}$ leads to the formula 
\[
    (gh)|_{x}= g|_{h\cdot x} h|_{x} \quad \text{ for any }g, h \in G, \text{ and } x\in E^{\infty}, 
\] 
by standard computations. This formula for infinite paths is consistent with Definition~\ref{D:ss}~\ref{item:S5} for finite paths.
We include the following lemma, which will be used repeatedly in Sections \ref{sec:Renault's reconstruction} and \ref{sec: Cartan subalgebras}.

\begin{lem}
    \label{it:powers go in} 
    Suppose $h\in G$ and $x\in E^\infty$ are such that $h\cdot x=x$. Then for $n\leq p\in \N$ and $\ell\in \Z$, we have
    \(
        (h|_{x(0,p)})^{\ell }
        =
        (h|_{x(0,n)})^{\ell }|_{x(n,p)}
     \). In particular,
    \([h|_{ x }]^{\ell }
        =
        [h^{\ell }|_{ x }].
    \) 
\end{lem}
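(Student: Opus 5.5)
We argue by induction, using only the cocycle identity \ref{item:S5}, the composition rule \ref{item:S3}, and the fact that $h$ fixes every initial segment of $x$. Since $h\cdot x=x$, the definition of the action on $E^\infty$ gives $h\cdot x(0,p)=x(0,p)$ for every $p$. Moreover $h|_{x(0,n)}\cdot x(n,p) = (h\cdot x)(n,p) = x(n,p)$, so $k\defeq h|_{x(0,n)}$ fixes $x(n,p)$. By \ref{item:S3}, $h|_{x(0,p)} = k|_{x(n,p)}$. The claim therefore reduces to the following: if $k\in G$ fixes a finite path $\mu$ (here $\mu=x(n,p)$), then $(k|_\mu)^\ell = k^\ell|_\mu$ for all $\ell\in\Z$. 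Note that $k^\ell$ fixes $\mu$ for every $\ell$.

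For $\ell\geq 0$ we induct on $\ell$. The case $\ell=0$ is \ref{item:S4}, and $\ell=1$ is trivial. For the inductive step, \ref{item:S5} gives $k^{\ell+1}|_\mu = k|_{k^\ell\cdot\mu}\, k^\ell|_\mu = k|_\mu (k|_\mu)^\ell$, using $k^\ell\cdot\mu=\mu$.

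For negative powers, apply \ref{item:S5} to $1_G = k^{-1}k$ together with \ref{item:S4}. This gives $1_G = k^{-1}|_{k\cdot\mu}\, k|_\mu = k^{-1}|_\mu\, k|_\mu$, so $k^{-1}|_\mu = (k|_\mu)^{-1}$. Then for $\ell=-m$ with $m>0$, the element $k^{-1}$ also fixes $\mu$, so the positive case applied to $k^{-1}$ yields $k^{-m}|_\mu = (k^{-1}|_\mu)^m = (k|_\mu)^{-m}$.

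For the "in particular" statement, the brackets presumably denote the class of the function $\N\to G$ in the quotient used for $\cGss$. At the level of functions, the identity already holds pointwise: taking $n=0$ and using $h|_{x(0,0)}=h$ by \ref{item:S2}, we get $(h|_{x(0,p)})^\ell = h^\ell|_{x(0,p)}$ for every $p$, that is, $(h|_x)^\ell = h^\ell|_x$. If the quotient is by a subgroup of functions and the group structure is pointwise, it follows immediately that $[h|_x]^\ell=[h^\ell|_x]$.

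The only real obstacle is bookkeeping rather than mathematics: checking that fixing $x$ forces each restricted element to fix the corresponding segment, and confirming that the quotient's group structure is pointwise.
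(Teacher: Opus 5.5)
Your proof is correct and takes the same approach as the paper, which only says that the identity follows from an easy induction on $|\ell|$ and that the second claim is the case $n=0$. Your reduction to $k=h|_{x(0,n)}$ fixing $\mu=x(n,p)$, the cocycle-identity induction for $\ell\geq 0$, and the handling of negative $\ell$ via $k^{-1}|_\mu=(k|_\mu)^{-1}$ spell out exactly that induction.
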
 

\begin{proof}
    Follows from an easy induction on $|\ell |$, with the second claim following from the case $n=0$.
\end{proof} 

We now recall the notion of self-similar graph $C^*$-algebras introduced by Exel and Pardo in \cite[Definition 3.2]{Self-similar:EP17} (see also \cite[Definition 3.1]{Self-similar:EP17}
or \cite{Rae05}
for the definition of a Cuntz-Krieger family  $\{s_\mu\}_{\mu \in E^*}$ for a graph $E$). 

\begin{defn} 
\label{D:ss_C*_algebra} 
Let $(G,E)$ be a self-similar graph. The \textit{self-similar graph $C^*$-algebra} $\mathcal{O}_{G,E}$
is defined to be the universal unital $C^*$-algebra generated by a family of unitaries $\{u_g\}_{g \in G}$ and
a Cuntz-Krieger family $\{s_\mu\}_{\mu \in E^*}$ satisfying
\begin{enumerate}[label=\textup{(\roman*)}]
\item 
$u_{gh}=u_g u_h$ for all $g$ and $h \in G$;
\item 
\label{ss_relator} 
$u_g s_\mu=s_{g \cdot \mu} u_{g \vert_\mu}$ for all $g \in G$ and $\mu \in E^*$.
\end{enumerate}
\end{defn} 

\begin{rem}  
\label{genuine_C*} 
When the self-similar action is genuine in the sense of Definition~\ref{D:action}, Condition~\ref{ss_relator} of Definition~\ref{D:ss_C*_algebra} becomes $u_g s_\mu u_g^{*}=s_{g \cdot \mu}$ for all $g \in G$ and $\mu \in E^*$, which is the defining relation for the crossed product $C^*(E) \rtimes G$. In other words, $\mathcal{O}_{G,E} \cong C^*(E) \rtimes G$ in the special case of genuine actions. 
\end{rem} 

\subsection{The groupoid of a self-similar graph} 
\label{defn_gpd} 
A \emph{groupoid} is a small category in which every element has an inverse. Given a groupoid $\calG$, we let $\calG\z  $ denote the unit space of $\calG$, $s\colon \calG \to \calG\z  $ the source map $s(\mfkg)=\mfkg^{-1}\mfkg$, $r\colon \calG \to \calG\z  $ the range map $r(\mfkg)=\mfkg\mfkg^{-1}$, and $\calG\comp \subseteq \calG \times \calG$ the set of composable pairs.
For a unit $x\in \calG\z$, we will write $\calG_{x}\defeq s^{-1}(x)$, $\calG^{x}\defeq r^{-1}(x)$, and $\calG(x)\defeq \calG_{x}\cap\calG^{x}$ 
will denote the isotropy group at $x$. 
The isotropy subgroupoid of a groupoid $\calG$ will be denoted $\Iso(\calG)\defeq \{\mfkg \in \calG : s(\mfkg)=r(\mfkg)\}
=
\cup_{x\in \calG\z} \calG(x)
$.

A \emph{topological groupoid} is a groupoid that is also a topological space, in which inversion, the range map, and the source map are continuous, and multiplication is continuous with respect to the relative topology on $\calG\comp \subseteq \calG \times \calG$. A \emph{bisection} $U \subseteq \calG$ is a subset where $r|_U$ and $s|_U$ are injective. 
A locally compact Hausdorff groupoid
is called \emph{ample} when there is a basis consisting of compact open bisections. 
 
Given a self-similar graph $(G,E)$,
we follow the notation for $\cGss$ as 
in \cite[Definition 5.2]{Self-similar:LY21}
for the case $k=1$
(also refer to 
\cite[Theorem 8.19]{Self-similar:EP17}, where $\cGss$ was originally defined). 
To encode the self-similarity, 
we replace the `usual' lag group that is the integers in a graph-groupoid (and $\Z^k$ in a $k$-graph groupoid) with a group that includes
information about how the restriction map interacts with infinite paths 
via a semi-direct product by a right shift map. To this end, we consider the group $C(\N ,G)$
of functions $f\colon  \N  \to G$ under pointwise multiplication. Define the tail equivalence relation by $f\sim g$ exactly when there exists some $N\in \N$ such that $f(k)=g(k)$ for all $k \geq N$. We then get a well-defined quotient group $Q(\N ,G)=C(\N ,G)/\sim$, and we write $[f]$ for the equivalence class of $f$ in $Q(\N ,G)$. For $n \in \Z $, $f\in C(\N ,G)$, and $p \in \N $, we can define the right translation
\[ \calT_n(f) (p) \defeq  \begin{cases} f(p-n) & \text{if } p-n\geq 0, \\ 1_{G} & \text{if }p-n< 0. \end{cases}\] 
The induced map $\calT_n \colon  Q(\N ,G) \to Q(\N ,G)$ is a group automorphism and the map $\calT \colon  \Z  \to \operatorname{Aut}(Q(\N ,G))$ given by $n \mapsto \calT_n$ is a homomorphism. We can then form the semidirect product $Q(\N ,G)\rtimes_\calT \Z $.

Now we define our groupoid $\cGss$ by
\begin{align*}
\cGss \defeq  \left \{ (x; [f], p-q; y):
\begin{matrix}
\sigma^p(x) = f(p)\cdot \sigma^q(y) \\
f(p+n)= f(p)|_{\sigma^q(y)}(n) \text{ for all }n \in \N  
\end{matrix}
\right \},
\end{align*}
or equivalently, as
explained in \cite[Remark~5.3 (ii)]{Self-similar:LY21},
\begin{align*}
\cGss = \left \{ (\mu(g \cdot x); \calT_{|\mu|}([g|_x]), |\mu|-|\nu|; \nu x) : 
\begin{matrix}
g \in G, \mu, \nu \in E^* \\
s(\mu)=g \cdot s(\nu), x \in s(\nu)E^\infty 
\end{matrix}
\right\}.
\end{align*} 
The operations of the groupoid $\cGss$ are given as follows: 
\begin{align}\label{DefGroupoidOps}
    s(x;[f],p-q;y) &= (y;[1_{G}],0;y),\\
    r(x;[f],p-q;y) &= (x;[1_{G}],0;x),\\
    (x;[f],p-q;y)^{-1} &= (y;\calT_{q-p}([f]^{-1}),q-p;x),\label{eq:mfkg inv}
    \\
    (x;[f],p-q;y)(y; [g],n-m;z) &= (x; [f]\calT_{p-q}([g]) ,p-q+n-m; z).
\end{align} 
Given $g \in G$, $\mu, \nu \in E^*$ with $s(\mu) = g \cdot s(\nu)$, we define the
\emph{cylinder sets} by
\[ 
\calZ(\mu,g,\nu) \defeq  \big\{ (\mu(g \cdot x); \calT_{|\mu|}([g|_x]), |\mu|-|\nu|; \nu x ): \: x \in s(\nu)E^\infty \big\}. 
\] 
We equip $\cGss$ with the topology generated by the cylinder sets which makes it a second-countable topological groupoid.
Under the following assumption of {\em pseudo-freeness}, the cylinder sets $\calZ(\mu, g, \nu)$ form a basis for the topology
and $\cGss$ is ample and Hausdorff.  

\begin{defn} 
\label{defn:pseudo-free} 
A self-similar graph $(G, E)$ is  
\textit{pseudo-free} when the following holds:
if $g\in G$ and $\mu \in E^*$ satisfy $g\cdot \mu = \mu$ and $g|_{\mu} = 1_{G}$, then $g = 1_{G}$. 
\end{defn}

We again illustrate in the case where $G$ acts on $E$ by a genuine action. 

\begin{rem} 
For a genuine action, $(G,E)$ is necessarily pseudo-free, since $g|_{\mu} = 1_{G}$ implies $g = g|_{\mu} = 1_{G}$ for any $g\in G$ and $\mu \in E^{*}$. 

In this setting, 
the self-similar groupoid $\cGss$ can be identified with an action groupoid $\calG_{E} \rtimes G$ associated to a canonical action of $G$ on the path groupoid
\begin{equation} 
\calG_{E} = \{(\mu x, |\mu| - |\nu|, \nu x): s(\mu) = s(\nu), x\in s(\nu) E^{\infty}\},
\end{equation} 
which provides an alternative proof to Remark~\ref{genuine_C*}. Indeed, $G$ acts on $\calG_{E}$
by
\begin{align*} 
g\cdot (\mu x, |\mu| - |\nu|, \nu x) 
:= &\big((g\cdot \mu) (g\cdot x), |g\cdot \mu| - |g\cdot \nu |, (g\cdot \nu) (g\cdot x)\big)
\\
=& \big(g\cdot (\mu x), |\mu| - |\nu |, g\cdot (\nu x)\big), 
\end{align*}
for any $g\in G$, $\mu, \nu \in E^{*}$ with $s(\mu) = s(\nu)$ and $x\in s(\nu) E^{\infty}$. 
Recall that, 
as a topological space, the action groupoid is the Cartesian product $\calG_{E} \times G$. We let the groupoid's unit space be
$(\calG_{E} \rtimes G)\z  = \calG_{E}\z 
\times \{1_{G}\}
\cong E^{\infty}$, and define
source and range maps 
\begin{equation} 
r(\gamma, g) = r(\gamma)
\quad\text{and}\quad
s(\gamma, g) = s(g^{-1} \cdot \gamma)
\end{equation} 
for $g\in G, \gamma \in \calG_{E}$, 
and multiplication 
\begin{equation} 
(\gamma, g) (\zeta, h) = (\gamma (g\cdot \zeta), gh) 
\quad\text{whenever }s(\gamma, g) = r(\zeta, h).
\end{equation} 
 An isomorphism $\calG_{E} \rtimes G\rightarrow \calG_{G,E}$ is given by 
\begin{equation} 
\big((\mu x, |\mu|-|\nu|, \nu x), g\big) \mapsto \big(\mu x; [g^{-1}], |\mu|-|\nu|; (g^{-1}\cdot \nu)(g^{-1}\cdot x)\big), 
\end{equation} 
where,
by a slight abuse of notation,
$[g^{-1}]$ denotes the equivalence class of the constant function with value $g^{-1}$. 
\end{rem} 

Pseudo-freeness plays a crucial role in the study of self-similar graph $C^*$-algebras. 
Under the assumption of pseudo-freeness, the cylinder sets $\calZ(\mu, g, \nu)$ form a basis for a topology, under which $\cGss$ is a second-countable Hausdorff ample groupoid. 
This is first established through a topological groupoid isomorphism $\cGss \cong \calG_{\mathrm{tight}} (\calS_{G, E})$, where $\calG_{\mathrm{tight}} (\calS_{G, E})$ is the tight groupoid associated to the inverse semigroup coming from $(G, E)$ 
(\cite[Theorem 8.19, Proposition 9.4]{Self-similar:EP17}); see also the more direct argument in \cite[Theorem~5.8]{Self-similar:LY21}. Moreover, in \cite{Self-similar:EP17} the $C^*$-algebra isomorphism $\mathcal{O}_{G, E}\cong C^{*}(\cGss)$ is obtained  by first identifying $\mathcal{O}_{G, E}$ with $C^*(\calG_{\mathrm{tight}} (\calS_{G, E}))$, and then assuming pseudo-freeness and using the groupoid isomorphism $\cGss \cong \calG_{\mathrm{tight}} (\calS_{G, E})$.

While non-pseudo-free self-similar actions have also been studied (see, for instance, \cite{KK_duality_ss}), this approach involves
non-Hausdorff groupoids and inverse semigroup techniques. In the present work, we restrict our attention to the pseudo-free setting, thereby working with Hausdorff groupoids admitting a basis of cylinder sets. To clarify the role of pseudo-freeness in relation to the existing literature, we indicate explicitly each instance in which this assumption is used and how it enters our arguments. 

Given a self-similar pseudo-free action $(G,E)$,
if $G$ is amenable, then $\cGss$ is amenable.
In particular, its full $C^*$-algebra $C^*(\cGss)$ and reduced $C^*$-algebra $\csr(\cGss )$ coincide, and thus isomorphic to $\mathcal{O}_{G,E}$. 

We end this subsection with the following standard lemmas for later use, which follows essentially from the argument of \cite[Theorem 5.8]{Self-similar:LY21}. We include a quick proof for completeness. 

\begin{lem}\label{lem:opensetexists} Let $(G,E)$ be a pseudo-free self-similar graph, and let \[\mfkg 
   = \bigl(\mu (g\cdot x)   ;  \calT_{|\mu|}([g|_{x}])  ,  |\mu|-|\nu|   ;  \nu x\bigr)
   \in \cGss. 
   \] 
Then every
neighborhood of $\mfkg$ contains a neighborhood of the form
\[
  \calZ\bigl(\mu (g\cdot x)(0,n),  g|_{x(0,n)},  \nu x(0,n)\bigr)
\]
for some $n \in \mathbb{N}$.
\end{lem}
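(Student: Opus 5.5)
The plan is to use that, under pseudo-freeness, the cylinder sets $\calZ(\alpha,h,\beta)$ form a basis for the topology on $\cGss$. So it suffices to show the following. Whenever $\mfkg\in\calZ(\alpha,h,\beta)$, there is $n$ with
\[
\calZ\bigl(\mu (g\cdot x)(0,n), g|_{x(0,n)}, \nu x(0,n)\bigr)\subseteq \calZ(\alpha,h,\beta).
\]
We will also check that each set of this form is a neighbourhood of $\mfkg$ (it is open, and it contains $\mfkg$). We will also use the elementary refinement
\begin{equation}
\calZ(\alpha,h,\beta) = \bigsqcup_{\eta\in s(\beta)E^{m}} \calZ\bigl(\alpha(h\cdot\eta), h|_\eta, \beta\eta\bigr)
\end{equation}
for every $m\in\N$. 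This follows directly from the self-similarity axioms \ref{item:S3} and \ref{item:S?}, writing $y=\eta y'$ and using $\calT_{|\alpha|}([h|_{\eta y'}])=\calT_{|\alpha|+|\eta|}([h|_\eta|_{y'}])$.

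\textbf{Step 1: the set contains $\mfkg$.} Write $x=x(0,n)x'$ with $x'=\sigma^n(x)$. Then
\[
\mu(g\cdot x)=\mu(g\cdot x)(0,n)\,(g|_{x(0,n)}\cdot x').
\]
Moreover $\calT_{|\mu|}([g|_x])=\calT_{|\mu|+n}([g|_{x(0,n)}|_{x'}])$, by \ref{item:S3} and the definition of the right translation. Hence $\mfkg$ is the element of the cylinder indexed by $x'$. Openness is immediate, since cylinder sets are basic open sets.

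\textbf{Step 2: the containment.} Suppose $\mfkg\in\calZ(\alpha,h,\beta)$, say $\mfkg=(\alpha(h\cdot y);\calT_{|\alpha|}([h|_y]),|\alpha|-|\beta|;\beta y)$. Comparing source units gives $\nu x=\beta y$. Choose $n$ large enough that $|\nu|+n\ge|\beta|$ and $|\mu|+n\ge|\alpha|$. Set $m=|\nu|+n-|\beta|$ and $\eta=y(0,m)$, so that $\beta\eta=\nu x(0,n)$. Equality of the lags $|\mu|-|\nu|=|\alpha|-|\beta|$ gives $|\alpha(h\cdot\eta)|=|\mu|+n$. Equality of the range units then forces $\alpha(h\cdot\eta)=\mu(g\cdot x)(0,n)$, because both are initial segments of the same infinite path of the same length.

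By the refinement formula, $\calZ(\alpha(h\cdot\eta),h|_\eta,\beta\eta)\subseteq\calZ(\alpha,h,\beta)$. It therefore remains to show $h|_\eta=g|_{x(0,n)}$. Both cylinders contain $\mfkg$; comparing the middle coordinates at the common infinite tail $x'$ gives
\[
[h|_\eta|_{x'}]=[g|_{x(0,n)}|_{x'}]
\]
in $Q(\N,G)$. Set $k=g|_{x(0,n)}^{-1}h|_\eta$. Using \ref{item:S5}, we get $k\cdot x'(0,N)=x'(0,N)$ (from the equal range units) and $k|_{x'(0,N)}=1_G$ for large $N$. Pseudo-freeness then gives $k=1_G$.

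\textbf{Main obstacle.} The one delicate point is the bookkeeping with $\calT$ and the tail classes, namely deducing $k\cdot x'(0,N)=x'(0,N)$ and $k|_{x'(0,N)}=1_G$ carefully. This is exactly where pseudo-freeness enters, and it is essentially the argument of \cite[Theorem 5.8]{Self-similar:LY21}.
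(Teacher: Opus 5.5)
Your proposal is correct and follows essentially the same route as the paper. Both arguments refine a basic cylinder $\calZ(\alpha,h,\beta)\ni\mfkg$ until its first and third entries are $\mu(g\cdot x(0,n))$ and $\nu x(0,n)$, then compare the $Q(\N,G)$-coordinates and use pseudo-freeness to force $h|_\eta=g|_{x(0,n)}$. The differences are cosmetic: the paper normalises so that $|\alpha|>|\mu|$ and cites \cite[Corollary 5.6]{Self-similar:LY21} for the final step, whereas you verify that step directly from axiom \ref{item:S5} and the equal range units, which is a correct and self-contained substitute.
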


\begin{proof} 
Since $(G, E)$ is pseudo-free, it follows that the cylinder sets form a basis of the topology. Thus, any neighborhood of $\mfkg$ contains a set of the form $\calZ(\alpha,h,\beta)$ that, in turn, contains $\mfkg$. Note that for any $m\in \mathbb{N}$, we have 
\begin{equation}
\mfkg \in \calZ\bigl(\alpha (h\cdot y(0,m)), h|_{y(0,m)}, \beta y(0,m)\bigr)
\subseteq \calZ(\alpha,h,\beta) .
\end{equation} 
Thus, by replacing $(\alpha,h,\beta)$ with the triple $(\alpha (h\cdot y(0,m)), h|_{y(0,m)}, \beta y(0,m))$, we can without loss of generality assume that $n \coloneqq |\alpha|- |\mu| > 0$.

It follows from $\mfkg\in\calZ(\alpha,h,\beta)$ that 
\begin{equation} 
\mfkg = \bigl(\alpha (h\cdot y);  \calT_{|\alpha|}([h|_{y}]), |\alpha|-|\beta|; \beta y\bigr) 
\quad
\text{ for some }y\in s(\beta) E^{\infty}.
\end{equation} 
Comparing the coordinates of the two different representations of $\mfkg$, we conclude that $\mu (g\cdot x) = \alpha (h\cdot y)$, $|\mu| - |\nu| = |\alpha| - |\beta|$, and $\nu x = \beta y$. As $n>0$, it follows from the first equality that  $\mu (g\cdot x(0,n))=\alpha$ and $g|_{x(0,n)} \cdot \sigma^{n}(x) = h \cdot y$. The second equality implies $|\beta| - |\nu| = n$ which, combined with the third equality, yields $\nu x(0,n)=\beta $ and $\sigma^{n}(x) = y$.
In particular, $h\cdot y = g|_{x(0,n)}\cdot y$.

Lastly, comparing the two different forms of $\mfkg$ implies that $\calT_{|\mu|}([g|_{x}]) = \calT_{|\alpha|}([h|_{y}])$. The left-hand side can be reformulated as follows by applying \cite[Lemma 5.1]{Self-similar:LY21}:
\begin{equation} 
\calT_{|\mu|}([g|_{x}]) = \calT_{|\mu|}([g|_{x(0,n) \sigma^{n}(x)}]) = \calT_{|\mu|+n}([g|_{x(0,n)}|_{\sigma^{n}(x)}]) = \calT_{|\alpha|}([g|_{x(0,n)}|_{y}]). 
\end{equation} 
Since this equals $\calT_{|\alpha|}([h|_{y}])$, we conclude $[g|_{x(0,n)}|_{y}] = [h|_{y}]$. By pseudo-freeness, we can invoke \cite[Corollary 5.6]{Self-similar:LY21}, which implies that $g|_{x(0,n)}=h$. All in all, we have shown that
\[
(\alpha,h,\beta)
=
\bigl(\mu (g\cdot x(0,n)), g|_{x(0,n)}, \nu x(0,n)\bigr),
\]
which finishes the claim. 
\end{proof}

\subsection{Cycline triples and local faithfulness} 
\label{subsec:ct_lf} 
We first recall the notion of cycline triples, which is introduced in \cite[Section 3]{Self-similar:LY19}, to study \emph{periodicity} of $(G,E)$. 

\begin{defn}
\label{defn:cycline_triple} 
Given a self-similar graph $(G,E)$, finite paths $\mu,\nu \in E^*$ and $g \in G$, we say that $(\mu,g,\nu)$ is a \textit{cycline triple at} $s(\nu)$ if 
$s(\mu)=g \cdot s(\nu)$ and we have
$\mu(g \cdot x)=\nu x$ for all $x \in s(\nu)E^\infty$.
A cycline triple 
 is called \textit{symmetric} if it is of the form $(\mu,g,\mu)$
and \textit{trivial} if it is of the form $(\mu, 1_{G}, \mu)$. 
\end{defn} 

Note that a cylinder set $\calZ (\mu,g,\nu)$ is contained in $\Iso(\cGss)$ if and only if $(\mu,g,\nu)$ is a cycline triple. When $(G, E)$ is pseudo-free, cylinder sets form a basis of the topology of $\cGss$, which implies that 
\begin{equation}\label{eq:int of Iso} 
    \Iso(\cGss)^{\circ} = \bigcup_{\substack{(\mu,g,\nu)\\\text{cycline}}}\calZ (\mu,g,\nu).
\end{equation}
This fact can also be found in \cite[Lemma 5.2]{Self-similar:LY19}. 

As noted in the Introduction, our main goal is to identify an open abelian subgroupoid of $\Iso(\cGss)^{\circ}$ that yields a Cartan subalgebra of $C^*_r(\cGss)$. This requires understanding commutativity within $\Iso(\cGss)^{\circ}$. The relevant criterion is given in the following lemma, whose proof can be extracted from the first part of \cite[Lemma 5.3]{Self-similar:LY19} in the $1$-graph case. 

\begin{lem}
\label{lem:comm_rela} 
If $(\mu, g, \nu)$ and $(\alpha, h, \beta)$ are cycline triples, then all elements belonging to the cylinder sets
$\calZ(\mu,g,\nu)$ and $\calZ(\alpha, h, \beta)$ commute if and only if for every $x\in s(\nu)E^{\infty}$ and $y\in s(\beta)E^{\infty}$, we have
\begin{equation} \label{comm_rela} 
\big[g|_{x(0,|\alpha|)}|_{\sigma^{|\alpha|}(x)} \, h|_{y(0,|\nu|)}|_{\sigma^{|\nu|}(y)} \big] = 
\big[h|_{y(0,|\mu|)}|_{\sigma^{|\mu|}(y)}
g|_{x(0,|\beta|)}|_{\sigma^{|\beta|}(x)}\big].  
\end{equation}
\end{lem}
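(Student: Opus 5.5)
We will compute the two products $\mfkg\mathfrak{h}$ and $\mathfrak{h}\mfkg$ directly, for arbitrary $\mfkg\in\calZ(\mu,g,\nu)$ and $\mathfrak{h}\in\calZ(\alpha,h,\beta)$ with a common unit. Write
\[
\mfkg=\bigl(\mu(g\cdot x);\calT_{|\mu|}([g|_x]),|\mu|-|\nu|;\nu x\bigr)
\quad\text{and}\quad
\mathfrak{h}=\bigl(\alpha(h\cdot y);\calT_{|\alpha|}([h|_y]),|\alpha|-|\beta|;\beta y\bigr).
\]
Both are isotropy elements because the triples are cycline, so $\mu(g\cdot x)=\nu x$ and $\alpha(h\cdot y)=\beta y$. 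The integer components of $\mfkg\mathfrak{h}$ and $\mathfrak{h}\mfkg$ always agree, since $\Z$ is abelian, and the units are the same. Hence commutation of $\mfkg$ and $\mathfrak{h}$ amounts to an equality in $Q(\N,G)$:
\[
\calT_{|\mu|}([g|_x])\,\calT_{|\mu|-|\nu|}\calT_{|\alpha|}([h|_y])
=
\calT_{|\alpha|}([h|_y])\,\calT_{|\alpha|-|\beta|}\calT_{|\mu|}([g|_x]).
\]
Using that $\calT$ is a homomorphism, we apply $\calT_{-|\mu|-|\alpha|}$ to both sides. This turns the required equality into
\[
\calT_{-|\alpha|}([g|_x])\,\calT_{-|\nu|}([h|_y])=\calT_{-|\mu|}([h|_y])\,\calT_{-|\beta|}([g|_x]).
\]

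The key step is the identity $\calT_{-n}([g|_x])=[g|_{x(0,n)}|_{\sigma^n(x)}]$. It follows from $g|_x(n+k)=g|_{x(0,n)}|_{\sigma^n(x)}(k)$, which is axiom \ref{item:S3} of Definition~\ref{D:ss}, or equivalently \cite[Lemma 5.1]{Self-similar:LY21} as used in the proof of Lemma~\ref{lem:opensetexists}. Applying this identity to each of the four factors converts the displayed equality into exactly \eqref{comm_rela}.

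The remaining issue is that the elements must be composable, so $x$ and $y$ are not independent. Commutativity is only asked of pairs with $\nu x=\beta y$, since both elements are isotropy. So the plan needs care here: one direction reduces to the pairs with $\nu x=\beta y$, while \eqref{comm_rela} is stated for all $x,y$. I expect this compatibility to be the main obstacle. I would handle it as follows. When $\nu x\neq\beta y$, the two elements do not compose in either order, so for such pairs ``commuting'' is vacuous. For the forward direction I would then need to check that \eqref{comm_rela} is effectively only tested on compatible pairs. Note that $x(0,|\alpha|)$ is well defined on $\calZ$ only through the common unit, so the natural reading is $y=\sigma^{|\nu|-|\beta|}(x)$ up to the appropriate prefix. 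I would follow \cite[Lemma 5.3]{Self-similar:LY19} and state the condition for the points with $\nu x=\beta y$. Finally I would verify that the translations by $\calT_{-n}$ for $n$ possibly negative are handled correctly; multiplying on the left by $\calT_{|\mu|+|\alpha|}$ is injective, which gives both implications at once.
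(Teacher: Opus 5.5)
Your computation is correct and is the same direct groupoid calculation as the first part of \cite[Lemma~5.3]{Self-similar:LY19}, which is where the paper defers for this proof: for composable $\mfkg\in\calZ(\mu,g,\nu)$ and $\mfkh\in\calZ(\alpha,h,\beta)$, only the $Q(\N,G)$-components of $\mfkg\mfkh$ and $\mfkh\mfkg$ can differ, and applying the automorphism $\calT_{-|\mu|-|\alpha|}$ together with $\calT_{-n}([g|_x])=[g|_{x(0,n)}|_{\sigma^n(x)}]$ turns their equality into \eqref{comm_rela} verbatim. Quantifying only over pairs with $\nu x=\beta y$ is the right reading and is in fact forced, because with $x,y$ ranging independently the forward implication fails for non-abelian $G$ (e.g.\ $G=S_3$ acting trivially on one vertex $v$ with two loops $e_1,e_2$, restrictions $c|_{e_1}=c$ and $c|_{e_2}=\tau c\tau^{-1}$ for $\tau=(1\,2\,3)$, $\mu=\nu=\alpha=\beta=v$, $g=h=(1\,2)$, $x=e_1^\infty$, $y=e_2^\infty$), whereas every application in the paper involves either abelian $G$ or singleton cylinder sets whose points compose; you need no explicit relation between $x$ and $y$, and the one you wrote is reversed (it should be $y=\sigma^{|\beta|-|\nu|}(x)$ if $|\beta|\ge|\nu|$ and $x=\sigma^{|\nu|-|\beta|}(y)$ otherwise).
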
 

An easy observation is that~\eqref{comm_rela} always holds in the genuine action case. 

\begin{cor}
\label{C:can-act} 
Let $G$ be an abelian group and let $(G, E)$ be a genuine action in the sense of Definition~\ref{D:action}. Then the subgroupoid $\Iso(\cGss)^\circ$ of $\cGss$ is abelian. 
\end{cor}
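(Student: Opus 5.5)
The plan is to reduce commutativity of arbitrary elements of $\Iso(\cGss)^\circ$ to the criterion of Lemma~\ref{lem:comm_rela}. Genuine actions are pseudo-free, so by \eqref{eq:int of Iso} the open set $\Iso(\cGss)^\circ$ is the union of the cylinder sets $\calZ(\mu,g,\nu)$ over cycline triples $(\mu,g,\nu)$. Being abelian means that any two elements $\mfkg,\mfkh\in\Iso(\cGss)^\circ$ with $s(\mfkg)=s(\mfkh)$ commute. Given such $\mfkg,\mfkh$, choose cycline triples $(\mu,g,\nu)$ and $(\alpha,h,\beta)$ with $\mfkg\in\calZ(\mu,g,\nu)$ and $\mfkh\in\calZ(\alpha,h,\beta)$. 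It then suffices to show that all composable elements of these two cylinder sets commute, and Lemma~\ref{lem:comm_rela} says this holds exactly when \eqref{comm_rela} holds for all $x\in s(\nu)E^\infty$ and $y\in s(\beta)E^\infty$.

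Next I verify \eqref{comm_rela} in the genuine case. There $g|_\mu=g$ for every finite path $\mu$. For an infinite path $z$, the function $g|_z\colon\N\to G$ is therefore the constant function $g$, because $g|_z(p)=g|_{z(0,p)}=g$. Applying this to the iterated restrictions gives
\[
g|_{x(0,|\alpha|)}|_{\sigma^{|\alpha|}(x)} = g|_{\sigma^{|\alpha|}(x)},
\]
which is the constant function $g$; the same holds for the other three factors. The left-hand side of \eqref{comm_rela} is then the class of the constant function $gh$, and the right-hand side is the class of the constant function $hg$. Since $G$ is abelian, $gh=hg$, so the two classes agree in $Q(\N,G)$. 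Lemma~\ref{lem:comm_rela} then gives that the elements of the two cylinder sets commute. In particular $\mfkg\mfkh=\mfkh\mfkg$, and $\Iso(\cGss)^\circ$ is abelian.

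The only point needing care is the interpretation of Lemma~\ref{lem:comm_rela}, where ``all elements commute'' should be read for composable pairs, that is, pairs with a common unit. If one prefers to avoid relying on that reading, one can check commutativity directly. Write $\mfkg=(z;[f],k;z)$ and $\mfkh=(z;[f'],k';z)$ in $\cGss$. By the product formula, $\mfkg\mfkh=(z;[f]\calT_k([f']),k+k';z)$. For elements of genuine cylinder sets, $[f]$ is a right translate of the class of a constant function, and $\calT$ sends the class of a constant function to the class of the same constant function, since the translate agrees with it eventually. Hence $\mfkg\mfkh$ and $\mfkh\mfkg$ have lag components $[gh]$ and $[hg]$, which coincide because $G$ is abelian. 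I expect no real obstacle here; the proof is a direct verification.
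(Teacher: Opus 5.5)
Your proposal is correct and follows the paper's route. The paper obtains this corollary from Lemma~\ref{lem:comm_rela} by noting that, for a genuine action, all restrictions are trivial, so both sides of \eqref{comm_rela} are the classes of the constant functions $gh$ and $hg$, which agree because $G$ is abelian. Your optional direct check is also valid, and in fact shows slightly more: in the genuine case, every isotropy group of $\cGss$ is abelian, not just those of $\Iso(\cGss)^\circ$.
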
 
In general, identifying maximal abelian subgroupoids of $\Iso(\cGss)^\circ$ is more intricate. Under the additional assumption that $(G, E)$ is \emph{locally faithful}, it is shown in \cite[Lemma~5.3]{Self-similar:LY19} that the equality~\eqref{comm_rela} is always satisfied, thus implying that $\Iso(\cGss)^\circ$ is abelian. In this setting, $\Iso(\cGss)^\circ$ gives rise to a Cartan subalgebra when 
$\Iso(\cGss)^\circ$ is closed (for instance when $(G, E)$ is strongly connected by \cite[Proposition 5.5]{Self-similar:LY19}), as an application of \cite[Corollary 4.5]{BNRSW:Cartan}.  

Now we turn to the notion of local faithfulness, introduced in \cite{Self-similar:LY19}. Recall that a self-similar graph $(G, E)$ is \emph{locally faithful} if for every vertex $v$ in $E$, whenever $g\in G$ is such that $g\cdot \mu = \mu$ for all $\mu\in vE^{*}$, then $g = 1_{G}$. As we will see later, 
in the $1$-graph case, 
local faithfulness forces all cycline triples at every vertex to be trivial (Corollary~\ref{conclusion}), 
thereby recovering the abelianness of $\Iso(\cGss)^\circ$ directly from Lemma~\ref{lem:comm_rela}. In this paper, however, we are interested in more general cases, where the nice phenomena of local faithfulness occurs only at certain vertices. To capture this, we introduce the following vertex-wise notion of local faithfulness. 

\begin{defn}\label{def:(n)lf} 
Let $(G, E)$ be a self-similar graph. A vertex $v\in E^{0} $ is \textit{locally faithful} if, whenever $g\in G$ is such that $g\cdot \mu = \mu$ for all $\mu\in vE^*$,
then $g = 1_{G}$. 
Otherwise, the vertex 
$v$ 
is called \textit{not locally faithful}. 
\end{defn} 

Note that $(G, E)$ is locally faithful if and only if every $v\in E^{0}$ is locally faithful. When both locally faithful and non-locally faithful vertices are present, $\Iso(\cGss)^{\circ}$ is not necessarily abelian, and it is not clear how to identify maximal abelian subgroupoids of $\Iso(\cGss)^{\circ}$. This motivates us to consider a special abelian subset $\ssym\subseteq \Iso(\cGss)^{\circ}$, consisting of cylinder sets corresponding to all symmetric cycline triples. Its abelianness follows from the following direct corollary of Lemma~\ref{comm_rela}. 

\begin{cor} \label{cor:cycline comm}
Let $(G, E)$ be a self-similar graph with $G$ abelian. 
Given two {\em symmetric} cycline triples $(\mu, g, \mu)$ and $(\alpha, h, \alpha)$,
all groupoid elements in $\calZ(\mu, g, \mu)$ and $\calZ(\alpha, h, \alpha)$ commute. 
\end{cor}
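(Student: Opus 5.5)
The plan is to apply Lemma~\ref{lem:comm_rela} to the two symmetric cycline triples $(\mu,g,\mu)$ and $(\alpha,h,\alpha)$, taking $\nu=\mu$ and $\beta=\alpha$. In that case \eqref{comm_rela} reads, for all $x\in s(\mu)E^\infty$ and $y\in s(\alpha)E^\infty$,
\[
\big[g|_{x(0,|\alpha|)}|_{\sigma^{|\alpha|}(x)}\, h|_{y(0,|\mu|)}|_{\sigma^{|\mu|}(y)}\big]
=
\big[h|_{y(0,|\mu|)}|_{\sigma^{|\mu|}(y)}\, g|_{x(0,|\alpha|)}|_{\sigma^{|\alpha|}(x)}\big].
\]
Each side is the class in $Q(\N,G)$ of a pointwise product of two functions $\N\to G$. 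The key point of the symmetric case is that the two factors appearing on the left are exactly the same two functions as the two factors on the right, only in the opposite order. In a general cycline triple the left side involves $|\alpha|,|\nu|$ while the right side involves $|\mu|,|\beta|$, so the factors differ; symmetry is precisely what makes them coincide.

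Since $G$ is abelian, $C(\N,G)$ with pointwise multiplication is abelian. For functions $f_1,f_2\colon\N\to G$ we therefore have $f_1f_2=f_2f_1$ pointwise, hence already in $C(\N,G)$ and a fortiori in the quotient $Q(\N,G)$. Applying this with $f_1=g|_{x(0,|\alpha|)}|_{\sigma^{|\alpha|}(x)}$ and $f_2=h|_{y(0,|\mu|)}|_{\sigma^{|\mu|}(y)}$ verifies \eqref{comm_rela} for every $x,y$. Lemma~\ref{lem:comm_rela} then yields that all elements of $\calZ(\mu,g,\mu)$ and $\calZ(\alpha,h,\alpha)$ commute.

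There is no real obstacle. The only point needing care is to substitute into the indices of \eqref{comm_rela} correctly ($\nu\mapsto\mu$, $\beta\mapsto\alpha$) and so confirm that the same two functions appear on both sides. No pseudo-freeness or further structure is needed beyond Lemma~\ref{lem:comm_rela} itself.
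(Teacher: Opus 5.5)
Your proposal is correct and matches the paper, which states this as a direct corollary of Lemma~\ref{lem:comm_rela}. The substitution $\nu=\mu$, $\beta=\alpha$ turns both sides of \eqref{comm_rela} into the same two functions multiplied in opposite orders. These products agree because $Q(\N,G)$ is abelian when $G$ is.
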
  

In Section~\ref{S:sym}, we show that $\ssym$ is a normal subgroupoid and thus serves as a candidates for yielding a Cartan subalgebra.

While the proofs in the rest of this paper do not make explicit use of the following lemma, we record it here since it came in handy repeatedly when developing intuition about self-similar graphs and constructing (counter)examples. 

\begin{lem} \label{lem:range is nlf}
    Let $(G, E)$ be a pseudo-free self-similar graph, and assume that $\mu \in E^*$. If $r(\mu)$ is not locally faithful, then neither is $s(\mu)$.
\end{lem}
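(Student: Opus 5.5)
We argue by contraposition on the witness of non-local-faithfulness, pushing it forward along $\mu$. By Definition~\ref{def:(n)lf}, since $r(\mu)$ is not locally faithful, there is some $g\in G\setminus\{1_G\}$ with $g\cdot\lambda=\lambda$ for all $\lambda\in r(\mu)E^*$. The natural candidate witness at $s(\mu)$ is $h\defeq g|_{\mu}$. We need to show two things: $h\neq 1_G$, and $h$ fixes every path in $s(\mu)E^*$.

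\textbf{Step 1: $h$ fixes $s(\mu)E^*$.} Let $\nu\in s(\mu)E^*$. Then $\mu\nu\in r(\mu)E^*$, so $g\cdot(\mu\nu)=\mu\nu$. By Definition~\ref{D:ss}\ref{item:S?},
\[
g\cdot(\mu\nu)=(g\cdot\mu)(g|_\mu\cdot\nu).
\]
Since $\mu$ itself lies in $r(\mu)E^*$, we have $g\cdot\mu=\mu$. Comparing the first $|\mu|$ edges and then the remainder (the action preserves lengths) gives $g|_\mu\cdot\nu=\nu$. This holds for every $\nu\in s(\mu)E^*$, including the vertex $s(\mu)$ itself.

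\textbf{Step 2: $h\neq 1_G$.} This is where pseudo-freeness enters, and it is the only step with any content. If $g|_\mu=1_G$, then together with $g\cdot\mu=\mu$, Definition~\ref{defn:pseudo-free} forces $g=1_G$, contradicting the choice of $g$. Hence $h\neq 1_G$, and $h$ witnesses that $s(\mu)$ is not locally faithful.

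The argument is short, and the main point to watch is simply that pseudo-freeness is precisely the hypothesis ruling out a trivial restriction. Without it the restriction $g|_\mu$ could collapse to $1_G$, and the statement could fail.
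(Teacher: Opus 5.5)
Your proof is correct and is essentially the paper's argument: the witness at $s(\mu)$ is $g|_{\mu}$, the fact that it fixes $s(\mu)E^*$ comes from $g\cdot(\mu\nu)=(g\cdot\mu)(g|_{\mu}\cdot\nu)$ together with $g\cdot\mu=\mu$, and pseudo-freeness rules out $g|_{\mu}=1_{G}$. The only cosmetic differences are that the paper first records $g|_{\mu\beta}\neq 1_{G}$ for all $\beta\in s(\mu)E^*$ and then specializes to $\beta=s(\mu)$, whereas you apply pseudo-freeness to $\mu$ directly, and that your argument is direct rather than by contraposition, despite your opening phrase.
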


\begin{proof}
   By assumption, there exists $g\in G\setminus\{1_{G}\}$ such that $g\cdot \alpha=\alpha$ for all $\alpha\in r(\mu)E^*$. Since $(G,E)$ is pseudo-free and $g\neq 1_{G}$, this implies that 
   \begin{equation}\label{eq:fix but nontrivial}
       \mu\beta = g\cdot (\mu\beta) 
       \quad\text{and}
       \quad
       g|_{\mu\beta}\neq 1_{G}
       \quad
       \text{ for any }\beta\in s(\mu)E^*
       .
   \end{equation}
   As $s(\mu)\in s(\mu)E^*$, it follows that
   \[
        \mu\beta
        \overset{\eqref{eq:fix but nontrivial}}{=}
        (g\cdot \mu)(g|_{\mu}\cdot\beta)
        =\mu(g|_{\mu}\cdot\beta), 
        \text{ so }\beta=g|_{\mu}\cdot \beta,       
       \quad\text{and}
       \quad
       g|_{\mu}\neq 1_{G}.
   \]
   This shows that $g|_{\mu}\in G_{s(\mu)}\setminus\{1_{G}\}$, so that $s(\mu)$ is not locally faithful. 
\end{proof}

\section{The symmetric cycline subgroupoid $\ssym$} 
\label{S:sym}
Let $(G,E)$ be a self-similar graph. We will use multiplicative notation for our countable discrete group $G$,
even when we are assuming that $G$ is abelian.
As mentioned in Subsection~\ref{subsec:ct_lf}, we consider the subset $\ssym\subseteq \Iso(\cGss)^{\circ}$ given by 
\begin{align} 
    \ssym
    \defeq 
    \bigcup_{\substack{(\mu,g,\mu)\\\text{cycline}}}\calZ (\mu,g,\mu), 
\end{align} 
which is always abelian by Corollary~\ref{cor:cycline comm}. Before we state the main result of this section, we include the following notation. 

\begin{defn}
    For a vertex $v\in E^{0}$, let 
    \begin{equation}\label{eq:G_v}
    G_{v} \defeq  \{g\in G: g\cdot x = x \text{ for all } x\in vE^{\infty}\},
    \end{equation} 
    the set of group elements fixing all infinite paths with range $v$. 
\end{defn} 

We record here the following easy consequence of \cite[Lemma 3.7(i)]{Self-similar:LY19}.
\begin{lem}\label{lem:G_v subgp}
   For $v\in E^{0}$, $G_{v}$ is a subgroup of $G$.
\end{lem}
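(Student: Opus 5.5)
To show that $G_v$ is a subgroup, we verify directly the three subgroup conditions: $G_v$ contains $1_G$, is closed under products, and is closed under inverses. The only input is that the assignment $x\mapsto g\cdot x$ on $E^\infty$ defined in Section~\ref{S:Pre} is a genuine left action of $G$ on $E^\infty$. This is the content of \cite[Lemma 3.7(i)]{Self-similar:LY19}, so we may either cite it or recheck it from the defining formula $(g\cdot x)(p,q)=g|_{x(0,p)}\cdot x(p,q)$.

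First, $1_G\in G_v$. By Definition~\ref{D:ss}~\ref{item:S4} we have $1_G|_{x(0,p)}=1_G$, and $1_G$ acts trivially on $E^*$ because $G\to\operatorname{Aut}(E^*)$ is a homomorphism. Hence $(1_G\cdot x)(p,q)=x(p,q)$ for all $p<q$, that is, $1_G\cdot x=x$.

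Second, we show $(gh)\cdot x=g\cdot(h\cdot x)$ for all $g,h\in G$ and $x\in E^\infty$. It suffices to compare the initial segments $(0,n)$ for every $n$, since an infinite path is determined by these. For finite paths we have $(gh)\cdot x(0,n)=g\cdot(h\cdot x(0,n))$, and $(h\cdot x)(0,n)=h\cdot x(0,n)$ directly from the definition. Therefore
\[
\big(g\cdot(h\cdot x)\big)(0,n)=g\cdot (h\cdot x)(0,n)=g\cdot\big(h\cdot x(0,n)\big)=\big((gh)\cdot x\big)(0,n).
\]
Consequently, if $g,h\in G_v$ and $x\in vE^\infty$, then $(gh)\cdot x=g\cdot(h\cdot x)=g\cdot x=x$, so $gh\in G_v$.

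Third, if $g\in G_v$ and $x\in vE^\infty$, then $g^{-1}\cdot x=g^{-1}\cdot(g\cdot x)=(g^{-1}g)\cdot x=x$, so $g^{-1}\in G_v$. (Alternatively, $g$ acts as a bijection on $vE^\infty$, because $g\cdot v=v$ follows from $g$ fixing any path starting at $v$.)

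The only point requiring any care is the action property in the second step. It is handled either by citing \cite[Lemma 3.7(i)]{Self-similar:LY19} or by the finite-segment comparison above, which uses Definition~\ref{D:ss}~\ref{item:S?} implicitly through the consistency of the finite-path action. No other obstacle is expected.
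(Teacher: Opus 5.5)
Your proof is correct: the identity, product and inverse checks all reduce to $1_G$ acting trivially on $E^\infty$ and to $(gh)\cdot x=g\cdot(h\cdot x)$. Your initial-segment comparison proves the latter, using $h|_{r(x)}=h$ and the fact that $G\to\operatorname{Aut}(E^*)$ is a homomorphism. This is essentially the paper's approach: the paper gives no further argument and simply records the lemma as an easy consequence of \cite[Lemma 3.7(i)]{Self-similar:LY19}, whereas you write that routine consequence out explicitly.
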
 

Notice that a triple $(\mu, g,\nu)$ is a symmetric cycline triple at $s(\nu)$ if and only if $\mu = \nu$ and $g\in G_{s(\nu)}$. This leads to the following useful reformulation of $\ssym$: 
\begin{align} 
\ssym = \bigcup_{v\in E^{0}} \bigcup_{g\in G_v } \bigcup_{\mu\in E^* v}\calZ(\mu, g, \mu).
\end{align} 

Our main goal in this section is to prove the following.

\begin{prop}\label{prop:Ssym:open,normal,abelian,subgpd}
    Let $G$ be a countable, discrete, abelian group, and let $(G,E)$ be a self-similar graph satisfying our standing assumptions on page~\pageref{Notation and conventions}. Then the set $\ssym$ is an open, abelian, normal subgroupoid of $\cGss$. 
\end{prop}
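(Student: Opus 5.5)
We verify the four properties separately, working throughout with the description
\[
\ssym=\bigcup_{v\in E^{0}}\bigcup_{g\in G_v}\bigcup_{\mu\in E^*v}\calZ(\mu,g,\mu).
\]
Since $g\in G_{s(\mu)}$ means $g\cdot x=x$ for every $x\in s(\mu)E^\infty$, a typical element of $\ssym$ has the form
\[
\bigl(\mu x;\calT_{|\mu|}([g|_x]),0;\mu x\bigr),\qquad g\in G_{s(\mu)}.
\]
\emph{Openness} is immediate, since $\ssym$ is a union of cylinder sets and these are open in the topology on $\cGss$. \emph{Abelianness} is Corollary~\ref{cor:cycline comm}: any two elements of $\ssym$ lie in cylinder sets of two symmetric cycline triples.

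\textbf{Step 1: refinement.} The basic tool is the following observation. If $g\in G_{s(\mu)}$ and $\lambda\in s(\mu)E^*$, then $g|_\lambda\in G_{s(\lambda)}$. Indeed, for every $w\in s(\lambda)E^\infty$,
\[
\lambda w=g\cdot(\lambda w)=(g\cdot\lambda)(g|_\lambda\cdot w),
\]
so $g\cdot\lambda=\lambda$ and $g|_\lambda\cdot w=w$. By \cite[Lemma 5.1]{Self-similar:LY21}, the element of $\calZ(\mu,g,\mu)$ at $\mu\lambda y$ coincides with the element of $\calZ(\mu\lambda,g|_\lambda,\mu\lambda)$ at $y$. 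Hence any finitely many elements of $\ssym$ over a common unit can be written with a common (arbitrarily long) prefix.

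\textbf{Step 2: subgroupoid.} We check units, inverses and products.
\begin{itemize}
\item Units: they lie in $\calZ(v,1_G,v)$, and $1_G\in G_v$.
\item Inverses: by \eqref{eq:mfkg inv}, the inverse of the element above is $(\mu x;\calT_{|\mu|}([g|_x]^{-1}),0;\mu x)$. Lemma~\ref{it:powers go in} gives $[g|_x]^{-1}=[g^{-1}|_x]$, and $g^{-1}\in G_{s(\mu)}$ by Lemma~\ref{lem:G_v subgp}.
\item Products: two composable elements of $\ssym$ sit over the same unit $\mu x$, so by Step 1 we may assume both use the same prefix $\mu$, with $g,h\in G_{s(\mu)}$. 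Their product is $(\mu x;\calT_{|\mu|}([g|_x][h|_x]),0;\mu x)$. Because $h\cdot x=x$, we have $(gh)|_x=g|_{h\cdot x}h|_x=g|_xh|_x$, so the product lies in $\calZ(\mu,gh,\mu)$ with $gh\in G_{s(\mu)}$.
\end{itemize}

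\textbf{Step 3: normality (the main point).} Take $\gamma\in\cGss$ and $\eta\in\ssym$ with $s(\gamma)=r(\eta)$. We must show $\gamma\eta\gamma^{-1}\in\ssym$.

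Write $\gamma=(\mu(k\cdot y);\calT_{|\mu|}([k|_y]),|\mu|-|\nu|;\nu y)$, so $\gamma\in\calZ(\mu,k,\nu)$. By Lemma~\ref{lem:opensetexists}, we may lengthen $\mu$ and $\nu$ along $y$, replacing $(\mu,k,\nu)$ by $(\mu(k\cdot y(0,n)),k|_{y(0,n)},\nu y(0,n))$. By Step 1, we may lengthen the prefix of $\eta$. Doing both, we arrange $\eta\in\calZ(\nu,h,\nu)$ at $y$ with $h\in G_{s(\nu)}$.

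Composing with the formulas \eqref{DefGroupoidOps}, the lags cancel and the $\calT$'s combine to give
\[
\gamma\eta\gamma^{-1}=\bigl(\mu(k\cdot y);\calT_{|\mu|}([k|_y][h|_y][k|_y]^{-1}),0;\mu(k\cdot y)\bigr)
=\bigl(\mu(k\cdot y);\calT_{|\mu|}([h|_y]),0;\mu(k\cdot y)\bigr),
\]
where the second equality uses that $G$ is abelian. It remains to identify this as an element of $\calZ(\mu,h,\mu)$, which needs two facts.
\begin{itemize}
\item $h\in G_{s(\mu)}$: since $s(\mu)=k\cdot s(\nu)$, every path in $s(\mu)E^\infty$ has the form $k\cdot w$ with $w\in s(\nu)E^\infty$. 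Then $hk=kh$ gives $h\cdot(k\cdot w)=k\cdot(h\cdot w)=k\cdot w$.
\item $h|_{k\cdot y}=h|_y$: expanding $(hk)|_y=(kh)|_y$ gives $h|_{k\cdot y}\,k|_y=k|_{h\cdot y}\,h|_y=k|_y\,h|_y$. Commutativity of $G$ then yields $h|_{k\cdot y}=h|_y$ pointwise.
\end{itemize}
Hence $\gamma\eta\gamma^{-1}\in\calZ(\mu,h,\mu)\subseteq\ssym$.

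I expect the main obstacle to be the bookkeeping in this step: choosing compatible representatives of $\gamma$ and $\eta$ over the common unit $\nu y$, and tracking the $\calT$-shifts through the groupoid multiplication and inversion formulas.
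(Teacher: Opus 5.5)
Your proof is correct and follows essentially the paper's route: you check the subgroupoid axioms directly and then compute the conjugate, using commutativity of $G$ both to show $h$ fixes $s(\mu)E^\infty$ and to get $h|_{k\cdot y}=h|_y$. Your common-prefix refinement simply replaces the paper's case split on the sign of $|\lambda|-|\mu|$. One small point: lengthening the representative of $\gamma$ is just the \cite[Lemma 5.1]{Self-similar:LY21} identity from your Step~1 and needs no pseudo-freeness, so you should not cite Lemma~\ref{lem:opensetexists}, which assumes pseudo-freeness (not a hypothesis of this proposition).
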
 
Subgroupoids with such nice properties allow us to construct new (twisted) groupoid models for self-similar graph $C^*$-algebras;
see Section~\ref{sec:Renault's reconstruction}.
We will prove Proposition~\ref{prop:Ssym:open,normal,abelian,subgpd} in multiple steps.

\begin{lem}\label{lem:Ssym is subgroupoid}
The subset $\ssym$ of $\Iso(\cGss)^\circ$ 
is an abelian subgroupoid. 
\end{lem}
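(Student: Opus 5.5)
The plan is to check the subgroupoid axioms directly on the basic pieces $\calZ(\mu,g,\mu)$ with $g\in G_{s(\mu)}$, using the reformulation $\ssym=\bigcup_{v}\bigcup_{g\in G_v}\bigcup_{\mu\in E^*v}\calZ(\mu,g,\mu)$. Throughout we take $G$ abelian, as in Proposition~\ref{prop:Ssym:open,normal,abelian,subgpd}. Containment in $\Iso(\cGss)^\circ$ is already recorded in~\eqref{eq:int of Iso}, since each $\calZ(\mu,g,\mu)$ comes from a cycline triple. Abelianness then follows from Corollary~\ref{cor:cycline comm}: any two elements of $\ssym$ lie in cylinder sets of symmetric cycline triples and therefore commute. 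What remains is to show that $\ssym$ contains the unit space and is closed under inversion and under products of composable pairs.

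\emph{Units.} Every unit has the form $(x;[1_G],0;x)$. Since $1_G\in G_{r(x)}$ by Lemma~\ref{lem:G_v subgp}, this unit lies in $\calZ(r(x),1_G,r(x))$.

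\emph{Inverses.} For $\mfkg=(\mu(g\cdot y);\calT_{|\mu|}([g|_y]),0;\mu y)$ we have $g\cdot y=y$. By~\eqref{eq:mfkg inv}, $\mfkg^{-1}=(\mu y;\calT_{|\mu|}([g|_y]^{-1}),0;\mu y)$. Lemma~\ref{it:powers go in} with $\ell=-1$ gives $[g|_y]^{-1}=[g^{-1}|_y]$. Also $g^{-1}\in G_{s(\mu)}$ by Lemma~\ref{lem:G_v subgp}, and $g^{-1}\cdot y=y$. Hence $\mfkg^{-1}\in\calZ(\mu,g^{-1},\mu)$.

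\emph{Products.} This is the main step. Take $\mfkg\in\calZ(\mu,g,\mu)$ and $\mfkh\in\calZ(\alpha,h,\alpha)$ with $s(\mfkg)=r(\mfkh)$. Then the common unit path is $x=\mu y=\alpha z$, with $g\cdot y=y$ and $h\cdot z=z$. Because $\mu$ and $\alpha$ are both initial segments of $x$, one extends the other. Without loss of generality $|\alpha|\ge|\mu|$, so $\alpha=\mu\gamma$ with $\gamma=y(0,|\gamma|)$ and $z=\sigma^{|\gamma|}(y)$.

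\begin{itemize}
\item First I will re-express $\mfkg$ as an element of a longer cylinder set, in the spirit of the proof of Lemma~\ref{lem:opensetexists}: $\mfkg\in\calZ(\mu\gamma,g|_\gamma,\mu\gamma)$. For this I need $g\cdot\gamma=\gamma$, which holds because $g$ fixes $y$.
\item Next I need $g|_\gamma\in G_{s(\gamma)}$. Take any $w\in s(\gamma)E^\infty$. Since $g$ fixes $\gamma w$, self-similarity gives $(g\cdot\gamma)(g|_\gamma\cdot w)=\gamma w$, so $g|_\gamma\cdot w=w$.
\item Both elements now sit over the same path $\alpha$. Using the multiplication formula and \cite[Lemma 5.1]{Self-similar:LY21} (as in Lemma~\ref{lem:opensetexists}), I expect
\[
\mfkg\mfkh=(\alpha z;\calT_{|\alpha|}([g|_\gamma|_z][h|_z]),0;\alpha z).
\]
\item The product of restriction functions rewrites as $[g|_\gamma|_z][h|_z]=[(g|_\gamma h)|_z]$. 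This uses the infinite-path cocycle formula $(g'h)|_z=g'|_{h\cdot z}\,h|_z$ together with $h\cdot z=z$.
\item Finally $g|_\gamma h\in G_{s(\alpha)}$, since $G_{s(\alpha)}$ is a subgroup. Hence $\mfkg\mfkh\in\calZ(\alpha,g|_\gamma h,\alpha)$.
\end{itemize}

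The case $|\mu|>|\alpha|$ is symmetric, extending $\mfkh$ instead of $\mfkg$.

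I expect the main obstacle to be the bookkeeping of the $Q(\N,G)$-coordinate. Specifically, one must check that $\calT_{|\mu|}([g|_y])=\calT_{|\alpha|}([g|_\gamma|_z])$ and keep track of the order of factors in the multiplication formula. Commutativity of $G$ is not really needed for closure, only for abelianness through Corollary~\ref{cor:cycline comm}. I will follow the computation in Lemma~\ref{lem:opensetexists} closely to get these identities right.
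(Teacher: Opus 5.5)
Your proposal is correct and follows essentially the same route as the paper. The paper also handles inverses via Lemma~\ref{it:powers go in}, and handles products by extending the shorter cylinder along $\gamma$ using \cite[Lemma 5.1]{Self-similar:LY21}, then checking that $g|_\gamma h$ fixes all of $s(\alpha)E^\infty$. Your only departures are cosmetic: you check units explicitly, and you invoke Lemma~\ref{lem:G_v subgp} for the last step where the paper verifies the fixing property directly.
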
 

\begin{proof}
Since we assumed $G$ to be abelian, it follows from Corollary~\ref{cor:cycline comm} that the elements of $\ssym$ commute. 
We can write $\mfkt\in \ssym$ as
\begin{equation*} 
\mfkt = (\lambda y;  \ \calT_{|\lambda|}([h|_{y}]), \ 0; \ \lambda y), 
\end{equation*} 
where $h$ fixes all infinite paths ending at $s(\lambda)$. Then since $h\cdot y = y = h^{-1}\cdot y$, 
\begin{equation}\label{eq:t-inv in ssym}
\mfkt^{-1} = (\lambda y;  \ \calT_{|\lambda|}([h|_{y}])^{-1}, \ 0; \ \lambda y) = (\lambda y; \ \calT_{|\lambda|}([h^{-1}|_{y}]), \ 0;  \ \lambda y)\in \ssym. 
\end{equation} 
Another element $\mfks  = (\mu x; \ \calT_{|\mu|}([g|_{x}]), \ 0; \ \mu x)\in \ssym$ is composable with $\mfkt$ only when $\lambda y = \mu x$. Without loss of generality, we can assume that $|\mu| < |\lambda|$, i.e.\ $\lambda = \mu \gamma$ for some $\gamma \in E^*$ and thus $x = \gamma y$. Then we have the following by \cite[Lemma 5.1]{Self-similar:LY21},
\begin{multline*}
    \mfkt\mfks  = (\lambda y;  \ \calT_{|\lambda|}([h|_{y}]) \calT_{|\mu|}([g|_{x}]), \ 0; \ \lambda y)
= (\lambda y; \ \calT_{|\lambda|}([h|_{y}])  \calT_{|\lambda|}([g|_{\gamma}|_{y}]), \ 0;  \ \lambda y)\\ 
= (\lambda y;  \ \calT_{|\lambda|}([h|_{y}  g|_{\gamma}|_{y}]), \ 0; \ \lambda y)
= (\lambda y;  \ \calT_{|\lambda|}([(h\cdot g|_{\gamma})|_{y}]), \ 0; \ \lambda y). 
\end{multline*}
Moreover, $h\cdot g|_{\gamma}$ fixes every infinite path ending at $s(\lambda)$.
Indeed, suppose $y \in s(\lambda)E^\infty$. Since $g$ fixes all infinite path whose range is $r(\gamma) = s(\mu)$ and $h$ fixes all infinite paths whose range is $s(\gamma) = s(\lambda)$, then 
 \begin{align*}
h (g|_{\gamma}) \cdot y 
&=  h(g|_{\gamma})\cdot \left(\sigma^{|\gamma|}(\gamma y)\right)
\overset{(\dagger)}{=} h \cdot \sigma^{|\gamma|} \left(g \cdot (\gamma y)\right) 
=h \cdot \sigma^{|\gamma|} ( \gamma y) 
 =h \cdot y 
 =y,
 \end{align*}
 where $(\dagger)$ follows from \cite[Lemma 3.7 (ii)]{Self-similar:LY21}. 
\end{proof}

The above motivates the following definition.

\begin{defn}
$\ssym$ is called the {\em symmetric cycline subgroupid of $\cGss$}.
\end{defn}

    Let $\calG$ be a groupoid and $\calG\z \subseteq\calS \subseteq \Iso
       (\calG)$ be a subgroupoid. Recall that we say $\calS$ is \textit{normal} if for any $\mfkg\in\calG$,  the set $\mfkg\calS \mfkg^{-1} \defeq  \{\mfkg \mfkt \mfkg^{-1}: \:\mfkt\in \calS, \,  r(\mfkt) = s(\mfkg)\}$ is contained in $\calS$.

\begin{lem}\label{lem:Ssym is normal} 
The subgroupoid $\ssym\subseteq \cGss $ is normal. 
\end{lem}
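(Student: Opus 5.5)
Take $\mfkg \in \cGss$ and $\mfkt \in \ssym$ with $r(\mfkt)=s(\mfkg)$, and write them in cylinder form:
\[
\mfkg=\bigl(\mu(g\cdot x);\calT_{|\mu|}([g|_x]),|\mu|-|\nu|;\nu x\bigr),\qquad
\mfkt=\bigl(\lambda y;\calT_{|\lambda|}([h|_y]),0;\lambda y\bigr),
\]
where $h\in G_{s(\lambda)}$ and $\lambda y=\nu x$. The goal is to show that $\mfkg\mfkt\mfkg^{-1}\in\calZ(\mu\gamma',k,\mu\gamma')$ for a suitable path $\gamma'$ and some $k\in G_{s(\mu\gamma')}$. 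Since $\calZ(\mu,g,\nu)$ contains $\calZ\bigl(\mu(g\cdot\gamma),g|_\gamma,\nu\gamma\bigr)$, we may lengthen $\nu$ along $x$. After doing so we can assume $\nu=\lambda\gamma$ for some $\gamma\in E^*$, with $y=\gamma x$.

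\textbf{Step 1: push $\mfkt$ down to $\nu$.} Replace $\mfkt$ by a representative based at $\nu$. By \cite[Lemma 5.1]{Self-similar:LY21}, as in the proof of Lemma~\ref{lem:Ssym is subgroupoid},
\[
\calT_{|\lambda|}([h|_{\gamma x}])=\calT_{|\nu|}([h|_\gamma|_x]).
\]
The element $h':=h|_\gamma$ lies in $G_{s(\nu)}$, by the same computation with \cite[Lemma 3.7(ii)]{Self-similar:LY21} used at the end of Lemma~\ref{lem:Ssym is subgroupoid}. So we may take $\lambda=\nu$ and $h\in G_{s(\nu)}$, and then $h\cdot x=x$.

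\textbf{Step 2: compute the conjugate.} Using the composition and inversion formulas \eqref{DefGroupoidOps} and \eqref{eq:mfkg inv}, the product $\mfkg\mfkt\mfkg^{-1}$ equals
\[
\bigl(\mu(g\cdot x);\ \calT_{|\mu|}([g|_x])\,\calT_{|\nu|}\bigl(\calT_{0}([h|_x])\bigr)^{\sim}\,\calT_{|\mu|}([g|_x])^{-1},\ 0;\ \mu(g\cdot x)\bigr),
\]
where the shifts combine so that every factor is shifted by $|\mu|$. Thus the middle entry is
\[
\calT_{|\mu|}\bigl([g|_x][h|_x][g|_x]^{-1}\bigr)=\calT_{|\mu|}\bigl([g|_{h\cdot x}\,h|_x\,(g^{-1})|_{g\cdot x}]\bigr),
\]
using $[g|_x]^{-1}=[g^{-1}|_{g\cdot x}]$ and $h\cdot x=x$. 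By the cocycle identity $(gh)|_x=g|_{h\cdot x}h|_x$ this is $[(ghg^{-1})|_{g\cdot x}]$. Since $G$ is abelian, $ghg^{-1}=h$, so $\mfkg\mfkt\mfkg^{-1}=\bigl(\mu z;\calT_{|\mu|}([h|_z]),0;\mu z\bigr)$ with $z=g\cdot x$. This lies in $\calZ(\mu,h,\mu)$ provided $h\in G_{s(\mu)}$.

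\textbf{Step 3: show $h\in G_{s(\mu)}$.} This is the main obstacle. We know $s(\mu)=g\cdot s(\nu)$, and that $h$ fixes $vE^\infty$ for $v=s(\nu)$. We need $h$ to fix every $w\in g\cdot vE^\infty=s(\mu)E^\infty$. Write $w=g\cdot w_0$ with $w_0\in vE^\infty$. Then
\[
h\cdot w=hg\cdot w_0=gh\cdot w_0=g\cdot w_0=w,
\]
again using that $G$ is abelian. Hence $h\in G_{s(\mu)}$.

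The real care lies in the bookkeeping of $\calT$-shifts in Step 2, in particular verifying that $\calT_{|\mu|-|\nu|}\calT_{|\nu|}=\calT_{|\mu|}$ and that $h\cdot x=x$ is what allows the three restriction factors to merge. Pseudo-freeness is not needed, because we only exhibit membership in a cylinder.
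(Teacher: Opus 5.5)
Your proof is correct, and it differs from the paper's mainly in how it is organized. The paper computes $\mfkg^{-1}\mfkt\mfkg$ with the given representatives. It uses commutativity of $Q(\N,G)$ to collapse this to $(\nu x;\calT_{|\nu|-|\mu|+|\lambda|}([h|_y]),0;\nu x)$. It then splits into the cases $|\lambda|\le|\mu|$ and $|\lambda|>|\mu|$. In each case it proves a pointwise identity of the form $[h'|_{x'}]=[h'|_{g'\cdot x'}]$ by cancelling in the abelian group, and checks that $h'$ fixes the relevant path space.

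You normalize first instead: you re-represent $\mfkg$ with $\nu$ extended along $x$, then push $h$ down to $h|_\gamma\in G_{s(\nu)}$. This makes the case split unnecessary. You then pass from $x$ to $g\cdot x$ with the cocycle identity $[g|_x][h|_x][g|_x]^{-1}=[(ghg^{-1})|_{g\cdot x}]$, valid whenever $h\cdot x=x$. That identity is what the paper's pointwise cancellation encodes, but your packaging shows exactly where commutativity enters. If you keep $ghg^{-1}$ instead of replacing it by $h$, Step~2 gives $\mfkg\mfkt\mfkg^{-1}\in\calZ(\mu,ghg^{-1},\mu)$. Step~3 then shows $ghg^{-1}\in G_{s(\mu)}$ via $ghg^{-1}\cdot(g\cdot w_0)=g\cdot w_0$. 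So your argument gives closure of $\ssym$ under conjugation for arbitrary $G$; abelianness is needed only for $\ssym$ itself to be abelian. The paper's route, in return, expresses the conjugate directly in terms of the original data, which matches the conjugation formula it uses later for the action on $\ssymdual$.

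Two small repairs to the write-up. First, the first display of Step~2 is not correct as written, since the factor $\calT_{|\nu|}(\calT_0([h|_x]))^{\sim}$ is undefined. The product is $\calT_{|\mu|}([g|_x])\,\calT_{|\mu|-|\nu|}\bigl(\calT_{|\nu|}([h|_x])\bigr)\,\calT_{|\mu|-|\nu|}\bigl(\calT_{|\nu|}([g|_x]^{-1})\bigr)$. This equals $\calT_{|\mu|}([g|_x][h|_x][g|_x]^{-1})$ because $n\mapsto\calT_n$ is a homomorphism $\Z\to\operatorname{Aut}(Q(\N,G))$, so your stated conclusion stands. Second, in the normalization step what you need is that $\mfkg$ itself lies in $\calZ\bigl(\mu(g\cdot x(0,n)),g|_{x(0,n)},\nu x(0,n)\bigr)$. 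This follows from \cite[Lemma 5.1]{Self-similar:LY21}, as in the proof of Lemma~\ref{lem:opensetexists}; containment of the cylinder sets alone is not what matters. Also avoid using $\gamma$ for two different paths.
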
 

\begin{proof} 
Suppose
$\mfkt\in \ssym$ and $\mfkg\in \cGss$ are composable, meaning that we can write
\begin{equation*} 
\mfkt  = (\lambda y; \ \calT_{|\lambda|}([h|_{y}]), \ 0; \ \lambda y) \quad\text{and}\quad \mfkg  = (\mu (g\cdot x);  \ \calT_{|\mu|}([g|_{x}]), \ |\mu|-|\nu|; \ \nu x)
\end{equation*}
for some $\lambda,\mu, \nu \in E^*$, $g\in G$, $h\in G_{s(\lambda)}$ and $x\in s(\nu)E^{\infty}, y\in s(\lambda)E^\infty$ which satisfy $s(\mu) = g \cdot s(\nu)$ and $\lambda y = \mu (g\cdot x)$. 
Using Equation~\eqref{eq:mfkg inv} and that  the group is abelian, we have the following by standard computations,
\begin{align} 
\label{eq:end of ginv t g, both cases} 
\mfkg^{-1}\mfkt \mfkg  = \left(\nu x;  \calT_{|\nu| - |\mu|+|\lambda|}([h|_{y}]), \ 0; \ \nu x\right). 
\end{align} 
We must now prove that $\mfkg^{-1}\mfkt \mfkg$ is an element of $\ssym$. To this end, we will make a case distinction.

First, assume that $|\lambda|-|\mu|\leq 0$. In this case, it suffices to prove that
\begin{align}\label{eq:Ssym is normal:case1}
    \mfkg^{-1}\mfkt \mfkg  
    =
       \left(\nu x; \calT_{|\nu|}([h|_{\sigma^{|\lambda|}(\mu)}|_{x}]), \ 0; \ \nu x
    \right),
    \end{align} 
    and that $h|_{\sigma^{|\lambda|}(\mu)}$ fixes all infinite paths at $s(\nu)$. Since $\lambda y = \mu(g\cdot x)$, it follows from $|\mu| \geq |\lambda|$ that  $y = \gamma (g\cdot x)$ where  $\gamma =\sigma^{|\lambda|}(\mu)$, so that
\begin{align} 
\mfkg^{-1} \mfkt \mfkg 
&= \left(\nu x;  \calT_{|\nu| - |\mu|+|\lambda|}([h|_{\gamma (g\cdot x)}]), \ 0; \ \nu x
\right)
&&\text{by~\eqref{eq:end of ginv t g, both cases} since $y = \gamma (g\cdot x)$}
\\&
= \left(\nu x;  \calT_{|\nu|}([h|_{\gamma}|_{g\cdot x}]), \ 0; \ \nu x
\right)
&&\text{by \cite[Lemma 5.1]{Self-similar:LY21}}.
\label{eq:end of ginv t g, case 1, part 2}
\end{align}
To see that $\mfkg^{-1} \mfkt \mfkg \in\ssym$, we first prove $[h|_{\gamma}|_{x}] = [h|_{\gamma}|_{g\cdot x}]$. To this end, note that the assumptions $y = \gamma (g\cdot x)$ and $h\cdot y = y$ imply $h|_{\gamma} \cdot (g\cdot x) = g\cdot x$. Acting by $g^{-1}$ on the left of both sides of the latter equation yields, since $G$ is abelian, that $h|_{\gamma} \cdot x=x$. 
With that, we have for $n\in \N $,
\begin{align*} 
(g|_{x(0,n)})(h|_{\gamma}|_{g\cdot x(0,n)}) &=(h|_{\gamma}|_{g\cdot x(0,n)})(g|_{x(0,n)})
&&\text{since $G$ is abelian}
\\ 
& = (h|_{\gamma} g)|_{x} (n) 
&& \text{by self-similarity and definition of $|_{x}$} 
\\ 
&= (g|_{x(0,n)}) (h|_{\gamma}|_{x(0,n)}) 
&&\text{since }G \text{ is abelian and }h|_{\gamma} \cdot x = x. 
\end{align*}

Canceling $g|_{x(0,n)}$ on both sides explains $(\dagger)$ in the following:
\begin{align*}
    h|_{\gamma}|_{g\cdot x}(n)
    &=
    h|_{\gamma}|_{(g\cdot x)(0,n)}
    =
    h|_{\gamma}|_{g|_{x(0,0)}\cdot x(0,n)}
    =
    h|_{\gamma}|_{g\cdot x(0,n)}
    \overset{(\dagger)}{=}
    h|_{\gamma}|_{x(0,n)}
    =
    h|_{\gamma}|_{x}(n).
\end{align*}
In particular, it follows that $[h|_{\gamma}|_{x}] = [h|_{\gamma}|_{g\cdot x}]$, so by~\eqref{eq:end of ginv t g, case 1, part 2}, Equation~\eqref{eq:Ssym is normal:case1} holds.
Since $\mfkt \in\ssym$,  $h$ fixes all infinite paths at $s(\lambda)$, which implies that $h|_{\gamma}$ fixes all infinite paths at $s(\mu)$.
We claim $h|_\gamma$ fixes all infinite paths with range $s(\nu)$ as well. Indeed, 
if $z\in s(\nu)E^\infty$, then $g\cdot z \in (g\cdot s(\nu))E^\infty=s(\mu)E^\infty$. 
Since $G$ is abelian, we have
    $g\cdot z
    =
    h|_{\gamma}\cdot (g\cdot z)
    =
    (h|_{\gamma}g)\cdot z
    = (gh|_{\gamma})\cdot z
    =
    g\cdot (h|_{\gamma}\cdot z)$.
Acting on both sides by $g\inv$ shows $z=h|_{\gamma}\cdot z$.
Thus, $(\nu, h|_\gamma, \nu)$ is cycline. Therefore $\mfkg^{-1}\mfkt \mfkg \in \calZ({\nu, h|_{\gamma}, \nu})
\subseteq \ssym$, as claimed.
This finishes the proof for the case $|\lambda|-|\mu|\leq 0$.

Next, assume that $p\defeq |\lambda|-|\mu|>0$. It suffices to prove that
\begin{align}\label{eq:Ssym is normal:case2}
    \mfkg^{-1}\mfkt \mfkg  
    =
        \left(\nu x(0,p) \sigma^{p}(x);  \calT_{|\nu|+p}([h|_{\sigma^{p}(x)}]), \ 0; \ \nu x(0,p) \sigma^{p}(x)\right),    
\end{align}
and that $h$ fixes all infinite paths at $r(\sigma^{p}(x))$.
It follows from $\lambda y = \mu(g\cdot x)$ and from \cite[Lemma 3.7(ii)]{Self-similar:LY21} that
\[
y = \sigma^{p}(g\cdot x) = g|_{x(0,p)}\cdot \sigma^{p}(x)
.
\]
Let $\nu'=\nu x(0,p)$ and $x'=\sigma^{p}(x)$, so that by~\eqref{eq:end of ginv t g, both cases} and since $p=|\lambda|- |\mu|$,
\begin{align} 
\mfkg^{-1} \mfkt \mfkg 
&= \left(\nu x;  \calT_{|\nu| +p}([h|_{g|_{x(0,p)}\cdot x'}]), \ 0; \ \nu x
\right)\notag \\
&=
\left(\nu' x';  \calT_{|\nu'|}([h|_{g|_{x(0,p)}\cdot x'}]), \ 0; \ \nu' x'
\right)
.
\label{eq:end of ginv t g, case 2, part 2}
\end{align}
Since $G$ is abelian, it follows from $y = g|_{x(0,p)}\cdot \sigma^{p}(x)$ and $h\cdot y = y$ that $h\cdot x'=x'$; in particular, $h\cdot x'(0,n)=x'(0,n)$ for all $n\in \N $, since $h|_{x'(0,0)}=h$. A computation analogous to that above therefore yields that
\[
    h|_{g|_{x(0,p)}\cdot x'} = h|_{x'},
\]
and so by~\eqref{eq:end of ginv t g, case 2, part 2}, Equation~\eqref{eq:Ssym is normal:case2} holds.
Since $\mfkt \in\ssym$,  $h$ fixes all infinite paths at $s(\lambda) =  r(y)$. By similar reasoning to above, $h$ fixes all infinite paths at $r(x')=s(\nu')$ as well since $g|_{x(0,p)} \cdot r(x') = r(y)$. So $(\nu' , h, \nu')$ is cycline. Hence $\mfkg^{-1}\mfkt \mfkg \in 
\calZ({\nu', h, \nu'})\subseteq\ssym$.
\end{proof} 

Since $\ssym$ is open as a union of basic open sets, this finishes the proof of Proposition~\ref{prop:Ssym:open,normal,abelian,subgpd}.

\section{The dual of $\ssym$ and a new model for $\csr(\cGss[\Z])$} \label{sec:Renault's reconstruction}

Given a pseudo-free self-similar graph $(\Z,E)$, we will describe the dual bundle $\ssymdual$ of the abelian groupoid $\ssym$ in Theorem~\ref{thm:ssymdual}. This is of particular interest when one wants to provide a different groupoid model for $\csr(\cGss[\Z])$: a transformation groupoid of the quotient $\cGss[\Z]/\ssym$ acting `by conjugation' on $\ssymdual$.

Note that $\ssymdual$ is the spectrum of the abelian subalgebra $\csr(\ssym)$ of $\csr(\cGss)$ which, in some situations, is even a Cartan subalgebra (see Theorem~\ref{Thm_Ssym_Cartan}).
For more details on this action groupoid construction in the general setting, see \cite[Theorem 5.5 (ii)]{Renault:2023:Ext} (quoted in Theorem~\ref{thm:Renaults5.5} below); for examples of self-similar graphs for which we compute this action groupoid explicitly, see Appendix~\ref{app:Weyl examples}.

\subsection{The dual of $\ssym$}

We start by reminding the reader of the definition of the dual bundle for an abelian (locally compact Hausdorff) groupoid $\calS$. For a unit $x\in \calS\z$, we let $\widehat{\calS}(x)$ denote the Pontryagin dual of the abelian group $\calS(x)$, i.e. 
\begin{equation} 
\widehat{\calS}(x) \coloneqq \{\kappa: 
\calS(x)\to \mathbb{T}: \kappa \text{ continuous group homomorphism}\}.
\end{equation} 
The group operation of $\widehat{\calS}(x)$ is defined pointwise, and, equipped with the topology of uniform convergence on compact sets, $\widehat{\calS}(x)$ is locally compact. If $\calS$ is \'etale, so that $\calS(x)$ is discrete, then $\widehat{\calS}(x)$ is compact and convergence $\kappa_{n}\to\kappa$ of a sequence in $\widehat{\calS}(x)$ is equivalent to convergence $\kappa_{n}(\mfks)\to \kappa(\mfks)$ in $\mathbb{T}$ for every $\mfks\in\calS(x)$.

The group bundle $\widehat{\calS}$ of all groups $ \widehat{\calS}(x)$ for $x\in \calS\z$ is given the topology as described in \cite[Proposition
3.3]{MRW:1996:CtsTrace}, making it a topological groupoid. The precise description of this topology is not essential for us, as our focus will be on describing the fibres $ \ssymdual(x)$ for any fixed infinite path $x\in E^\infty=\ssym\z$ rather than $\ssymdual$. 

We start by finding an easier description of the group 
\begin{align}\notag
\ssym(x) &=
\left\{
    \left(
        x;
        \calT_{n}([f|_{\sigma^n (x)}]), 0;
        x
    \right)
    \mid
    n\in \N,
    f\in \Z_{x(n,n)}
\right\} \leq \cGss[\mathbb{Z}](x) .
\end{align}
By its definition, multiplication in $\ssym$ ``works as in $Q(\N,\Z)$,'' meaning that $\ssym(x)$ naturally corresponds to the subgroup of $Q(\N,\Z)$ given by 
\begin{align}\label{eq:def frak-S}
\mathfrak{S}(x)
\defeq
\left\{
        \calT_{n}([f|_{\sigma^n (x)}])
    \mid
    n\in \N,
    f\in \Z_{x(n,n)}
\right\}.
\end{align}
For convenience, we record how elements of this subgroup multiply:
\begin{align}\label{eq:mult in mathfrakS}
    \calT_{n}([f|_{\sigma^{n} (x)}]) \, \calT_{m}([g|_{\sigma^m (x)}])
    =
    \calT_{\ell}
    \left(
    \Bigl[
        \bigl(
        f|_{x(n,\ell)}g|_{x(m,\ell)}
        \bigr)|_{\sigma^{\ell}(x)}
    \Bigr]
    \right)
    ,
\end{align}
where $\ell\in\N$ is any number for which $n,m\leq \ell $.

To describe $\mathfrak{S}(x)$ (and, in turn, $\ssym(x)$), we analyse $\Z_{x(n,n)}$ for $n\in \N$. Since $\Z_{x(n,n)}$ is a subgroup of $\Z$, it is singly generated, say\footnote{Here, we abuse notation when we write $h_{n}$ instead of, say, $h_{x(n,n)}$ or $h_{x,n}$, since the generator of $\Z_{x(n,n)}$ depends not on the natural number $n$ but rather on the vertex $x(n,n)$.}
\begin{align}\label{eq:def'n h_n}
\Z_{x(n,n)} = \langle h_{n}\rangle
\text{ for some (unique) }h_{n}\in\N.
\end{align}
 
Using the axioms for self-similarity, it is easy to prove that these integers relate to one another as described in the following lemma:
\footnote{
We remind the reader of our convention to write $1_{G}$ for the identity element of $\Z$ (see page~\pageref{Notation and conventions}). Moreover, our chosen symbol $k_{n}$ is again not as precise as (but less cumbersome than) $k_{x(n,n)}$ or $k_{x,n}$ would have been.}

\begin{lem}\label{lem:k_n}
    For each $n\in \N$, there exists $k_{n}\in \Z$ such that $h_{n}|_{x(n,n+1)}=h_{n+1}^{k_{n}}
    $. If $h_{n+1}\neq 1_{G}
    $, then $k_{n}$ is unique; if $h_{n+1}=1_{G}
    $, then we
    choose $k_{n}=0$.
\end{lem}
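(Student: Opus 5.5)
The plan is to show that $h_{n}|_{x(n,n+1)}$ lies in $\Z_{x(n+1,n+1)}=\langle h_{n+1}\rangle$. Once that is established, existence of $k_{n}$ follows immediately. Uniqueness when $h_{n+1}\neq 1_{G}$ holds because $\Z$ is torsion-free, so $m\mapsto h_{n+1}^{m}$ is injective. When $h_{n+1}=1_{G}$ the subgroup is trivial, every $k$ works, and we simply adopt the convention $k_{n}=0$.

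For the membership claim, write $v=x(n,n)$, $e=x(n,n+1)$ and $w=s(e)=x(n+1,n+1)$, and set $h=h_{n}\in\Z_{v}$. I would show that $h|_{e}\in\Z_{w}$, i.e.\ that $h|_{e}$ fixes every $z\in wE^{\infty}$. The path $ez$ lies in $vE^{\infty}$, so $h\cdot(ez)=ez$. On the other hand, the self-similar action on infinite paths gives $h\cdot(ez)=(h\cdot e)(h|_{e}\cdot z)$; this is the infinite-path version of Definition~\ref{D:ss}\ref{item:S?}, as in \cite[Lemma 3.7(ii)]{Self-similar:LY21}, which was already used in the proof of Lemma~\ref{lem:Ssym is subgroupoid}. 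Comparing first edges gives $h\cdot e=e$. Applying $\sigma^{1}$ to both sides then gives $h|_{e}\cdot z=z$. Hence $h|_{e}\in\Z_{w}=\langle h_{n+1}\rangle$, as required.

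No step is a real obstacle; the only point requiring care is using the infinite-path form of self-similarity rather than the finite-path form. An alternative is to use the finite version with $z(0,m)$ for every $m$, comparing lengths to get $h\cdot e=e$ and $h|_{e}\cdot z(0,m)=z(0,m)$ for all $m$, which gives $h|_{e}\cdot z=z$ coordinatewise by the definition of the action on $E^{\infty}$.
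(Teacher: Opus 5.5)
Your proof is correct and is essentially the argument the paper has in mind: the paper gives no written proof beyond saying the lemma follows easily from the self-similarity axioms. The same step you use — an element fixing $vE^\infty$, restricted to an edge $e$ with $r(e)=v$, fixes $s(e)E^\infty$ — appears explicitly in the proofs of Lemma~\ref{lem:Ssym is normal} and Lemma~\ref{compute_cycle}\ref{restrict_i}, and together with torsion-freeness of $\Z$ for uniqueness, your membership $h_{n}|_{x(n,n+1)}\in\Z_{x(n+1,n+1)}=\langle h_{n+1}\rangle$ is exactly what is needed.
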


To state our first intermediate result, which 
describes
a generating set for $\ssym(x)$ and how these generators multiply, we need one more bit of notation. For $i,j\in\N$, $i<j$, we define the numbers
\begin{align}\label{def:k_i,j}
    k_{i,i}=1
    \quad\text{ and }\quad
    k_{i,j}:=\prod_{i\leq n< j}k_{n}
    .
\end{align}
Note that $k_{i,i+1}=k_{i}$, and that
\begin{equation}\label{eq:h_n = 1 means k_i,j = 0}
    h_{n}=1_{G} \implies k_{i,j}=0\text{ for all }i\leq n< j,
\end{equation}
since the equality $1_{G}=h_{n}|_{x(n,n+1)}=h_{n+1}^{k_n}$ means that $k_{n}=0$, which appears in the product that defines $k_{i,j}$.

\begin{prop}\label{prop:mult in ssym, revised}
   Let $(\Z,E)$ be a self-similar graph, and fix $x\in E^\infty$. 
   If we let 
   \begin{equation}\label{def:xi_n}
        \mfks_{n}        
        \defeq \left(
            x;
            \calT_{n}([h_{n}|_{\sigma^n (x)}]), 0;
            x
        \right)
        \text{ where $h_{n}$ generates }
        \Z_{x(n,n)},
    \end{equation}
    then $\ssym(x)$ is generated by $\{\mfks_{n}:n\in\N\}$. Moreover, multiplication in $\ssym(x)$ is given 
    for $p,q\in\Z$ and $ m , n \leq \ell\in\N$ by
    \begin{align}\label{eq:mult in ssym(x)}
    \mfks_{ m }^p \, \mfks_{ n }^q = \mfks_{\ell}^{p k_{ m ,\ell} + q k_{ n ,\ell}}.
    \end{align}
\end{prop}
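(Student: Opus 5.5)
The plan is to work entirely inside the group $\mathfrak{S}(x)\leq Q(\N,\Z)$ from \eqref{eq:def frak-S}, using the multiplication rule \eqref{eq:mult in mathfrakS}. The key identity to establish first is a \emph{shifting rule}: for $m\le \ell$,
\[
\calT_{m}([h_{m}|_{\sigma^{m}(x)}]) = \calT_{\ell}([h_{m}|_{x(m,\ell)}|_{\sigma^{\ell}(x)}]) = \calT_{\ell}([h_{\ell}^{k_{m,\ell}}|_{\sigma^{\ell}(x)}]),
\]
that is, $\mfks_m=\mfks_\ell^{k_{m,\ell}}$. The first equality is \cite[Lemma 5.1]{Self-similar:LY21} (as used in Lemma~\ref{lem:opensetexists}).

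For the second equality I would induct on $\ell-m$ using Lemma~\ref{lem:k_n}. The base case $\ell=m$ is trivial since $k_{m,m}=1$. For the step, assume $h_m|_{x(m,\ell)}=h_\ell^{k_{m,\ell}}$. Then
\[
h_m|_{x(m,\ell+1)}=(h_\ell^{k_{m,\ell}})|_{x(\ell,\ell+1)}=(h_\ell|_{x(\ell,\ell+1)})^{k_{m,\ell}}=h_{\ell+1}^{k_\ell k_{m,\ell}}=h_{\ell+1}^{k_{m,\ell+1}} .
\]
Here the step $(h_\ell^{k})|_{e}=(h_\ell|_e)^{k}$ is Lemma~\ref{it:powers go in}, applied with $h=h_\ell$. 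This is legitimate because $h_\ell\in\Z_{x(\ell,\ell)}$ fixes $\sigma^\ell(x)$. The convention $k_n=0$ when $h_{n+1}=1_G$ keeps everything consistent, by \eqref{eq:h_n = 1 means k_i,j = 0}.

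Next I need powers. The group operation in $\mathfrak{S}(x)$ at a common level $\ell$ is just multiplication of restrictions: by \eqref{eq:mult in mathfrakS} with $n=m=\ell$,
\[
\calT_\ell([f|_{\sigma^\ell x}])\,\calT_\ell([g|_{\sigma^\ell x}])=\calT_\ell([(fg)|_{\sigma^\ell x}])
\]
for $f,g\in\Z_{x(\ell,\ell)}$, since $G$ is abelian and both fix $\sigma^\ell(x)$. Alternatively one can use $(fg)|_y=f|_{g\cdot y}g|_y$ with $g\cdot y=y$. Hence $\mfks_\ell^{p}=\calT_\ell([h_\ell^{p}|_{\sigma^\ell x}])$, and the map $p\mapsto \mfks_\ell^p$ is a homomorphism. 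Combining this with the shifting rule gives
\[
\mfks_m^p\mfks_n^q=\mfks_\ell^{pk_{m,\ell}}\mfks_\ell^{qk_{n,\ell}}=\mfks_\ell^{pk_{m,\ell}+qk_{n,\ell}},
\]
which is \eqref{eq:mult in ssym(x)}.

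Generation is then immediate. Any element of $\ssym(x)$ has the form $(x;\calT_n([f|_{\sigma^n x}]),0;x)$ with $f\in\Z_{x(n,n)}=\langle h_n\rangle$. Writing $f=h_n^{p}$, this element equals $\mfks_n^{p}$ by the power computation above.

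The main obstacle I expect is the bookkeeping in the inductive step: checking that Lemma~\ref{it:powers go in} applies with the correct base point, including negative exponents, and that the $k_n=0$ convention is harmless when some $h_{n+1}$ is trivial. In the latter case both sides reduce to the identity class, so this should be fine.
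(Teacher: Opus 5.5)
Your proposal is correct and takes essentially the paper's route. The paper uses the same ingredients, namely \eqref{eq:mult in mathfrakS}, Lemma~\ref{it:powers go in}, and iterating Lemma~\ref{it:powers go in:relationship of generators of ssym} (which is your shifting rule $\mfks_m=\mfks_\ell^{k_{m,\ell}}$); the only cosmetic difference is that you lift each generator to level $\ell$ before multiplying, whereas the paper multiplies first via \eqref{eq:mult in mathfrakS} and then rewrites $h_m^{p}|_{x(m,\ell)}=h_\ell^{pk_{m,\ell}}$.
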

Note that~\eqref{eq:mult in ssym(x)} applied to $q=0$, $p=1$, and $m< \ell\in\N$ yields in particular:
    \begin{align}\label{eq:s_m vs s_ell}
    \mfks_{ m } = \mfks_{\ell}^{k_{m}k_{m+1}\dots k_{\ell-1}}.
    \end{align}

To prove the proposition, we will need the following identities:

\begin{lem}\label{it:powers go in:relationship of generators of ssym}
    Suppose $m\leq n\leq p<q$ are natural numbers, and let $k_{n}    $ be as 
    in Lemma~\ref{lem:k_n}.
    Then for all $\ell \in\Z$, we have
       $h_{m}|_{x(m,q)}=(h_{m+1}|_{x(m+1,q)})^{k_{m}}$.
    In particular,
    \[
    \calT_{m}([h_{m}|_{\sigma^{m}(x)}])=\calT_{m+1}([h_{m+1}|_{\sigma^{m+1}(x)}])^{k_{m}}.
    \]
\end{lem}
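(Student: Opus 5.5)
The plan is to derive the displayed identity from Lemma~\ref{lem:k_n} combined with Lemma~\ref{it:powers go in}, and then to pass to tail classes to get the ``in particular'' statement.

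\textbf{Step 1: the fixed-point hypothesis.} Lemma~\ref{it:powers go in} requires an element fixing the infinite path $\sigma^{m+1}(x)$. By definition $h_{m+1}$ generates $\Z_{x(m+1,m+1)}$, so it fixes every infinite path with range $x(m+1,m+1)=r(\sigma^{m+1}(x))$. In particular $h_{m+1}\cdot\sigma^{m+1}(x)=\sigma^{m+1}(x)$.

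\textbf{Step 2: the identity for $q>m$.} By Definition~\ref{D:ss}\ref{item:S3} we can split off the first edge:
\[
h_{m}|_{x(m,q)}=\bigl(h_{m}|_{x(m,m+1)}\bigr)|_{x(m+1,q)}.
\]
Lemma~\ref{lem:k_n} rewrites the inner term, giving $(h_{m+1}^{k_m})|_{x(m+1,q)}$. Now apply Lemma~\ref{it:powers go in} to $h=h_{m+1}$ and the path $\sigma^{m+1}(x)$, with $n=0$, $p=q-m-1$ and $\ell=k_m$. This moves the power outside:
\[
(h_{m+1}^{k_m})|_{x(m+1,q)}=(h_{m+1}|_{x(m+1,q)})^{k_m}.
\]
The case $q=m+1$ is just Lemma~\ref{lem:k_n} together with Definition~\ref{D:ss}\ref{item:S2}, since then the path $x(m+1,q)$ is a vertex. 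If $h_{m+1}=1_G$, then $k_m=0$ and both sides are trivial by Definition~\ref{D:ss}\ref{item:S4}, so this degenerate case needs no separate argument.

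\textbf{Step 3: the ``in particular'' statement.} Evaluate both sides as functions on $\N$. For $p\ge m+1$,
\[
\calT_{m}(h_{m}|_{\sigma^m(x)})(p)=h_{m}|_{x(m,p)},
\qquad
\calT_{m+1}(h_{m+1}|_{\sigma^{m+1}(x)})(p)=h_{m+1}|_{x(m+1,p)}.
\]
By Step~2, the first equals the $k_m$-th power of the second. Because $G=\Z$ is abelian and multiplication in $Q(\N,G)$ is pointwise, the pointwise $k_m$-th power of the second function is a representative of $\calT_{m+1}([\cdot])^{k_m}$. The two functions agree for all $p\ge m+1$, so they are tail equivalent and their classes coincide.

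The only delicate point is Step~2: one has to make sure the base path used in Lemma~\ref{it:powers go in} is really fixed by $h_{m+1}$, which is exactly what Step~1 supplies. Beyond that I expect only bookkeeping of indices. The extra hypotheses $n\le p$ and ``for all $\ell$'' in the statement appear unused, which suggests a longer variant with exponent $\ell$ in which both sides are raised to $\ell$. That variant would follow the same way from Lemma~\ref{it:powers go in}, using that $h_m$ fixes $\sigma^m(x)$.
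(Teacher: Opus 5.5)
Your proof is correct and follows the paper's argument, which likewise combines Lemma~\ref{lem:k_n} with Lemma~\ref{it:powers go in}. The only difference is cosmetic: the paper phrases it as an induction on $q-m$. You instead apply the case $n=0$ of Lemma~\ref{it:powers go in} once, to $h_{m+1}$ and $\sigma^{m+1}(x)$ over the whole path $x(m+1,q)$, which absorbs that induction.
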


\begin{proof}
    Follows from combining Lemma~\ref{it:powers go in} 
    with an induction on $q-m$.
\end{proof}

\begin{proof}[Proof of Proposition~\ref{prop:mult in ssym, revised}]
We deduce the following equation of sets within $Q(\N,\Z)$:
\begin{align}
    \mathfrak{S}(x)
    &=
    \{
    \calT_{n}([g|_{\sigma^{n}(x)}])
    : n\in\N, g\in \Z_{x(n,n)}
    \}
    &&\text{by def'n of  }\mathfrak{S}(x)
    \\
    &=
    \{
    \calT_{n}([h_{n}^{k}|_{\sigma^{n}(x)}])
    : n\in\N, k\in \Z
    \}
    &&\text{by choice of }h_{n}
    \\
    &=
    \{
    \calT_{n}([h_{n}|_{\sigma^{n}(x)}])^{k}
    : n\in\N, k\in \Z
    \}
    &&\text{by Lemma~\ref{it:powers go in}}
    .
\end{align}
    By Equation~\eqref{eq:mult in mathfrakS}, we have
    \begin{align}
    \calT_{n}([h_{n}^{p}|_{\sigma^{n} (x)}]) \cdot \calT_{m}([h_{m}^{q}|_{\sigma^m (x)}])
    =
    \calT_{\ell}
    \left(\Bigl[
        \bigl(
        h_{n}^{p}|_{x(n,\ell)} \, h_{m}^{q}|_{x(m,\ell)}
        \bigr)|_{\sigma^{\ell}(x)}
    \Bigr]
    \right)
\end{align}
in $Q(\N,\Z)$. By Lemma~\ref{it:powers go in} and
repeated use of~\ref{it:powers go in:relationship of generators of ssym},
we have
    \[
        h_{n}^{p}|_{x(n,\ell)}
        =
        (h_{n}|_{x(n,\ell)} )^{p}
        =
        h_{\ell}^{pk_{n,\ell}}
        \quad
        \text{ and }
        \quad
        h_{m}^{q}|_{x(m,\ell)}
        =
        h_{\ell}^{qk_{m,\ell}},
    \]
    so that
    \[
        \bigl(
        h_{n}^{p}|_{x(n,\ell)} \, h_{m}^{q}|_{x(m,\ell)}
        \bigr)|_{\sigma^{\ell}(x)}
        =
        h_{\ell}^{p k_{n,\ell}+q k_{m,\ell}}|_{\sigma^{\ell}(x)}.
    \]
    Another application of Lemma~\ref{it:powers go in} then proves
    \begin{align}
    \calT_{n}([h_{n}|_{\sigma^{n} (x)}])^{p} \calT_{m}([h_{m}|_{\sigma^m (x)}])^{q}
    =
    \calT_{\ell}
    \left(\left[
        h_{\ell}|_{\sigma^{\ell}(x)}
    \right]\right)^{p k_{n,\ell}+q k_{m,\ell}},
\end{align}
which was the claim.
\end{proof}

Out of the numbers $k_{n}$, we will now construct an inverse system $(X_{i},f_{i,j})$ of topological groups which, as long as the self-similar graph is pseudo-free, is isomorphic to the fibre $\ssymdual(x)$. 

If $h_{i}=1_{G}$ for all $i\in \N$, let $X_{i}(x)\defeq X_{i}\defeq\{1\}$; otherwise,  there exists a minimal $N\in\N$ with $h_{i}\neq 1_{G}$. We let
\[
    X_{i}(x)\defeq X_{i} \defeq \begin{cases}
        \{1\} & \text{ if } i< N
        \\
        \mathbb{T}  & \text{ if } i\geq N
    \end{cases},
\]
    and for $i\leq j$,
\[
    f_{i,j}\colon X_{j}\to X_{i},
    \quad
    z\mapsto z^{k_{i,j}}.
\]
Note that this is well-defined, for if $i<N\leq j$, then $k_{i,j}=0$ by~\eqref{eq:h_n = 1 means k_i,j = 0}, so that $z^{k_{i,j}}=1\in X_{i}$.
As $((X_{i})_{i\in\N},(f_{i,j})_{i\leq j\in \N})$ is an inverse system of topological groups, we may let 
\begin{equation}\label{eq:def X}
    X(x)\defeq X
    \defeq\varprojlim_{i}\, (X_{i}, f_{i,j})
\end{equation}
denote its inverse limit, and let $\pi_{i}\colon X\to X_{i}$ be the natural projections satisfying $\pi_{i}=f_{i,j}\circ\pi_{j}$.
Recall that $X(x)$ is equipped with the coarsest topology that makes all maps $f_{i,j}$ continuous.

We can finally describe the fibre $\ssymdual(x)$ of the dual group bundle $\ssymdual$:

\begin{thm} \label{thm:ssymdual}
   Let $(\Z,E)$ be a self-similar graph, and fix any infinite path $x\in E^\infty$. 
   There exists an injective continuous homomorphism
  \[
  \Omega\colon \ssymdual(x)\to X(x)=\varprojlim_{i}\, (X_{i} (x) , f_{i,j})
    \text{ uniquely determined by } (\pi_{n}\circ\Omega)(\kappa)
  =
  \kappa (\mfks_{n})
  \]
  for   $\kappa \in \ssymdual(x)$, where $\mfks_{n}$ is at~\eqref{def:xi_n}.
  
  If $(\Z,E)$ is pseudo-free, then $\Omega$
   is an isomorphism of topological groups whose inverse maps an infinite sequence $\vec{z}\in X$ to the continuous homomorphism
  \[\Omega\inv(\vec{z})\colon\ssymdual(x)\to\mathbb{T}
  \]
  that is uniquely determined by 
  \begin{equation}
      \Omega\inv (\vec{z})\colon
      \quad
      \mfks_{n} \mapsto \pi_{n}(\vec{z})
      \quad\text{ for all }n\in\N.
  \end{equation}
\end{thm}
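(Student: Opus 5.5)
The plan is to realise $\ssym(x)$ as a direct limit of cyclic groups and then dualise, using that the Pontryagin dual of a direct limit of discrete groups is the inverse limit of the duals.

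\textbf{Step 1: defining $\Omega$.} By Proposition~\ref{prop:mult in ssym, revised}, the elements $\mfks_n$ generate $\ssym(x)$, and \eqref{eq:s_m vs s_ell} gives $\mfks_i=\mfks_j^{k_{i,j}}$ for $i\le j$. Given $\kappa\in\ssymdual(x)$, set $\Omega(\kappa)\defeq(\kappa(\mfks_n))_n$. Two facts make this land in $X$.
\begin{itemize}
\item Compatibility: $f_{i,j}(\kappa(\mfks_j))=\kappa(\mfks_j)^{k_{i,j}}=\kappa(\mfks_i)$.
\item Correct factors: for $i<N$ we have $h_i=1_G$, so $\mfks_i$ is the unit $(x;[1_G],0;x)$ and $\kappa(\mfks_i)=1\in X_i$.
\end{itemize}
The map $\Omega$ is a homomorphism because the operations on both sides are pointwise. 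It is injective because a character is determined by its values on a generating set. It is continuous because each $\pi_n\circ\Omega$ is evaluation at $\mfks_n$, which is continuous for the topology of pointwise convergence on the discrete group $\ssym(x)$. Uniqueness of $\Omega$ is built into the inverse-limit description. None of this uses pseudo-freeness.

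\textbf{Step 2: surjectivity under pseudo-freeness.} Fix $\vec z\in X$ and try to define $\kappa(\mfks_\ell^{m})\defeq\pi_\ell(\vec z)^m$. By \eqref{eq:mult in ssym(x)}, every element of $\ssym(x)$ can be written as $\mfks_\ell^m$ for $\ell$ large. Compatibility under raising $\ell$ holds: writing $\mfks_\ell^m=\mfks_{\ell'}^{mk_{\ell,\ell'}}$, we get $\pi_{\ell'}(\vec z)^{mk_{\ell,\ell'}}=\pi_\ell(\vec z)^m$. Once well-definedness is known, multiplicativity follows directly from \eqref{eq:mult in ssym(x)}.

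The main obstacle is well-definedness at a fixed level $\ell$. We must show that $\mfks_\ell^m=\mfks_\ell^{m'}$ forces $\pi_\ell(\vec z)^m=\pi_\ell(\vec z)^{m'}$. In the case $\ell\ge N$ this is immediate from the stronger claim that $\mfks_\ell$ has infinite order, so $m=m'$. If $\ell<N$, then $\mfks_\ell$ is the unit and $\pi_\ell(\vec z)=1$, so there is nothing to check.

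To prove that $\mfks_\ell$ has infinite order when $\ell\ge N$, suppose $\mfks_\ell^m$ is the unit. By Lemma~\ref{it:powers go in} this means $[h_\ell^m|_{\sigma^\ell(x)}]=[1_G]$, so $h_\ell^m|_{x(\ell,q)}=1_G$ for some $q$. At the same time, $h_\ell^m$ fixes $x(\ell,q)$, because $h_\ell\in\Z_{x(\ell,\ell)}$ fixes all paths at that vertex. Pseudo-freeness (Definition~\ref{defn:pseudo-free}) then forces $h_\ell^m=1_G$. Since $h_\ell\neq1_G$ generates a subgroup of $\Z$, we conclude $m=0$. This is exactly the point where pseudo-freeness is needed; one could alternatively invoke \cite[Corollary 5.6]{Self-similar:LY21}, as in the proof of Lemma~\ref{lem:opensetexists}.

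\textbf{Step 3: topological isomorphism.} The inverse of $\Omega$ is the map just constructed, and it has the stated formula $\mfks_n\mapsto\pi_n(\vec z)$. Finally, $\ssymdual(x)$ is compact because $\ssym(x)$ is discrete, and $X$ is Hausdorff. A continuous bijection from a compact space to a Hausdorff space is a homeomorphism, so $\Omega$ is an isomorphism of topological groups.
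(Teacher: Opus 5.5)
Your overall route matches the paper's. Your infinite-order computation is the content of Lemma~\ref{lem:cascading}\ref{it:cascading:Omega is well defined}. Passing to a common level $\ell$ replaces the paper's comparison of $\mfks_m^p=\mfks_n^q$ with $m\le n$. Deducing continuity of $\Omega^{-1}$ from compactness of $\ssymdual(x)$ and Hausdorffness of $X$ is a legitimate shortcut for the paper's direct check on nets.

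There is, however, one unjustified step. In the case $\ell\ge N$ you write ``since $h_\ell\neq 1_G$''. But $N$ is only the \emph{first} index with $h_N\neq 1_G$, and nothing you have said excludes $h_\ell=1_G$ for some $\ell>N$. In that situation $\mfks_\ell$ is the unit, so your claim that $\mfks_\ell$ has infinite order for every $\ell\ge N$ would be false. The case $\ell\ge N$ of your well-definedness argument is therefore not covered as written.

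The missing fact is exactly Lemma~\ref{lem:cascading}\ref{it:cascading:minimal non-zero h_m}, which is a second use of pseudo-freeness. If $h_{m+1}=1_G$, then $h_m|_{x(m,m+1)}=h_{m+1}^{k_m}=1_G$ while $h_m$ fixes $x(m,m+1)$, so $h_m=1_G$. Iterating shows $h_\ell\neq 1_G$ for all $\ell\ge N$. You can also get this from your own argument: if $h_\ell=1_G$ for some $\ell>N$, then $\mfks_N=\mfks_\ell^{k_{N,\ell}}$ is the unit, and your computation at level $N$ with $m=1$ forces $h_N=1_G$, a contradiction. Alternatively, you can bypass the issue altogether. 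By~\eqref{eq:h_n = 1 means k_i,j = 0}, $h_\ell=1_G$ forces $k_\ell=0$, hence $\pi_\ell(\vec z)=\pi_{\ell+1}(\vec z)^{k_\ell}=1$ for every $\vec z\in X$, so there is nothing to check at such a level. With any one of these inserted, your proof is complete.
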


Note that we are using that the elements $\mfks_{n}$ generate $\ssym(x)$ (Proposition~\ref{prop:mult in ssym, revised}), so that $\Omega$ is indeed uniquely determined by the given formula.  To prove Theorem~\ref{thm:ssymdual}, let us first check that the map $\Omega$ exists.

\begin{lem}[Existence of $\Omega$]\label{lem:Omega exists}
    The continuous maps
    \begin{align*}
    \mfks^*_{i}\colon \ssymdual(x)&\to \T,
    &
    \mfks^*_{i}(\kappa)&\defeq \kappa (\mfks_{i}),
\end{align*}
    satisfy $\mfks^*_{i}=f_{i,j}\circ \mfks^*_{j}$ for all natural numbers $i\leq j$. Moreover, there exists a unique continuous group homomorphism $\Omega\colon \ssymdual(x)\to X(x)$
    such that the diagram
    \[
    \begin{tikzcd}
    & \ssymdual(x)\ar[rdd, "\mfks^*_{i}"]\ar[ldd, "\mfks^*_j"'] 
    \ar[d, "\Omega"]
    &
    \\[15pt]
    & X(x)\ar[rd, "\pi_{i}"']\ar[ld, "\pi_j"] &
    \\
    X_{j}(x)\ar[rr, "f_{i,j}"] && X_{i}(x)
    \end{tikzcd}
    \]
    commutes for all $i\leq j$, where as before $X(x)$ is the inverse limit as defined at~\eqref{eq:def X}.
\end{lem}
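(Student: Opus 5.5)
The plan has three steps: verify the compatibility relation $\mfks^*_{i}=f_{i,j}\circ\mfks^*_{j}$, check that each $\mfks^*_i$ lands in $X_i(x)$ and is a continuous homomorphism, and then obtain $\Omega$ from the universal property of the inverse limit.

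\textbf{Compatibility.} Fix $i\leq j$ and $\kappa\in\ssymdual(x)$. Apply~\eqref{eq:s_m vs s_ell} with $m=i$ and $\ell=j$ (trivially true when $i=j$, since $k_{i,i}=1$). This gives $\mfks_i=\mfks_j^{k_{i,j}}$ in $\ssym(x)$. Since $\kappa$ is a group homomorphism into $\T$,
\[
\mfks^*_i(\kappa)=\kappa(\mfks_j^{k_{i,j}})=\kappa(\mfks_j)^{k_{i,j}}=f_{i,j}(\mfks^*_j(\kappa)).
\]

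\textbf{Values, continuity and homomorphism property.} We must check that $\mfks^*_i$ actually takes values in $X_i(x)$. If $i<N$ (or if all $h_n$ are trivial), then $h_i=1_G$, so $\mfks_i=(x;\calT_i([1_G|_{\sigma^i(x)}]),0;x)=(x;[1_G],0;x)$ is the unit of $\ssym(x)$. Hence $\kappa(\mfks_i)=1$ and $\mfks^*_i(\kappa)\in\{1\}=X_i$. Otherwise $X_i=\T$ and there is nothing to check. Each $\mfks^*_i$ is a group homomorphism because the operation on $\ssymdual(x)$ is pointwise. Each is continuous because $\ssym(x)$ is discrete, so the topology of $\ssymdual(x)$ is that of pointwise convergence, under which evaluation at a point is continuous.

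\textbf{Construction of $\Omega$.} The family $(\mfks^*_i)_i$ is a compatible cone of continuous homomorphisms into the inverse system $(X_i,f_{i,j})$. By the universal property of the inverse limit of topological groups, realised concretely as the subgroup $\{\vec z\in\prod_i X_i:\ z_i=f_{i,j}(z_j)\ \text{for all } i\leq j\}$ with the subspace product topology, we set $\Omega(\kappa)\defeq(\kappa(\mfks_i))_i$. Compatibility shows that $\Omega(\kappa)$ lies in $X(x)$. $\Omega$ is a homomorphism because each coordinate is, and it is continuous because each composition $\pi_i\circ\Omega=\mfks^*_i$ is continuous, which suffices for the initial topology. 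Uniqueness holds because a map into $X(x)$ is determined by its compositions with all the $\pi_i$, which commutativity of the diagram forces to equal the $\mfks^*_i$.

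None of these steps is a real obstacle. The only point needing care is the well-definedness into $X_i$ for $i<N$, which is handled by the observation that $\mfks_i$ is the unit there.
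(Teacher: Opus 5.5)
Your proposal is correct and follows essentially the same route as the paper. Both arguments check that $\mfks_i$ is the unit whenever $X_i=\{1\}$, derive compatibility from $\mfks_i=\mfks_j^{k_{i,j}}$ together with the homomorphism property of $\kappa$, and obtain $\Omega$ from the universal property of the inverse limit. The only cosmetic difference is that you apply the iterated identity \eqref{eq:s_m vs s_ell} directly, whereas the paper treats $j=i+1$ via Lemma~\ref{it:powers go in:relationship of generators of ssym} and then uses $f_{i,j}=f_{i,k}\circ f_{k,j}$.
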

\begin{proof}
    First, we point out that $\mfks^*_{i}$ is indeed continuous by definition of the topology on $\ssymdual(x)$.
    Next note that the diagram makes sense: If $j\in\N$ is such that $X_{j}\defeq X_{j}(x)$ is $\{1\}$ and not $\T$, then we must have that $h_{j}=1_{G}$. This implies $\mfks_{j}=1_{\ssym(x)}$ and thus $\mfks^*_{j}(\kappa)=1 \in X_{j}$.

    We have
\begin{align*}
    f_{i,i+1}\circ \mfks^*_{i+1} (\kappa)
    &=
    f_{i,i+1}\circ\kappa (\mfks_{i+1})
    =
    \kappa(\mfks_{i+1})^{k_{i}}
    \\
    &\overset{(\dagger)}{=}
    \kappa\left(\mfks_{i+1}^{k_{i}}\right)
    \overset{(\ddagger)}{=}
    \kappa (\mfks_{i})
    =
    \mfks^*_{i} (\kappa)
\end{align*}
where $(\dagger)$ follows since $\kappa$ is a homomorphism and $(\ddagger)$ from 
Lemma~\ref{it:powers go in:relationship of generators of ssym}.
The general claim for arbitrary $i\leq j$ now follows from the fact that $f_{i,j}=f_{i,k}\circ f_{k,j}$ for all $i\leq k\leq j$. The universal property of $\varprojlim_{i\in\N} (X_{i}, f_{i})$ 
then implies the existence and uniqueness of the continuous map $\Omega$.
\end{proof}

We need one more intermediate lemma, which gives the reason why we assumed pseudo-freeness in the second half of Theorem~\ref{thm:ssymdual}.

\begin{lem}[Cascading down]\label{lem:cascading}
    Suppose that $(\Z,E)$ is pseudo-free, and let $m\leq n\in\N$ and $p,q\in\Z$.
    \begin{enumerate}[label=\textup{(\roman*)}]
        \item\label{it:cascading:minimal non-zero h_m} If $h_{m+1}=1_{G}$, then $h_{i}=1_{G}$ for all $0\leq i\leq m$ also.
        \item\label{it:cascading:Omega is well defined} If     $\mfks_{n}^{q}=\mfks_{m}^{p}
    $ in $\ssym(x)$,
    then either $h_{n}=1_{G}$ or $q=pk_{m,n}$.
    \end{enumerate}
\end{lem}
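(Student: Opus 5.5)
For part \ref{it:cascading:minimal non-zero h_m}, I would argue by contraposition at a single step and then induct downward. Suppose $h_{m+1}=1_{G}$ but $h_{m}\neq 1_{G}$, and put $e\defeq x(m,m+1)$. Then $r(e)=x(m,m)$ and $s(e)=x(m+1,m+1)$. Since $h_{m}\in \Z_{x(m,m)}$, it fixes every infinite path $ez$ with $z\in s(e)E^\infty$. The self-similarity formula gives $h_{m}\cdot(ez)=(h_{m}\cdot e)(h_{m}|_{e}\cdot z)$. Comparing first edges and tails, we get $h_{m}\cdot e=e$ and $h_{m}|_{e}\cdot z=z$ for all such $z$. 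Hence $h_{m}|_{e}\in \Z_{x(m+1,m+1)}=\langle h_{m+1}\rangle=\{1_{G}\}$. Pseudo-freeness (Definition~\ref{defn:pseudo-free}), applied to $h_{m}\cdot e=e$ and $h_{m}|_{e}=1_{G}$, forces $h_{m}=1_{G}$, a contradiction. So $h_{m+1}=1_{G}$ implies $h_{m}=1_{G}$. Iterating this implication from $m$ down to $0$ gives the claim.

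For part \ref{it:cascading:Omega is well defined}, I would first reduce to a statement about a single generator. Equation~\eqref{eq:s_m vs s_ell} (or directly \eqref{eq:mult in ssym(x)}) gives $\mfks_{m}=\mfks_{n}^{k_{m,n}}$. The hypothesis $\mfks_{n}^{q}=\mfks_{m}^{p}$ then becomes $\mfks_{n}^{r}=1_{\ssym(x)}$ with $r\defeq q-pk_{m,n}$. It therefore suffices to prove the following: if $h_{n}\neq 1_{G}$ and $\mfks_{n}^{r}$ is a unit, then $r=0$.

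By Lemma~\ref{it:powers go in}, $\mfks_{n}^{r}=(x;\calT_{n}([h_{n}^{r}|_{\sigma^{n}(x)}]),0;x)$. Since $\calT_{n}$ is injective on $Q(\N,\Z)$, this element is a unit exactly when $[h_{n}^{r}|_{\sigma^n(x)}]=[1_{G}]$. That means there is some $N$ with $h_{n}^{r}|_{x(n,N)}=1_{G}$. Moreover, $h_{n}^{r}\in \Z_{x(n,n)}$ fixes $\sigma^{n}(x)$, so $h_{n}^{r}\cdot x(n,N)=x(n,N)$. Pseudo-freeness then yields $h_{n}^{r}=1_{G}$. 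Because $\Z$ is torsion-free and $h_{n}\neq 1_{G}$, we conclude $r=0$, i.e.\ $q=pk_{m,n}$.

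The only delicate point is the bookkeeping with the tail classes. One must be sure that triviality of the class $[h_{n}^{r}|_{\sigma^{n}(x)}]$ really gives a single finite path $x(n,N)$ on which the restriction is trivial while the path is fixed, so that pseudo-freeness applies. This follows from the definition of $\sim$ together with the identity $h|_{\sigma^n(x)}(N-n)=h|_{x(n,N)}$.
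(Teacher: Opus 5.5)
Your proposal is correct and follows essentially the same route as the paper. In (i), you unpack Lemma~\ref{lem:k_n} to get $h_{m}|_{x(m,m+1)}\in\langle h_{m+1}\rangle=\{1_{G}\}$, apply pseudo-freeness, and induct downward. In (ii), you reduce to a trivial tail class of $h_{n}^{r}|_{\sigma^{n}(x)}$, extract a fixed finite path $x(n,N)$ on which the restriction is trivial, apply pseudo-freeness, and use that $\Z$ is torsion-free. The only cosmetic difference is that you do the reduction $\mfks_{m}=\mfks_{n}^{k_{m,n}}$ inside $\ssym(x)$, whereas the paper does it in $Q(\N,\Z)$.
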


\begin{proof}
    \ref{it:cascading:minimal non-zero h_m} Lemma~\ref{lem:k_n} implies that $h_{m}|_{x(m,m+1)}$ is a power of $h_{m+1}$ and hence equal to $h_{m+1}=1_{G}$. 
    Since $h_{m}\in \Z_{x(m,m)}$, we have
    $h_{m}\cdot x(m,m+1)=x(m,m+1)$, and so by pseudo-freeness, we must have $h_{m}=1_{G}$. Iteratively, we see that $h_{i}=1_{G}$ for all $0\leq i\leq m$.

   \ref{it:cascading:Omega is well defined}
    Using Lemma~\ref{it:powers go in:relationship of generators of ssym} repeatedly, we get
    \begin{align}
    \calT_{n} ([h_{n}|_{\sigma^{n}(x)}])^{q}
    =
    \calT_{m}([h_{m}|_{\sigma^{m}(x)}])^{p}
    \overset{\ref{it:powers go in:relationship of generators of ssym}}{=}
    \left(\calT_{n}[h_{n}|_{\sigma^{n}(x)}]^{k_{m,n} }\right)^{p},
    \end{align}
    or in other words,
    \[
        [\mathrm{const}_{1_{G}}]
        =
        \calT_{n}([h_{n}|_{\sigma^{n}(x)}])^{pk_{m,n}-q}.
    \]
    Let $s\defeq pk_{m,n}-q$. The above means that, for large enough $r\in\N_{\geq n}$, we have
    \[
        1_{G}
        =
        (h_{n}|_{\sigma^{n}(x)})^{ s } (r-n)
        \overset{\ref{it:powers go in} }{=}
        h_{n}^{ s }|_{\sigma^{n}(x)} (r-n)
        =
        h_{n}^{ s }|_{x(n,r)}.
    \]
    Now, $h_{n}^{ s }\in \langle h_{n}\rangle= \Z_{x(n,n)}$, so $h_{n}^{ s }\cdot \sigma^{n}(x)=\sigma^{n}(x)$. From this, it follows 
    that $h_{n}^{ s }\cdot x(n,r)=x(n,r)$. Combined with $1_{G}=h_{n}^{ s }|_{x(n,r)}$ and pseudo-freeness, we deduce that $h_{n}^{ s }=1_{G}$. Since $G=\Z$ is an integral domain, we must either have $h_{n}=1_{G}$ or $pk_{m,n}-q = s =0$, which proves the claim.
\end{proof}

We are ready to prove Theorem~\ref{thm:ssymdual}.
\begin{proof}[Proof of Theorem~\ref{thm:ssymdual}]
    The continuous homomorphism $\Omega$ exists by Lemma~\ref{lem:Omega exists}. Moreover, since every element of $\ssym(x)$ can be written as $\mfks_{n}^{k}$ for $n\in\N$ and $k\in\Z$ by Proposition~\ref{prop:mult in ssym, revised}, it is clear that $\Omega$ is injective. It remains to show that, if $(\Z,E)$ is pseudo-free, then $\Omega$ has a continuous inverse with the formula as claimed in the statement of the theorem.

    First, let us check that $\Omega\inv(\vec{z})$ is well-defined. If $ i \leq j $,
    then 
    \begin{align}\label{eq:z_n and k_i,j}
        z_{ i }
        &\defeq \pi_{ i }(\vec{z})
        =
        \bigl(f_{ i , j }\circ\pi_{ j }\bigr) (\vec{z})
        =
        z_{ j }^{k_{ i , j }}.
    \end{align}
    Now, if $p,q\in\Z$ and $m\leq n \in \N$ are such that
    \(\mfks_{ m }^{p}=\mfks_{ n }^{q}\), then by Lemma~\ref{lem:cascading}\ref{it:cascading:Omega is well defined}, there are two scenarios: 
    Either $h_n=1_{G}$, or $q=p k_{ m , n }$. In the first case, it follows by definition of $X$ and by Lemma~\ref{lem:cascading}\ref{it:cascading:minimal non-zero h_m}that $X_{m}=X_{n}=\{1\}$; thus, $z_{m}^{p}=1=z_{n}^q$. In the second case, we get
   \[z_{ m }^p
    \overset{\eqref{eq:z_n and k_i,j}}{=}
    z_{ n }^{p k_{ m , n }}
    \overset{\ref{lem:cascading}\ref{it:cascading:Omega is well defined}}{=}
    z_{ n }^{q}.
    \]
    This proves that the assignment $\kappa\defeq \Omega\inv(\vec{z})\colon \mfks_{ n }^q \mapsto z_{ n }^q$ is unambiguous. Next, we  show that $\kappa$ is an element of $\ssymdual(x)$: for any $ m , n \in\N$, any $\ell\geq  m , n $, and any $p,q\in\Z$, we have
    \begin{align*}
        \kappa(\mfks_{ m }^p \, \mfks_{ n }^q)
        &=
        z_{\ell}^{p k_{ m ,\ell}+ q k_{ n ,\ell}}
        &&\text{
        by Proposition~\ref{prop:mult in ssym, revised} and
        by def'n of $\kappa$}
        \\
        &=
        z_{ m }^p\, z_{ n }^{q}
        &&\text{by Equation~\eqref{eq:z_n and k_i,j} (twice)}
        \\
        &=
        \kappa(\mfks_{ m }^p)\kappa(\mfks_{ n }^q)
        &&\text{by def'n of $\kappa$}.
    \end{align*}
    We further have that $\kappa(\mfks_{ m }^0)=z_{ m }^0=1$, so that $\kappa(1_{\ssym})=1$, which finishes the proof that $\kappa$ is a homomorphism. Since it is furthermore $\T$-valued and continuous (as $\ssym(x)$ is discrete from the groupoid being \'{e}tale), we have shown that $\kappa=\Omega\inv(\vec{z})$ is an element of $\ssymdual(x)$.

    To verify that $\vec{z}\mapsto \Omega\inv(\vec{z})$ is continuous, let $(\vec{z}_{\lambda})_{\lambda}$ be a net in $X$ that converges to $\vec{z}$. To see that $\kappa_{\lambda}\defeq  \Omega\inv(\vec{z}_{\lambda})$ converges to $\kappa\defeq \Omega\inv(\vec{z})$, we must show 
    that $\kappa_{\lambda}(\mfkt)\to \kappa(\mfkt)$ for all $\mfkt$ in $\ssym(x)$. (Here, we have made use of the fact that $\ssym(x)$ is discrete, as explained earlier.)
    If we write $\mfkt=\mfks_{ n }^p$ for some $ n \in\N$ and some $p\in\Z$, then $\kappa_{\lambda}(\mfkt)=\pi_{ n }(\vec{z}_{ \lambda })^p$, which by continuity of $\pi_{ n }$ converges to $\pi_{ n }(\vec{z})^p=\kappa(\mfkt)$, as claimed.  

    Lastly, note that the maps $\Omega$ and $\Omega\inv$ are indeed mutually inverse: $\bigl(\Omega\circ\Omega\inv\bigr) (\vec{z})$ is the infinite sequence whose $ n $th coordinate is given by
    \begin{align*}
        (\pi_{ n }\circ\Omega)\bigl(\Omega\inv (\vec{z})\bigr)
        =
        \Omega\inv (\vec{z})\bigl(\mfks_{ n }\bigr)
        =
        z_{ n },
    \end{align*}
    meaning it is the infinite sequence $\vec{z}$.
   Likewise, 
    \begin{align*}
        (\Omega\inv \circ \Omega)(\kappa)
        =
        \bigl(
        \mfks_{ n }
        \mapsto
        \pi_{ n } (\Omega(\kappa))
        =
        \kappa(\mfks_{ n })
        \bigr)
        =
        \kappa.
    \end{align*}
    This concludes the proof of Theorem~\ref{thm:ssymdual}.
\end{proof}

Now that we have identified $\ssymdual(x)$ for any arbitrary $x\in E^\infty$, we can give a more explicit description of the action of the quotient groupoid $\cGss[\Z]/\ssym$ on the dual bundle $\widehat{S}$ that we alluded to in the beginning of this section.

\subsection{Towards a new groupoid model: The action}

We have shown in Proposition~\ref{prop:Ssym:open,normal,abelian,subgpd} that, given any self-similar graph $(G,E)$, the abelian groupoid $\ssym$ is open and normal in $\cGss$ and has full unit space. This allows us to invoke the following theorem by Renault, where we write $\dot{\mfkg}$ for the class of an element $\mfkg\in\calG$ in the quotient $\calG/\calS$:
\begin{thm}[{\cite[Cor.\ A.8]{Renault:2023:Ext} and special case of \cite[Theorem 5.5(ii)]{Renault:2023:Ext}}]\label{thm:Renaults5.5}
   Let
   $\calG$ be a second countable, locally compact, Hausdorff,
   \'etale groupoid. 
   Suppose that
   $\calS$ is
   an open,
   abelian, normal subgroupoid with $\calS\z =\calG\z $.
   Then the quotient groupoid $\calG/\calS$ is locally compact (not necessarily Hausdorff), and   
   $\csr(\calG)$ is isomorphic to a \emph{twisted} reduced groupoid $C^*$-algebra of the action groupoid $\calG/\calS \ltimes \widehat{\calS}$,
   where  $\calG/\calS$ acts on the topological space $\widehat{\calS}$ by
    \begin{align} \label{eq:left action, general formula}
    (\dot{\mfkg} \HleftX \kappa)(\mfks) = \kappa (\mfkg\inv \mfks \mfkg)
    \end{align}
    for $\mfkg\in \calG_y^x$, $\kappa\in \widehat{\calS}(y)$, $\mfks\in \calS(x)$. 
\end{thm}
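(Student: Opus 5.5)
Since this result is quoted from \cite{Renault:2023:Ext}, we would only recall how its proof goes rather than reprove it. The approach is a ``partial Fourier transform along $\calS$'': abelian harmonic analysis is performed fibrewise on the cosets of $\calS$, and what remains is the quotient $\calG/\calS$ acting on the dual bundle $\widehat{\calS}$.

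\textbf{Step 1: the quotient groupoid.} Give $\calG/\calS$ the quotient topology. Because $\calS$ is open and multiplication in an \'etale groupoid is an open map, the saturation $U\calS$ of an open set $U$ is open. Hence the quotient map $q\colon\calG\to\calG/\calS$ is open. Local compactness of $\calG/\calS$ then follows: images of compact open neighbourhoods are compact neighbourhoods. Hausdorffness may fail, since $\calS$ need not be closed, and this is why the statement does not claim it.

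\textbf{Step 2: the action is well defined.} Let $\mfkg\in\calG^x_y$ and $\mfkt\in\calS(y)$. Normality gives $\mfkg\inv\mfks\mfkg\in\calS(y)$ for $\mfks\in\calS(x)$, and commutativity of $\calS(y)$ then gives
\[
(\mfkg\mfkt)\inv\mfks(\mfkg\mfkt)=\mfkt\inv(\mfkg\inv\mfks\mfkg)\mfkt=\mfkg\inv\mfks\mfkg .
\]
Representatives of the same coset on the right therefore induce the same conjugation. Cosets $\mfkg\calS$ and $\calS\mfkg$ agree by normality, so~\eqref{eq:left action, general formula} depends only on $\dot{\mfkg}$. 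It is also a continuous group isomorphism $\widehat{\calS}(y)\to\widehat{\calS}(x)$. Continuity of the action on the whole bundle is checked in the topology of \cite[Proposition 3.3]{MRW:1996:CtsTrace}. To do so, lift local sections of $q$ to open bisections of $\calG$; conjugation by a bisection is a homeomorphism of $\calS$.

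\textbf{Step 3: the twist and the isomorphism.} The twist $\Sigma$ over $\calG/\calS\ltimes\widehat{\calS}$ is $\calG\times_{s}\widehat{\calS}$ modulo the relation $(\mfkg\mfkt,\kappa)\sim(\mfkg,\kappa(\mfkt)\kappa)$, with the central circle acting by scalars. A diagonal section $(\dot{\mfkg},\kappa)\mapsto[\mfkg,\kappa]$ need not be continuous or multiplicative, and this is the source of the twist. On compactly supported functions, define
\[
\Phi(f)[\mfkg,\kappa]=\sum_{\mfkt\in\calS(s(\mfkg))} f(\mfkg\mfkt)\,\overline{\kappa(\mfkt)} .
\]
This is a Fourier transform along each coset, and it is $\T$-equivariant. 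One checks that $\Phi$ intertwines convolution and involution; this is a direct computation using abelianness and normality. On $C_c(\calS)$, $\Phi$ restricts to the Gelfand transform $\csr(\calS)\cong C_0(\widehat{\calS})$.

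\textbf{The main obstacle} is that $\Phi$ preserves reduced norms, and extends to a surjection, in the non-Hausdorff quotient setting. The plan is to compare regular representations. For each $\kappa\in\widehat{\calS}(y)$, the regular representation of the twisted groupoid at $(y,\kappa)$ corresponds under $\Phi$ to the representation of $\csr(\calG)$ induced from the character $\kappa$ of $\calS(y)$. Direct-integrating over $\kappa\in\widehat{\calS}(y)$ by Plancherel recovers the regular representation $\lambda_y$ of $\calG$. Conversely, each induced representation is weakly contained in $\lambda_y$, because $\calS(y)$ is amenable: it is abelian. Together these give equality of norms. Density of the image of $C_c$ then yields the isomorphism.
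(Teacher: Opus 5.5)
Your proposal agrees with the paper, which gives no proof of its own and simply imports the statement from Renault's work. Your outline (Fourier transform along the $\calS$-cosets, a twist coming from the non-canonical choice of coset representatives, and equality of reduced norms by writing $\lambda_y$ as a direct integral over $\widehat{\calS}(y)$ of the representations induced from the characters $\kappa$, together with weak containment coming from amenability of $\calS(y)$) is a sound sketch of the standard argument. Two small repairs are needed. First, the twist should be $(\calG\times_s\widehat{\calS}\times\T)/{\sim}$ with $(\mfkg\mfkt,\kappa,z)\sim(\mfkg,\kappa,\kappa(\mfkt)z)$, since $\kappa(\mfkt)\kappa$ is not a point of $\widehat{\calS}$. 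Second, in Step~1 use compact rather than compact open neighbourhoods, as $\calG$ is \'etale but not assumed ample.
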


\begin{rem}
    \begin{enumerate}
    \item The groupoid $\calG/\calS \ltimes \widehat{\calS}$ can be seen as an improvement over the groupoid $\calG$ since one has, effectively, moved an abelian part of the dynamics (i.e., $\calS$) into the purely topological, spatial part of the groupoid (represented by the new unit space, $\widehat{\calS}$). However, this comes at the price of (potentially) needing to deal with a twisted $C^*$-algebra rather than the untwisted $\csr(\calG)$, and with a potentially non-Hausdorff groupoid.
   \item The quotient $\calG/\calS$ is Hausdorff if and only if $\calS$ is closed in $\calG$. Since many people prefer to work with Hausdorff groupoids, it is therefore reasonable to wonder when $\ssym$ is closed in $\cGss$. Provided that the group $G$ of the self-similar graph $(G,E)$ has an element of infinite order, we identify in Theorem~\ref{thm:closedness-equivalent} of Section~\ref{ssec:closedness} which conditions on $E$ are necessary and sufficient for $\ssym$ to be closed. 
   \item The properties that $\calS\z=\calG\z$, that $\calS$ is open, and that $\calG$ is locally compact \'etale can be used to show that the quotient map $q\colon  \calG\to\calG/\calS$ is a surjective continuous open groupoid homomorphism which, in turn, implies that the action of $\calG/\calS$ on $\widehat{\calS}$ is continuous. 

    \end{enumerate}
\end{rem}

We will now focus on the situation that $G=\Z$ and that we are given a self-similar graph $(\Z,E)$ that is pseudo-free, so that Theorem~\ref{thm:ssymdual} identifies the dual of the subgroup $\ssym(x)$ of $\cGss[\Z]$. We will now describe the action $\cGss[\Z]\curvearrowright \ssymdual$ as defined at~\eqref{eq:left action, general formula} more explicitly, to see the advantage of the new groupoid model in Theorem~\ref{thm:Renaults5.5}; the main take-away is that the new groupoid $\cGss[\Z]/\ssym \ltimes \ssymdual$ is much easier to understand than the original groupoid $\cGss[\Z]$.

The first thing we point out is that the action of $\cGss[\Z]/\ssym$ on $\ssymdual$ can be easily computed---no matter the choice of $E$, the action of $\Z$ on $E$, or the restriction `of $E$ on $\Z$.' In fact, $\cGss[\Z]/\ssym \curvearrowright \ssymdual$ really only depends on the copy of $\Z$ in
 $Q(\N,\Z)\rtimes \Z$ in a way that we will now make precise.

   Fix $x\in E^\infty$, and let $\mfks_{x,n}\in \ssym(x)$ be as defined at~\eqref{def:xi_n}, where we added the index $x$ to 
   distinguish the fibres. Fix another infinite path $y\in E^\infty$ and  an arbitrary element
    \[
     \mfkg
        =
        \left(
            x;
            \calT_{m}([g|_{\sigma^{\ell}(y)}]), m-\ell;
            y
        \right)
    \]
    of $(\cGss[\Z])_y^x$; in particular, $m,\ell\in\N $ and $g\in \Z$ are such that $\sigma^{m}(x)=g\cdot \sigma^{\ell}(y)$. 
    By definition of the semi-direct product, we have 
    in $Q(\N,\Z)\rtimes \Z$,
    \[
        \left(\calT_{m}([g|_{\sigma^{\ell}(y)}]), m-\ell\right)\inv =
        \left(
            \calT_{\ell}([(g|_{\sigma^{\ell}(y)})\inv]), \ell-m
        \right),
    \]
    and so since $G=\Z=\langle a\rangle$ is abelian, 
    a standard computation shows
    \begin{align}
        \label{eq:conjugation}
    \begin{split}
        &(\calT_{m}([g|_{\sigma^{\ell}(y)}]), m-\ell)\inv
        \,
        (\calT_{n}([h_{n}|_{\sigma^{n}(x)}]),0)
        \,
        (\calT_{m}([g|_{\sigma^{\ell}(y)}]), m-\ell)
        \\
        ={}&
        (
        \calT_{n+\ell-m}([h_{n}|_{\sigma^{n}(x)}]),
        0
        ).
    \end{split}
    \end{align}
     
    Let $\kappa\in\ssymdual(y)$. Since $\mfkg\HleftX \kappa \colon \mfks \mapsto \kappa(\mfkg\inv \mfks \mfkg)$, this computation shows that 
\begin{equation}\label{eq:action,general}
        \left(
            x;
            [f]
            , 
            k
            ;
            y
        \right)
        \HleftX
        \kappa
        \colon
        \quad
        \left( x;\calT_{n}([h|_{\sigma^{n}(x)}]),0;x\right)        
        \mapsto
        \kappa\left( y;\calT_{n
        -k
        }([h|_{\sigma^{n}(x)}]),  0;y\right).
    \end{equation}
    (Note that we have checked in Lemma~\ref{lem:Ssym is normal} that $\left( y;\calT_{n-k}([h|_{\sigma^{n}(x)}]),  0;y\right)$ is indeed an element of $\ssym(y)$, so that evaluating $\kappa$ at it makes sense.)
    This shows that the $Q(\N,\Z)$-component of $\mfkg$ does not feature at all.

In Appendix~\ref{app:Weyl examples}, we compute the action groupoid $\cGss[\Z]/\ssym\ltimes \ssymdual$ for specific examples of graphs $E$--namely a loop and a loop with a stem. 

\section{Cartan subalgebras for $\mathcal{O}_{\Z, E}$} 
\label{sec: Cartan subalgebras} 

In this section, we will show that for a large class of self-similar graphs, the subgroupoid $\ssym$ induces a Cartan subalgebra $C^*_{r}(\ssym)\subseteq C^*_{r}(\cGss[\Z])$. More precisely, we will prove Theorem~\ref{Thm_Ssym_Cartan} in Subsection \ref{ssec:Ssym satisfies ricc}. The terms $g$-generic and $g$-rare are established in Definition~\ref{def:GenaricRare}. We also remind the reader of our convention of writing $a$ for the generator of $\Z$; see page~\pageref{Notation and conventions}.
 
\begin{thm} 
\label{Thm_Ssym_Cartan} 
Let $(\mathbb{Z}, E)$ be a pseudo-free self-similar graph satisfying our standing assumptions
on page~\pageref{Notation and conventions}.
Assume additionally that the following conditions hold: 
\begin{enumerate}[label=\textup{(\arabic*)}] 
\item\label{it:Thm_Ssym_Cartan:generic or rare} For any $x\in E^{\infty}$ and 
$g\in \Z$,
$x$ is either $g$-generic or $g$-rare. 
\item\label{it:Thm_Ssym_Cartan:cycles}
For any cycle $C$ in $E$ with no entrance, if $p$ is the minimal positive integer such that $a^p$ fixes a vertex on $C$, we have $a^p|_{C} \neq a^{\pm p}$.
\end{enumerate} 
Then $C^*_{r}(\ssym)$ is a Cartan subalgebra of $C^*_{r}(\cGss[\Z])$. 
\end{thm}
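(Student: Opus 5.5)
The plan is to apply \cite[Corollary 4.5]{DWZ:Twist}, which (for a second-countable, locally compact, Hausdorff, étale groupoid $\calG$) gives that $\csr(\calS)$ is Cartan in $\csr(\calG)$ once $\calS$ is an open subgroupoid of $\Iso(\calG)^\circ$ with full unit space that is abelian, normal, closed in $\calG$, and maximal among open abelian subgroupoids of $\Iso(\calG)^\circ$. The argument then reduces to checking these properties for $\calS=\ssym\subseteq\cGss[\Z]$. Pseudo-freeness ensures that $\cGss[\Z]$ is a Hausdorff, ample, second-countable groupoid, and amenability of $\Z$ gives $\csr(\cGss[\Z])\cong\mathcal{O}_{\Z,E}$.

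The ingredients would be collected in the following order.
\begin{enumerate}
\item \textbf{Open, abelian, normal, full unit space.} By Proposition~\ref{prop:Ssym:open,normal,abelian,subgpd}, $\ssym$ is open, abelian and normal. It contains $\cGss[\Z]\z$, since the trivial triples $(v,1_G,v)$ are cycline.
\item \textbf{Closedness.} Closedness of $\ssym$ in $\cGss[\Z]$ should follow from Theorem~\ref{thm:closedness-equivalent}, using hypothesis~\ref{it:Thm_Ssym_Cartan:generic or rare}. Its statement requires a group with an element of infinite order, which $\Z$ has.
\item \textbf{Maximality.} I would invoke Theorem~\ref{thm:Ssym is maximal}, whose sufficient conditions I expect to be hypothesis~\ref{it:Thm_Ssym_Cartan:cycles}. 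The argument goes through Corollary~\ref{cor:the green corollary}, which decomposes $\Iso(\cGss[\Z])^\circ$ as $\ssym$ together with singletons $\calZ(\alpha,g,\beta)$ coming from non-symmetric cycline triples. Such singletons arise only along cycles without entrance, and Lemmas~\ref{lem:TypeIITypeIDontCommute} and~\ref{lem:TypeIIITypeIDontCommute} show that under hypothesis~\ref{it:Thm_Ssym_Cartan:cycles} none of them commutes with $\ssym$. Hence no open abelian subgroupoid strictly containing $\ssym$ exists.
\item \textbf{Remaining hypothesis of \cite{DWZ:Twist}.} That corollary also needs an ``immediately centralizing'' type condition, which is what Lemma~\ref{lem:Ssym (almost) immediately centralizing} supplies.
\end{enumerate}

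The main obstacle is matching the precise form of the hypothesis in \cite[Corollary 4.5]{DWZ:Twist} with what is available. If that condition is stated for open bisections normalizing $\ssym$ rather than for isotropy, I would argue as follows. Take $\mfkg$ in the interior of the centralizer of $\ssym$. Open bisections in $\Iso(\cGss[\Z])^\circ$ are covered by cycline cylinders, so $\mfkg$ lies in some cylinder $\calZ(\alpha,g,\beta)$. Either that cylinder is symmetric, in which case it is contained in $\ssym$, or it is a non-symmetric singleton. A singleton can only be open at an isolated infinite path, namely $C^\infty$ for a cycle $C$ without entrance. This is exactly where hypothesis~\ref{it:Thm_Ssym_Cartan:cycles} excludes commutation: the relevant restriction $a^p|_C\neq a^{\pm p}$ breaks equation~\eqref{comm_rela} of Lemma~\ref{lem:comm_rela} against some generator $\mfks_n$ of $\ssym(C^\infty)$.

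Combining these items, Corollary 4.5 of \cite{DWZ:Twist} yields that $C^*_r(\ssym)$ is a Cartan subalgebra of $C^*_r(\cGss[\Z])$.
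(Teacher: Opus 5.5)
Your proposal is correct and matches the paper's proof, which verifies the hypotheses of Theorem~\ref{thm:DWZ} in the same way: Proposition~\ref{prop:Ssym:open,normal,abelian,subgpd} gives open, abelian and normal; Theorem~\ref{thm:closedness-equivalent} gives closedness; Theorem~\ref{thm:Ssym is maximal} gives maximality; and Lemma~\ref{lem:Ssym (almost) immediately centralizing} gives \eqref{eq:ricc,stronger}. Your hedging paragraph about an alternative ``normalizing bisection'' formulation is not needed, since the version of \cite[Corollary 4.5]{DWZ:Twist} the paper uses is already stated in terms of $\Iso(\calG)^{\circ}$ and condition~\eqref{eq:ricc,stronger}.
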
 

The strategy to prove Theorem~\ref{Thm_Ssym_Cartan} is to verify sufficient conditions provided in results of \cite{DWZ:Twist}, namely:

\begin{thm}[cf.\ {\cite[Corollary 4.5]{DWZ:Twist}}]\label{thm:DWZ}
    Let $\calG$ be a locally compact Hausdorff \'etale groupoid, and $\mathcal{S}$ be maximal among open abelian subgroupoids of $\Iso(\calG)^{\circ}$. If $\mathcal{S}$ is closed and normal in $\calG$, and if
    \begin{equation} 
    \label{eq:ricc,stronger} 
    \{\mfkg\in \Iso(\calG)^{\circ}: 1<|\{\mfks^{-1} \mfkg \mfks  : \mfks \in \mathcal{S}\}|< \infty \} = \emptyset, 
    \end{equation} 
    then $C^*_{r}(\mathcal{S})$ is a Cartan subalgebra in $C^*_{r}(\calG)$.
\end{thm}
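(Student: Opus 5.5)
The statement is \cite[Corollary 4.5]{DWZ:Twist}, and the plan is to reprove it by checking the three defining properties of a Cartan subalgebra for $B\defeq C^*_r(\mathcal{S})$ one at a time. Throughout, I use that $\calG$ is Hausdorff and \'etale, so every $b\in C^*_r(\calG)$ is represented by a function $j(b)\in C_0(\calG)$, and $j$ is injective. I also use that $\mathcal{S}\z=\calG\z$: an open subgroupoid of $\Iso(\calG)^\circ$ that is maximal abelian contains the open set $\calG\z$, since adding units preserves abelianness.

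\emph{Conditional expectation.} Because $\mathcal{S}$ is open, restriction of functions gives a map $C_c(\calG)\to C_c(\mathcal{S})$. Because $\mathcal{S}$ is also closed, this map is well defined and contractive for the reduced norms. I would check this through the regular representations $\operatorname{Ind}_x$: the subspace $\ell^2(\mathcal{S}_x)\subseteq\ell^2(\calG_x)$ reduces the action of $C_c(\mathcal{S})$. The map therefore extends to a conditional expectation $P\colon C^*_r(\calG)\to B$ with $j(P(b))=j(b)|_{\mathcal{S}}$. Faithfulness should follow from faithfulness of the canonical expectation onto $C_0(\calG\z)$, which factors through $P$.

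\emph{Regularity.} Let $f\in C_c(\calG)$ be supported in an open bisection $U$, and let $a\in C_c(\mathcal{S})$. Then $f^*af$ is supported in $U^{-1}\mathcal{S}U$, which lies in $\mathcal{S}$ by normality, and likewise $faf^*\in B$. Such $f$ span a dense subspace, so $B$ is regular.

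\emph{Maximal abelianness; this is the main step.} Let $b\in B'\cap C^*_r(\calG)$.
\begin{enumerate}
\item Since $b$ commutes with $C_0(\calG\z)\subseteq B$, we get $j(b)(\mfkg)\big(\phi(r(\mfkg))-\phi(s(\mfkg))\big)=0$ for all $\phi$. Hence $\operatorname{supp} j(b)\subseteq\Iso(\calG)$.
\item The set $W\defeq\{\mfkg : j(b)(\mfkg)\neq 0\}$ is open and contained in $\Iso(\calG)$, hence $W\subseteq\Iso(\calG)^\circ$.
\item Take $f\in C_c(\mathcal{S})$ supported in a small open bisection $V\subseteq\mathcal{S}$ with $f\equiv 1$ near a given $\mfks$. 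Comparing $f^* b f$ with $f^*f b$ gives $j(b)(\mfks^{-1}\mfkg\mfks)=j(b)(\mfkg)$. So $j(b)$ is constant on $\mathcal{S}$-conjugacy classes.
\item Since $j(b)\in C_0(\calG)$, for every $\varepsilon>0$ the set $\{|j(b)|\ge\varepsilon\}$ is compact. An \'etale groupoid has discrete fibres, so each conjugacy class inside that set is finite. It follows that every $\mfkg\in W$ has a finite $\mathcal{S}$-conjugacy class. By \eqref{eq:ricc,stronger} that class is then a singleton, i.e.\ every $\mfkg\in W$ commutes with all of $\mathcal{S}(r(\mfkg))$.
\end{enumerate}

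The obstacle is to upgrade this to $W\subseteq\mathcal{S}$. My first attempt would be to consider the subgroupoid $\mathcal{S}'$ generated by $\mathcal{S}$ and a small open bisection $U\subseteq W$ around a given point. Every element of $U$ centralizes $\mathcal{S}$, and $U\subseteq\Iso(\calG)^\circ$ is a bisection inside the isotropy, so its powers commute with each other pointwise at each unit. The set $\mathcal{S}'=\bigcup_{n\in\Z}U^n\mathcal{S}$ is then open, lies in $\Iso(\calG)^\circ$, and is abelian. Maximality of $\mathcal{S}$ forces $U\subseteq\mathcal{S}$, hence $W\subseteq\mathcal{S}$. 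This gives $j(b)=j(P(b))$, so $b=P(b)\in B$ by injectivity of $j$. The delicate point is that $U^n$ commutes with $U^m$ and with $\mathcal{S}$ pointwise: each fibre of $U$ over a unit $x$ is a single element of $\calG(x)$ that centralizes $\mathcal{S}(x)$, so this should reduce to a fibrewise check.
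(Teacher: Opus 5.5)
Your proof is correct. The paper gives no argument of its own for this statement; it imports it from \cite[Corollary 4.5]{DWZ:Twist}, where it is proved for twisted groupoid algebras via a maximal-abelianness criterion phrased through $\mathcal{S}$-conjugacy classes (the mechanism recalled in Remark~\ref{rmk:ricc,stronger}). You instead give a self-contained proof in the untwisted Hausdorff case, and it shows exactly where each hypothesis is used:
\begin{itemize}
\item closedness of $\mathcal{S}$ is needed only for the expectation (restriction, made contractive by compressing $\operatorname{Ind}_x$ to $\ell^2(\mathcal{S}_x)$);
\item normality is needed only for regularity, since $U^{-1}\mathcal{S}U=\{\mfkg^{-1}\mfks\mfkg\}$ for a bisection $U$ because $\mathcal{S}\subseteq\Iso(\calG)$;
\item maximality and \eqref{eq:ricc,stronger} are needed only in the masa step. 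There, $j(b)\in C_0(\calG)$ and discreteness of $\calG_x$ make the $\mathcal{S}$-conjugacy classes in $\{j(b)\neq 0\}$ finite, hence trivial.
\end{itemize}

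The fibrewise check you flag does go through. Since $U$ is a bisection inside $\Iso(\calG)$, $U^n=\{u^n : u\in U\}$. So the fibre of the enlarged groupoid over $x\in s(U)$ is the group generated by $\mathcal{S}(x)$ and one element centralizing it, which is abelian. The sets $U^n\mathcal{S}$ are open because $\calG$ is \'etale.

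Two small fixes. First, write the enlargement as $\mathcal{S}\cup\bigcup_{n\neq 0}U^n\mathcal{S}$: the $n=0$ term $U^{-1}U\mathcal{S}$ is only $\mathcal{S}$ restricted to $s(U)$, so it does not recover all of $\mathcal{S}$. Second, you tacitly use the standard fact that extension by zero $C^*_r(\mathcal{S})\to C^*_r(\calG)$ is isometric for open $\mathcal{S}$ with $\mathcal{S}\z=\calG\z$.

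What the cited route buys is generality. It covers twists, where the conjugation-invariance of $j(b)$ and the commutation with $\mathcal{S}$ must be handled in the twist rather than in $\calG$. It also has a localized variant \cite[Proposition 3.14]{DWZ:Twist} that does not require \eqref{eq:ricc,stronger} at every point. Your route shows that, for the untwisted Hausdorff groupoids used in this paper, the theorem needs nothing beyond $j\colon C^*_r(\calG)\to C_0(\calG)$ and discreteness of the fibres.
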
 

It is known
\cite{Self-similar:EP17}
that $\cGss$ is locally compact Hausdorff \'etale, and we have already 
shown in Lemma~\ref{lem:Ssym is subgroupoid} that $\ssym$ is an open abelian subgroupoid of $\Iso(\cGss)^{\circ}$. The next subsections are devoted to verifying that, given suitable self-similar graphs $(G,E)$, the other assumptions in Theorem~\ref{thm:DWZ} are likewise satisfied 
(To see why a condition such as~\eqref{eq:ricc,stronger} is needed, see Remark~\ref{rmk:ricc,stronger}).

In Subsection \ref{ssec:closedness}, we characterize the closedness of the groupoid $\ssym$ based on the dynamics of $(G,E)$ in Theorem~\ref{thm:closedness-equivalent}, which leads us to Assumption~\ref{it:Thm_Ssym_Cartan:generic or rare} in Theorem~\ref{Thm_Ssym_Cartan}. Assumption~\ref{it:Thm_Ssym_Cartan:cycles} is needed to show that $\ssym$ is maximal among open abelian subgroupoids of $\Iso(\cGss)^{\circ}$; see Theorem~\ref{thm:Ssym is maximal}. 

Combining Theorem~\ref{thm:closedness-equivalent} with Lemma~\ref{C:can-act} yields the following theorem for Cartan subalgebras in the genuine action case. 

\begin{thm} 
\label{Thm_genuine_action_Cartan} 
Let $(G, E)$ be a genuine action of an abelian group on a graph $E$ satisfying our standing assumptions on page~\pageref{Notation and conventions} and where $G$ has an element of infinite order. The following are equivalent:
\begin{enumerate}[label=\textup{(\arabic*)}] 
\item \label{equiv-closed} For any $x\in E^{\infty}$ and $g\in G$, $x$ is either $g$-generic or $g$-rare.
\item $C^*_{r}(\Iso(\cGss)^\circ)$ is a Cartan subalgebra in $C^*_{r}(\cGss)$. 
\end{enumerate} 
\end{thm}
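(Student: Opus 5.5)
The plan is to apply Theorem~\ref{thm:DWZ} with $\calS \defeq \Iso(\cGss)^\circ$ itself, using Corollary~\ref{C:can-act} for abelianness and Theorem~\ref{thm:closedness-equivalent} to convert closedness into Condition~\ref{equiv-closed}. For the direction (2)$\Rightarrow$(1), if $C^*_r(\Iso(\cGss)^\circ)$ is Cartan, then its conditional expectation is faithful and its spectrum must equal $\cGss\z$. Together with the standard result (e.g.\ \cite{BNRSW:Cartan}) that an open abelian subgroupoid of $\Iso(\calG)^\circ$ whose $C^*$-algebra is Cartan must be closed, this shows $\Iso(\cGss)^\circ$ is closed in $\cGss$. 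Since $\Iso(\cGss)^\circ$ contains $\ssym$, Theorem~\ref{thm:closedness-equivalent} (which characterises closedness of any subgroupoid $\calS$ with $\ssym \subseteq \calS \subseteq \Iso(\cGss)^\circ$ when $G$ has an element of infinite order) then yields Condition~\ref{equiv-closed}.

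For (1)$\Rightarrow$(2), I check the hypotheses of Theorem~\ref{thm:DWZ} in order. First, $\cGss$ is locally compact Hausdorff étale, since genuine actions are pseudo-free. Second, $\calS = \Iso(\cGss)^\circ$ is open and abelian by Corollary~\ref{C:can-act}; it is therefore trivially maximal among open abelian subgroupoids of $\Iso(\cGss)^\circ$. Third, it is normal, because conjugation by any $\mfkg$ is a homeomorphism of $\cGss$ that preserves $\Iso(\cGss)$, and hence preserves its interior. Fourth, it is closed by Theorem~\ref{thm:closedness-equivalent} under Condition~\ref{equiv-closed}. Fifth, condition~\eqref{eq:ricc,stronger} holds trivially: for $\mfkg \in \Iso(\cGss)^\circ = \calS$ and composable $\mfks \in \calS$, abelianness gives $\mfks^{-1}\mfkg\mfks = \mfkg$, so each conjugacy set is a singleton.

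The main obstacle is the (2)$\Rightarrow$(1) direction. It needs the implication ``Cartan $\Rightarrow$ $\Iso^\circ$ closed'', which I would take from \cite{BNRSW:Cartan}: there, a Cartan $C^*_r(\calS)$ with $\calS$ open in $\Iso(\calG)^\circ$ forces $\calS$ to be closed, since the conditional expectation is given by restriction to $\calS$ and must map $C_c(\calG)$ into $C_0$-functions on $\calS$. If that reference only covers the converse, I would argue directly. Suppose $\calS$ is not closed, and pick $\mfkg \in \overline{\calS}\setminus\calS$. A function supported near $\mfkg$ then has a restriction to $\calS$ that is not continuous on $\overline{\calS}$, so its restriction cannot lie in $C_r^*(\calS)$. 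This contradicts the fact that the unique conditional expectation is restriction.

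Finally, I must also confirm that the hypotheses of Theorem~\ref{thm:closedness-equivalent} (an abelian $G$ with an element of infinite order, acting pseudo-freely) match those of the present theorem.
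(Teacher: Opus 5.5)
Your proposal is correct and follows essentially the same route as the paper: abelianness of $\Iso(\cGss)^\circ$ comes from Corollary~\ref{C:can-act}, the equivalence ``Cartan $\iff$ $\Iso(\cGss)^\circ$ closed'' comes from the groupoid literature, and then Theorem~\ref{thm:closedness-equivalent} is applied; the paper cites \cite[Corollary 4.5]{BNRSW:Cartan} for both directions of the middle step, while you use Theorem~\ref{thm:DWZ} for one of them, which reduces to the same statement when $\calS=\Iso(\cGss)^\circ$. Two harmless slips: the spectrum of $C^*_r(\Iso(\cGss)^\circ)$ is a dual bundle, not $\cGss\z$ (that remark is never used), and, strictly speaking, getting from closedness of $\Iso(\cGss)^\circ$ alone to condition~(1) needs the argument of (3)$\Rightarrow$(1) in Theorem~\ref{thm:closedness-equivalent} rerun via Lemma~\ref{lem:g-generic-iso} with $\calS=\Iso(\cGss)^\circ$, rather than the literal statement of that theorem (the paper glosses over this in the same way).
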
 
Moreover, we include an example where condition \ref{equiv-closed} fails so that, in particular, stronger versions of Theorem~\ref{thm:DWZ} imply that $\ssym$ will not yield a Cartan subalgebra of $C^*_r(\cGss[\Z])$ (see Example~\ref{exa:closed_g_generic}).

\subsection{Cycline triples and maximal abelianness of $\ssym$}
\label{ssec:cycline triples and masa} 
In this Subsection, our target is to show the following theorem. 

\begin{thm} 
\label{thm:Ssym is maximal} 
Let $(\Z, E)$ be a pseudo-free self-similar graph satisfying our standing assumptions on page~\pageref{Notation and conventions}.
Assuming Condition~\ref{it:Thm_Ssym_Cartan:cycles} of Theorem~\ref{Thm_Ssym_Cartan}, the subgroupoid $\ssym$ is maximal among open abelian subgroupoids of $\Iso(\cGss[\Z])^\circ$. 
\end{thm}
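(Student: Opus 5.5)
The plan is to argue by contradiction. Suppose $\calS$ is an open abelian subgroupoid of $\Iso(\cGss[\Z])^\circ$ with $\ssym\subseteq\calS$, and pick $\mfkt\in\calS\setminus\ssym$. By~\eqref{eq:int of Iso}, $\mfkt$ lies in some $\calZ(\alpha,g,\beta)$ with $(\alpha,g,\beta)$ cycline. Since $\calS$ is open, Lemma~\ref{lem:opensetexists} lets us shrink this cylinder so that $\mfkt\in\calZ(\alpha,g,\beta)\subseteq\calS$. The triple cannot be symmetric: a symmetric cycline triple has $g\in\Z_{s(\alpha)}$, which would force $\mfkt\in\ssym$. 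The goal is then to exhibit an element of $\ssym$ that fails to commute with $\mfkt$.

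The first step is structural and uses the vertex classification (Corollary~\ref{conclusion}), together with the fact from Theorem~\ref{mainthm_structure} that non-symmetric cycline triples have singleton cylinders. From this I would deduce that $s(\beta)E^\infty$ is a single path $C^\infty$, where $C$ is a cycle with no entrance, so that $\mfkt=(\beta C^\infty;\ \cdot\ ,\ |\alpha|-|\beta|;\ \beta C^\infty)$. The lag must be nonzero. Indeed, if $|\alpha|=|\beta|$, then comparing prefixes in $\alpha(g\cdot C^\infty)=\beta C^\infty$ gives $\alpha=\beta$, and Lemma~\ref{lem:cycles to cycles}\ref{it:cycles to cycles:self-similarity} then shows that $g$ fixes $C^\infty$, so the triple would be symmetric. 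Replacing $\mfkt$ by its inverse if needed, I take the lag $k=|\alpha|-|\beta|$ to be nonzero, and I expect it to be a multiple of $|C|$ up to a rotation of the cycle.

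The second step tests $\mfkt$ against the symmetric elements living on the cycle. Let $p$ be minimal with $a^p$ fixing a vertex $v$ on $C$. Because $C$ has no entrance, $vE^\infty=\{C^\infty\}$ (after rotating $C$), so $a^p\in\Z_v$. Hence for a suitable $\gamma$ ending at $v$, the cylinder $\calZ(\gamma,a^p,\gamma)$ lies in $\ssym$ and contains the generators $\mfks_n$ at $x=\beta C^\infty$. By the conjugation formula~\eqref{eq:conjugation}, $\mfkt^{-1}\mfks_n\mfkt$ equals $\mfks$ with index shifted by the lag, i.e.\ $\calT_{n-k}([h_n|_{\sigma^n(x)}])$. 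Commutation therefore forces this class in $Q(\N,\Z)$ to be invariant under the shift by $k$. Writing $a^p|_C=a^q$ and iterating around $C$ via Lemma~\ref{it:powers go in}, the tail of $a^p|_{C^\infty}$ is, up to bookkeeping, a geometric-type sequence $a^{p(q/p)^j}$ sampled every $|C|$ steps.

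The main obstacle, and the step where Condition~\ref{it:Thm_Ssym_Cartan:cycles} enters, is showing that this sequence is not shift-invariant modulo tail equivalence when $q\neq\pm p$. Shift-invariance would force $q^j=p^j$ eventually, or a sign-alternating variant with the same period, i.e.\ $q=\pm p$. I would handle this separately according to whether $s(\alpha)$ lies on $C$ or on a different no-entrance cycle $g\cdot C$; this is the Type II/III versus Type I split mentioned in the introduction (Lemmas~\ref{lem:TypeIITypeIDontCommute} and~\ref{lem:TypeIIITypeIDontCommute}). In both cases I expect the comparison to reduce to the equation above through Lemma~\ref{lem:comm_rela} and pseudo-freeness, which is used to cancel restrictions, as in Lemma~\ref{lem:cascading}. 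This gives the contradiction and proves maximality.
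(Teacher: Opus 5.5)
Your overall strategy matches the paper's proof. Non-symmetric elements of $\Iso(\cGss[\Z])^\circ$ live only over cycles $C$ with no entrance, you test them against the Type~I elements $\calZ(\beta,a^p,\beta)\subseteq\ssym$, and you conclude from $R\neq\pm1$ together with $r_i\neq0$ (pseudo-freeness), where $a^p|_C=a^{pR}$ as in Lemma~\ref{compute_cycle}. Your use of \eqref{eq:conjugation} is actually cleaner than the paper's route through Lemma~\ref{lem:comm_rela}. Because $Q(\N,\Z)$ is abelian, $\mfkt=(x;[F],k;x)$ commutes with $\mfks=(x;[f],0;x)$ if and only if $\calT_k([f])=[f]$, whatever $[F]$ is.

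The gap is in the step you yourself call the main obstacle. You expect the lag $k=|\alpha|-|\beta|$ to be a multiple of $|C|$, but this is false for Type~III triples, where $k\notin|C|\Z$ by Lemma~\ref{types_of_cycline_triples}. For such $k$, your picture of a geometric sequence $a^{pR^j}$ sampled every $|C|$ steps says nothing directly about invariance under $\calT_k$. So the claim ``shift-invariance forces $q=\pm p$'' is unjustified exactly in the case treated by Lemma~\ref{lem:TypeIIITypeIDontCommute}.

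Your proposed case split is also misdescribed. Since $\alpha(g\cdot C^\infty)=\beta C^\infty$, the cycle $g\cdot C$ always has the same edges as $C$, so $s(\alpha)$ always lies on $C$. The real split is $s(\alpha)=s(\beta)$ (Type~II, $k\in|C|\Z$) versus $s(\alpha)\neq s(\beta)$ (Type~III), and both are tested against Type~I, not ``II/III versus~I''.

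The repair is short and removes the need for any split. Write $a^p|_{C^\infty(0,m)}=a^{pQ_m}$, where $Q_m$ is the product of the $r_i$ over the first $m$ edges of $C^\infty$; then $Q_m\neq0$. For $k>0$, $\calT_k$-invariance means $Q_m=Q_{m-k}$ for all large $m$. Equivalently, the product of the $r_i$ over every far-out window of length $k$ equals $1$. Since $R\neq\pm1$ and all $r_i$ are nonzero integers, some $|r_j|\geq2$, and infinitely many such windows contain $e_j$, giving a contradiction; this is the paper's Type~III argument. Alternatively, $\calT_k$-invariance implies $\calT_{|C|k}$-invariance, which gives $Q_m=Q_{m-|C|k}R^{k}=Q_{m-|C|k}$. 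Hence $R^{k}=1$ and $R=\pm1$, again contradicting the hypothesis.
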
 

We first obtain a more concrete description of $\Iso(\cGss[\Z])^\circ$ by exploring cycline triples at vertices with different dynamical properties. Notice first that both locally faithful and non-locally faithful vertices (Definition~\ref{def:(n)lf}) could have symmetric cycline triples $(\mu, g, \mu)$.  
In fact, the following lemma is straightforward to check. 
\begin{lem}\label{lem:symmetric implies g stabilizes}
Let $(G, E)$ be a self-similar graph. 
If $(\mu, g, \mu)$ is a symmetric cycline triple, then $g\in G_{s(\mu)}$, and if additionally $s(\mu)$ is locally faithful, then $g=1_{G}$.
\end{lem}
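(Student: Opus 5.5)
The plan is to unwind the definition of a cycline triple directly. Suppose $(\mu,g,\mu)$ is a symmetric cycline triple, so $s(\mu)=g\cdot s(\mu)$ and $\mu(g\cdot x)=\mu x$ for every $x\in s(\mu)E^\infty$. Cancelling the common finite prefix $\mu$ (both sides are infinite paths beginning with $\mu$, so comparing coordinates beyond position $|\mu|$ is legitimate) gives $g\cdot x=x$ for all $x\in s(\mu)E^\infty$. This is exactly the condition $g\in G_{s(\mu)}$ in~\eqref{eq:G_v}, which settles the first claim. This is also the observation already recorded just before the reformulation of $\ssym$ in Section~\ref{S:sym}.

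For the second claim, assume in addition that $v\defeq s(\mu)$ is locally faithful in the sense of Definition~\ref{def:(n)lf}. It suffices to show that $g\cdot\nu=\nu$ for every finite path $\nu\in vE^*$; local faithfulness then forces $g=1_G$.

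To pass from infinite paths to finite ones, fix $\nu\in vE^*$. Since $E$ is source-free, $\nu$ extends to an infinite path $x=\nu z\in vE^\infty$. We then compute $g\cdot x$ using the action on infinite paths, in particular the coordinate formula $(g\cdot x)(0,|\nu|)=g|_{x(0,0)}\cdot x(0,|\nu|)=g\cdot\nu$, since $g|_v=g$ by~\ref{item:S2}. Because $g\cdot x=x$, comparing initial segments of length $|\nu|$ gives $g\cdot\nu=\nu$. When $\nu=v$ has length zero, the equality $g\cdot v=v$ follows from $g\cdot s(\mu)=s(\mu)$, which is part of the cycline hypothesis.

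The only potentially delicate point is this passage from fixing all infinite paths to fixing all finite paths. It relies on source-freeness, which is a standing assumption, and on the compatibility $(g\cdot x)(0,n)=g\cdot x(0,n)$. Both are already available, so no pseudo-freeness is needed and the argument should be entirely routine.
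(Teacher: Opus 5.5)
Your proof is correct, and it is exactly the direct unwinding of the definitions that the paper has in mind when it calls this lemma straightforward to check (the paper writes out no proof): cancelling the prefix $\mu$ gives $g\in G_{s(\mu)}$, and source-freeness lets you pass from fixed infinite paths to fixed finite paths in $s(\mu)E^*$, so that local faithfulness applies. You are also right that pseudo-freeness plays no role.
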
 

In comparison, a non-symmetric cycline triple only appears at vertices with a unique infinite path, and such vertices are automatically non-locally faithful provided $G$ is not a torsion group.
To be precise:

\begin{lem} \label{lem:one path} 
Let $(G, E)$ be a self-similar graph satisfying the standing assumptions of page~\pageref{Notation and conventions}.
Let $v \in E^0$. 
\begin{enumerate}[label=\textup{(\arabic*)}]
    \item\label{it:one path:unique infinite path} If $\mu (g\cdot x) = \nu x$ for some $g\in G$, infinite path $x$ with $r(x)=s(\nu)$ and $\mu, \nu \in E^{*}$ with $\mu \neq \nu$, then $x$ is the unique infinite path with $r(x)=s(\nu)$ satisfying $\mu (g\cdot x) = \nu x$. 
    \item\label{it:one path:non-symmetric cycline triple} If there exists a non-symmetric cycline triple at $v$, then $|vE^{\infty}| = 1$. 
    \item\label{it:one path:inf order}      
    Suppose $|vE^{\infty}| = 1$. If $g \cdot v = v$ for some non-trivial group element $g\in G$, then $v$ is not locally faithful; in particular, if $G$ has an element of infinite order, then $v$ is not locally faithful.
\end{enumerate}
\end{lem}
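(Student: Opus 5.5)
We prove the three items in order, each building on the previous one.

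For \ref{it:one path:unique infinite path}, suppose $\mu(g\cdot x)=\nu x$ with $\mu\neq\nu$. Comparing lengths, we may assume without loss of generality $|\mu|\neq|\nu|$. If $|\mu|=|\nu|$, then equality of the initial segments would force $\mu=\nu$, which is excluded; so indeed $|\mu|\neq|\nu|$. By symmetry (apply $g^{-1}$ and use $\nu x=\mu(g\cdot x)$, i.e.\ $\nu(g^{-1}\cdot y)=\mu y$ with $y=g\cdot x$), we may assume $|\nu|<|\mu|$, so $\mu=\nu\gamma$ with $|\gamma|=k\geq 1$. Then $x=\gamma(g\cdot x)$, and this gives a recursion that determines $x$ completely:
\[
x=\gamma\,(g\cdot\gamma)\,(g|_\gamma g\cdot x)=\cdots .
\]
Inductively, the first $nk$ edges of $x$ are determined by $\gamma$ and $g$ via
\[
x(0,(n+1)k)=\gamma\,\bigl(g\cdot x(0,nk)\bigr),
\]
using $(g\cdot x)(0,nk)=g\cdot x(0,nk)$. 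Any other $x'$ satisfying $\mu(g\cdot x')=\nu x'$ obeys the same recursion, so by induction on $n$ we get $x'(0,nk)=x(0,nk)$ for all $n$, and hence $x'=x$. The case $|\mu|<|\nu|$ is symmetric.

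For \ref{it:one path:non-symmetric cycline triple}, let $(\mu,g,\nu)$ be a cycline triple at $v$ with $\mu\neq\nu$. By definition, every $x\in vE^\infty$ satisfies $\mu(g\cdot x)=\nu x$, so by \ref{it:one path:unique infinite path} there is at most one such $x$. The set $vE^\infty$ is nonempty because $E$ is finite and source-free: one can always extend a path backwards along an incoming edge. Hence $|vE^\infty|=1$. If the triple has $\mu=\nu$ but is called non-symmetric, this is a contradiction, so non-symmetric means $\mu\neq\nu$ and the argument applies.

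For \ref{it:one path:inf order}, let $vE^\infty=\{x\}$ and $g\cdot v=v$ with $g\neq 1_G$. Then $g\cdot x\in (g\cdot v)E^\infty=vE^\infty$, so $g\cdot x=x$. Every $\mu\in vE^*$ is a prefix of an infinite path with range $v$, again by source-freeness, so $\mu=x(0,|\mu|)$ and $g\cdot\mu=(g\cdot x)(0,|\mu|)=\mu$. Thus $g$ fixes all of $vE^*$, and $v$ is not locally faithful.

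For the final claim, let $h$ have infinite order. Since $E^0$ is finite, the orbit of $v$ under $\langle h\rangle$ is finite, so some $h^n$ with $n\geq1$ fixes $v$, and $h^n\neq 1_G$ because $h$ has infinite order. Applying the previous paragraph to $g=h^n$ shows $v$ is not locally faithful.

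The only delicate step is the recursion in \ref{it:one path:unique infinite path}, specifically the identity $(g\cdot x)(0,nk)=g\cdot x(0,nk)$. This holds because $g|_{x(0,0)}=g$, so no restriction enters at the first segment.
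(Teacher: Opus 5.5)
Your proof is correct and follows the same structure as the paper. For (1), unequal lengths force a recursion that determines $x$, and the other length case reduces to this one via $y=g\cdot x$ and $g^{-1}$. Item (2) follows from (1), and (3) comes from the unique path being fixed plus pigeonhole.

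Your orientation $\mu=\nu\gamma$ gives the slightly cleaner prefix recursion $x(0,(n+1)k)=\gamma\,(g\cdot x(0,nk))$, which needs no restriction maps. You also make explicit two points the paper leaves implicit: $vE^\infty\neq\emptyset$ by source-freeness, and fixing the unique infinite path means fixing all of $vE^*$.
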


\begin{proof} 
\ref{it:one path:unique infinite path} 
We consider only the case $|\nu|-|\mu|\defeq k>0$, so that $\nu = \mu \tilde{\nu}$ for some $\tilde{\nu}\in E^{k}$; the other case follows similarly by considering the path $y=g\cdot x$ instead. Since $\mu (g\cdot x) = \nu x = \mu \tilde{\nu} x$, it follows that $g\cdot x = \tilde{\nu} x$.
Notice that the first length-$k$ segment 
$x(0,k)$
of the infinite path
is determined by $g$ and $\tilde{\nu}$. Indeed, since $g\cdot x = \tilde{\nu} x$ implies $g\cdot x(0,k)= \tilde{\nu}$, it follows that $x(0,k) = g^{-1}\cdot \tilde{\nu}$. Similarly, for any $n\geq 1$, we have 
\begin{align*} 
    \sigma^{nk}(x)
    =
    \sigma^{(n+1)k}(\tilde{\nu} x)
    &=
    \sigma^{(n+1)k}(g\cdot x)
    &&\text{by choice of $k$ and $\tilde{\nu}$}
    \\
    &=
    g|_{x(0, (n+1)k)}\cdot \sigma^{(n+1)k}(x)
    &&\text{by \cite[Lemma 3.7(ii)]{Self-similar:LY21}},
\end{align*} 
which implies 
\[
\big(g|_{x(0, (n+1)k)}\big)^{-1}
\cdot x(nk,(n+1)k) = x((n+1)k,(n+2)k). 
\] 
Thus the $(n+1)^\text{st}$ segment of length $k$ is determined by $g$ and the $n$-th segment of length $k$ for any $n\geq 0$. Inductively, $x$ is uniquely determined by $g$ and $\tilde{\nu}$. Or in other words, $x$ is the only infinite path with $r(x)=s(\nu)$ satisfying $\mu (g\cdot x) = \nu x$. 

\ref{it:one path:non-symmetric cycline triple} 
Suppose that there exists a non-symmetric cycline triple $(\mu, g, \nu)$ at $v$, which in particular means that $\mu\neq\nu$. By~\ref{it:one path:unique infinite path}, there exists a unique infinite path $x$ with $r(x)=v = s(\nu)$ such that $\mu (g\cdot x) = \nu x$. Since $(\mu, g, \nu)$ is a cycline triple, it follows that $vE^{\infty} = \{x\}$. 

\ref{it:one path:inf order} 
Let $x$ be the unique infinite path with $r(x)=v$. If $v$ is fixed by a non-trivial group element $g\in G$, then $g$ fixes $x$.
As a result, the vertex $v$ is not locally faithful.
The last sentence in~\ref{it:one path:inf order} follows from the pigeonhole principle. Indeed, suppose there exists an element $a\in G$ of infinite order. Since $\{a^m \cdot v : m\in \N \} \subset E^{0} $ is finite by assumption on $E$, there exist distinct natural numbers $m, m'$ such that $a^m\cdot v = a^{m'}\cdot v$. Thus the non-trivial group element $a^{m-m'}$ fixes $v$. 
\end{proof} 

    We record the following corollary of Lemma~\ref{lem:one path}~\ref{it:one path:inf order}, which will be helpful for us later.

\begin{cor}\label{cor:actiononcycles:nlf}
    Let $(G, E)$ be a self-similar graph satisfying the standing assumptions of page~\pageref{Notation and conventions},
    and such that $G$ has an element of infinite order.

    If $h\in G$ is arbitrary and if $C$ is a cycle with no entrance, then
        no vertex on $h\cdot C$ is
        locally faithful.
\end{cor}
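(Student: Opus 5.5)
The plan is to reduce to Lemma~\ref{lem:one path}\ref{it:one path:inf order}, applied to each vertex lying on the cycle $h\cdot C$. Fix $h\in G$ and a cycle $C$ with no entrance. By Lemma~\ref{lem:cycles to cycles}\ref{it:cycles to cycles:length} and~\ref{it:cycles to cycles:entrance}, $C'\defeq h\cdot C$ is again a cycle, of the same length, and it has no entrance. So it suffices to prove the claim for an arbitrary cycle with no entrance. After renaming, we may therefore assume $h=1_G$ and just show that no vertex on $C$ is locally faithful.

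Next, let $v$ be any vertex on $C$, say $v=r(\mu_i)$ where $C=\mu_1\dots\mu_k$. We check that $|vE^\infty|=1$. An infinite path $x$ with $r(x)=v$ starts with an edge whose range is $v$. Since $C$ has no entrance, that edge must be $\mu_i$. Its source $s(\mu_i)$ is again on $C$, so inductively every edge of $x$ is forced to be the next edge of $C$. Hence $x$ is the unique rotation of $C^\infty$ starting at $v$. Such a path exists because $C$ is a cycle, and this gives $|vE^\infty|=1$.

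Finally, $G$ has an element of infinite order by hypothesis, and $E$ is finite by the standing assumptions. Lemma~\ref{lem:one path}\ref{it:one path:inf order} then applies directly: via the pigeonhole argument given there, some nontrivial power of that element fixes $v$, and hence fixes the unique infinite path at $v$. This shows $v$ is not locally faithful. Since $v$ was an arbitrary vertex on the cycle, the corollary follows.

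The only step needing any care is the verification that every vertex of a cycle without entrance receives exactly one infinite path; the rest is bookkeeping with the earlier lemmas.
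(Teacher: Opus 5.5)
Your proof is correct and takes the same approach as the paper, which records this statement as an immediate corollary of Lemma~\ref{lem:one path}\ref{it:one path:inf order}. The paper's route is: Lemma~\ref{lem:cycles to cycles} shows $h\cdot C$ is again a cycle without entrance, each vertex on such a cycle receives exactly one infinite path (a fact the paper also uses without comment in the proof of Lemma~\ref{lem:cycles to cycles}\ref{it:cycles to cycles:self-similarity}), and the pigeonhole argument then yields a nontrivial element fixing that vertex.
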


\medskip

Given a non-symmetric cycline triple $(\alpha,g,\beta)$, Lemma~\ref{lem:one path}~\ref{it:one path:non-symmetric cycline triple} states that there is a unique infinite path $x$ with $x(0,0)=s(\beta)$. Since we assume that our graph $E$ has only finitely many vertices, $x$ must thus be contained in a path of the form depicted in Figure~\ref{diag:nlf on stem}.
In particular, there are two scenarios for where $x(0,0)$ appears: Either, $x(0,0)$ lies 
on the cycle (meaning it equals a vertex such as $v$ 
or $v'$ in the figure), so that $x$ is (a shift of) the infinite cycle. 
Or $x(0,0)$ lies `on the stem', meaning it equals a vertex such as $w$ in Figure~\ref{diag:nlf on stem}.

\begin{figure}[h!] 
\centering
\begin{tikzpicture}[xscale=-1]

\def \radius {1.5} 
\def \factor {2} 
\def \starting {.75} 
\def \n {6} 
\def \m {3} 
\def \margin {1} 

\foreach \s in {1,...,\n}
{
  \node[draw, circle,fill=black, scale=0.2]
    at ({360/\n * (\s - 1)}:\radius cm) {};
  \draw[->-]
    ({360/\n * (\s - 1)+\margin}:\radius cm)
    arc ({360/\n * (\s - 1)+\margin}:{360/\n * (\s)-\margin}:\radius cm);
}

\node[right] at ({\margin}:\radius cm) {$v$};
\node[above] at ({290+\margin}:\radius cm){
$v'$
} ;

\foreach \t in {\starting,...,\m}
{
  \node[draw, circle,fill=black, scale=0.2]
    at ({0}:\factor*\t cm + \factor cm) {};
  \draw[->-]
    ({0}:\factor*\t cm) -- ({0}:\factor*\t cm +\factor cm);
}

\node[above] at ({0}:\factor*\starting cm + \factor*\m cm - \factor cm ) {$w$};

\node[left] at ({0}:\factor*\m cm + \factor cm - 5mm) {$\dots$};

\end{tikzpicture}

\caption{An infinite cycle $C^\infty$ at a vertex $v$ with a stem from $v$ to $w$ and beyond}\label{diag:nlf on stem}
\end{figure}

Since $(\alpha,g,\beta)$ was assumed to be a non-symmetric cycline triple at $r(x)$, our next lemma shows that the latter situation cannot happen, i.e., that we must have $r(x)\neq w$. In other words, non-symmetric cycline triples can only appear at vertices on cycles with no entrance.

Note that if $G$ has an element of infinite order, then Lemma~\ref{lem:one path}~\ref{it:one path:inf order} asserts that 
none of the vertices in Figure~\ref{diag:nlf on stem} are
locally faithful.

\begin{lem} 
\label{it:one path:no cycle} 
Let $(G, E)$ be a self-similar graph under the standing assumptions of page~\pageref{Notation and conventions}. If $|vE^{\infty}| = 1$ and $v$ is not on a cycle, then every cycline triple at $v$ is symmetric.  
\end{lem}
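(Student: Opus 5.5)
Let $(\mu,g,\nu)$ be a cycline triple at $v$ and suppose for contradiction that $\mu\neq\nu$. Let $x$ be the unique element of $vE^{\infty}$. The cycline condition gives $\mu(g\cdot x)=\nu x$. Since the lengths must then agree after the prefix, I will first argue that $|\mu|\neq|\nu|$. Indeed, if $|\mu|=|\nu|$, comparing the first $|\mu|$ edges forces $\mu=\nu$, contrary to assumption.

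By symmetry, as in the proof of Lemma~\ref{lem:one path}~\ref{it:one path:unique infinite path}, I may assume $k\defeq|\nu|-|\mu|>0$. In the other case I replace $x$ by $g\cdot x$, which is the unique infinite path at $g\cdot v=s(\mu)$, and swap roles via the inverse element $(\nu x;\dots;\mu(g\cdot x))$. Writing $\nu=\mu\tilde\nu$ with $|\tilde\nu|=k$, the equation becomes $g\cdot x=\tilde\nu x$.

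The key observation concerns vertices. Here $r(\tilde\nu)=r(g\cdot x)=g\cdot v$ and $s(\tilde\nu)=r(x)=v$, so $\tilde\nu$ is a path of length $k$ from $v$ to $g\cdot v$. The natural plan is to iterate this to produce a cycle through $v$.

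Applying $g$ repeatedly and using self-similarity, I get $g^{n}\cdot x = (g^{n-1}\cdot\tilde\nu)\cdots(g\cdot\tilde\nu)\,\tilde\nu\, x$, up to restrictions. More carefully, $g^2\cdot x=g\cdot(\tilde\nu x)=(g\cdot\tilde\nu)(g|_{\tilde\nu}\cdot x)$, and the restricted elements appear. A cleaner route avoids this: the edges along $x$ themselves give the structure. Since $\sigma^{k}(g\cdot x)=x$ and $g\cdot x$ is the unique infinite path at $g\cdot v$, I can argue via vertices only. The vertex $x(k,k)=(g\cdot x)(2k,2k)$ satisfies $(g\cdot x)(j,j)=g\cdot x(j,j)$, because the action on vertices of paths commutes with $r,s$ and restrictions act compatibly by axiom (4') of Remark~\ref{rmk:abuse of terminology}. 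Hence $x(j,j)=g\cdot x(j+k,j+k)$ for all $j$.

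Applying this with $j=0$ gives $v=g\cdot x(k,k)$. Iterating, $v=g^{n}\cdot x(nk,nk)$ for every $n$. This alone is not a contradiction, so I will instead use finiteness. The vertex $v$ is not on a cycle, so $x$ visits $v$ only at position $0$, and every vertex $x(j,j)$ with $j>0$ differs from $v$. Now $g$ acts as a permutation of the finite set $E^{0}$, so $g^{m}$ fixes every vertex for some $m\ge1$. Then $v=g^{m}\cdot x(mk,mk)=x(mk,mk)$ with $mk>0$. So $x$ returns to $v$, meaning $x(0,mk)$ is a cycle through $v$, which is a contradiction.

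I expect the main obstacle to be justifying $(g\cdot x)(j,j)=g\cdot x(j,j)$. This should follow from $(g\cdot x)(p,q)=g|_{x(0,p)}\cdot x(p,q)$ together with $g|_{\mu}\cdot s(\mu)=g\cdot s(\mu)$, i.e.\ axiom (4'), applied inductively. A second point needing care is the reduction for the case $|\mu|>|\nu|$. There I will use the triple $(\nu,g^{-1},\mu)$ at $g\cdot v$, together with the facts that $g\cdot v$ is also not on a cycle and has a unique infinite path, since the action preserves cycles and paths.
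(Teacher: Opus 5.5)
Your proof is correct, but it takes a different and shorter route than the paper's.

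The paper first uses finiteness of $E^0$ to write the unique path as $x=x(0,n)C^\infty$, a stem followed by a cycle. It then treats $|\mu|>|\nu|$ in two subcases, $|\tilde\mu|\ge n$ and $|\tilde\mu|<n$. In each, $g$ or the restriction $g|_{x(0,n-|\tilde\mu|)}$ would send a vertex lying on no cycle to a vertex on $C$, which Lemma~\ref{lem:cycles to cycles}\ref{it:cycles to cycles:length} forbids. The case $|\mu|<|\nu|$ is reduced to this through the triple $(\nu,g^{-1},\mu)$ at $g\cdot v$, just as you propose.

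You instead track only the vertex sequence of $x$. The relation $x(j,j)=g\cdot x(j+k,j+k)$, together with the finite order of the permutation of $E^0$ induced by $g$, forces $x(mk,mk)=v$. So $x$ itself closes up into a cycle through $v$.

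What each route buys:
\begin{itemize}
\item Yours removes the stem/cycle decomposition and all subcases. It never uses restrictions beyond $g|_{r(x)}=g$, and its core never uses uniqueness of $x$: any $x\in vE^\infty$ works.
\item The paper's makes the stem-plus-cycle picture of Figure~\ref{diag:nlf on stem} explicit and reuses Lemma~\ref{lem:cycles to cycles}.
\end{itemize}

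Two simplifications for your write-up:
\begin{itemize}
\item The step you flag as the main obstacle is immediate. Since $(g\cdot x)(0,j)=g\cdot x(0,j)$ and the action commutes with $s$, you get $(g\cdot x)(j,j)=s(g\cdot x(0,j))=g\cdot x(j,j)$.
\item The case $|\mu|>|\nu|$ needs no reduction. Writing $\mu=\nu\tilde\mu$, the equation $\tilde\mu(g\cdot x)=x$ gives $x(j+k,j+k)=g\cdot x(j,j)$, hence $x(mk,mk)=g^m\cdot v=v$ directly.
\end{itemize}
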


\begin{proof} 
Let $(\mu , g, \nu)$ be a cycline triple at $v$ and $vE^\infty=\{x\}$. Then 
 \begin{equation}\label{eq:mu (g cdot x) = nu x}
 \mu (g\cdot x) = \nu x.
 \end{equation}
Take $n\in \mathbb{N}$ to be the minimal integer such that the vertex $w = x(n, n)$ appears in $x$ more than once, and take $m$
to be the minimal positive integer such that $x(n + m, n+m) = w$. Then $C\coloneqq x(n, n+m)$ is a cycle starting and ending at $w$ and $x(0, n) C^{\infty}$ is an infinite path with range $v$. Uniqueness of infinite paths with $x(0,0)=v$ implies that $x = x(0,n) C^{\infty}$. There are three cases to consider, depending on how the lengths of $\mu$ and $\nu$ compare:

\paragraph{Case 1: $|\mu|=|\nu|$.}
Equation~\eqref{eq:mu (g cdot x) = nu x} implies that
$\mu=\nu$, 
as desired.

\paragraph{Case 2: $|\mu|>|\nu|$.} 
Equation~\eqref{eq:mu (g cdot x) = nu x} implies that $\mu = \nu \widetilde\mu$ for some finite path $\widetilde\mu$ and 
\begin{equation}\label{eq:widetilde mu (g cdot x) = x}
\widetilde\mu \bigl(g\cdot (x(0,n) C^\infty)\bigr) = x(0,n) C^\infty. 
\end{equation} 
Again, we make a case distinction according to how the lengths of $\widetilde\mu$ and $x(0,n)$ compare: 
\begin{itemize}
\item 
If $|\tilde{\mu}| \geq n$, it follows from~\eqref{eq:widetilde mu (g cdot x) = x} that $w' = s(\tilde{\mu})$ is on $C$. Take $C'$ to be the unique cycle starting and ending at $w'$. Removing paths of length $|\tilde{\mu}|$ on both sides of~\eqref{eq:widetilde mu (g cdot x) = x}, we have 
\begin{equation} 
\label{eq:lag by length n}
g\cdot (x(0,n) C^\infty) = C'^{\infty}. 
\end{equation} 
This implies that the range vertex $v$ of $x(0,n) C^\infty$, which is not on a cycle by our assumption, is sent by the group element $g$ to the range vertex $w'$ of $C'^{\infty}$, which is on a cycle. This is impossible by Lemma~\ref{lem:cycles to cycles}~\ref{it:cycles to cycles:length}.
\item 
If $|\tilde{\mu}| < n$, we remove the first length $n$ segment from both sides of~\eqref{eq:widetilde mu (g cdot x) = x} and 
\begin{equation} 
g|_{x(0, n-|\tilde{\mu}|)} \cdot (x(n-|\tilde{\mu}|, n) C^{\infty}) = C^{\infty}. 
\end{equation} 
Note that $x(n-|\tilde{\mu}|, n-|\tilde{\mu}|)$ is not on the cycle as $|\tilde{\mu}| > 0$ and $n$ is the smallest positive integer such that $x(n,n)$ is on the cycle. Another application of 
Lemma~\ref{lem:cycles to cycles}~\ref{it:cycles to cycles:length}
shows that this case is impossible. 
\end{itemize} 
Therefore, it is impossible to have $|\mu|>|\nu|$. 
\paragraph{Case 3: $|\mu|<|\nu|$.} 
Equation~\eqref{eq:mu (g cdot x) = nu x} implies for $y:=g\cdot x$ that  $ \nu (g^{-1}\cdot y) = \mu y$. Since $|(g\cdot v)E^\infty|=|vE^\infty|=1$, this shows that $(\nu, g^{-1}, \mu)$ is a cycline triple. Again by
Lemma~\ref{lem:cycles to cycles}~\ref{it:cycles to cycles:length},
$g\cdot v$ does not lie on a cycle as $v$ is not on a cycle. We can now invoke Case~2 from above to reach another contradiction.
\end{proof}

We summarize the observations from Lemmas~\ref{lem:symmetric implies g stabilizes},~\ref{lem:one path}, and~\ref{it:one path:no cycle} as follows:

\begin{cor}\label{conclusion}  
Let $(G, E)$ be a self-similar graph, and consider the following partition of the set $E^{0}$ into four collections of vertices: 
\begin{enumerate} [label=\textup{(\arabic*)}] 
\label{item:Type1} 
\item \label{vertex_1} $v$ is locally faithful; 
\item \label{vertex_2} $v$ is not locally faithful and $|vE^{\infty}|\geq 2$; 
\item \label{vertex_3} $v$ is not locally faithful,
$|vE^{\infty}|=1$, and $v$ is not on a cycle; 
\item \label{vertex_4} $v$ is not locally faithful and $vE^{\infty}=\{C^\infty\}$ for some cycle $C$. 
\end{enumerate} 
If $G$ has an element of infinite order and $E$ has finitely many vertices and no sources, then the following hold:
\begin{itemize}
    \item If $v$ is a vertex of type~\ref{vertex_1}, then all cycline triples at $v$ are  trivial. 
    \item If $v$ is a vertex of type~\ref{vertex_2} or~\ref{vertex_3}, then all cycline triples at $v$ are symmetric.
    \item Only vertices of type~\ref{vertex_4} allow non-symmetric cycline triples.
\end{itemize}
\end{cor}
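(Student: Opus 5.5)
The plan is to first check that the four collections really partition $E^{0}$, and then treat each bullet separately, using the lemmas just proved.

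\textbf{Partition.} A vertex is either locally faithful (type~\ref{vertex_1}) or not. If it is not, then, since $E$ is source-free, we have $|vE^{\infty}|\geq 1$, so either $|vE^{\infty}|\geq 2$ (type~\ref{vertex_2}) or $|vE^{\infty}|=1$. In the latter case, $v$ lies on a cycle or it does not. If $v$ lies on a cycle $C$ based at $v$, then $C^\infty\in vE^{\infty}$, and uniqueness forces $vE^{\infty}=\{C^\infty\}$ (type~\ref{vertex_4}). Otherwise $v$ is of type~\ref{vertex_3}. For the converse direction of type~\ref{vertex_4}: if $vE^\infty=\{C^\infty\}$ for a cycle $C$, then $v=r(C)$ lies on $C$. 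So the four types are disjoint and exhaustive.

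\textbf{Type~\ref{vertex_1}.} Let $(\mu,g,\nu)$ be a cycline triple at a locally faithful vertex $v$.
\begin{itemize}
\item If it is non-symmetric, Lemma~\ref{lem:one path}\ref{it:one path:non-symmetric cycline triple} gives $|vE^{\infty}|=1$. Then Lemma~\ref{lem:one path}\ref{it:one path:inf order}, using the element of infinite order and finiteness of $E^0$, shows that $v$ is not locally faithful, a contradiction.
\item So the triple is symmetric, of the form $(\mu,g,\mu)$. Lemma~\ref{lem:symmetric implies g stabilizes} then gives $g=1_G$, so the triple is trivial.
\end{itemize}

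\textbf{Types~\ref{vertex_2} and~\ref{vertex_3}.} For type~\ref{vertex_2}, a non-symmetric cycline triple would force $|vE^\infty|=1$ by Lemma~\ref{lem:one path}\ref{it:one path:non-symmetric cycline triple}, contradicting $|vE^\infty|\geq 2$. For type~\ref{vertex_3}, Lemma~\ref{it:one path:no cycle} applies directly and gives symmetry.

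\textbf{Third bullet.} This follows by elimination: the first two bullets show that non-symmetric cycline triples cannot occur at vertices of types~\ref{vertex_1}--\ref{vertex_3}, so they can only occur at type~\ref{vertex_4}.

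The only mildly delicate point is the partition step for type~\ref{vertex_4}, namely that a vertex with a unique infinite path that lies on a cycle has that path equal to $C^\infty$. This is immediate from $C^\infty\in vE^\infty$. I expect no real obstacle; the argument is bookkeeping over the earlier lemmas.
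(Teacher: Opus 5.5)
Your proof is correct and is essentially the paper's argument: the paper states this corollary as a direct summary of Lemmas~\ref{lem:symmetric implies g stabilizes}, \ref{lem:one path} and~\ref{it:one path:no cycle}, combined exactly as you do, with the third bullet following by elimination. Your explicit check that the four types partition $E^{0}$, which the paper asserts without proof, is correct and a harmless addition.
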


Although the type division is not essential for the main proofs, it provides an intuitive picture of vertex configurations with different dynamical properties.

\begin{figure}[h]
    \centering

 \scalebox{.9}
{    
    \begin{tikzpicture}[
      dot/.style={circle, fill=black, draw=none, inner sep=1.3pt},
      dashedsep/.style={dashed, line width=1.0pt},
      M/.style={
        matrix of nodes,
        nodes={anchor=center, inner sep=2pt},
        row sep=8mm,
        column sep=10mm
      }, 
      ampersand replacement=\& 
    ]
    
    
    \newcommand{\LoopW}{2.3cm} 
    \newcommand{\LoopH}{1.6cm} 

    
    \matrix (m) [M] {
      \text{Type~\ref{vertex_1}} \& {} \& {} \& {} \& {} \& {} \& {} \& {} \& \text{Type~\ref{vertex_4}} \\
      {} \& {} \& {} \& {} \& {} \& {} \& {} \& {} \& |[name=r1b, dot]| {} \\
      {} \& {} \& {} \& |[name=v, dot]| {} \& {} \& {} \& {} \& {} \& {} \\
      |[name=w, dot]| {} \& {} \& |[name=a, dot]| {} \& {} \& {} \& |[name=b, dot]| {} \& {} \& {} \& |[name=c, dot]| {} \\
      {} \& {} \& {} \& {} \& {} \& {} \& |[name=d, dot]| {} \& {} \& |[name=e, dot]| {} \\
      {} \& {} \& {} \& {} \& {} \& {} \& {} \& {} \& {} \\
    };
    
    
    \node at ($(m-1-2.center)!0.5!(m-1-5.center)$) {\text{Type }\ref{vertex_2}};
    \node at ($(m-1-5.center)!0.5!(m-1-8.center)$) {\text{Type }\ref{vertex_3}};
    
    
    \draw[dashedsep] (m-1-2.center) -- (m-6-2.center);
    \draw[dashedsep] (m-1-5.center) -- (m-6-5.center);
    \draw[dashedsep] (m-1-8.center) -- (m-6-8.center);
    
    
    \node[right=2pt] at (v.north) {
    };
    \node[below=2pt] at (w.south) {
    };
    
    
    \LoopRight{r1b}{}
    
    \LoopAbove{v}{}
    \LoopLeft{v}{}
    
    \LoopLeft{w}{}
    \LoopAbove{w}{}
    \LoopBelow{w}{}
    
    
    \draw[->-] (r1b.center) -- (a.center);
    \draw[->-] (v.center) -- (a.center);
    \draw[->-] (a.center) -- (w.center);
    \draw[->-] (b.center) -- (a.center);
    \draw[->-] (c.center) -- (b.center);
    \draw[->-] (d.center) -- (a.center);
    \draw[->-] (e.center) -- (d.center);
    
    \node[dot, name=cr] at ($(c)+(1.8cm,0)$) {};
    \node[dot, name=er] at ($(e)+(1.8cm,0)$) {};

    \draw[->-] (c)  to[bend left=35] (cr);
    \draw[->-] (cr) to[bend left=35] (er);
    \draw[->-] (er) to[bend left=35] (e);
    \draw[->-] (e)  to[bend left=35] (c);
    \end{tikzpicture}
}
    \caption{Configurations of vertices of different types. The column headers refer to the types introduced in Corollary~\ref{conclusion}.} 
    \label{fig:all types of vertices}
\end{figure}

\medskip

Under the assumption of pseudo-freeness,
possible paths between vertices follow from invoking
Lemma~\ref{lem:range is nlf} and from the comparison of numbers of infinite paths ending at each vertex. For instance, there are no finite paths with the source vertex of type~\ref{vertex_1} and range vertex of type~\ref{vertex_2},~\ref{vertex_3} or~\ref{vertex_4} as a consequence of Lemma~\ref{lem:range is nlf}. Moreover, there are no paths from a type~\ref{vertex_2} vertex to a type~\ref{vertex_3} vertex, as infinite paths at the source vertex are inherited by the range vertex. 

A more concrete description of $\Iso(\cGss)^\circ$ follows from previous discussions, which is essential in the proof of both maximal abelianness (Theorem~\ref{thm:Ssym is maximal}) and closedness of $\ssym$ (Theorem~\ref{thm:closedness-equivalent}). The pseudo-freeness assumption ensures that cylinder sets form a basis for the topology, which in turn allows us to invoke \eqref{eq:int of Iso} for $\Iso(\cGss)^{\circ}$.

\begin{cor} 
\label{cor:the green corollary}
Let $(G, E)$ be a pseudo-free self-similar graph under the standing assumptions of page~\pageref{Notation and conventions}, and that $G$ has an element of infinite order.
Given a non-symmetric cycline triple $(\alpha, g, \beta)$, the associated compact open bisection $\calZ(\alpha, g, \beta)$ is a singleton and the vertex $s(\beta)$ is of type~\ref{vertex_4} as defined in Corollary~\ref{conclusion}.
Moreover, 
\begin{equation} 
\label{Iso:concrete_description} 
\Iso(\cGss)^\circ = \ssym \bigsqcup \left(
\bigcup_{\substack{(\alpha, g, \beta) \text{ non-symmetric} \\
\text{cycline triple}}} 
 \calZ(\alpha, g, \beta)\right).
\end{equation} 
\end{cor}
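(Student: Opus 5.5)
The argument splits into three parts: the singleton claim together with the location of $s(\beta)$, the decomposition of $\Iso(\cGss)^\circ$, and disjointness of that decomposition.

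\emph{Singleton and type.} Let $(\alpha,g,\beta)$ be a non-symmetric cycline triple. By Lemma~\ref{lem:one path}\ref{it:one path:non-symmetric cycline triple} we have $|s(\beta)E^\infty|=1$, so $\calZ(\alpha,g,\beta)$, which is parametrised by $x\in s(\beta)E^\infty$, is a singleton. Since $G$ has an element of infinite order, Lemma~\ref{lem:one path}\ref{it:one path:inf order} shows that $s(\beta)$ is not locally faithful. By Lemma~\ref{it:one path:no cycle}, $s(\beta)$ must lie on a cycle, for otherwise every cycline triple at $s(\beta)$ would be symmetric. Let $C$ be the cycle through $s(\beta)$. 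The unique infinite path with range $s(\beta)$ is then $C^\infty$. Uniqueness also forces $C$ to have no entrance at any of its vertices: an entrance would produce a second infinite path, since $E$ is source-free. Hence $s(\beta)$ is of type~\ref{vertex_4} in Corollary~\ref{conclusion}.

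\emph{The decomposition.} By pseudo-freeness, \eqref{eq:int of Iso} writes $\Iso(\cGss)^\circ$ as the union of $\calZ(\mu,g,\nu)$ over all cycline triples. Splitting this union into symmetric and non-symmetric triples gives the union in \eqref{Iso:concrete_description}, since the symmetric part is exactly $\ssym$ by definition.

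\emph{Disjointness.} This is the only real content. Suppose $\mfkg\in\calZ(\alpha,g,\beta)\cap\calZ(\mu,h,\mu)$ with $(\alpha,g,\beta)$ non-symmetric. Since $\calZ(\mu,h,\mu)\subseteq\ssym$, the lag of $\mfkg$ is $0$, so $|\alpha|=|\beta|$. Because the triple is cycline, $\alpha(g\cdot x)=\beta x$, and equal lengths then force $\alpha=\beta$. This contradicts non-symmetry.

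The only subtle point is that "non-symmetric" must be read as $\alpha\neq\beta$. Under this reading the lag argument closes the gap. One should not try to compare the $Q(\N,G)$-coordinates instead, as that route is unnecessary.
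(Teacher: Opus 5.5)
Your proposal is correct and follows the paper's route. The paper gets the singleton and type~\ref{vertex_4} claims by citing Corollary~\ref{conclusion}, which packages exactly the lemmas you invoke, and then reads off the decomposition from \eqref{eq:int of Iso}. Your lag argument for disjointness (every element of $\ssym$ has lag $0$, and a cycline triple with $|\alpha|=|\beta|$ is forced to be symmetric) makes explicit a point that the paper's proof leaves implicit.
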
 

\begin{proof} 
If $(\alpha, g, \beta)$ is a non-symmetric cycline triple, then by Corollary~\ref{conclusion}, $s(\beta)$ is of type~\ref{vertex_4}; in particular, $s(\beta)$ is not locally faithful and $s(\beta) E^{\infty} = 
\{C^\infty\}
$ for some cycle $C$. The latter implies directly that
$\calZ(\alpha, g, \beta)$ is a singleton consisting only of the element $(\alpha (g\cdot C^{\infty}); |\alpha|-|\beta|, \calT_{|\alpha|}([g|_{C^{\infty}}]); \beta C^{\infty})$.

By the description of $\Iso(\cGss)^\circ$ at 
Equation~\eqref{eq:int of Iso},
$\Iso(\cGss)^\circ$ is the union of all compact open bisections corresponding to cycline triples. Since the 
bisections corresponding to symmetric triples are contained in $\ssym$ by the definition of $\ssym$, this combined with the first paragraph proves the claim. 
\end{proof}

\begin{rem}
    It follows from Corollary~\ref{cor:the green corollary} that any subgroupoid of $\Iso(\cGss)^\circ$ that contains $\ssym$ is automatically open, since every point in $\Iso(\cGss)^\circ\setminus\ssym$ is open. 
\end{rem}

As a consequence of Corollary~\ref{cor:the green corollary}, to identify the circumstances under which $\ssym$ is maximal abelian in $\Iso(\cGss)^\circ$, it suffices to understand the commutation relation between elements in $\ssym$ and elements in $\calZ(\alpha,g,\beta)$ for non-symmetric cycline triples $(\alpha,g,\beta)$, which are automatically located at vertices on cycles with no entrance. 
Since only composable groupoid elements are relevant, we fix a cycle without any entrance and study locally the commutation relations among elements whose source and range infinite path ends up being an infinite repetition of this cycle.

We begin by establishing notation for the rest of the subsection. 

\begin{notation}
\label{Notation_cycle} 
In the remaining portion of Subsection \ref{ssec:cycline triples and masa}, we focus on the case $G = \mathbb{Z}$. Assume $C$ is a cycle with no entrance. Let $n$ be the number of edges on $C$, which we denote by $e_{0}, \dots, e_{n-1}$. We let $v_{i}\defeq s(e_{i})$, and we assume that the edges are ordered in such a way that $r(e_i) = v_{i+1\text{ mod }{n}}$ and that we can write the cycle as $C = e_{n-1} e_{n-2} \dots e_1 e_0$. Let $p$ be the smallest positive integer such that $a^{p}$ fixes some vertex on $C$. Since $C$ is a cycle with no entrance and the action respects sources and ranges, $a^{p}$ also fixes all finite paths and vertices on $C$ by induction. 
\end{notation} 
The following quick lemma divides potential cycline triples at a given vertex on $C$ into three types.

\begin{lem} 
\label{types_of_cycline_triples} 
Let $(\mathbb{Z}, E)$ be a self-similar graph, and let $
C$ and $p$ be as described in Notation~\ref{Notation_cycle}.
Every cycline triple at 
a given vertex $v$ on $C$
falls into exactly one of the types below: 
\begin{enumerate}[label=\textup{(\Roman*)}]
    \item\label{TypeI} symmetric cycline triples $(\nu, (a^p)^q, \nu)$ where $s(\nu) = 
    v
    $ and $q \in \Z $; 
    \item\label{TypeII} 
    non-symmmetric cycline triples  of the form
    $(\nu, (a^p)^q, \nu C^{k})$ 
 or
    $(\nu C^{k}, (a^p)^q, \nu)$ where $s(\nu) = 
    v   
    , k \in \N^+$, and $q \in \Z $; 
    \item\label{TypeIII} 
    non-symmmetric cycline triples of the form $(\mu, a^q, \nu)$ where $q \in \Z  \setminus p\Z $, $s(\nu) = v
    \neq s(\mu)$, $r(\mu) = r(\nu)$, and $a^q \cdot 
    v
    = s(\mu)$. 
 In this case, $|\mu|-|\nu|\notin |C|\Z$.
\end{enumerate} 
Moreover, for any cycline triple $(\mu, g, \nu)$ at $v$, the cylinder set $\calZ(\mu, g, \nu)$ is a singleton. 
\end{lem}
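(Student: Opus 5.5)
The plan is to begin from the fact that $v$ lies on a cycle $C$ with no entrance, so $vE^\infty=\{x\}$ where $x$ is the appropriate rotation of $C^\infty$ starting at $v$. Since $x$ is the only infinite path with range $v$, every cylinder set $\calZ(\mu,g,\nu)$ with $s(\nu)=v$ is a singleton. This settles the last assertion at once.

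Next, take a cycline triple $(\mu,a^q,\nu)$ at $v$. Then $s(\mu)=a^q\cdot v$, and $\mu(a^q\cdot x)=\nu x$. By Lemma~\ref{lem:cycles to cycles}, $a^q\cdot v$ lies on the cycle $a^q\cdot C$, which also has no entrance. The path $\nu x$ ends in the tail $C^\infty$, so $\mu(a^q\cdot x)$ must end in the tail $(a^q\cdot C)^\infty$. Comparing tails of one and the same infinite path shows that $a^q\cdot C$ is a rotation of $C$, so $s(\mu)$ is again a vertex on $C$. I will split into cases according to whether $s(\mu)=v$.

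\emph{Case $s(\mu)=v$.} Then $a^q$ fixes $v$. By minimality of $p$, and because the stabiliser of $v$ is a subgroup of $\Z$ (it also contains $a^p$, since $a^p$ fixes all of $C$), we get $q\in p\Z$. From $\mu(a^q\cdot x)=\nu x$ and $a^q\cdot x=x$ (Lemma~\ref{lem:cycles to cycles}\ref{it:cycles to cycles:self-similarity}), we get $\mu x=\nu x$. Suppose $|\mu|\ge|\nu|$. Then $\mu=\nu\gamma$ with $\gamma x=x$, where $\gamma$ starts and ends at $v$. Since $x$ is the unique path and $C$ has no entrance, $\gamma$ is an initial segment of $x$ that returns to $v$; as $C$ is a simple cycle, this forces $\gamma=C^k$. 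If $k=0$ the triple is of type~\ref{TypeI}; if $k>0$ it is of type~\ref{TypeII}. The case $|\mu|<|\nu|$ is symmetric.

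\emph{Case $s(\mu)\ne v$.} Then $a^q\cdot v\neq v$. Since $a^p$ fixes every vertex of $C$, the element $a^q$ cannot lie in $p\Z$, so $q\notin p\Z$. We have $r(\mu)=r(\nu)$ because the infinite paths $\mu(a^q\cdot x)$ and $\nu x$ agree. It remains to show $|\mu|-|\nu|\notin|C|\Z$. Suppose $|\mu|-|\nu|=k|C|$, and assume without loss of generality that $k\ge0$. Truncating the common path $\mu(a^q\cdot x)=\nu x$ at length $|\mu|$ gives $s(\mu)=x(k|C|,k|C|)=v$, because $x$ has period $|C|$. This contradicts $s(\mu)\ne v$. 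The case $k<0$ is handled the same way, using $\nu$ instead.

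The three types are mutually exclusive. Type~\ref{TypeI} is exactly the symmetric case, type~\ref{TypeII} is non-symmetric with $s(\mu)=s(\nu)$, and type~\ref{TypeIII} is non-symmetric with $s(\mu)\neq s(\nu)$.

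I expect the main obstacle to be the step $\gamma x=x\Rightarrow\gamma=C^k$, together with the identification of $s(\mu)$ as a vertex on $C$. Both rest on the same combinatorial fact. If $C$ has no entrance, then every finite path ending at a vertex of $C$ is a subpath of some $C^m$ (suitably rotated), and an infinite path whose tail lies on $C$ is determined by its offset modulo $|C|$. I will state and use this fact carefully.
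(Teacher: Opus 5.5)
Your proposal is correct and follows essentially the same route as the paper. You get the singleton claim from $|vE^\infty|=1$, and you split cases on whether $s(\mu)=v$ where the paper splits on whether $g$ fixes $C^\infty$; both conditions are equivalent to $q\in p\Z$. Then $\mu x=\nu x$ forces $\mu=\nu C^k$ or $\nu=\mu C^k$ in one case, and in the other $s(\mu)$ lies on $C$ with $|\mu|-|\nu|\notin|C|\Z$. Your truncation argument for that length condition and your derivation of $s(\mu)\neq s(\nu)$ from the case hypothesis are slightly more explicit than the paper's. Your use of simplicity of $C$ (and of its rotation at $v$) is an assumption the paper also makes implicitly.
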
 

\begin{proof} 
Let $(\mu, g, \nu)$ be a cycline triple at $v$, 
which means $\mu (g\cdot C^{\infty}) = \nu C^{\infty}$. If $g$ fixes $C^{\infty}$, then $g$ fixes $C$ in particular. By minimality of $p$, we then must have $g = (a^{p})^{q}$ for some $q\in \mathbb{Z}$. In this case, $\mu C^{\infty} = \nu C^{\infty}$, which implies $\mu = \nu$ (producing Type~\ref{TypeI}  cycline triples), or $\mu = \nu C^{k}$, or $\nu = \mu C^{k}$ for some $k\geq 1$ (producing Type~\ref{TypeII} cycline triples). If $g$ does not fix $C^{\infty}$, again by minimality of $p$, we have $g = a^{q}$ for some $q \in \Z  \setminus p\Z$ (thus giving Type~\ref{TypeIII} cycline triples). For such a cycline triple $(\mu, a^{q}, \nu)$, since $\mu (a^{q}\cdot C^{\infty}) = \nu C^{\infty}$, it follows from \cite[Lemma 3.5 (ii)]{Self-similar:LY21} that $s(\mu) = a^{q} \cdot s(\nu) = a^{q}|_{C^{\ell}} \cdot s(\nu)$ for any $\ell\in \mathbb{N}$ as $s(\nu)$ is on the cycle $C$. For large enough $\ell$, we have that $a^{q}|_{C^{\ell}} \cdot s(\nu)$ is a vertex on $C^{\infty}$, which implies that $s(\mu)$ is on the cycle $C$. By definition of Type~\ref{TypeIII}, $s(\mu)$ is a different vertex than $s(\nu)$. We thus have two finite paths $\mu, \nu$ ending at the same vertex but starting at different vertices on the same cycle $C$, so their lengths do not differ by a multiple of $|C|$.

Since $C$ has no entrance, the set $vE^\infty$ is a singleton for each vertex $v$ on $C$. In particular, the cylinder set corresponding to a cycline triple at $v$ of any of the three types is likewise a singleton.
\end{proof} 

For convenience, we refer to an element $\mfkg \in \Iso(\cGss[\mathbb{Z}])^\circ$ as Type~\ref{TypeI}, Type~\ref{TypeII}, or Type~\ref{TypeIII}  at $v$ if it belongs to a cyclinder set $\calZ(\mu,g,\nu)$ where $s(\nu) = v$ and $(\mu,g,\nu)$ is a cycline triple of Type~\ref{TypeI}, Type~\ref{TypeII}, or Type~\ref{TypeIII}, respectively.

In the rest of the section, we show that if $a^p|_C \neq a^{\pm p}$, then Type~\ref{TypeII} and Type~\ref{TypeIII}  elements do not commute with all of Type~\ref{TypeI} elements, which implies that $\ssym$ is maximal among open abelian subgroupoids. We collect computations needed in later proofs, which follow from Definition~\ref{D:ss} and Lemma \ref{lem:cycles to cycles}, and are essentially the same as Lemmas~\ref{lem:k_n} and~\ref{it:powers go in:relationship of generators of ssym}. In an effort to keep this subsection self-contained, we have again included a short proof. Note that pseudo-freeness is needed to ensure non-triviality of the power in Lemma~\ref{compute_cycle} \ref{restrict_i} below, and it is precisely through this that it enters the proof of maximal abelianness of $\ssym$ in $\Iso(\cGss)^{\circ}$.

\begin{lem} 
\label{compute_cycle}
Let $(\mathbb{Z}, E)$ be a 
self-similar graph 
under the standing assumptions of page~\pageref{Notation and conventions}, and let $C$ and $p$ be as described
in Notation~\ref{Notation_cycle}.
Then we have the following: 
\begin{enumerate} [label=\textup{(\arabic*)}] 
\item \label{restrict_i}
For each $i\in \{0, \dots, n-1\}$, there exists $r_i \in \mathbb{Z}$ such that $a^p|_{e_i} = a^{pr_i}$, and if the action is pseudo-free, then $r_{i}\neq 0$;
\item \label{restrict_power_i} 
For any finite path $\mu$ consisting of edges in $C$, $a^{pq}|_{\mu} = (a^{p}|_{\mu})^{q}$ for any $q\in \mathbb{Z}$;  
\item \label{restrict_cycle} 
Let $R\coloneqq r_{n-1} r_{n-2} \dots r_0$, then $a^p|_C = a^{pR}$; 
\item \label{restrict_cycle_other_start} 
For each $i\in \{1, \dots, n-1\}$, 
let $C_i = e_{n+i-1} \dots e_{i+1} e_i$, where all indices are $\hskip -.3cm\mod n$.
Then $a^p|_{C_{i}} = a^{pR}$.
\end{enumerate} 
\end{lem}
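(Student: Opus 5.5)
For \ref{restrict_i}, I will use that $a^p$ fixes every vertex on $C$ (Notation~\ref{Notation_cycle}). By Remark~\ref{rmk:abuse of terminology}\ref{cond:S4} we get $a^p|_{e_i}\cdot v_i = a^p\cdot v_i = v_i$, so $a^p|_{e_i}$ fixes the vertex $v_i$ on $C$. Every element of $\Z$ fixing a vertex of $C$ lies in $p\Z$. To see this, write the stabiliser of $v_i$ as $m\Z$ with $m>0$. Since $a^m$ fixes $v_i$, it fixes all of $C$, as in the induction of Notation~\ref{Notation_cycle}, because $C$ has no entrance. So minimality gives $m\geq p$, while $p\in m\Z$ gives $m\le p$. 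Hence $a^p|_{e_i}=a^{pr_i}$ for some $r_i\in\Z$. For the pseudo-free part, suppose $r_i=0$. Then $a^p|_{e_i}=1_G$ while $a^p\cdot e_i=e_i$, and pseudo-freeness forces $a^p=1_G$, which is impossible since $p>0$ and $a$ has infinite order.

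For \ref{restrict_power_i}, I will argue as in Lemma~\ref{it:powers go in}, by induction on $|q|$ using axiom~\ref{item:S5}. The key input is that $a^{p}$ fixes $\mu$, since $\mu$ consists of edges of $C$. Then $a^{p(q+1)}|_\mu = a^{pq}|_{a^p\cdot\mu}\,a^p|_\mu = a^{pq}|_\mu\, a^p|_\mu$. Negative $q$ are handled by $1_G = a^{-p}|_{a^p\cdot\mu}\,a^p|_\mu$. The case of a single edge already follows from \ref{restrict_i}, and the general case is the same computation.

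For \ref{restrict_cycle}, I will expand $a^p|_C=a^p|_{e_{n-1}}|_{e_{n-2}}\cdots|_{e_0}$ using axiom~\ref{item:S3}. Inductively, $a^p|_{e_{n-1}\dots e_j}=a^{pr_{n-1}\cdots r_j}$. The induction step restricts $a^{pM}=(a^p)^M$ by $e_{j-1}$ and applies \ref{restrict_power_i} with $\mu=e_{j-1}$, giving $(a^{pr_{j-1}})^M$. This yields $a^{pR}$, and commutativity of the product of the $r_i$ is irrelevant here.

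Part \ref{restrict_cycle_other_start} is the identical computation along $C_i=e_{n+i-1}\cdots e_i$. It produces the product $r_{n+i-1}\cdots r_i$, indices mod $n$, which is a cyclic permutation of the factors of $R$ and so equals $R$ because $\Z$ is commutative. The only step requiring genuine care is \ref{restrict_i}: showing that the stabiliser of a vertex of $C$ is exactly $p\Z$, which uses that $C$ has no entrance, and invoking pseudo-freeness for $r_i\ne0$. The rest is routine bookkeeping.
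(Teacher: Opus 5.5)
Your proposal is correct and follows the paper's proof essentially step for step. Part (1) uses the vertex-stabiliser argument to get $a^p|_{e_i}\in\langle a^p\rangle$, with pseudo-freeness ruling out $r_i=0$. Part (2) is an induction on $|q|$ via $(gh)|_\mu=g|_{h\cdot\mu}\,h|_\mu$, which is exactly the content of Lemma~\ref{it:powers go in} that the paper cites. Parts (3) and (4) unwind $a^p|_C$ edge by edge. You additionally spell out why the stabiliser of a vertex on $C$ is exactly $p\Z$, a detail the paper compresses into ``by minimality of $p$''.
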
 

\begin{proof} 
As mentioned in Notation~\ref{Notation_cycle}, $a^{p}$ fixes all finite paths and vertices in $C$. In particular, for each $i\in \{0, \dots, n-1\}$, we have $e_{i}=a^{p}\cdot e_{i}$ and hence
\begin{equation} 
s(e_{i}) = s(a^{p}\cdot e_{i}) = a^{p}|_{e_{i}} \cdot s(e_{i}). 
\end{equation} 
In other words, $a^{p}|_{e_{i}}$ fixes $s(e_{i})$ and thus fixes all edges in $C$. By minimality of $p$, we conclude that $a^{p}|_{e_{i}} = (a^{p})^{r_{i}}$ for some $r_{i}\in \mathbb{Z}$. Since $a^{p}$ fixes $e_{i}$, by pseudo-freeness of the action, we have $a^{p}|_{e_{i}}\neq 1_{G}$ and thus $r_{i}\neq 0$. This proves Statement~\ref{restrict_i}.

Statement~\ref{restrict_power_i} is Lemma~\ref{it:powers go in} applied to $n= 0$, $p= |\mu|$, $h= a^{p}$, and $x=\sigma^{i}(C^\infty)$ for $i$ such that $r(\mu)=v_i$.
This, combined with Statement~\ref{restrict_i}, can be used to prove Statement~\ref{restrict_cycle}:
\begin{align*} 
a^p|_{C} = a^{p}|_{e_{n-1}}|_{e_{n-2}}|_{e_{n-3}}\dots|_{e_{0}} & \overset{
\ref{restrict_i}}{=} a^{p r_{n-1}} |_{e_{n-2}}|_{e_{n-3}}\dots|_{e_{0}} \\
& \overset{
\ref{restrict_power_i}}{=} (a^{p}|_{e_{n-2}})^{r_{n-1}}|_{e_{n-3}}\dots |_{e_{0}} = \dots = a^{pR}. 
\end{align*} 
Statement~\ref{restrict_cycle_other_start} follows from the same computation. 
\end{proof} 

The next lemma characterizes commutativity of Type~\ref{TypeI} and 
Type~\ref{TypeII} elements. 

\begin{lem} \label{lem:TypeIITypeIDontCommute} 
Let $(\mathbb{Z}, E)$ be a pseudo-free self-similar graph under the standing assumptions of page~\pageref{Notation and conventions}, and let $C, p,n$ be as described in  Notation~\ref{Notation_cycle}. Suppose that $(\mu, g, \nu)$ is a Type~\ref{TypeII} cycline triple at a given vertex $v$ on $C$ and $\mfkg \in \calZ(\mu, g, \nu)$. Then $\mfkg$ commutes with all Type~\ref{TypeI} elements at vertices in $C$ if and only if either of the following conditions holds: 
\begin{itemize}
\item $a^p|_C = a^{p}$, or
\item $a^p|_C = a^{-p}$ and $|\mu|-|\nu| \in 2 n\mathbb{Z} \setminus \{0\}$.
\end{itemize}  
\end{lem}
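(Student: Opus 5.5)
The plan is to reduce the statement to a commutation computation inside the lag group $Q(\N,\Z)\rtimes_{\calT}\Z$. Since $\calZ(\mu,g,\nu)$ is a singleton (Lemma~\ref{types_of_cycline_triples}), $\mfkg$ is a single point. Say $(\mu,g,\nu)=(\nu C^{k},a^{pq},\nu)$; the other form $(\nu,a^{pq},\nu C^k)$ is handled by passing to $\mfkg^{-1}$, which commutes with the same elements. Then $r(\mfkg)=s(\mfkg)=x\defeq \nu C^\infty$, and $\mfkg=(x;\,[F],\,d;\,x)$ with $d=|\mu|-|\nu|=\pm kn\neq 0$. Only Type~\ref{TypeI} elements composable with $\mfkg$ matter, i.e.\ elements of $\ssym(x)$. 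By Proposition~\ref{prop:mult in ssym, revised} and Lemma~\ref{types_of_cycline_triples}, these have the form
\[
\mfkt=(x;\,\calT_{m}([a^{pq'}|_{\sigma^m(x)}]),\,0;\,x),\qquad m\geq |\nu|,\ q'\in\Z.
\]
I would note that Type~\ref{TypeI} elements based at other vertices of $C$ but with unit $x$ are already of this form. In the semidirect product, $(F,d)(H,0)=(F\,\calT_d(H),d)$ and $(H,0)(F,d)=(HF,d)$. Since $Q(\N,\Z)$ is abelian, $\mfkg\mfkt=\mfkt\mfkg$ holds if and only if $\calT_d([H])=[H]$, where $H\defeq\calT_m(a^{pq'}|_{\sigma^m(x)})$.

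Next I would compute $H$ explicitly using Lemma~\ref{compute_cycle}. Write $\sigma^m(x)=C_i^\infty$ for the appropriate starting index $i$. By Lemma~\ref{compute_cycle}\ref{restrict_i} and \ref{restrict_power_i}, for $j\geq m$ we have
\[
H(j)=a^{pq'\rho_{j}},
\]
where $\rho_j$ is a product of the $r_\ell$ along $x(m,j)$. In particular $\rho_{j+n}=R\rho_j$ by \ref{restrict_cycle} and \ref{restrict_cycle_other_start}. Pseudo-freeness gives $r_\ell\neq0$, so $\rho_j\neq 0$. The condition $\calT_d[H]=[H]$ means $H(j-d)=H(j)$ for all large $j$. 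With $d=\pm kn$ this becomes $\rho_j=R^{\pm k}\rho_j$, that is, $q'\rho_j(R^{k}-1)=0$ up to sign conventions. Taking $q'=1$ shows that commutation with all Type~\ref{TypeI} elements forces $R^k=1$. Conversely, $R^k=1$ makes every such $H$ eventually $kn$-periodic, so $\mfkg$ commutes with all of them.

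Finally, $R^k=1$ with $k\geq1$ and $R\in\Z$ holds exactly when $R=1$, or when $R=-1$ and $k$ is even. Since $a^p|_C=a^{pR}$ and $a$ has infinite order, $R=\pm1$ corresponds to $a^p|_C=a^{\pm p}$. Moreover, $k$ even is equivalent to $|\mu|-|\nu|=\pm kn\in 2n\Z\setminus\{0\}$. This gives the two bullet points.

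I expect the main obstacle to be the bookkeeping in the middle step: pinning down the exact representative of $\mfkg$ (its $\calT_{|\mu|}$-shift and the sign of $d$), and checking that the periodicity $\rho_{j+n}=R\rho_j$ holds independently of the starting vertex $C_i$. The rest is a direct reading of the semidirect-product multiplication.
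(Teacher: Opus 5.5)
Your proposal is correct and takes essentially the paper's route. The paper gets the commutation criterion from Lemma~\ref{lem:comm_rela} and cancels the common factor, which is the same as your condition $\calT_d([H])=[H]$ read off from the semidirect-product multiplication; from there both arguments write the restrictions as $a^{pq'\rho}$ with $\rho$ a product of the $r_i$ that is nonzero by pseudo-freeness, use that a full traversal of $C$ multiplies the exponent by $R$, and conclude that commuting with all Type~\ref{TypeI} elements is equivalent to $R^k=1$.
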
 

\begin{proof} 
As $(\mu, g, \nu)$ is of Type~\ref{TypeII}, it is either of the form $(\mu, a^{pq}, \mu C^{k})$ or $(\nu C^{k}, a^{pq}, \nu)$ for some $q\in \mathbb{Z}$ and $k\in \mathbb{N}^+$; we will focus on the case $(\mu, g, \nu) = (\mu, a^{pq}, \mu C^{k})$, as the other case follows from a similar argument. For every Type~\ref{TypeI} element $\mfks$ at a vertex in $C$ that is composable with $\mfkg$, we have $\{\mfks\}=\calZ(\alpha, a^{pq'},\alpha) $ (see the last line of Lemma~\ref{types_of_cycline_triples}) for some $q'\in \mathbb{Z}$ and where $\alpha = (\mu C^\infty)(0,t)$ for some $t\geq |\mu|$, so that $s(\alpha)$ is on $C$. Note that any integer value of $q'$ may be achieved by some Type~\ref{TypeI} element. We fix such a Type~\ref{TypeI} element $\mfks$. 

Let $x=C^\infty$ and $y=\sigma^{t-|\mu|} (C^\infty)$, the unique elements of  $vE^{\infty}$ and $s(\alpha) E^\infty$ respectively.
Since $|C| = n$ (as per Notation~\ref{Notation_cycle}), we have by Lemma~\ref{comm_rela} that $\mfkg \mfks = \mfks \mfkg$ if and only if
we have in $Q(\N, G)$ that
\begin{equation} 
    [
        a^{pq}|_{x(0,|\alpha|)}|_{\sigma^{|\alpha|}(x)}
        \
        a^{pq'}|_{y(0,|\mu|+nk)}|_{\sigma^{|\mu|+nk}(y)}
    ]
    =
    [
        a^{pq}|_{x(0,|\alpha|)}|_{\sigma^{|\alpha|}(x)}
        \
        a^{pq'}|_{y(0,|\mu|)}|_{\sigma^{|\mu|}(y)}
    ]. 
\end{equation} 
Canceling the first factor on both sides, we see that this
equality holds if and only if 
\begin{equation} \label{eqn: TypeII} 
[a^{pq'}|_{y(0,|\mu|+nk)}|_{\sigma^{|\mu|+nk}(y)}] = [a^{pq'}|_{y(0,|\mu|)}|_{\sigma^{|\mu|}(y)}]. 
\end{equation} 
By the definition of $Q(\N, G)$ in Section~\ref{defn_gpd}, this means that $\mfkg \mfks = \mfks \mfkg$ if and only if
\begin{equation} 
\label{restrict_apply_to_l}
a^{pq'}|_{y(0,|\mu|+nk)}|_{\sigma^{|\mu|+nk}(y)} (\ell) = a^{pq'}|_{y(0,|\mu|)}|_{\sigma^{|\mu|}(y)} (\ell)
    \text{
    \quad for all big enough $\ell$.
    }
\end{equation} 
We rewrite the right-hand side:
\begin{equation} 
a^{pq'}|_{y(0,|\mu|)}|_{\sigma^{|\mu|}(y)} (\ell) = a^{pq'}|_{y(0, |\mu|+\ell)}. 
\end{equation} 
Repetitively applying Lemma~\ref{compute_cycle}~\ref{restrict_i} and~\ref{restrict_power_i}
yields
$a^{pq'}|_{y(0, |\mu|+\ell)} = (a^{p})^{Q_{\ell}}$,
where $Q_{\ell}\in \mathbb{Z}$ is the product of $q'$ with $r_{i}$'s for each edge $e_{i}$ of $C$ that is included in $y(0, |\mu|+\ell)$.
Since the action is pseudo-free, it follows from Lemma~\ref{compute_cycle}~\ref{restrict_i} that $r_{i}\neq 0$ for all $i\in \{0, \dots, n-1\}$. Thus, $q' = 0$ if and only if $Q_{\ell}=0$ for some $\ell$
(if and only if $Q_{\ell}=0$ for all $\ell$).
 
Using $a^{pq'}|_{y(0, |\mu|+\ell)} = (a^{p})^{Q_{\ell}}$ in the third equality and Lemma~\ref{compute_cycle}~\ref{restrict_power_i},\ref{restrict_cycle_other_start} in the last equality,
the left-hand side of~\eqref{restrict_apply_to_l} is computed as follows: 
\begin{align*}  
a^{pq'}|_{y(0,|\mu|+nk)}|_{\sigma^{|\mu|+nk}(y)} (\ell) &= a^{pq'}|_{y(0, |\mu|+nk + \ell)}  \\
&=a^{pq'}|_{y(0, |\mu|+\ell)} |_{y(|\mu|+\ell, |\mu|+nk + \ell)}  \\
&= (a^{p})^{Q_{\ell}} |_{y(|\mu|+\ell, |\mu|+nk + \ell)} \\
&
=
(a^{p})^{Q_{\ell}R^{k}}, 
\end{align*}   
where 
$R=r_{n-1}r_{n-2}\dots r_0\neq 0$ is as in Lemma~\ref{compute_cycle}~\ref{restrict_cycle}.

Thus, $\mfkg$ commutes with $\mfks$ if and only if  $(a^{p})^{Q_{\ell}} = (a^{p})^{Q_{\ell}R^{k}}$ for all big enough $\ell$, which is further equivalent to either $R^k=1$ or $Q_{\ell} = 0$ for all big enough $\ell$ (and thus $q' = 0$). 
Since there exist elements of Type~\ref{TypeI} at a vertex in $C$ for which $q'\in \mathbb{Z}\setminus\{0\}$, we have shown that $\mfkg$ commutes with all Type~\ref{TypeI} elements at vertices in $C$ if and only if $R^{k} = 1$. 
Since $a^{p}|_{C}=a^{pR}$ by Lemma~\ref{compute_cycle}~\ref{restrict_cycle}, this is equivalent to either having $a^{p}|_{C}=a^{-p}$ and $k=|\nu|-|\mu|>0$ even (the case $R=-1$), or $a^{p}|_{C}=a^{p}$ (the case $R=1$).
\end{proof}

Under the assumption that $a^p|_C \neq a^{\pm p}$, we show the non-commutativity between Type~\ref{TypeIII} element and Type~\ref{TypeI} elements in the next lemma. 

\begin{lem} \label{lem:TypeIIITypeIDontCommute}
Let $(\mathbb{Z}, E)$ be a pseudo-free self-similar graph under the standing assumptions of page~\pageref{Notation and conventions}, and let $C, p$ be as described in Notation~\ref{Notation_cycle}. Suppose $(\mu, g, \nu)$ is a Type~\ref{TypeIII} cycline triple at a given vertex $v$ on $C$ and $\mfkg \in \calZ(\mu, g, \nu)$. If $a^p|_C \neq a^{\pm p}$, then there exists an element of Type~\ref{TypeI}  which does not commute with $\mfkg$. 
\end{lem}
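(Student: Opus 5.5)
The plan is to follow the template of the proof of Lemma~\ref{lem:TypeIITypeIDontCommute}: reduce commutativity to an equality in $Q(\N,\Z)$ via Lemma~\ref{lem:comm_rela}, then evaluate both sides using the integers $r_i$ from Lemma~\ref{compute_cycle}. Write the Type~\ref{TypeIII} triple as $(\mu,a^q,\nu)$ with $s(\nu)=v$, so that $\mfkg$ is the unique element of $\calZ(\mu,a^q,\nu)$ (Lemma~\ref{types_of_cycline_triples}). Set $d\defeq|\mu|-|\nu|$. By Lemma~\ref{types_of_cycline_triples}, $d\notin|C|\Z$; in particular $d\neq 0$.

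As test elements, take the Type~\ref{TypeI} elements $\mfks$ composable with $\mfkg$, that is, $\{\mfks\}=\calZ(\alpha,a^{pq'},\alpha)$ with $\alpha=(\nu C_v^\infty)(0,t)$ for some $t\geq|\nu|$, where $C_v^\infty$ denotes the unique infinite path at $v$. We choose $q'\neq 0$. Because $\Z$ is abelian, the factors in~\eqref{comm_rela} that involve $a^q$ cancel on both sides. Writing $y$ for the unique infinite path at $s(\alpha)$, commutativity $\mfkg\mfks=\mfks\mfkg$ then becomes
\[
[a^{pq'}|_{y(0,|\nu|)}|_{\sigma^{|\nu|}(y)}]=[a^{pq'}|_{y(0,|\mu|)}|_{\sigma^{|\mu|}(y)}],
\]
that is, $a^{pq'}|_{y(0,|\nu|+\ell)}=a^{pq'}|_{y(0,|\mu|+\ell)}$ for all large $\ell$.

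Assume without loss of generality that $d>0$; the case $d<0$ is symmetric, obtained by exchanging the roles of $\mu$ and $\nu$. Applying Lemma~\ref{compute_cycle}~\ref{restrict_i},\ref{restrict_power_i} repeatedly, $a^{pq'}|_{y(0,|\nu|+\ell)}=(a^p)^{Q_\ell}$, where $Q_\ell$ is $q'$ times a product of the $r_i$. Restricting further along the next $d$ edges of $C$ multiplies the exponent by $P_\ell$, the product of the $r_i$ over the window of $d$ consecutive edges of $C$ that $y$ traverses from position $|\nu|+\ell$ to position $|\mu|+\ell$. By pseudo-freeness, every $r_i\neq 0$, and $q'\neq0$, so $Q_\ell\neq 0$. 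Since $a^p$ has infinite order in $\Z$, commutativity is therefore equivalent to $Q_\ell=Q_\ell P_\ell$, i.e.\ to $P_\ell=1$ for all large $\ell$.

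Now suppose, for contradiction, that $\mfkg$ commutes with this $\mfks$. Then $P_\ell=1$ for all large $\ell$. As $\ell$ runs over $n$ consecutive values, the windows start at every position of $C$. Multiplying the $n$ window products $P_\ell,\dots,P_{\ell+n-1}$ therefore counts each edge of $C$ exactly $d$ times, so their product equals $R^d$, with $R=r_{n-1}\cdots r_0$ as in Lemma~\ref{compute_cycle}~\ref{restrict_cycle}. Hence $R^d=1$ for an integer $R$ and $d\neq0$, which forces $R=\pm1$. By Lemma~\ref{compute_cycle}~\ref{restrict_cycle} this gives $a^p|_C=a^{\pm p}$, contradicting the hypothesis. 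So any Type~\ref{TypeI} element with $q'\neq0$ fails to commute with $\mfkg$.

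The main obstacle will be the index bookkeeping in Lemma~\ref{lem:comm_rela}. We must check that, after the abelian cancellation, the two surviving restrictions really differ by restriction along a window of exactly $|d|$ consecutive edges of $C$. This uses Lemma~\ref{compute_cycle}~\ref{restrict_cycle_other_start} and the fact that $y$ runs around $C$, since $C$ has no entrance. We must also check that such windows occur at every residue mod $n$, which holds because $\ell$ ranges over all large integers.
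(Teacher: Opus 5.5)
Your proposal is correct and follows the paper's proof. You use the same reduction via Lemma~\ref{lem:comm_rela} to equality of the restrictions of $a^{p}$ along $|\nu|+\ell$ versus $|\mu|+\ell$ edges for all large $\ell$, with the same nonzero exponents $Q_\ell$ coming from pseudo-freeness; the paper simply takes $\calZ(\nu,a^{p},\nu)$, i.e.\ your $t=|\nu|$, $q'=1$. The only difference is the finish: the paper chooses $\ell$ so that the window begins at an edge $e_j$ with $|r_j|\ge 2$, which forces the window product to have absolute value at least $2$, while you multiply $n$ consecutive window products to get $R^{d}=1$. Both arguments are valid.
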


\begin{proof} 
Since $(\mu, g, \nu)$ is of Type~\ref{TypeIII} at $v$, it follows that $g = a^{q}$ for some $q \in \Z  \setminus p\Z$,  $s(\nu) = v \neq s(\mu)$, $r(\mu) = r(\nu)$, and $a^q \cdot v = s(\mu)$. Take $\mfks$ to be the unique element of the singleton $\calZ(\nu, a^{p}, \nu)$, so that $\mfks$ is of Type~\ref{TypeI}. Let $x=C^\infty$, the unique element of  $v E^{\infty}=s(\nu)E^\infty$. By Lemma~\ref{comm_rela}, $\mfkg \mfks = \mfks \mfkg$ if and only if 
\begin{equation}
[a^q|_{x(0,|\nu|)}|_{\sigma^{|\nu|}(x)} \ a^p|_{x(0,|\nu|)}|_{\sigma^{|\nu|}(x)}] = [a^q|_{x(0,|\nu|)}|_{\sigma^{|\nu|}(x)} \ a^p|_{x(0,|\mu|)}|_{\sigma^{|\mu|}(x)}].
\end{equation} 
The equality holds if and only if
we have for all $\ell\in \N$ big enough,
\begin{align}  
a^p|_{x(0,|\nu|)}|_{\sigma^{|\nu|}(x)}(\ell) &= a^p|_{x(0,|\mu|)}|_{\sigma^{|\mu|}(x)}(\ell), 
\notag
\intertext{or in other words, }
\label{III_restrict_l}
a^p|_{x(0,|\nu|+\ell)} &= a^p|_{x(0,|\mu|+\ell)}.
\end{align} 

By symmetry, we may without loss of generality assume $|\mu| \geq |\nu|$. Note that since $(\mu, g, \nu)$ is a non-symmetric cycline triple, this in fact means $|\mu|>|\nu|$. Since $(G, E)$ is pseudo-free, by repetitively applying Lemma~\ref{compute_cycle}~\ref{restrict_i} and~\ref{restrict_power_i} as in the proof of Lemma~\ref{lem:TypeIITypeIDontCommute}, there exists $Q_{\ell}\in \mathbb{Z} \setminus \{0\}$ such that
\begin{equation} 
a^{p}|_{x(0, |\nu|+\ell)}
=  a^{pQ_{\ell}} .
\end{equation} 
 The right-hand side of~\eqref{III_restrict_l} now becomes 
\begin{equation} 
a^{p}|_{x(0, |\mu|+\ell)}
= a^{pQ_{\ell}}|_{x(|\nu|+\ell, |\mu|+\ell)}. 
\end{equation} 

If $a^p|_C \neq a^{\pm p}$, then 
$|r_{j}|\geq 2$ for some $j\in\{0, \dots, n-1\}$ by Lemma~\ref{compute_cycle}~\ref{restrict_i} and~\ref{restrict_cycle}. Note that for any $\ell$ such that $x(|\nu|+\ell, |\nu|+\ell) = r(e_{j})$, we have 
\begin{align*} 
a^{pQ_{\ell}}|_{x(|\nu|+\ell, |\mu|+\ell)} &= a^{pQ_{\ell}}|_{e_{j}} |_{x(|\nu|+\ell+1, |\mu|+\ell)} 
&&\text{since }|\nu|+1\leq |\mu|
\\&
= a^{pQ_{\ell} r_{j}} |_{x(|\nu|+\ell+1, |\mu|+\ell)} &&\text{by Lemma~\ref{compute_cycle}~\ref{restrict_i} and~\ref{restrict_power_i}}, 
\end{align*} 
which is not equal to $a^{p Q_{\ell}}$. As a result,~\eqref{III_restrict_l} fails for any such $\ell$, which can be taken as large as needed.
Thus $\mfkg \mfks \neq \mfks \mfkg$, which concludes the proof. 
\end{proof} 

We conclude this subsection with the proof that $\ssym$ is maximal among open and abelian subgroupoids of $\Iso(\cGss[\Z])^\circ$, under the assumption that $a^{p}|_{C} \neq a^{\pm p}$. 

\begin{proof}[Proof of Theorem~\ref{thm:Ssym is maximal}] 
We have seen in Proposition~\ref{prop:Ssym:open,normal,abelian,subgpd} that $\ssym$ is an open and abelian subgroupoid of $\Iso(\cGss[\Z])^{\circ}$. It remains to show that $\ssym$ is maximal: By Corollary~\ref{cor:the green corollary} and Lemma~\ref{types_of_cycline_triples}, $\Iso(\cGss[\Z])^{\circ}\setminus \ssym$ 
consists exactly of Type~\ref{TypeII} and Type~\ref{TypeIII} elements at vertices on cycles with no entrance. Under the condition that $a^{p}|_{C} \neq a^{\pm p}$, by Lemmas~\ref{lem:TypeIITypeIDontCommute} and~\ref{lem:TypeIIITypeIDontCommute}, Type~\ref{TypeII} 
respectively Type~\ref{TypeIII} elements do not commute with all Type~\ref{TypeI} elements, which are contained in $\ssym$ in particular.
Thus, any subset of $\Iso(\cGss[\Z])^{\circ}$ larger than $\ssym$ fails to be abelian.
\end{proof} 

In cases where $a^p|_C = a^p$ or $a^p|_C = a^{-p}$, elements of Type~\ref{TypeII} or Type~\ref{TypeIII} may commute with elements of $\ssym$. The resulting computations become significantly more technical and largely ad hoc, although the underlying proof strategy remains essentially the same as that presented in this section. For this reason, we do not pursue these cases in greater generality. A discussion of maximal abelian subgroupoids in the case $a^p|_C = a^p$ is included in Appendix~\ref{sec:AnotherCandidate} for interested readers.

\subsection{Closedness of $\ssym$} 
\label{ssec:closedness} 
In this Subsection, we characterize closedness in $\cGss$ for  subgroupoids $\calS$ of $\Iso(\cGss)^{\circ}$ containing $\ssym$. In particular, we provide equivalent conditions for closedness of both $\ssym$ and $\Iso(\cGss)^{\circ}$, which are necessary for proving corresponding results for Cartan subalgebras (Theorems~\ref{Thm_Ssym_Cartan}, \ref{Thm_genuine_action_Cartan}, and Appendix~\ref{Thm_Salt_Cartan}).

Given any subgroupoid $\calS$ of $\cGss$, we will write $ \calS^{c} \defeq \cGss  \setminus \calS$ for its complement in $\cGss$.
Note that if $\calS$ is further a subgroupoid of $\Iso(\cGss)$, we have the following:
\begin{align}
\label{eq:calS closed iff}
         \calS\text{ is closed in } \cGss\iff\Iso(\cGss) \subseteq \calS \sqcup (\calS^{c})^\circ.
\end{align} 
This follows since $\Iso(\cGss)$ is closed in the Hausdorff groupoid $\cGss$ (as explained in Section~\ref{defn_gpd}, Hausdorffness follows from pseudo-freeness of the self-similar action).

In the rest of this section, we translate the abstract groupoid dichotomy on the right hand side of the equivalence 
\eqref{eq:calS closed iff}
into concrete dynamical properties of the underlying self-similar
graph (Theorem~\ref{thm:closedness-equivalent}).
This perspective naturally leads to  the following notions of $g$-generic and $g$-rare infinite paths. The notion of $g$-generic infinite paths can be traced back at least to the work of Nekrashevych (see \cite[Section 4]{Nekrashevych_generic} for instance, where an infinite path $x$ is said to be \emph{generic with respect to $g\in G$}). In contrast, the notion of $g$-rare appears to be new, at least to the best of the authors' knowledge, and is intended to capture essentially the opposite dynamical 
phenomenon of $g$-genericity.

\begin{defn}\label{def:GenaricRare}
Let $(G,E)$ be a self-similar graph, and let $x\in E^{\infty}$. For any $g\in G$, we say that 
\begin{itemize} 
\item $x$ is \emph{$g$-generic} if either $g\cdot x \neq x$, or $g$ pointwise fixes a neighborhood of $x$; 
\item $x$ is \emph{$g$-rare} if  $g\cdot x = x$, and there exists $N\in\N$ such that $g$ does not pointwise fix any neighborhood of any $z\in x(0,N)E^\infty$.
\end{itemize}
\end{defn} 

\begin{rem}
    Unpacking the definition of $g$-rarity, one sees that $x$ is $g$-rare if and only if there exists $N\in\N$ with the following property:
    if $y\in x(N,N)E^\infty$ satisfies $g|_{x(0,N)}\cdot y = y$, then for all $m\in \N$, the element $g|_{x(0,N)}|_{y(0,m)}$ does not fix all of $y(m,m)E^\infty$.
\end{rem} 

The connection between the abstract groupoid condition in~\eqref{eq:calS closed iff} and these dynamical characterizations is made precise in the following lemma. Note that the pseudo-freeness assumption will be used throughout the remainder of the argument for closedness, as we make essential use of Lemma~\ref{lem:opensetexists} in what follows. 

\begin{lem} 
\label{lem:g-generic-iso} 
Let $(G,E)$ be a pseudo-free self-similar graph. Take an infinite path $x$ and some $g\in G$ such that $g\cdot x = x$, and a finite path $\mu$ such that $s(\mu) = x(0, 0)$. Let $\mfkg = \bigl(\mu x; \calT_{|\mu|}([g|_{x}]),  0; \mu x\bigr)$ be an element of $\cGss$. The following are equivalent:
\begin{enumerate}[label={\textup{(\arabic*)}}]
  \item \label{lem:g-generic-iso:it:g-gen} \(x\) is \(g\)-generic;
  \item\label{lem:g-generic-iso:it:Ssym}
        \(\mfkg \in \ssym\);
        \item\label{lem:g-generic-iso:it:Iso} 
        \(\mfkg \in \Iso(\cGss)^\circ\);
  \item \label{lem:g-generic-iso:it:calS} $\mfkg\in \calS$ for any subgroupoid $\ssym\subseteq \calS\subseteq\Iso(\cGss)^\circ$.
\end{enumerate}
Likewise, the following are equivalent:
\begin{enumerate}[label={\textup{(\roman*)}}]
  \item\label{lem:g-generic-iso:it:g-rare} \(x\) is \(g\)-rare;
  \item\label{lem:g-generic-iso:it:Ssym-c}
        \(\mfkg \in (\ssym^{c})^\circ\);
        \item\label{lem:g-generic-iso:it:Iso-c}
        \(\mfkg \in\bigl(\bigl(\Iso(\cGss)^\circ\bigr)^{c}\bigr)^\circ\);
        \item\label{lem:g-generic-iso:it:calS-c}
        \(\mfkg \in (\calS^{c})^{\circ}\) for any subgroupoid
        \(\ssym\subseteq\calS\subseteq\Iso(\cGss)^\circ\);
\end{enumerate}
\end{lem}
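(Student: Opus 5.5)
We prove the first chain of equivalences as a cycle of implications. The implications \ref{lem:g-generic-iso:it:Ssym}$\Rightarrow$\ref{lem:g-generic-iso:it:calS}$\Rightarrow$\ref{lem:g-generic-iso:it:Iso} are immediate from $\ssym\subseteq\calS\subseteq\Iso(\cGss)^\circ$ (apply \ref{lem:g-generic-iso:it:calS} with $\calS=\ssym$). It therefore remains to prove \ref{lem:g-generic-iso:it:g-gen}$\Rightarrow$\ref{lem:g-generic-iso:it:Ssym} and \ref{lem:g-generic-iso:it:Iso}$\Rightarrow$\ref{lem:g-generic-iso:it:g-gen}.

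\emph{Proof of \ref{lem:g-generic-iso:it:g-gen}$\Rightarrow$\ref{lem:g-generic-iso:it:Ssym}.} Since $g\cdot x=x$, genericity gives an $N$ such that $g$ fixes $\calZ(x(0,N))$ pointwise. Set $h\defeq g|_{x(0,N)}$. Writing $g\cdot(x(0,N)y)=(g\cdot x(0,N))(h\cdot y)$, we see that $h$ fixes every $y\in x(N,N)E^\infty$, so $h\in G_{x(N,N)}$. Next, \cite[Lemma 5.1]{Self-similar:LY21} rewrites $\calT_{|\mu|}([g|_x])$ as $\calT_{|\mu|+N}([h|_{\sigma^N(x)}])$. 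Hence $\mfkg\in\calZ(\mu x(0,N),h,\mu x(0,N))$, and this triple is symmetric cycline, so $\mfkg\in\ssym$.

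\emph{Proof of \ref{lem:g-generic-iso:it:Iso}$\Rightarrow$\ref{lem:g-generic-iso:it:g-gen}.} Lemma~\ref{lem:opensetexists} supplies some $n$ with
\[
\calZ\bigl(\mu x(0,n),\, g|_{x(0,n)},\, \mu x(0,n)\bigr)\subseteq\Iso(\cGss)^\circ .
\]
Here we have used $g\cdot x(0,n)=x(0,n)$. This triple is cycline and symmetric, so by Lemma~\ref{lem:symmetric implies g stabilizes} we get $g|_{x(0,n)}\in G_{x(n,n)}$. Self-similarity then shows that $g$ fixes $\calZ(x(0,n))$ pointwise, so $x$ is $g$-generic.

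\emph{The second chain.} The complements reverse inclusions: $(\Iso(\cGss)^\circ)^c\subseteq\calS^c\subseteq\ssym^c$. Passing to interiors gives \ref{lem:g-generic-iso:it:Iso-c}$\Rightarrow$\ref{lem:g-generic-iso:it:calS-c}$\Rightarrow$\ref{lem:g-generic-iso:it:Ssym-c}. We close the cycle with \ref{lem:g-generic-iso:it:Ssym-c}$\Rightarrow$\ref{lem:g-generic-iso:it:g-rare} and \ref{lem:g-generic-iso:it:g-rare}$\Rightarrow$\ref{lem:g-generic-iso:it:Iso-c}.

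\emph{Proof of \ref{lem:g-generic-iso:it:g-rare}$\Rightarrow$\ref{lem:g-generic-iso:it:Iso-c}.} Take $N$ from rarity and put $h=g|_{x(0,N)}$. We claim that $U\defeq\calZ(\mu x(0,N),h,\mu x(0,N))\ni\mfkg$ misses $\Iso(\cGss)^\circ$. Suppose a point of $U$, with source $\mu x(0,N)y$, lies in $\Iso(\cGss)^\circ$. Being isotropy forces $h\cdot y=y$. Lemma~\ref{lem:opensetexists} then gives an $m$ with
\[
\calZ\bigl(\mu x(0,N)y(0,m),\, h|_{y(0,m)},\, \mu x(0,N)y(0,m)\bigr)\subseteq\Iso(\cGss)^\circ ,
\]
so this symmetric triple is cycline. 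Arguing as in the first chain, $g$ fixes the neighborhood $\calZ(x(0,N)y(0,m))$ of $z=x(0,N)y$ pointwise. This contradicts rarity.

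\emph{Proof of \ref{lem:g-generic-iso:it:Ssym-c}$\Rightarrow$\ref{lem:g-generic-iso:it:g-rare}.} By Lemma~\ref{lem:opensetexists}, some $U=\calZ(\mu x(0,N),g|_{x(0,N)},\mu x(0,N))$ is disjoint from $\ssym$. Suppose some $z\in x(0,N)E^\infty$ has a neighborhood fixed pointwise by $g$; in particular $g\cdot z=z$, and $z$ is $g$-generic. The element $(\mu z;\calT_{|\mu|}([g|_z]),0;\mu z)$ lies in $U$: this again uses \cite[Lemma 5.1]{Self-similar:LY21} to move the restriction past $x(0,N)$. By the already-proved \ref{lem:g-generic-iso:it:g-gen}$\Rightarrow$\ref{lem:g-generic-iso:it:Ssym}, applied to $z$, this element lies in $\ssym$, which contradicts $U\cap\ssym=\emptyset$. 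Hence $x$ is $g$-rare.

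The main obstacle is bookkeeping rather than ideas. One must check that the elements at the nearby paths $z$ really lie in the given cylinder sets; this is an equality in $Q(\N,G)$ under the shift $\calT$. One must also check that a cycline symmetric triple on a sub-cylinder translates exactly into pointwise fixing of a neighborhood by $g$. Pseudo-freeness enters only through Lemma~\ref{lem:opensetexists}, which guarantees that these canonical cylinders form a neighborhood base.
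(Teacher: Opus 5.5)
Your proposal is correct and follows essentially the same route as the paper: you use the same cylinder neighbourhoods from Lemma~\ref{lem:opensetexists}, the same translation between symmetric cycline triples and $g$ pointwise fixing $\calZ(x(0,n))$, and the same contradiction argument for (i)$\Rightarrow$(iii). The only cosmetic difference is that you prove (ii)$\Rightarrow$(i) directly, by applying the already-established (1)$\Rightarrow$(2) to the nearby path $z$; the paper instead argues by contrapositive and writes down the symmetric cycline triple $\bigl(\mu x(0,n)y(0,M), g|_{x(0,n)}|_{y(0,M)}, \mu x(0,n)y(0,M)\bigr)$ explicitly.
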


\begin{proof} 
The implications
\ref{lem:g-generic-iso:it:Ssym}$\implies$\ref{lem:g-generic-iso:it:Iso} and~\ref{lem:g-generic-iso:it:Iso-c}$\implies$\ref{lem:g-generic-iso:it:Ssym-c} follow from the fact that $\ssym\subseteq \Iso(\cGss)^\circ$, and the equivalences~\ref{lem:g-generic-iso:it:g-gen}+\ref{lem:g-generic-iso:it:Ssym}$\iff$\ref{lem:g-generic-iso:it:calS} and~\ref{lem:g-generic-iso:it:g-rare}+\ref{lem:g-generic-iso:it:Ssym-c}$\iff$\ref{lem:g-generic-iso:it:calS-c} are straightforward. It remains to
prove~\ref{lem:g-generic-iso:it:Iso}$\implies$\ref{lem:g-generic-iso:it:g-gen}$\implies$\ref{lem:g-generic-iso:it:Ssym} and~\ref{lem:g-generic-iso:it:Ssym-c}$\implies$\ref{lem:g-generic-iso:it:g-rare}$\implies$\ref{lem:g-generic-iso:it:Iso-c}.

\ref{lem:g-generic-iso:it:Iso} $\implies$~\ref{lem:g-generic-iso:it:g-gen}: Assume that $\mfkg \in \Iso(\cGss)^\circ$.
Since $(G, E)$ is pseudo-free, by Lemma~\ref{lem:opensetexists}, there exists $n\in \N$ such that 
\begin{equation} 
\mfkg\in \calZ\bigl(\mu (g\cdot x(0,n)), g|_{x(0,n)}, \mu x(0,n)\bigr)\subseteq \Iso(\cGss)^\circ .
\end{equation} 
By~\eqref{eq:int of Iso}, this
implies that $g|_{x(0,n)}$ fixes any $y\in x(n,n) E^{\infty}$. 
Note that our assumption $g\cdot x = x$ implies $g\cdot x(0,n) = x(0,n)$. Combining this with $g|_{x(0,n)}\cdot y=y$ yields
that $g$ pointwise fixes the neighborhood $\calZ(x(0,n))$ of $x$, so that $x$ is $g$-generic.

\ref{lem:g-generic-iso:it:g-gen} $\implies$~\ref{lem:g-generic-iso:it:Ssym}: If $x$ is $g$-generic, then since we assumed $g$ fixes $x$, $g$ pointwise fixes  a neighborhood of $x$, say $\calZ(x(0,n))$ for some $n\in\N$.  This means that for any $y\in x(n,n)E^{\infty}$, we have
\mbox{$g\cdot (x(0,n)y) = x(0,n)y$,}
which implies
\mbox{$g|_{x(0,n)} \cdot y = y$}
by self-similarity.
This means that 
$(\mu x(0,n), g|_{x(0,n)}, \mu x(0,n))$ is a symmetric cycline triple.
Since $\mfkg$ is in the associated cylinder set by construction, our definition of $\ssym$ implies $\mfkg \in \ssym$, as needed.

\ref{lem:g-generic-iso:it:Ssym-c} $\implies$~\ref{lem:g-generic-iso:it:g-rare}:
We will show the contrapositive, so suppose
that $x$ is not $g$-rare; it suffices to prove that any open neighborhood $U$ of $\mfkg$ intersects $\ssym$ non-trivially. Since $(G,E)$ is pseudo-free, again by Lemma~\ref{lem:opensetexists}, there exists $n\in\N$ such that 
\begin{equation} 
\mfkg \in  \calZ\bigl(\mu x(0,n), g|_{x(0,n)}
        , \mu x(0,n)\bigr) \subseteq U.
\end{equation} 
Since $x$ is not $g$-rare and $g\cdot x=x$ by assumption,
there exists $y\in x(n,n)E^\infty$ and $M\in \N$ such that $g|_{x(0,n)}\cdot y=y$ and $g|_{x(0,n)}|_{y(0,M)}$ fixes every element of $y(M,M)E^\infty$. 
This means that the symmetric triple 
\[\bigl(\mu x(0,n)y(0,M), g|_{x(0,n)}|_{y(0,M)}
        , \mu x(0,n)y(0,M)\bigr)\]
        is cycline, and so the corresponding cylinder set
    is fully contained in $\ssym$. The cylinder set's element
    \[
        \bigl(\mu x(0,n)y; \calT_{|\mu|+n}([g|_{x(0,n)}|_{y(0,M)}|_{\sigma^{M}(y)}]),  0; \mu x(0,n)y\bigr)
    \]
    is, by choice of $n$, also an element of $U$, proving that $U\cap \ssym\neq\emptyset$.

\ref{lem:g-generic-iso:it:g-rare}$\implies$\ref{lem:g-generic-iso:it:Iso-c}: Suppose that $x$ is $g$-rare, so
there exists $N\in\N$ with the following property: if $y\in x(N, N)E^\infty$ satisfies $g|_{x(0,N)}\cdot y = y$, then for all $m \in \N$, the group element $g|_{x(0,N)}|_{y(0,m)}$ does not fix every point in $y(m,m)E^\infty$. 
We claim that the neighborhood
\begin{equation} 
U\defeq  \calZ \bigl( \mu x(0,N) , g|_{x(0,N)}, \mu x(0,N) \bigr) 
\end{equation} 
 of $\mfkg$ 
is contained in $\bigl(\Iso(\cGss)^\circ\bigr)^{c}$, which will prove the claim.
Suppose that $U\cap \Iso(\cGss)^\circ$ is non-empty, 
so there exists $y\in x(N, N)E^{\infty}$ such that 
\begin{equation} 
\mfkt \defeq \bigl(\mu x(0,N)  (g|_{x(0,N)}\cdot y);  \calT_{|\mu|+N}([g|_{x(0,N)}|_{y}]),0; \mu x(0,N)  y\bigr)
\end{equation} 
is an element of $\Iso(\cGss)^\circ$.
In particular, 
$r(\mfkt)=s(\mfkt)$, which implies
$g|_{x(0,N)}\cdot y
=
y$ 
and thus $g|_{x(0,N)}\cdot y(0, m) = y(0, m)$ for any $m\in \N$. 
By Lemma~\ref{lem:opensetexists},  there exists $M\in\N$ such that
\begin{equation} 
\mfkt \in \calZ\bigl(\mu x(0, N) y(0,M), g|_{x(0,N)}|_{y(0, M)}, \mu x(0, N) y(0,M)\bigr) \subseteq U\cap \Iso(\cGss)^\circ.
\end{equation}
Since the cylinder set is contained in $\Iso(\cGss)$,
$g|_{x(0,N)}|_{y(0, M)}$ fixes every infinite path in $y(M, M) E^{\infty}$, which is a contradiction to our assumption that $x$ is $g$-rare.  
\end{proof} 

Combining the equivalence in~\eqref{eq:calS closed iff} and Lemma~\ref{lem:g-generic-iso}, we characterize closedness of any subgroupoid $\calS$ sitting between $\ssym$ and $\Iso(\cGss)$. 

\begin{thm}
\label{thm:closedness-equivalent} 
Let $(G,E)$ be a pseudo-free self-similar graph, where $G$ is 
a group containing
an element of infinite order, and $E$ has finitely many vertices and no sources. 
The following statements are equivalent.
\begin{enumerate}[label={\textup{(\arabic*)}}]
  \item\label{thm:closedness-equivalent:it:split into generic and rare} 
  For
  every \(g\in G\) and every \(x\in E^\infty\),
        the path \(x\) is either \(g\)-generic or \(g\)-rare.
  \item\label{thm:closedness-equivalent:it:closed} 
        Every subgroupoid $\calS$
        with
        \(\ssym\subseteq\calS\subseteq\Iso(\cGss)^{\circ}\)
    is closed in $\cGss$.
    \item\label{thm:closedness-equivalent:it:just ssym, closed}
    The subgroupoid $\ssym$ is closed in $\cGss$.
\end{enumerate}
\end{thm}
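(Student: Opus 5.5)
We will prove the cycle \ref{thm:closedness-equivalent:it:split into generic and rare}$\implies$\ref{thm:closedness-equivalent:it:closed}$\implies$\ref{thm:closedness-equivalent:it:just ssym, closed}$\implies$\ref{thm:closedness-equivalent:it:split into generic and rare}. The middle implication is trivial, since $\ssym$ is itself a subgroupoid sitting between $\ssym$ and $\Iso(\cGss)^\circ$.

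For \ref{thm:closedness-equivalent:it:split into generic and rare}$\implies$\ref{thm:closedness-equivalent:it:closed}, fix $\calS$ with $\ssym\subseteq\calS\subseteq\Iso(\cGss)^\circ$. By \eqref{eq:calS closed iff} it suffices to show that every $\mfkh\in\Iso(\cGss)$ lies in $\calS\sqcup(\calS^c)^\circ$. Write $\mfkh=(\mu(g\cdot x);\calT_{|\mu|}([g|_x]),|\mu|-|\nu|;\nu x)$ with $\mu(g\cdot x)=\nu x$, and split into two cases.

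\emph{Case $|\mu|=|\nu|$.} Then $\mu=\nu$ and $g\cdot x=x$, so $\mfkh$ has exactly the form treated in Lemma~\ref{lem:g-generic-iso}. By \ref{thm:closedness-equivalent:it:split into generic and rare}, $x$ is $g$-generic or $g$-rare. Lemma~\ref{lem:g-generic-iso}, items \ref{lem:g-generic-iso:it:calS} and \ref{lem:g-generic-iso:it:calS-c}, then places $\mfkh$ in $\calS$ or in $(\calS^c)^\circ$ respectively.

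\emph{Case $|\mu|\neq|\nu|$.} This is where the standing hypotheses (an element of infinite order, finitely many vertices, no sources) must enter. By Lemma~\ref{lem:one path}\ref{it:one path:unique infinite path}, $x$ is the only path in $s(\nu)E^\infty$ with $\mu(g\cdot x)=\nu x$. There are two subcases.
\begin{itemize}
\item If $(\mu,g,\nu)$ is cycline, then $\mfkh$ lies in the singleton open set $\calZ(\mu,g,\nu)\subseteq\Iso(\cGss)^\circ$ (Corollary~\ref{cor:the green corollary}). We claim $\mfkh$ is an isolated point of $\cGss$, so $\{\mfkh\}$ is open and hence either $\mfkh\in\calS$ or $\{\mfkh\}\subseteq\calS^c$ is an open neighbourhood.
\item If $(\mu,g,\nu)$ is not cycline, we instead refine with Lemma~\ref{lem:opensetexists}: every neighbourhood of $\mfkh$ contains some $\calZ(\mu(g\cdot x)(0,n),g|_{x(0,n)},\nu x(0,n))$. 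Because the lag $|\mu|-|\nu|$ is nonzero, Lemma~\ref{lem:one path}\ref{it:one path:unique infinite path} shows that each such cylinder meets $\Iso(\cGss)$ only in $\mfkh$. Hence this cylinder minus $\{\mfkh\}$ misses $\Iso$. If $\mfkh\notin\Iso(\cGss)^\circ$, then $\mfkh\notin\calS$, and the cylinder is an open set contained in $\calS^c$.
\end{itemize}
The point requiring care is that elements with nonzero lag never lie in $\ssym$. Lemma~\ref{lem:g-generic-iso} only handles zero lag, so these elements must be shown to be in the interior of $\calS^c$ whenever they are not in $\calS$, and the uniqueness statement of Lemma~\ref{lem:one path} is exactly what supplies this.

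For \ref{thm:closedness-equivalent:it:just ssym, closed}$\implies$\ref{thm:closedness-equivalent:it:split into generic and rare}, take $g\in G$ and $x\in E^\infty$. If $g\cdot x\neq x$, then $x$ is $g$-generic by definition. Otherwise form $\mfkg=(x;[g|_x],0;x)$, taking $\mu=x(0,0)$ in Lemma~\ref{lem:g-generic-iso}. Since $\mfkg\in\Iso(\cGss)$ and $\ssym$ is closed, \eqref{eq:calS closed iff} gives $\mfkg\in\ssym$ or $\mfkg\in(\ssym^c)^\circ$. Lemma~\ref{lem:g-generic-iso} (\ref{lem:g-generic-iso:it:Ssym}$\Rightarrow$\ref{lem:g-generic-iso:it:g-gen} and \ref{lem:g-generic-iso:it:Ssym-c}$\Rightarrow$\ref{lem:g-generic-iso:it:g-rare}) translates this into $x$ being $g$-generic or $g$-rare.

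The main obstacle is the nonzero-lag case in the first implication. I expect to handle it by first showing $|s(\nu)E^\infty|=1$ in the cycline situation (Lemma~\ref{lem:one path}\ref{it:one path:non-symmetric cycline triple}), so that $\calZ(\mu,g,\nu)$ is an isolated point. In the non-cycline situation the argument falls back on Lemma~\ref{lem:opensetexists} together with the uniqueness statement in Lemma~\ref{lem:one path}\ref{it:one path:unique infinite path}.
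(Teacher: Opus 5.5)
Your three implications, and the tools you use for them, coincide with the paper's: the criterion \eqref{eq:calS closed iff}, Lemma~\ref{lem:g-generic-iso} for zero lag, the uniqueness statement of Lemma~\ref{lem:one path}\ref{it:one path:unique infinite path}, and singleton cylinders for non-symmetric cycline triples. The implications (2)$\Rightarrow$(3)$\Rightarrow$(1) and the zero-lag case are fine. However, the nonzero-lag part of (1)$\Rightarrow$(2) has a hole in its case analysis. You split according to whether the \emph{given} triple $(\mu,g,\nu)$ is cycline. In the non-cycline branch you only reach a conclusion under the extra hypothesis $\mfkh\notin\Iso(\cGss)^\circ$. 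Consider the situation where $(\mu,g,\nu)$ is not cycline (i.e.\ $|s(\nu)E^\infty|\geq 2$), yet $\mfkh\in\Iso(\cGss)^\circ$ because some refinement $(\mu\,(g\cdot x)(0,n),g|_{x(0,n)},\nu\,x(0,n))$ is cycline. You neither handle nor rule out this case, so as written you have not shown $\mfkh\in\calS\sqcup(\calS^c)^\circ$ for every $\mfkh\in\Iso(\cGss)$.

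The repair uses exactly what you already set up. If $\mfkh\in\Iso(\cGss)^\circ$, Lemma~\ref{lem:opensetexists} gives a refined cylinder $Z\ni\mfkh$ with $Z\subseteq\Iso(\cGss)^\circ$. You showed $Z\cap\Iso(\cGss)=\{\mfkh\}$, so $Z=\{\mfkh\}$, hence $\mfkh$ is isolated and lies in $\calS$ or in $(\calS^c)^\circ$.

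This is how the paper organizes the argument: it splits on $\mfkh\in\Iso(\cGss)^\circ$ versus $\mfkh\notin\Iso(\cGss)^\circ$, not on cycline-ness of the representing triple. In the first case it refines via Lemma~\ref{lem:opensetexists} to a non-symmetric cycline triple, whose cylinder is $\{\mfkh\}$ by Corollary~\ref{cor:the green corollary}. In the second case it simply uses $\calZ(\mu,g,\nu)$ itself. So the refinement you invoke in your non-cycline branch is superfluous there, but it is precisely what the missing branch needs.

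Alternatively, you could prove the missing case is vacuous. With lag $k\neq 0$, the relation $\mu(g\cdot x)=\nu x$ puts all vertices $x(mk,mk)$ in a single $G$-orbit, on which $|vE^\infty|$ is constant. Meanwhile $|x(n,n)E^\infty|=1$ persists to all later vertices of $x$. Hence $\mfkh\in\Iso(\cGss)^\circ$ forces $|s(\nu)E^\infty|=1$, so $(\mu,g,\nu)$ is cycline. That argument would need to be written out; it is not in your proposal.
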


\begin{proof} 
As noted in~\eqref{eq:calS closed iff},~\ref{thm:closedness-equivalent:it:closed} holds if and only if $\Iso(\cGss) \subseteq \calS \sqcup (\calS^{c})^\circ$ for any subgroupoid $\calS$ with  $\ssym\subseteq\calS\subseteq\Iso(\cGss)^{\circ}$. 
Note further that \ref{thm:closedness-equivalent:it:closed}$\implies$\ref{thm:closedness-equivalent:it:just ssym, closed} is obvious.

\ref{thm:closedness-equivalent:it:just ssym, closed}
$\implies$\ref{thm:closedness-equivalent:it:split into generic and rare}: For any infinite path $x$ and $g\in G$ such that $g$ fixes $x$, and a finite path $\mu$ such that $s(\mu) = x(0, 0)$, the isotropy element $\mfkg = \bigl(\mu x; \calT_{|\mu|}([g|_{x}]),  0; \mu x\bigr)$ is contained in $\ssym \sqcup (\ssym^{c})^\circ$ by~\eqref{eq:calS closed iff}. By Lemma~\ref{lem:g-generic-iso}, this implies~\ref{thm:closedness-equivalent:it:split into generic and rare}. 

\ref{thm:closedness-equivalent:it:split into generic and rare}$\implies$\ref{thm:closedness-equivalent:it:closed}: Fix a subgroupoid $\calS$ with $\ssym\subseteq\calS\subseteq\Iso(\cGss)^{\circ}$. By~\eqref{eq:calS closed iff}, it suffices to show that $\Iso(\cGss)\subseteq \calS \cup (\calS^c)^\circ$. Fix an element of $\Iso(\cGss)$, say
 \begin{align}
\label{eq:arb elt of Iso}
  \mfkg 
   = \bigl(\mu(g\cdot x)   ; 
           \calT_{|\mu|}([g|_{x}])  , 
           |\mu|-|\nu|   ; 
           \nu x\bigr),
           \text{ where \(\mu(g\cdot x)=\nu x\)}.
\end{align}
If $|\mu| = |\nu|$, then 
$\mu (g\cdot x) = \nu x$ implies $\mu = \nu$ and $g \cdot x = x$,
so Lemma~\ref{lem:g-generic-iso} applies.  The assumption that every infinite path is either $g$-generic or $g$-rare allows us to conclude that $\mfkg$ is either in $\calS$ or in $(\calS^c)^\circ$, as needed.

Now assume that $|\mu| \neq |\nu|$. We first consider  the case that $\mfkg\in \Iso(\cGss)^{\circ}$. Since $(G, E)$ is pseudo-free, by Lemma~\ref{lem:opensetexists}, there exists $n \in \mathbb{N}$ such that
\begin{equation} 
\mfkg \in \calZ\bigl(\mu (g\cdot x(0,n)), g|_{x(0,n)}, \nu x(0,n)\bigr) \subseteq \Iso(\cGss)^{\circ}, 
\end{equation} 
which implies that $\bigl(\mu (g\cdot x(0,n)), g|_{x(0,n)}, \nu x(0,n)\bigr)$ is a cycline triple at $x(n,n)$ and is non-symmetric as $|\mu| \neq |\nu|$. By Corollary~\ref{cor:the green corollary}, 
\begin{equation} 
\label{singleton_neighbor} 
\calZ\bigl(\mu (g\cdot x(0,n)), g|_{x(0,n)}, \nu x(0,n)\bigr) = \{\mfkg\}. 
\end{equation} 
If $\mfkg \not\in (\calS^c)^\circ$, meaning that every neighborhood of $\mfkg$ intersects $\calS$ nontrivially, then $\mfkg \in \calS$ as a consequence of~\eqref{singleton_neighbor}. 

Lastly, we consider the case that $\mfkg \in \Iso(\cGss) \setminus \Iso(\cGss)^{\circ}$; 
in particular, the neighborhood $\calZ(\mu,g,\nu)$ of $\mfkg$ is not fully contained in $\Iso(\cGss)$. 
Consider $\mfkg\neq \mfkh \in \calZ(\mu,g,\nu)$, and let $y\in s(\nu) E^\infty \setminus\{x\}$ be such that
\(r(\mfkh)=\mu (g\cdot y)\) and \(s(\mfkh)=\nu y\).
Now, as $|\mu| \neq |\nu|$, it follows from Lemma~\ref{lem:one path}~\ref{it:one path:unique infinite path} that $x$ is the {\em unique} infinite path at $s(\nu)$ satisfying 
\mbox{$\mu (g\cdot x) = \nu x$;} in particular, $\mu (g\cdot y )\neq \nu y$. This implies that $\mfkh\notin \Iso(\cGss)$, and so we have shown $\calZ(\mu, g, \nu)\cap \Iso(\cGss) = \{\mfkg\}$. As $\mfkg\notin \Iso(\cGss)^\circ$, this implies $\calZ(\mu, g, \nu)\cap \Iso(\cGss)^\circ = \emptyset$, and therefore
\begin{equation} 
\mfkg \in \big(\Iso (\cGss)^{c}\big)^{\circ}\subseteq (\calS^c)^\circ. 
\qedhere
\end{equation}
\end{proof} 

A direct consequence is the characterization of Cartan subalgebras coming from the transformation groupoid in the genuine action case.

\begin{proof}
[Proof of Theorem~\ref{Thm_genuine_action_Cartan}] 
\label{proof:cartan_geuine_action} 
By Corollary~\ref{C:can-act}, for the genuine action case, $\Iso(\cGss)^{\circ}$ is an abelian subgroupoid. Then it follows from \cite[Corollary 4.5]{BNRSW:Cartan} that $C^*_{r}(\Iso(\cGss)^\circ)$ is a Cartan subalgebra in $C^*_{r}(\cGss)$ if and only if $\Iso(\cGss)^{\circ}$ is closed in $\cGss$. The equivalence holds by a direct application of Lemma~\ref{thm:closedness-equivalent}. 
\end{proof} 

The following example exhibits a self-similar graph which contains $g$-generic, $g$-rare, and  neither $g$-generic nor $g$-rare infinite paths. Moreover, we demonstrate that the main theorem of the section (Theorem~\ref{Thm_Ssym_Cartan}) applies to more general examples. 

\begin{exa} 
\label{exa:closed_g_generic} 
Consider the following self-similar graph $(\Z, E)$, which can be extended from the action and the restriction defined for the generator $a\in\Z$ on edges: 
\begin{align}
& a\cdot e_i=e_{i+1 \text{ mod 2}}, 
\ a\cdot f_i=f_{i+1 \text{ mod 2}}, 
\ a\cdot \mu=\mu \text{ for any edge } \mu\ne e_i, f_i,\\
&a|_{e_1}
=a|_{d_5}=a^2, \ a|_{\nu}=a \text{ for any edge }\nu \ne e_1, d_{5}. 
\end{align}

\begin{figure}[h]  
\label{fig_exa} 
    \centering
   
    \begin{tikzpicture}[ dot/.style={circle, fill=black, draw=none, inner sep=1.4pt}]
\def\LoopW{1.4cm}
\def\LoopH{0.9cm}


\node[dot, label=below:$v_1$] (v1) at (0,0) {};
\node[dot, label=above:$v_2$] (v2) at (3,0) {};
\node[dot, label=below:$v_3$] (v3) at (6,0) {};


\draw[->-] (v1) -- node[above] {$d_2$} (v2);
\draw[->-] (v3) -- node[above] {$d_4$} (v2);


\LoopLeft{v1}{$d_1$}
\LoopAbove{v1}{$f_1$}
\LoopBelow{v1}{
{$f_2$}}

\LoopAboveDash{v2}{$d_3$} 
\LoopLeftRotCCW{v2}{$e_1$}{45}
\LoopBelowRotCCW{v2}{
{$e_2$}}{45}

\LoopRight{v3}{$d_5$}

\end{tikzpicture}
    
    \caption{A self-similar graph $(\mathbb Z, E)$ with all types of infinite paths.}
    \label{closed_exam}
\end{figure}

For this self-similar graph $(\Z, E)$, there are infinite paths 
that are $a$-generic, $a$-rare, and neither $a$-generic nor $a$-rare. More specifically,
we have the following: 
\begin{enumerate}
\item \label{exa_1} 
$(e_1e_2)^\infty$ is $g$-generic for every $g \neq 1_{{G}}
$, since $(e_1e_2)^\infty$ is not fixed by any non-trivial group element. Indeed, any odd power of $a$ maps $e_{1}$ to $e_{2}$, hence moves the infinite path. Any even power of $a$, when restricted to $e_{1}$, becomes an odd power and thus also moves the remaining infinite path;
\item $d_5^\infty$ is $g$-generic for every $g\in\mathbb Z$ as it is fixed by every $g$ and $v_3E^\infty=\{d_5^\infty\}$;
\item $d_1^\infty$ is $g$-rare for every $g\in \mathbb{Z}$ that is 
an
odd power of 
$a$: For $N=0$,
any neighborhood of any $z\in d_1^\infty (0,N)E^{\infty}
=
v_{1}E^{\infty}$ coincides with $v_{1}E^{\infty}$. 
Since $g$ flips $(f_{1}f_{2})^{\infty}$ to $(f_{2}f_{1})^{\infty}$, it does not fix  $v_{1}E^{\infty}$ pointwise, proving that  $d_{1}^{\infty}$ is $g$-rare.

\item $d_3^\infty$ is neither $g$-generic nor $g$-rare for $g\neq 1_{{G}}$:  By a similar argument to~\eqref{exa_1}, non-trivial group elements do not fix 
$v_{2}E^{\infty}$ pointwise,
which implies that $d_3^\infty$ is not $g$-generic for any $g \neq 1_{{G}}$.
Secondly,
for any $N\in \mathbb{N}$, the infinite path $d_{3}^{N} d_{4} d_{5}^{\infty}$ is in $d_{3}^{N} E^{\infty}$ and its neighborhood $d_{3}^{N} d_{4} E^{\infty}$ is the singleton $\{d_{3}^{N} d_{4} d_{5}^{\infty}\}$ and is thus fixed by any group element.
Hence, $d_3^\infty$ is not $g$-rare for any $g \neq 1_{{G}}$. 
\end{enumerate}

If we remove the dashed edge $d_3$ in Figure~\ref{closed_exam}, then 
for each $g\in G$, each infinite path of the new resulting self-similar graph $(\mathbb{Z}, E')$ is either $g$-generic or $g$-rare. It is standard to check that $(\mathbb{Z}, E')$ is pseudo-free. By the main theorem of the section (Theorem~\ref{Thm_Ssym_Cartan}, whose proof is in the next subsection), the subgroupoid $\ssym$ in $\calG_{\mathbb{Z}, E'}$ gives a Cartan subalgebra of $C^*_{r}(\calG_{\mathbb{Z}, E'})$. 
Furthermore, the self-similar graph $(\mathbb{Z}, E')$ contains both a locally faithful vertex $v_{2}$ and non-locally faithful vertices $v_{1}$ and $v_{3}$.  
To the best of our knowledge, this is the first example of self-similar graphs exhibiting mixed dynamical behavior, while admitting Cartan subalgebras coming from canonical groupoid models.

\end{exa}

\subsection{Cartan subalgebras from $\ssym$}\label{ssec:Ssym satisfies ricc}
This subsection is devoted to verifying that,
when $G$ is torsion free, the subgroupoid $\ssym$ of $\cGss$ satisfies~\eqref{eq:ricc,stronger}. For the convenience of the reader, we repeat the condition here, where $\calG=\cGss$ and $\calS= \ssym$ need to be substituted:
\begin{equation}
\tag{\ref{eq:ricc,stronger}, repeated}
    \{\mfkg\in \Iso(\calG)^{\circ}: 1<|\{\mfks^{-1} \mfkg \mfks  : \mfks \in \calS\}|< \infty \} = \emptyset. 
    \end{equation} 

\begin{rem}  \label{rmk:ricc,stronger}
Equation~\eqref{eq:ricc,stronger} is an improvement on a condition called \emph{immediately centralizing}, which was introduced in \cite{DGNRW}. 
To understand why the $\mathcal{S}$-conjugates have relevance for $C^*_{r}(\mathcal{S})$ to be masa in $C^*_{r}(\calG)$, let us consider the discrete case:  If an element $\mfkg_0\in \calG(x)$ is in the set on the left-hand side of~\eqref{eq:ricc,stronger}, meaning that there exist finitely many elements $\mfkg_1,\dots,\mfkg_n$ such that $\{
               \mfks^{-1} \mfkg_0 \mfks  : \mfks \in \mathcal{S}(x) 
            \}=\{\mfkg_0,\dots,\mfkg_{n}\}$, then one can easily write down an element of $C^*_{r}(\calG)\setminus C^*_{r}(\mathcal{S})$ that commutes with all of $C^*_{r}(\mathcal{S})$. Indeed, for each $0\leq i\leq n$ and $\mfks \in\mathcal{S}(x)$, there exists a unique $0\leq j\leq n$ that satisfies $\delta_{\mfkg_{i}}\delta_{\mfks }=\delta_{\mfks }\delta_{\mfkg_{j}}$, so that the sum $\sum_{i=0}^{n}\delta_{\mfkg_{i}}$ of point-masses commutes with every point-mass $\delta_{\mfks }$ and hence with all of $C^*_{r}(\mathcal{S})$. However, since $\mathcal{S}$ is abelian, the elements $\mfkg_{i}$ are not in $\mathcal{S}$ and the sum of their point-masses is hence not in the subalgebra, so the existence of $\mfkg_0$ prevents $C^*_{r}(\mathcal{S})$ from being masa. It was further explained in \cite[Remark 3.6]{DWZ:Twist} why elements $\mfkg$ of $\Iso(\calG)^{\circ}$ that are not in the set on the left-hand side of~\eqref{eq:ricc,stronger} cannot
            stand in the way of $C^*_{r}(\mathcal{S})$ being maximal abelian. In the non-discrete case, there is a bit more leeway and one does not need to assume that~\eqref{eq:ricc,stronger} holds everywhere; see \cite[Proposition 3.14]{DWZ:Twist} for a precise statement. 
            For our purposes,~\eqref{eq:ricc,stronger} will be sufficient, as Lemma~\ref{lem:Ssym (almost) immediately centralizing} below shows. 
\end{rem}

\begin{lem}\label{lem:Ssym (almost) immediately centralizing}
Let $G$ be a countable discrete group that is abelian and torsion free, and let $E$ be a finite graph. Let $(G, E)$ be a self-similar action. Then $\ssym
\subseteq \cGss$ satisfies~\eqref{eq:ricc,stronger}.
\end{lem}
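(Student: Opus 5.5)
The plan is to show that for every $\mfkg\in\Iso(\cGss)^\circ$ the set of $\ssym$-conjugates $\{\mfks^{-1}\mfkg\mfks : \mfks\in\ssym\}$ is either the singleton $\{\mfkg\}$ or infinite. Write $\mfkg=(x;[f],k;x)$ with source and range $x$. The only relevant $\mfks$ lie in $\ssym(x)$, which by the multiplication formula for $\ssym$ (as in Lemma~\ref{lem:Ssym is subgroupoid}) has the form $\mfks=(x;[\phi],0;x)$, where $[\phi]=\calT_{m}([h|_{\sigma^m(x)}])$ with $h\in G_{x(m,m)}$. Since $G$ is abelian, the computation in $Q(\N,G)\rtimes\Z$ behind~\eqref{eq:conjugation} gives
\[
\mfks^{-1}\mfkg\mfks=\bigl(x;\,[f]\,[\phi]^{-1}\calT_{k}([\phi]),\,k;\,x\bigr).
\]
So the conjugate depends only on the class $c(\phi)\defeq[\phi]^{-1}\calT_k([\phi])\in Q(\N,G)$, and $\mfks\mapsto c(\phi)$ is a group homomorphism $\ssym(x)\to Q(\N,G)$ (again because everything is abelian). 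Its image is a subgroup of $Q(\N,G)$, and the set of conjugates is in bijection with this image.

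The key step is that $Q(\N,G)$ is torsion free when $G$ is. Indeed, if $[\psi]^n=1$ with $n\neq 0$, then $\psi(j)^n=1_G$ for all large $j$, so $\psi(j)=1_G$ for all large $j$, i.e.\ $[\psi]=1$. A subgroup of a torsion-free group is either trivial or infinite. Hence the set of conjugates is either $\{\mfkg\}$ or infinite, and~\eqref{eq:ricc,stronger} holds.

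The main obstacle is checking the conjugation formula carefully in the semidirect product, in particular the sign and direction of $\calT_k$ and whether the $\calT$-twist interacts with the abelian product correctly. I would first do this for $k\ge0$ using~\eqref{DefGroupoidOps} and~\eqref{eq:mfkg inv}, with $[\phi]$ in the $Q$-slot and lag $0$:
\[
\mfks^{-1}\mfkg\mfks=\bigl(x;\,[\phi]^{-1}[f]\calT_k([\phi]),\,k;\,x\bigr),
\]
and then use commutativity of $Q(\N,G)$. The case $k<0$ is analogous. Note that if $k=0$ the conjugates are trivially $\{\mfkg\}$, and that neither finiteness of $E$ nor pseudo-freeness is needed. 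The argument does not even require $\mfkg$ to lie in the interior of the isotropy; it works for every isotropy element.
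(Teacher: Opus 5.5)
Your argument is correct and is essentially the paper's proof in cleaner form. The paper fixes one $\mfkt\in\ssym(x)$ with $\mfkt^{-1}\mfkg\mfkt\neq\mfkg$ and uses Lemma~\ref{it:powers go in} to write the $Q(\N,G)$-component of $\mfkt^{p}$ as a $p$-th power. It then uses torsion-freeness of $G$ pointwise to get $\mfkt^{-p}\mfkg\mfkt^{p}\neq\mfkg$ for all $p\neq 0$. Your homomorphism $\mfks\mapsto[\phi]^{-1}\calT_k([\phi])$ into the torsion-free group $Q(\N,G)$ packages exactly these two steps, and your conjugation formula follows directly from the multiplication rule in $\cGss$ for every $k\in\Z$, so the sign split on $k$ is unnecessary.
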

\begin{proof}
Assume that for some $x\in\cGss\z$ and $\mfkg\in \cGss(x)$, there exists $\mfkt\in\ssym(x)$ for which $\mfkt^{-1}\mfkg\mfkt\neq \mfkg$. 
Then 
\begin{align*} 
\mfkt  &= ( x ; \ \calT_{ l }([t|_{\sigma^{l}(x)}]),\ 0; \  x ), 
\end{align*} 
for some $l\in\mathbb{N}$, $t\in G_{x(l, l)}$. We write $y \coloneqq \sigma^{l}(x)$. 
Since $t$ fixes $x(l, l) E^{\infty}$,
it follows from Lemma~\ref{it:powers go in} that
\[
   \ssym(x)\ni
   \mfkt^{p}
   \overset{\ref{it:powers go in}}{=}
   ( x ; \ \calT_{ l }([t^{p}|_{ y }]),\ 0; \  x )
   \quad
   \text{ for any }p\in \Z.
\] 
It suffices to show that
\(
\mfkt^{-p}\mfkg\mfkt^{p}\neq  \mfkg 
\)
for all $p\neq 0$.
The element $\mfkg$ is of the form
\begin{align*} 
\mfkg  &= (x;  \ \calT_{ m }([g|_{\sigma^{n}(x)}]), \ k ; \  x )
    \quad\text{ where }
    k = m-n.
\end{align*}
Since $G$ and hence $Q(\N ,G)$ is abelian, we have by Lemma~\ref{it:powers go in}
\begin{align} 
\mfkt^{-p} \mfkg \mfkt^{p}
&= 
( x ; \ \calT_{ l }([t^{-p}|_{ y }]) \calT_{ m }([g|_{\sigma^{n}(x)}]) \calT_{ k }\calT_{ l }([t^{p}|_{ y }]),\  k ; \  x )
\\
&\overset{\ref{it:powers go in}}{=} 
( x ; \ \calT_{ l }([t^{p}|_{ y }])^{-1}
\calT_{ k + l }([t^{p}|_{ y }])
\calT_{ m }([g|_{\sigma^{n}(x)}]) ,\  k ; \  x ) .
\label{eq:g conjugate by t_p}
\end{align} 
Because of our assumption $\mfkt^{-1} \mfkg \mfkt\neq \mfkg$, the above computation for $p=1$ implies that
\begin{equation}
    \calT_{ l }([t|_{y}])
    \neq
    \calT_{ k + l }([t|_{y}]).
\end{equation}
In particular, there exists an infinite  sequence $k+l\leq d_{1} < d_{2} <\dots$ of increasing natural numbers such that 
\begin{equation}\label{eq:t different}
   t|_{y(0, d_{i}- k)} 
   \neq 
   t|_{y(0, d_{i} )}
   \text{ for all }i\in\N.
\end{equation} 
Fix $p\neq 0$. Since $t^{p}\in G_{r(y)}$, we have \(
    t^{p}|_{y(0, d)} = (t|_{y(0, d)})^{p}\)
for all $d\in \N$ by Lemma~\ref{it:powers go in}.
Since $G$ is assumed to be torsion free, this implies that the inequality at~\eqref{eq:t different} holds not only for $t$ but also for $t^{p}$. Combined with Equation~\eqref{eq:g conjugate by t_p}, this means exactly that $\mfkt^{-p}\mfkg\mfkt^{p}\neq \mfkg$ for $p\neq 0$.
\end{proof}

Combining results in Section~\ref{S:sym} and~\ref{sec: Cartan subalgebras}, we prove our main theorem for $\ssym$. 
\begin{proof}[Proof of Theorem~\ref{Thm_Ssym_Cartan}] 
The sufficient conditions appearing in Theorem~\ref{thm:DWZ} have all been verified for $\ssym$. Indeed, Proposition~\ref{prop:Ssym:open,normal,abelian,subgpd} shows that $\ssym$ is an open, abelian, normal subgroupoid of $\cGss$; $\ssym$ is maximal abelian due to Theorem~\ref{thm:Ssym is maximal}; closedness of $\ssym$ follows from Theorem~\ref{thm:closedness-equivalent}; and lastly,~\eqref{eq:ricc,stronger} is satisfied by Lemma~\ref{lem:Ssym (almost) immediately centralizing}. 
\end{proof}

\appendix

\section{Examples for Section 4}
\label{app:Weyl examples}
In this appendix, we will consider specific examples of self-similar graphs $(\Z,E)$ and use our results from the previous sections to describe the groupoid $\ssym$, its dual $\ssymdual$, and the action of $\cGss[\Z]$ on $\ssymdual$. As before, we write $\Z=\langle a\rangle$.

\subsection{One loop}
    Let $E^{0}=\{v\}$ and $E^1=\{e\}$, so $E$ is a single loop.

    \begin{figure}[h] 
    \centering

    \begin{tikzpicture}[ dot/.style={circle, fill=black, draw=none, inner sep=1.4pt}]
\def\LoopW{1.4cm}
\def\LoopH{0.9cm}


\node[dot, label=below:$v$] (v) at (2,0) {};


%
\LoopRight{v}{$e$}

\end{tikzpicture}
    
    \caption{A loop.}
    \label{fig:loop}
\end{figure}

    This forces the $\Z$-action on $E$ to be 
    trivial.
    We let the restriction map be determined by 
    \[
    a|_{e}=a^2.
    \]
    One can compute $a^n|_{e^m}=a^{2^mn}$, so the action is pseudo-free.
    Since  there is a single infinite path, namely $x=e^\infty$, we have $\ssym=\ssym(e^\infty)$ and $\Z_v = \Z$. This means that the elements $h_n=h_{x,n}$ defined at~\eqref{eq:def'n h_n} are all equal to the generator $a$. Using the notation from Theorem~\ref{thm:ssymdual}, we thus have that $\ssym(e^\infty)$ is generated by the elements
    \[
        \mfks_{x,n}=\mfks_{n}
        =
        \left(
            e^\infty;
            \calT_{n}([a|_{e^\infty}])
            , 0, e^\infty
        \right)
        \quad\text{ for }n\geq 0.
    \]

    For an arbitrary element
    \(        \mfkg
        =
        \left(
            e^\infty;
            [f], k ;
            e^\infty
        \right)
    \)
    of $\cGss[\Z]$,
     we see from Equation~\eqref{eq:conjugation} that
    \[
        \mfkg\inv \mfks_{n} \mfkg
        =
        \left(
            e^\infty; 
            \calT_{n-k}([a|_{e^\infty}])
            , 0, e^\infty
        \right),
    \]
    which, for $k\leq n$, is exactly $\mfks_{n-k}$. 
    For $k\geq n$, it is easy to check that our definition of the restriction map implies
    \begin{equation}\label{eq:calT on a|e^infty}
       \calT_{n-k}([a|_{e^\infty}])
       = [a^{2^{|n-k|}}|_{e^\infty}] = [a|_{e^\infty}]^{2^{|n-k|}}.
    \end{equation}
    Thus, all in all, the action on $\kappa\in\ssymdual$ is given by: 
    \[
        (\mfkg\HleftX \kappa)(\mfks_{n})
        =
        \begin{cases}
            \kappa(\mfks_{0})^{2^{|n-k|}} & \text{ if } n-k \leq 0,
            \\
            \kappa(\mfks_{n-k}) & \text{ if } n-k\geq 0.
        \end{cases}
    \]
    (Note that~\eqref{eq:mult in ssym(x)}
    implies for $n-k\geq 0$ that
    \(
     \mfks_{0}
     = 
     \mfks_{n-k}^{2^{n-k}},
    \) 
    so the above formula is very natural.)

    Because we defined $a|_{e}=a^2$, we have for $m\in\N$ that
    \[
        h_{m}|_{x(m,m+1)}
        =
        a|_{e}
        =
        a^{2}
        =
        h_{m+1}^2;
    \]
    in other words, the integers $k_n$ from Lemma~\ref{lem:k_n} are all equal to $2$. 
    Thanks to Theorem~\ref{thm:ssymdual}, we can identify $\ssymdual$ with $\varprojlim (\T, \mvisiblespace^2)$  via 
    \[
    \ssymdual \ni
    \kappa \mapsto [\kappa(\mfks_{0})\,\kappa(\mfks_{1})\dots].
    \]
    
    Since the map
    \begin{equation}\label{eq:loop:mfkg}
        \mfkg
        =
        \left(
            e^\infty;
            \calT_{m}([g|_{e^\infty}])
            , k ;
            e^\infty
        \right)
        \mapsto
        k 
    \end{equation}
    induces an isomorphism from
    $\cGss[\Z] / \ssym 
    $ to $\Z$,
    the action at~\eqref{eq:action,general} of $\cGss[\Z]/\ssym$ on $\ssymdual$ can all in all instead be understood as the following $\Z$-action on $\varprojlim (\T, \mvisiblespace^2)$:
    \begin{equation}
        \label{eq:loop:k HleftX dyadic}
        k \HleftX [z_{0} \, z_{1} \dots]
        =
        \begin{cases}
            [z_{0}^{2^{|k|}} \, z_{1}^{2^{|k|}} \dots]
            &\text{ if } k\leq 0
            \\
            [z_{k} \, z_{k+1}\dots]
            &\text{ if } k\geq 0
        \end{cases}
        ,
    \end{equation}
    and the corresponding transformation groupoid $\Z\ltimes \varprojlim (\T, \mvisiblespace^2)$ serves as a new groupoid model for the $C^*$-algebra $\csr(\cGss[\Z])$.

\subsection{One loop with a stem}

    Let $E^{0}=\{v_{0},v_{1}\}$ and $v_{0}Ev_{0}=\{e\}, v_{1}Ev_{0}=\{d\}$, so $E$ is a loop with a stem.

    \begin{figure}[h!] 
    \centering
   
    \begin{tikzpicture}[ dot/.style={circle, fill=black, draw=none, inner sep=1.4pt}]
\def\LoopW{1.4cm}
\def\LoopH{0.9cm}


\node[dot, label=below:$v_0$] (v0) at (2,0) {};
\node[dot, label=below:$v_1$] (v1) at (0,0) {};


\draw[->-] (v0) -- node[above] {$d$} (v1);


\LoopRight{v0}{$e$}

\end{tikzpicture}
    
    \caption{A loop with a stem.}
    \label{fig:1 petal rose}
\end{figure}

We must assume that $\Z$ acts trivially on $E$. We let the restriction map be determined by 
    \[
    a|_{e}=a^2\quad\text{and}\quad a|_{d}=a^3.
    \]
    One can check this self-similar action is pseudo-free by noticing that $a^m \cdot \mu = \mu$ always and $a^m|_{de^n}$ and $a^m|_{e^n}$ are never identity unless $m=0$.
    There are two infinite paths, namely $e^\infty$ and $de^\infty$. Since $\Z$ acts trivially on $E$, we have for
    $x,y\in 
    E^\infty$ that
    \begin{align*}  
        (\cGss[\Z])_{y }^{x}
        =
        \left\{
        \left(
            x
            ;
            \calT_{
            m}[g|_{
            \sigma^{\ell }(y )}], 
            m
            -
            \ell 
            ;
            y  
        \right)
        : g\in \Z, m,\ell \in\N, \sigma^{m}(x)=\sigma^{\ell }(y )
        \right\}
        .
    \end{align*}
    Note that $m-\ell $ can be any integer by the nature of the two infinite paths $e^\infty, de^\infty$.

    As before, we let 
    \[
    \mfks_{x,n }\coloneqq (x;\calT_{n }[a|_{\sigma^n(x)}] ,0;x)
    \in \ssym(x).
    \]
    Since for $n\in\N$,
    \[
        [a|_{\sigma^n(x)}]
        =
        \calT_1 ([a|_{e^\infty}])^p
        \quad\text{ where }\quad
        p
        =
        \begin{cases}
            3 & \text{ if } x=de^\infty\text{ and } n=0,
            \\ 
            2 & \text{ otherwise} ,
        \end{cases}
    \]
    we see that for $\ell\in\N$,
    \[
        \left(
            y; \calT_{\ell}[a|_{\sigma^{n}(x)}], 0, y
        \right)
        =
        \begin{cases}
            \mfks_{y,1}^{p\cdot 2^{|\ell|}} & \text{ if } \ell\leq 0,\\
            \mfks_{y,\ell+1}^p & \text{ if } \ell\geq0 .
        \end{cases}
    \]
    Thus, for $n\geq 1$ (so that $p=2$),
    \[
        \mfkg\inv \mfks_{x,n} \mfkg
        \overset{\eqref{eq:conjugation}}{=}
        \left(
            y; \calT_{n-k}[a|_{\sigma^{n}(x)}], 0, y
        \right)
        =
        \begin{cases}
            \mfks_{y,1}^{2^{|n-k|+1}}& \text{ if } n-k\leq 0
            \\            
            \mfks_{y,n-k+1}^2 & \text{ if } n-k\geq 0
        \end{cases}
    \]   
    (The obnoxious $+1$ everywhere allows us to cover the cases $y=de^\infty$ and $y=e^\infty$ simultaneously.)
    We deduce that the action of \(
    \mfkg=\left(
            x            ; [f],  k  ;   y 
        \right)\)
    on $\kappa\in \ssymdual(y)$ is determined for $n\geq 1$ by
    \[
        \mfkg\HleftX \kappa
        \colon\quad
            \mfks_{x ,n }
            \mapsto
        \begin{cases}
            \kappa(\mfks_{y,1})^{2^{|n-k|+1}} & \text{ if }  n-k \leq 0 ,
            \\
            \kappa(\mfks_{y,n-k+1})^2 & \text{ if } n-k\geq 0 .
        \end{cases}  
    \]

    Since $r(e^\infty)=v_0\neq v_1 = r(de^\infty)$, we have $\Z_{v_{i}}=\Z$ for $i=0,1$; in particular, the generator $h_{x,n}$ of $\Z_{x(n,n)}$ for $x\in E^\infty$ and $n\in \N$ is always just equal to the generator $a$ of $\Z$. Thus \begin{equation}\label{eq:lollypop:ssym}
    \ssym(y )
    =
    \left\{
    \left(
    y 
    ;\calT_{
    \ell 
    }[h|_{
    \sigma^{\ell }(y )}],0;
    y 
    \right)
    \mid
    \ell \in\N 
    ,h\in G
    \right\}
    \end{equation}
    and the map
    \[
        \left(
            x
            ;
            [f], k ;
            y
        \right)
        \mapsto
        (x,k ,y )
    \]
    with inverse
    \begin{align*}    
        (x, k, y )
        \mapsto
        \begin{cases}
            (x; \calT_{1-k}[a|_{e^\infty}],k; y )\ssym
            &\text{ if } k\leq 0,
            \\
            (x; \calT_{1}[a|_{e^\infty}],k; y )\ssym
            &\text{ if } k\geq 0,
        \end{cases}
    \end{align*} 
    induces a bijection between $\cGss[\Z]/ \ssym$ and the Deaconu-Renault groupoid associated to the $\N$-action on $E^\infty$ induced by the shift map $\sigma$. 

    A quick computation shows that
    the integers $k_{y,n}$ from Lemma~\ref{lem:k_n} are given by
    \[
        k_{y,n}=
        \begin{cases}
            3 &\text{ if } y=de^\infty \text{ and } n=0,
            \\
            2 &\text{ otherwise}.
        \end{cases}
    \]
    As before, we use Theorem~\ref{thm:ssymdual} to identify $\ssymdual(y)$ with $\varprojlim (\T, \mvisiblespace^2)$  via 
    \[
    \ssymdual(y) \ni
    \kappa \mapsto [\kappa(\mfks_{y,1})\,\kappa(\mfks_{y,2})\dots].
    \]
    (Note that we have dropped $\mfks_{y,0}$ to avoid having to distinguish between $k_{de^\infty,0}$ and all other $k_{y,n}$, since the first map in the inverse limit can be forgotten.) This means that $\mfkg\HleftX \kappa$ is identified with
    \[
        \begin{cases}
            \left[
                \kappa(\mfks_{y,2-k})^2 \, \kappa(\mfks_{y,3-k})^2 \, \kappa(\mfks_{y,4-k})^2 \dots
            \right]
            &\text{ if } k\leq 1,
            \\            
            \left[
                \kappa(\mfks_{y,1})^{2^k} \, \kappa(\mfks_{y,1})^{2^{k-1}} \,
                \dots\, 
                \kappa(\mfks_{y,1})^{2}\,
                \kappa(\mfks_{y,2})^{2} \,
                \kappa(\mfks_{y,3})^{2} \dots
            \right]
            &\text{ if } k\geq 1.
        \end{cases}
    \]
    
    With the identification of $\cGss[\Z]/\ssym$ with the Deaconu-Renault groupoid and of $\ssymdual(y)$ with $\varprojlim (\T, \mvisiblespace^2)$, the action can now be described as
    \[
        (x,k,y)\HleftX [z_{1}\, z_{2}\, \dots]
        =
        \begin{cases}
            [z_{2-k}^{2} \, z_{3-k}^{2} \dots]
            &\text{ if } k\leq 1,
            \\
            \left[
                z_{1}^{2^k} \, z_{1}^{2^{k-1}} \,
                \dots\, 
                z_{1}^{2}\,
                z_{2}^{2} \,
                z_{3}^{2} \dots
            \right]
            &\text{ if } k\geq 1.
        \end{cases}            
    \]

\section{Alternative Candidate for a Cartan Subalgebra}
\label{sec:AnotherCandidate}

In this appendix, we identify another subgroupoid $\salt$ of $\Iso(\cGss)^{\circ}$ that yields a Cartan subalgebra $C^*(\salt) \subseteq C^*(\cGss[\Z])$ for certain self-similar
graphs $(\Z, E)$ not covered by Theorem~\ref{Thm_Ssym_Cartan}. 
To be more precise, we relax Condition~\ref{it:Thm_Ssym_Cartan:cycles} of Theorem~\ref{Thm_Ssym_Cartan} by allowing some cycles $C$ with no entrance to satisfy
$a^{p}|_{C} = a^{p}$,
where $p$ is the smallest positive integer such that $a^{p}$ fixes every vertex in $C$. Under this assumption, all Type~\ref{TypeII} elements at vertices in $C$ commute with Type~\ref{TypeI} elements at vertices in $C$ by Lemma~\ref{lem:TypeIITypeIDontCommute} (see Lemma~\ref{types_of_cycline_triples} and the subsequent remarks for different types). This motivates the definition of $\salt$ as 
an alternative candidate for a maximal abelian subgroupoid of $\Iso(\cGss)^{\circ}$. We will define $\salt$ to consist of $\ssym$
together with neighborhoods arising from certain Type~\ref{TypeII} cycline triples.
We state the main theorem of Appendix~\ref{sec:AnotherCandidate}.

\begin{thm} 
\label{Thm_Salt_Cartan} 
Let $(\Z, E)$ be a pseudo-free self-similar graph, and let $E$ 
have finitely many vertices and no sources. Assume additionally the following conditions 
hold:
\begin{enumerate}[label=\textup{(\arabic*)}] 
\item\label{it:Thm_Salt_Cartan:generic or rare} \label{AssumptionForClosedness} For any $x\in E^{\infty}$ and 
$g\in \Z$,
$x$ is either $g$-generic or $g$-rare. 
\item\label{it:Thm_Salt_Cartan:cycles}
Given any cycle $C$ in $E$ with no entrance and  $p$ the smallest positive integer such that $a^p$ fixes a vertex on $C$, we have $a^{p}|_{C} \neq a^{-p}$. Moreover, if $a^{p}|_{C} = a^{p}$, then the following holds:
        \begin{gather} \label{TypeII_cycle_hypothesis} 
        \tag{\ref{sec:AnotherCandidate}.
        $\dagger$} 
        \text{for any }
        1\leq m < |C|, \text{ there exists } \gamma \subseteq C \text{ with } |\gamma| = m \text{ and } a^{p}|_\gamma \neq a^{p}. 
        \end{gather} 
\end{enumerate}
Then the
subgroupoid
\begin{equation} 
\label{Salt:definition}  
\salt = \ssym \bigsqcup \left( 
\bigcup_{\substack{C \text{ satisfies}\\ 
a^p|_C = a^p 
}
} 
\bigcup_{\substack{(\alpha, g, \beta) \text{ Type~\ref{TypeII}} \\
\text{cycline triple at }s(\beta) \in C}} 
 \calZ(\alpha, g, \beta)\right)
\end{equation} 
of $\Iso(\cGss[\Z])^{\circ}$
satisfies the hypotheses of Theorem~\ref{thm:DWZ}.
Consequently, $C_r^*(\salt)$ is a Cartan subalgebra in $C_r^*(\cGss[\mathbb{Z}])$.
\end{thm}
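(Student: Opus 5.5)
We will verify the four hypotheses of Theorem~\ref{thm:DWZ} for $\calS=\salt$: that $\salt$ is an open abelian subgroupoid of $\Iso(\cGss[\Z])^\circ$, maximal among such; that it is closed; that it is normal in $\cGss[\Z]$; and that it satisfies~\eqref{eq:ricc,stronger}.

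\textbf{Openness, subgroupoid, abelianness.} Openness is automatic: $\ssym$ is open, and each added Type~\ref{TypeII} set is a singleton cylinder set (Lemma~\ref{types_of_cycline_triples}). For the subgroupoid property, a Type~\ref{TypeII} element at a vertex of $C$ has the form $(\nu C^\infty; \calT_{|\nu|+kn}([a^{pq}|_{C^\infty}]), \pm kn; \nu C^\infty)$, possibly with the roles of the first and last coordinates swapped. Inverses and products of such elements, and products with Type~\ref{TypeI} elements at $C$, are again of Type~\ref{TypeI} or~\ref{TypeII} at $C$. This follows from the multiplication formulas, since the lags add in $n\Z$ and the group parts stay in $\langle a^p\rangle$. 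Abelianness splits into three cases:
\begin{itemize}
\item $\ssym$ with itself, by Corollary~\ref{cor:cycline comm};
\item $\ssym$ with the added Type~\ref{TypeII} elements, by Lemma~\ref{lem:TypeIITypeIDontCommute} in the case $a^p|_C=a^p$;
\item Type~\ref{TypeII} with Type~\ref{TypeII}, via Lemma~\ref{comm_rela}.
\end{itemize}
For the third case we use $a^{p}|_{C_i}=a^{p}$ (Lemma~\ref{compute_cycle}\ref{restrict_cycle_other_start} with $R=1$). Restricting an element of $\langle a^p\rangle$ along whole cycles therefore changes nothing, and both sides of~\eqref{comm_rela} reduce to the same eventual product in the abelian group.

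\textbf{Maximality.} By Corollary~\ref{cor:the green corollary}, the complement $\Iso(\cGss[\Z])^\circ\setminus\salt$ consists of three kinds of elements:
\begin{itemize}
\item Type~\ref{TypeII} elements at cycles with $a^p|_C\neq a^{\pm p}$;
\item Type~\ref{TypeIII} elements at cycles with $a^p|_C\neq a^{\pm p}$;
\item Type~\ref{TypeIII} elements at cycles with $a^p|_C=a^p$.
\end{itemize}
(Cycles with $a^p|_C=a^{-p}$ are excluded by hypothesis.) Lemmas~\ref{lem:TypeIITypeIDontCommute} and~\ref{lem:TypeIIITypeIDontCommute} handle the first two kinds, each failing to commute with some Type~\ref{TypeI} element. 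The third kind is the main new step, and here~\eqref{TypeII_cycle_hypothesis} is used. For $(\mu,a^q,\nu)$ of Type~\ref{TypeIII} with $|\mu|>|\nu|$, set $m\equiv |\mu|-|\nu| \bmod n$, so $1\le m<n$ by Lemma~\ref{types_of_cycline_triples}. As in the proof of Lemma~\ref{lem:TypeIIITypeIDontCommute}, commutation with $\calZ(\nu,a^p,\nu)$ reduces to~\eqref{III_restrict_l}. Since whole cycles act trivially, this becomes $a^{pQ_\ell}|_{\gamma_\ell}=a^{pQ_\ell}$ for the length-$m$ subpath $\gamma_\ell$ of $C^\infty$ starting at position $|\nu|+\ell$, for all large $\ell$. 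As $\ell$ varies over a period, $\gamma_\ell$ runs over every length-$m$ subpath of $C$. Hypothesis~\eqref{TypeII_cycle_hypothesis} provides a subpath $\gamma$ with $a^p|_\gamma=a^{pr}$ for some $r\neq 1$. Then $a^{pQ_\ell}|_\gamma=a^{pQ_\ell r}\neq a^{pQ_\ell}$, because $Q_\ell\neq0$ by pseudo-freeness and Lemma~\ref{compute_cycle}\ref{restrict_i},\ref{restrict_power_i}. So the Type~\ref{TypeIII} element fails to commute with a Type~\ref{TypeI} element. I expect the index bookkeeping relating $\ell$ to the starting edge of $\gamma$ to be the main obstacle.

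\textbf{Closedness, normality, and~\eqref{eq:ricc,stronger}.} Closedness follows from Theorem~\ref{thm:closedness-equivalent} and hypothesis~\ref{AssumptionForClosedness}, since $\ssym\subseteq\salt\subseteq\Iso(\cGss[\Z])^\circ$. For normality, conjugating a Type~\ref{TypeII} element at $C$ by $\mfkg\in\cGss[\Z]$ gives an element whose unit lies in the orbit of the cycle. Write the new unit as $\nu'(h\cdot C)^\infty$; by Lemma~\ref{lem:cycles to cycles}, $h\cdot C$ is again a cycle with no entrance. The lag is unchanged and the group component is still a power of $a^p$, since $\Z$ is abelian. Moreover, $a^p|_{h\cdot C}=a^p$, because $\Z$ is abelian and $h|_{C^k}\cdot C^\infty=(h\cdot C)^\infty$. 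Normality of $\ssym$ (Lemma~\ref{lem:Ssym is normal}) covers the remaining elements. Finally, for~\eqref{eq:ricc,stronger}, we rerun the proof of Lemma~\ref{lem:Ssym (almost) immediately centralizing} with $\mfkt\in\salt(x)$. If $\mfkt\in\ssym$, that argument applies verbatim. If $\mfkt$ is a Type~\ref{TypeII} element, we have $\mfkg\in\Iso(\cGss)^\circ$ at a unit $x$ on $C^\infty$, so the isotropy group is the cyclic-like group $\calG(x)$. In that case we check directly that conjugation by powers of $\mfkt$ either fixes $\mfkg$ or produces infinitely many distinct conjugates, using torsion-freeness exactly as in~\eqref{eq:t different}.
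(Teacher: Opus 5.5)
The gap is in the normality step; everything else is sound. Your overall architecture matches the paper's:
\begin{itemize}
\item abelianness is split as in Theorem~\ref{thm:Smaxabelian}, and your Type~\ref{TypeII}/Type~\ref{TypeII} computation is Lemma~\ref{lem:TypeIICommute};
\item maximality uses the same three kinds of complementary elements, and your $(\dagger)$-argument is Lemma~\ref{lem:TypeIIITypeIDontCommute_Salt}; you also correctly cite Lemma~\ref{lem:TypeIIITypeIDontCommute} for cycles with $a^p|_C\neq a^{\pm p}$;
\item closedness comes from Theorem~\ref{thm:closedness-equivalent}.
\end{itemize}

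\textbf{The gap in normality.} The sentence ``the group component is still a power of $a^p$, since $\Z$ is abelian'' carries the whole step, and commutativity does not prove it. Write $\mfkt=(\beta C^\infty;\calT_{|\beta|+nk}([a^{pq}|_{C^\infty}]),nk;\beta C^\infty)$ and $\mfkg=(\mu(g\cdot x);\calT_{|\mu|}([g|_x]),|\mu|-|\nu|;\nu x)$. Then $\mfkg\mfkt\mfkg^{-1}$ has $Q(\N,\Z)$-component
\[
\calT_{|\mu|}\bigl([g|_x]\,\calT_{|\beta|-|\nu|+nk}([a^{pq}|_{C^\infty}])\,\calT_{nk}([g|_x]^{-1})\bigr),
\]
and $[g|_x]$ does not cancel against its \emph{shifted} inverse. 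The defect $[g|_x]\,\calT_{nk}([g|_x])^{-1}$ is in general nontrivial. For example, let $a$ swap two loops $e$ (at $v$) and $f$ (at $w$), with $a|_e=a^3$ and $a|_f=a^{-1}$. Then $a|_{e^m}=a^{2m+1}$, the defect is $[a^2]$, and $a^{2q}$ becomes $a^{2q+2}$.

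You must show this defect is itself of the form $\calT_{\cdot}([a^{pt}|_{C^\infty}])$. The paper does this in Lemma~\ref{lem:inductionstep}: with $x=\nu'C^\infty$ and $h=g|_{\nu'}$, the element $h^{-1}h|_{C^k}$ fixes $s(C)$, hence equals $a^{pt}$. This gives $(h|_{C(0,\ell)})^{-1}h|_{C(0,kn+\ell)}=a^{pt}|_{C(0,\ell)}$, and the new triple carries $a^{p(t+q)}$.

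A softer repair also works.
\begin{itemize}
\item $\Iso(\cGss[\Z])^\circ$ is invariant under conjugation: conjugate by an open bisection around $\mfkg$.
\item Conjugation preserves the lag $nk$.
\item After refining via Lemma~\ref{lem:opensetexists}, the conjugate lies in a cycline cylinder at a vertex of $\tilde C=g|_{\nu'}\cdot C$, with lag a nonzero multiple of $|\tilde C|$.
\item Lemma~\ref{types_of_cycline_triples} then forces that triple to be of Type~\ref{TypeII}.
\end{itemize}
Either way you also need $\tilde p=p$ and $a^p|_{\tilde C}=a^p$. These come from ``$h'$ fixes $C$ iff it fixes $h\cdot C$'' and the cocycle identity $(a^p|_{h\cdot C})(h|_C)=(a^ph)|_C=(ha^p)|_C=(h|_C)(a^p|_C)$. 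The identity $h|_{C^k}\cdot C^\infty=(h\cdot C)^\infty$ that you cite does not give them.

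\textbf{Your route for~\eqref{eq:ricc,stronger}.} This genuinely differs from the paper's and can be made cleaner than you wrote it. For Type~\ref{TypeIII} elements at cycles with $a^p|_C=a^p$, the paper computes explicitly that the conjugates $\mfks_q^{-1}\mfkg\mfks_q$ take the value $a^{2pqk}$ at suitable positions. You instead note that the argument of Lemma~\ref{lem:Ssym (almost) immediately centralizing} works for \emph{any} $\mfkg\in\cGss[\Z](x)$ and any $\mfkt\in\ssym(x)$ with $\mfkt^{-1}\mfkg\mfkt\neq\mfkg$, by torsion-freeness of $Q(\N,\Z)$; this is correct.

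Your separate ``direct check'' for Type~\ref{TypeII} elements $\mfkt$ is unsupported as written, and $\cGss[\Z](x)$ is not cyclic. That case is unnecessary, though. If $\mfkg$ fails to commute with some element of $\salt$, then $\mfkg\notin\salt$, because $\salt$ is abelian. Your maximality step then already supplies a Type~\ref{TypeI} element of $\ssym(x)$ not commuting with $\mfkg$, and the verbatim argument yields infinitely many conjugates. This avoids the paper's explicit computation altogether.
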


For the rest of this appendix, we will fix a self-similar graph  $(\Z,E)$ and we let $\salt$ be the subgroupoid of $\cGss[\Z]$ defined in Theorem~\ref{Thm_Salt_Cartan}. For each of the following results,  we will explicitly name any other assumption  about $(\Z,E)$ from the theorem as needed.

The following quick lemma is a direct consequence of the definition of $\salt$ and our characterization of closed subgroupoids (Theorem~\ref{thm:closedness-equivalent}).

\begin{lem} 
$\salt$ is an open subgroupoid of $\Iso(\cGss[\Z])^{\circ}$. If $(\mathbb{Z}, E)$ is pseudo-free and satisfies Assumption~\ref{AssumptionForClosedness}, then $\salt$ is closed in $\cGss[\Z]$. 
\end{lem}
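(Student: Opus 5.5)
The lemma asserts three things: $\salt$ is a subgroupoid, it is open, and (under the extra hypotheses) it is closed. Openness is immediate, since $\salt$ is a union of cylinder sets, which are open. Alternatively, one can use the remark after Corollary~\ref{cor:the green corollary}: every point of $\Iso(\cGss[\Z])^{\circ}\setminus\ssym$ is isolated, so any subset of $\Iso(\cGss[\Z])^\circ$ containing $\ssym$ is open. Closedness follows directly from Theorem~\ref{thm:closedness-equivalent}, implication \ref{thm:closedness-equivalent:it:split into generic and rare}$\implies$\ref{thm:closedness-equivalent:it:closed}. Here $G=\Z$ has an element of infinite order and $E$ is finite and source-free, so once $\salt$ is known to be a subgroupoid with $\ssym\subseteq\salt\subseteq\Iso(\cGss[\Z])^\circ$, Assumption~\ref{AssumptionForClosedness} gives that $\salt$ is closed. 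The inclusions themselves are clear from \eqref{Salt:definition} and \eqref{eq:int of Iso}.

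The real content is therefore the subgroupoid property, which I would check directly. Units lie in $\ssym\subseteq\salt$. For inverses, $\ssym$ is closed under inversion by Lemma~\ref{lem:Ssym is subgroupoid}. The inverse of the unique element of $\calZ(\nu,a^{pq},\nu C^k)$ is the unique element of $\calZ(\nu C^k,a^{-pq},\nu)$, using \eqref{eq:mfkg inv} and $a^{pq}\cdot C^\infty=C^\infty$. This is again a Type~\ref{TypeII} triple at a vertex of the same cycle, so it lies in $\salt$.

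For products, take composable $\mfkg,\mfkh\in\salt$ with at least one outside $\ssym$. The common unit is then $\lambda C^\infty$ for some cycle $C$ with no entrance satisfying $a^p|_C=a^p$, by Lemma~\ref{types_of_cycline_triples} and the singleton description in Corollary~\ref{cor:the green corollary}. Every element of $\Iso(\cGss[\Z])^\circ$ at such a unit has lag $|\alpha|-|\beta|$. The product therefore lies in $\Iso(\cGss[\Z])^\circ$ and is given by a cycline triple at a vertex of $C$, obtained by extending both triples along $C^\infty$ via Lemma~\ref{lem:opensetexists}, with group element a power of $a^{p}$ when both factors fix $C^\infty$. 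By the classification in Lemma~\ref{types_of_cycline_triples}, that triple is of Type~\ref{TypeI} (product in $\ssym$) or Type~\ref{TypeII} at a vertex of $C$ (product in $\salt$). It cannot be of Type~\ref{TypeIII}, because both factors fix $C^\infty$ pointwise through powers of $a^p$: Type~\ref{TypeI} and Type~\ref{TypeII} elements have group part in $\langle a^p\rangle$, and the product's group part is $a^{pq}\cdot a^{pq'}$ after restriction, which stays in $\langle a^p\rangle$ by Lemma~\ref{compute_cycle}\ref{restrict_i},\ref{restrict_power_i}.

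The main obstacle I expect is this bookkeeping: showing that the product of a Type~\ref{TypeI}/\ref{TypeII} element with a Type~\ref{TypeII} element at the same unit is represented by a triple whose group element lies in $p\Z$. Concretely, I would write both elements over the common infinite path $\lambda C^\infty$ with a common long prefix $\lambda C^{N}$. I would then apply the multiplication formula in \eqref{DefGroupoidOps} together with \cite[Lemma 5.1]{Self-similar:LY21}, so that the product's $Q(\N,\Z)$-component has the form $\calT_{\ell}([a^{pq''}|_{C^\infty}])$ for some $\ell$ and $q''$.
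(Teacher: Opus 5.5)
Your proposal is correct and follows the paper's proof. Openness holds because $\salt$ is a union of cylinder sets, and closedness follows from Theorem~\ref{thm:closedness-equivalent} applied to $\ssym\subseteq\salt\subseteq\Iso(\cGss[\Z])^\circ$. The paper only calls the subgroupoid property ``standard (but tedious)'', and your sketch fills it in soundly: after extending along the common unit so that prefixes match, the product lies in $\calZ(\alpha_1,g_1g_2,\beta_2)$ with $g_1g_2\in\langle a^p\rangle$, which excludes Type~\ref{TypeIII}; this step needs neither pseudo-freeness nor Lemma~\ref{lem:opensetexists}, since extending triples and the singleton property of Lemma~\ref{types_of_cycline_triples} are hypothesis-free.
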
 
\begin{proof}
    A standard (but tedious) argument shows that $\salt$ is a subgroupoid. Since $\salt$ is a union of open sets, it is open. Since $\ssym \subseteq \salt \subseteq \Iso(\cGss[\mathbb{Z}])^\circ$ and $(\mathbb{Z}, E)$ is pseudo-free,
    Assumption~\ref{AssumptionForClosedness} and Theorem~\ref{thm:closedness-equivalent} imply that~$\salt$ is closed.
\end{proof}

Since $\ssym$ is abelian by Corollary~\ref{cor:cycline comm}, to show that $\salt$ is abelian, it suffices to show that Type~\ref{TypeII} elements in $\salt$ pairwise commute and that they commute with all Type~\ref{TypeI} elements. 
We will show that this
follows from the assumption that Type~(II) elements in $\salt$ are based at cycles satisfying $a^p|_C=C$ (summed up in Theorem~\ref{thm:Smaxabelian}); to be a bit more precise,
the latter is guaranteed through an application of Lemma~\ref{lem:TypeIITypeIDontCommute}, and the former is shown in Lemma~\ref{lem:TypeIICommute} below. Assumption~\eqref{TypeII_cycle_hypothesis} ensures that no Type~\ref{TypeIII} element commutes with all elements of $\salt$ (see Lemma~\ref{lem:TypeIIITypeIDontCommute_Salt}), thus yielding maximal abelianness. 

As readers may observe, Theorem~\ref{Thm_Salt_Cartan}, though more general than Theorem~\ref{Thm_Ssym_Cartan}, still does not provide candidates for Cartan subalgebras in full generality. This reflects the inherently 
{\em ad hoc}
nature of identifying Cartan subalgebras for self-similar graph $C^*$-algebras. The aim of this appendix is to demonstrate that the analysis of cycline triples in Section~\ref{sec: Cartan subalgebras} nonetheless provides a systematic and flexible framework for finding Cartan subalgebras. From this viewpoint, $\salt$ arises naturally by adjoining those singleton elements that commute with $\ssym$ under the additional Assumption~\ref{it:Thm_Salt_Cartan:cycles}. Cases beyond the scope of Theorem~\ref{Thm_Salt_Cartan} involve complicated computations, while following a similar philosophy as Section~\ref{sec: Cartan subalgebras} and Appendix~\ref{sec:AnotherCandidate}. We therefore restrict to the setting of Theorem~\ref{Thm_Salt_Cartan} rather than pursue greater generality. 

As explained before Notation~\ref{Notation_cycle}, we will fix a cycle $C$ without entrance and a vertex $v$ on it and study locally the commutativity for elements of different types (note that later arguments do not depend on the choice of the fixed vertex).
We show first that composable Type~\ref{TypeII} elements on $C$ commute.

\begin{lem} \label{lem:TypeIICommute} 
Let $C, p, n$ be as described in Notation~\ref{Notation_cycle}. Suppose that $a^{p}|_{C} = a^{p}$. If $\mfks$ is a Type~\ref{TypeII} element at a given vertex on $C$, then it commutes with any composable Type~\ref{TypeII} element at a vertex on $C$. 
\end{lem}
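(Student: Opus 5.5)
We apply the commutation criterion of Lemma~\ref{lem:comm_rela} to the two cycline triples, as in the proof of Lemma~\ref{lem:TypeIITypeIDontCommute}. Let $\mfks\in\calZ(\mu,a^{pq},\nu)$ and $\mfkt\in\calZ(\alpha,a^{pq'},\beta)$ be composable Type~\ref{TypeII} elements at vertices on $C$. By Lemma~\ref{types_of_cycline_triples} both cylinder sets are singletons, and $|\mu|-|\nu|=\pm kn$, $|\alpha|-|\beta|=\pm k'n$ for some $k,k'\in\N^+$. The relevant infinite paths are $x=\sigma^{i}(C^\infty)$ and $y=\sigma^{j}(C^\infty)$, the unique elements of $s(\nu)E^\infty$ and $s(\beta)E^\infty$. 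Since $G=\Z$ is abelian, condition~\eqref{comm_rela} becomes an equality in $Q(\N,\Z)$ of products of two factors. It therefore suffices to compare, for large $\ell$, the values
\[
a^{pq}|_{x(0,|\alpha|+\ell)}\ \text{vs.}\ a^{pq}|_{x(0,|\beta|+\ell)}
\quad\text{and}\quad
a^{pq'}|_{y(0,|\nu|+\ell)}\ \text{vs.}\ a^{pq'}|_{y(0,|\mu|+\ell)}.
\]
If each of these pairs agrees, then~\eqref{comm_rela} holds.

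The key computation is to show that $a^{pq}|_{x(0,d)}$ depends on $d$ only modulo $n$, and only after the first full traversal, so that adding multiples of $n$ to the length changes nothing. By Lemma~\ref{compute_cycle}\ref{restrict_power_i}, $a^{pq}|_\gamma=(a^p|_\gamma)^q$ for any path $\gamma$ in $C$. By Lemma~\ref{compute_cycle}\ref{restrict_cycle_other_start} together with the hypothesis $a^p|_C=a^p$, every rotation $C_i$ satisfies $a^p|_{C_i}=a^{pR}=a^p$. Hence for $\gamma$ a path in $C$ and $\delta$ any full traversal of the cycle starting at $s(\gamma)$, we get
\[
a^{p}|_{\gamma\delta}=(a^p|_\gamma)|_{\delta}=(a^{pQ})|_{\delta}=(a^p|_\delta)^{Q}=a^{pQ}=a^p|_\gamma,
\]
where $a^p|_\gamma=a^{pQ}$ by Lemma~\ref{compute_cycle}\ref{restrict_i},\ref{restrict_power_i}. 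Since $|\alpha|-|\beta|\in n\Z$, the paths $x(0,|\alpha|+\ell)$ and $x(0,|\beta|+\ell)$ differ by appending a multiple of full traversals of $C$ starting at the appropriate vertex. Therefore $a^{pq}|_{x(0,|\alpha|+\ell)}=a^{pq}|_{x(0,|\beta|+\ell)}$ for all $\ell$, and symmetrically the $y$-terms agree. Thus both sides of~\eqref{comm_rela} coincide and $\mfks\mfkt=\mfkt\mfks$.

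The main point requiring care is bookkeeping. We must write each restricted element in the form $g|_{x(0,m)}|_{\sigma^m(x)}$ evaluated at large $\ell$ as $g|_{x(0,m+\ell)}$, and check that the ``extra'' segment is exactly a concatenation of rotated copies of $C$ starting at the right vertex, so that Lemma~\ref{compute_cycle}\ref{restrict_cycle_other_start} applies. This is where the case split into the two forms $(\nu,a^{pq},\nu C^k)$ and $(\nu C^k,a^{pq},\nu)$ enters, but in both cases the length difference is a multiple of $n$, so the argument is uniform. No use of the signs of $q,q'$ or of pseudo-freeness is needed.
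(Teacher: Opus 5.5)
Your proposal is correct and follows the paper's proof: both apply Lemma~\ref{lem:comm_rela}, match the $x$-factors and the $y$-factors separately, and use $a^p|_C=a^p$ (hence $a^p|_{C_i}=a^p$ for every rotation) to show that inserting full traversals of $C$ does not change the restricted group element. The only cosmetic difference is that the paper strips the extra copies of $C$ from the front of the path and then uses periodicity of the shifted $C^\infty$, whereas you append them at the end via $(a^{pQ})|_\delta=(a^p|_\delta)^Q$.
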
 
\begin{proof} 
    Since $\mfks$ is a Type~\ref{TypeII} element, we may without loss of generality assume $\mfks \in \calZ(\nu, a^{pq}, \nu    C^k   )$ for some $\nu \in E^*, q \in \mathbb{Z}, k \in \mathbb{N}$.  If $\mfkt$ is a Type~\ref{TypeII} element at    any vertex    on $C$    and is composable with $\mfks$, then $\mfkt \in \calZ(\alpha, a^{pq'},\beta)$ for some $\alpha, \beta \in E^*$ such that $r(\alpha) = r(\beta) = r(\nu)$ and $s(\alpha) = s(\beta) \in C  $, and $q' \in \mathbb{Z}$.
Without loss of generality, as $\mfkt$ is Type~\ref{TypeII},
we may assume $|\alpha| < |\beta|$ so that $|\beta| = |\alpha| + n k'$ for some $k' \in \mathbb{N}$, where $n=|C|$. By Lemma~\ref{lem:comm_rela}, we have that $\mfks \mfkt = \mfkt \mfks$ if and only if for $x=C^\infty$, the unique element in $s(\nu C^k)E^\infty$, and for $y$ the unique element of $s(\alpha)E^\infty$, 
the following equality holds
    \begin{equation} \label{condforcomm}
    [a^{pq}|_{x(0,|\alpha|)}|_{\sigma^{|\alpha|}(x)}  \ a^{pq'}|_{y(0,|\nu|+nk)}|_{\sigma^{|\nu|+nk}(y)}] = [a^{pq}|_{x(0,|\alpha|+nk')}|_{\sigma^{|\alpha|+nk'}(x)}  \ a^{pq'}|_{y(0,|\nu|)}|_{\sigma^{|\nu|}(y)}].
    \end{equation}
    We compute the first factor of the left-hand side: for $m \in \mathbb{N}$,
    \begin{equation}
    a^{pq}|_{x(0,|\alpha|)}|_{\sigma^{|\alpha|}(x)}(m) = a^{pq}|_{x(0,m+|\alpha|)}.
    \end{equation}
We compute the second factor of the left-hand side, using that $a^p|_C=a^p$ and that $y(0,nk)=\sigma^{\ell}(C^\infty)(0,nk)$ (some $\ell\in\N$) is `a rotated copy of $C^k$' 
for the equations marked $(\dagger)$: for $m\in \N$, 
    \begin{align}
    a^{pq'}|_{y(0,|\nu|+nk)}|_{\sigma^{|\nu|+nk}(y)}(m) & = a^{pq'}|_{y(0,|\nu|+nk+m)}  
    = a^{pq'}|_{y(0,nk)}|_{y(nk,|\nu|+nk+m)}
    \\
    &\overset{(\dagger)}= a^{pq'}|_{y(nk,m+|\nu|+nk)}\overset{(\dagger)}= a^{pq'}|_{y(0,m+|\nu|)}.
    \end{align}

Thus, the left-hand side is the equivalence class in $\mathcal{Q}(\Z,\N)$ of the function
\[ m\mapsto a^{pq}|_{x(0,m+|\alpha|)} \ 
a^{pq'}|_{y(0,m+|\nu|)}
\]
Next, we compute the first factor of the right-hand side, using that $a^p|_{C^k}=a^p$ and that $x(0,nk')=C^{k'}$ for the equations marked $(\ast )$:
\begin{align}
a^{pq}|_{x(0,|\alpha|+nk')}|_{\sigma^{|\alpha|+nk'}(x)}(m) &= a^{pq}|_{x(0,|\alpha|+nk'+m)}
    = a^{pq}|_{x(0,nk')}|_{x(nk',|\alpha|+nk'+m)}
    \\
    &\overset{(\ast )}= a^{pq}|_{x(nk',m+|\alpha|+nk')} \overset{(\ast )}= a^{pq}|_{x(0,m+|\alpha|)},
\end{align} 
Lastly, we compute the second factor of the right-hand side:
\begin{equation}
    a^{pq'}|_{y(0,|\nu|)}|_{\sigma^{|\nu|}(y)}(m)  = a^{pq'}|_{y(0,m+|\nu|)},
\end{equation}
which shows that the right-hand side is the equivalence class in $\mathcal{Q}(\Z,\N)$ of the same function as the left-hand side.
Thus equality holds in Equation \eqref{condforcomm}, so $\mfks$ and $\mfkt$ commute. 
\end{proof}

\begin{lem} \label{lem:TypeIIITypeIDontCommute_Salt}  
Let $C, p,n$ be as described in Notation~\ref{Notation_cycle}, and assume that $(\Z,E)$ is pseudo-free. Suppose $(\mu, g, \nu)$ is a Type~\ref{TypeIII} cycline triple at a given vertex $v$ on $C$, and $\mfkg \in \calZ(\mu, g, \nu)$. If
$C$ satisfies $a^p|_C = a^p$ and~\eqref{TypeII_cycle_hypothesis}, then there exists an element of Type~\ref{TypeI} that does not commute with $\mfkg$. 
\end{lem}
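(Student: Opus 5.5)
We follow the proof of Lemma~\ref{lem:TypeIIITypeIDontCommute}, changing only the step where the hypothesis $a^p|_C\neq a^{\pm p}$ was used; that step is now handled by \eqref{TypeII_cycle_hypothesis}. Write $g=a^q$ with $q\notin p\Z$, $s(\nu)=v\neq s(\mu)$, and assume without loss of generality that $|\mu|>|\nu|$. By Lemma~\ref{types_of_cycline_triples}, $d\defeq|\mu|-|\nu|\notin n\Z$, where $n=|C|$. We compare $\mfkg$ with a Type~\ref{TypeI} element $\mfks\in\calZ(\nu',a^{p},\nu')$, where $\nu'$ is an extension of $\nu$ along $C^\infty$ (so $s(\nu')$ lies on $C$). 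For $\nu'=\nu$ and $x=C^\infty$, the computation in that proof using Lemma~\ref{comm_rela} shows that $\mfkg\mfks=\mfks\mfkg$ if and only if
\[
a^p|_{x(0,|\nu|+\ell)}=a^p|_{x(0,|\mu|+\ell)}\quad\text{for all large }\ell.
\]
Write $a^p|_{x(0,|\nu|+\ell)}=a^{pQ_\ell}$ with $Q_\ell\neq0$; this uses pseudo-freeness and Lemma~\ref{compute_cycle}\ref{restrict_i}, \ref{restrict_power_i}. By Lemma~\ref{compute_cycle}\ref{restrict_power_i}, the condition becomes
\[
a^{pQ_\ell}=(a^p|_{\gamma_\ell})^{Q_\ell},\qquad \gamma_\ell\defeq x(|\nu|+\ell,|\mu|+\ell).
\]
Since $\Z$ is torsion free and $Q_\ell\neq0$, this is equivalent to $a^p|_{\gamma_\ell}=a^p$, that is, $\prod_{e_i\in\gamma_\ell}r_i=1$.

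The next step reduces $d$ modulo $n$. Since $a^p|_C=a^p$, we have $R=1$, and Lemma~\ref{compute_cycle}\ref{restrict_cycle_other_start} gives $a^p|_{C_i}=a^p$ for every rotation $C_i$ of $C$. Write $d=kn+m$ with $1\le m<n$ (here $m\neq 0$ because $d\notin n\Z$). Then $\gamma_\ell$ is a subpath of $C^\infty$ of length $d$, and it factors as $\gamma_\ell=\gamma'_\ell\,\rho$, where $\gamma'_\ell$ has length $m$ and $\rho$ is a product of $k$ full rotations of $C$ starting at $s(\gamma'_\ell)$. Since $\rho$ is a rotated copy of $C^k$, it has the form $C_i^k$ for some $i$, so $a^p|_{\gamma_\ell}=a^p|_{\gamma'_\ell}|_{\rho}$. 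One applies Lemma~\ref{compute_cycle}\ref{restrict_power_i} together with $a^p|_{C_i}=a^p$, exactly as in the proof of Lemma~\ref{lem:TypeIICommute}. This yields
\[
a^p|_{\gamma_\ell}=a^p \iff a^p|_{\gamma'_\ell}=a^p.
\]
Now \eqref{TypeII_cycle_hypothesis} provides a path $\gamma\subseteq C$ with $|\gamma|=m$ and $a^p|_\gamma\neq a^p$. As $\ell$ increases, $\gamma'_\ell$ runs periodically, with period $n$, through all length-$m$ subpaths of $C^\infty$, so $\gamma'_\ell=\gamma$ for infinitely many $\ell$. For each such $\ell$ the required equality fails, hence $\mfkg\mfks\neq\mfks\mfkg$.

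The main obstacle is bookkeeping. We must check that the Lemma~\ref{comm_rela} computation from Lemma~\ref{lem:TypeIIITypeIDontCommute} goes through unchanged here, which it should: that argument used only the Type~\ref{TypeIII} shape of the triple and never the hypothesis on $a^p|_C$. We must also confirm that ``$\gamma\subseteq C$'' in \eqref{TypeII_cycle_hypothesis} means a subpath of $C^\infty$, possibly wrapping around the cycle, so that it is indeed hit by some $\gamma'_\ell$. If instead it means a contiguous subpath of the word $C$ itself, the conclusion still holds, since such subpaths also occur in $C^\infty$. The remaining step is the torsion-freeness cancellation of $Q_\ell$, which is routine.
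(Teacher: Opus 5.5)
Your proposal is correct and follows the paper's proof essentially step for step. You use the same Type~\ref{TypeI} element from $\calZ(\nu,a^p,\nu)$, the same reduction via Lemma~\ref{lem:TypeIIITypeIDontCommute} to comparing $a^{pQ_\ell}$ with $(a^p|_{\gamma_\ell})^{Q_\ell}$ for $Q_\ell\neq 0$, and the same use of $a^p|_C=a^p$ to discard the $k$ full rotations and apply \eqref{TypeII_cycle_hypothesis} to the remaining length-$m$ piece; the only cosmetic difference is that you split off the length-$m$ subpath at the front of $\gamma_\ell$ rather than the back, which is harmless since full rotations restrict powers of $a^p$ trivially either way.
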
 

\begin{proof} 
Since $(\mu, g, \nu)$ is of Type~\ref{TypeIII}, we have
$g = a^{q}$ for some $q \in \Z  \setminus p\Z$,  $s(\nu) =  v  \neq s(\mu)$, $r(\mu) = r(\nu)$, and $a^q \cdot  v  = s(\mu)$. Take $\mfks$ to be the unique element of the singleton $\calZ(\nu, a^{p}, \nu)$, so that $\mfks$ is of Type~\ref{TypeI}.
As explained at Equation~\eqref{III_restrict_l} in the proof of Lemma~\ref{lem:TypeIIITypeIDontCommute},
we have $\mfkg \mfks \neq\mfks \mfkg$ if and only if 
\begin{equation} 
\label{III_restrict_l_Salt} 
a^p|_{C^\infty(0,|\nu|+\ell)} 
\neq
a^p|_{C^\infty(0,|\mu|+\ell)}
\end{equation} 
for infinitely many $\ell\in\N$.
Note that our assumption that $a^p|_C = a^p$ implies for any $t\in\N$ that
\[
a^p|_{C^\infty(0,t+n)}
=
a^p|_{C}|_{C^\infty(0,t)}
=
a^p|_{C^\infty(0,t)}
\]
and so if the inequality at~\eqref{III_restrict_l_Salt} holds for {\em one} $\ell$, then it automatically holds for {\em infinitely many}.

Since $(\mu, g, \nu)
$ is a non-symmetric cycline triple, we may assume $|\mu|>|\nu|$ since the proof when $|\mu|<|\nu|$ is similar. Since $(\mu,g, \nu)$ is of Type~\ref{TypeIII}, this assumption means that $|\mu| = |\nu| +kn + m$ for some $k \in \mathbb{N}$ and $m \in \{1,2, \dots, n-1\}$.
By Lemma~\ref{compute_cycle}~\ref{restrict_i}  and~\ref{restrict_power_i} 
and an inductive argument,
for each $\ell\in \N$,
there exists $Q_{\ell}\in \mathbb{Z} $ such that
\begin{equation} 
a^{p}|_{C^\infty(0, |\nu|+\ell)}  
=  a^{pQ_{\ell}} ,
\end{equation} 
and since $(\mathbb{Z},E)$ is pseudo-free, the numbers $Q_{\ell}$ are all non-zero.
Using this in the first step of the next computation and that $(|\mu|+\ell)-(|\nu|+nk+\ell) = m >0$ in the second step, the right-hand side of \eqref{III_restrict_l_Salt} now becomes 
 \begin{align*} 
 a^{p}|_{C^\infty(0, |\mu|+\ell)}
&= a^{pQ_\ell}|_{C^\infty(|\nu|+\ell, |\mu|+\ell)}= a^{pQ_\ell}|_{C^\infty(|\nu|+\ell, |\nu| + nk+\ell)}|_{C^\infty(|\nu|+nk+\ell, |\mu|+\ell)}.
\intertext{Since $|C|=n$, we can iteratively apply Lemma~\ref{compute_cycle}~\ref{restrict_power_i}  and~\ref{restrict_cycle_other_start} and our assumption that 
$a^p|_C = a^p$
to conclude}
    a^{p}|_{C^\infty(0, |\mu|+\ell)}&=a^{pQ_\ell}|_{C^\infty(|\nu|+nk+\ell, |\mu|+\ell)}=(a^{p}|_{C^\infty(|\nu|+nk+\ell, |\mu|+\ell)})^{Q_\ell}.
\end{align*}

Note that the subscript $C^\infty(|\nu|+nk+\ell, |\mu|+\ell)$ 
is a subpath of $C$ of length $m$
and that any subpath $\gamma\subset C$ of length $m$ 
is of this form
for some $\ell\in\N$. 
Thus, since we assumed that $C$ satisfies~\eqref{TypeII_cycle_hypothesis}, there
exists $\ell$ 
such that 
\[
a^p|_{C^\infty(|\nu|+nk+\ell, |\mu|+\ell)}\neq a^p.
\]
Since $Q_\ell\neq 0$ this proves thatthe right-hand side
of~\eqref{III_restrict_l_Salt}
does not equal $a^{pQ_{\ell}}$,  which is the left-hand side of~\eqref{III_restrict_l_Salt}, and the proof is finished.
\end{proof} 

Next we prove the maximal abelianness of $\salt$. The hypothesis of pseudo-freeness is used to apply Lemma~\ref{lem:TypeIIITypeIDontCommute_Salt} and Lemma~\ref{lem:TypeIITypeIDontCommute}. 

\begin{thm} \label{thm:Smaxabelian} 
Let $(\Z, E)$ be a pseudo-free self-similar graph, and let $E$ have finitely many vertices and no sources. Suppose additionally that Assumption~\ref{it:Thm_Salt_Cartan:cycles} of Theorem~\ref{Thm_Salt_Cartan} holds. Then $\salt$ is maximal among open abelian subgroupoids of $\Iso(\cGss[\Z])^\circ$. 
\end{thm}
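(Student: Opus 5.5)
The proof has two parts. First we show that $\salt$ is abelian; then we show that no strictly larger subset of $\Iso(\cGss[\Z])^\circ$ is abelian. Openness of $\salt$ and the fact that it is a subgroupoid come from the preceding lemma.

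For abelianness, $\salt$ decomposes by Corollary~\ref{cor:the green corollary} and Lemma~\ref{types_of_cycline_triples} into $\ssym$ and Type~\ref{TypeII} singletons. These singletons sit at vertices on cycles $C$ with no entrance satisfying $a^p|_C=a^p$. It suffices to check commutation of composable pairs:
\begin{itemize}
\item Two elements of $\ssym$ commute by Corollary~\ref{cor:cycline comm}.
\item A Type~\ref{TypeII} element at a vertex of such a $C$ and an element of $\ssym$ that is composable with it must sit over the unit $\nu C^\infty$. So the $\ssym$ element has source path eventually in $C^\infty$; by Lemma~\ref{lem:opensetexists} we may shrink its cylinder so that it is a Type~\ref{TypeI} element at a vertex of $C$. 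Lemma~\ref{lem:TypeIITypeIDontCommute}, in the case $a^p|_C=a^p$, then gives commutation.
\item Two composable Type~\ref{TypeII} elements live over the same unit, hence at vertices of the same cycle $C$, up to the action moving $C$ to $h\cdot C$. They commute by Lemma~\ref{lem:TypeIICommute}.
\end{itemize}

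For maximality, suppose $\calS\supsetneq\salt$ is an abelian subgroupoid of $\Iso(\cGss[\Z])^\circ$, and pick $\mfkg\in\calS\setminus\salt$. By Corollary~\ref{cor:the green corollary}, $\mfkg$ is a Type~\ref{TypeII} or Type~\ref{TypeIII} singleton at a vertex $v$ of some cycle $C$ with no entrance. We then split according to Assumption~\ref{it:Thm_Salt_Cartan:cycles}, which excludes $a^p|_C=a^{-p}$.
\begin{itemize}
\item If $a^p|_C\neq a^{\pm p}$, Lemma~\ref{lem:TypeIITypeIDontCommute} (Type~\ref{TypeII}) or Lemma~\ref{lem:TypeIIITypeIDontCommute} (Type~\ref{TypeIII}) produces a Type~\ref{TypeI} element, hence an element of $\ssym\subseteq\salt\subseteq\calS$, not commuting with $\mfkg$.
\item If $a^p|_C=a^p$, every Type~\ref{TypeII} element at $v$ already lies in $\salt$ by definition~\eqref{Salt:definition}. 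So $\mfkg$ must be of Type~\ref{TypeIII}, and Lemma~\ref{lem:TypeIIITypeIDontCommute_Salt}, using~\eqref{TypeII_cycle_hypothesis}, gives a non-commuting Type~\ref{TypeI} element.
\end{itemize}
In either case $\calS$ is not abelian, a contradiction.

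The main obstacle is the bookkeeping in the abelianness step. We must confirm that a composable element of $\ssym$ really can be represented as a Type~\ref{TypeI} element based on the same cycle, since elements of $\ssym$ may come from cylinders at vertices off $C$ (on the stem) whose germ at $\nu C^\infty$ we must re-represent. We must also confirm that Lemma~\ref{lem:TypeIITypeIDontCommute} applies uniformly when the Type~\ref{TypeI} element's base path is not of the form $(\mu C^\infty)(0,t)$ with $t\ge|\mu|$. I would handle this by extending the triple along $C^\infty$, as in Lemma~\ref{lem:opensetexists}, until the base vertex is on $C$ and the length exceeds $|\mu|$. Since $\calZ$ is a singleton there, the element is unchanged.
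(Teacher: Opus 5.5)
Your proof is correct and follows essentially the same route as the paper. Abelianness comes from Corollary~\ref{cor:cycline comm}, Lemma~\ref{lem:TypeIITypeIDontCommute} in the case $a^p|_C=a^p$, and Lemma~\ref{lem:TypeIICommute}; maximality comes from the non-commutation lemmas. Your case split for Type~\ref{TypeIII} elements is in fact slightly more careful than the paper's: you use Lemma~\ref{lem:TypeIIITypeIDontCommute} when $a^p|_C\neq a^{\pm p}$ and Lemma~\ref{lem:TypeIIITypeIDontCommute_Salt} only when $a^p|_C=a^p$, whereas the paper cites only the latter, whose hypotheses require $a^p|_C=a^p$.
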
 

\begin{proof}
    First we prove $\salt$ is abelian. By Lemma~\ref{cor:cycline comm}, we know all elements belonging to symmetric cycline cylinder sets commute with each other. By Lemma~\ref{lem:TypeIITypeIDontCommute} and Assumption~\ref{it:Thm_Salt_Cartan:cycles} of Theorem~\ref{Thm_Salt_Cartan} all Type~\ref{TypeII} elements in $\salt$ commute with all Type~\ref{TypeI} elements. Lastly, Lemma~\ref{lem:TypeIICommute} guarantees that Type~\ref{TypeII} elements of $\salt$ commute with each other.

    Now we prove that $\salt$ is maximal abelian. Lemma~\ref{lem:TypeIIITypeIDontCommute_Salt} shows that no Type~\ref{TypeIII} elements commute with all Type~\ref{TypeI} elements. Lastly, any Type~\ref{TypeII} element $\mfkg$ not in $\salt$ must be based at a vertex on a cycle $C$ such that $a^p|_C \neq a^{\pm p}.$ Lemma~\ref{lem:TypeIITypeIDontCommute} then proves that $\mfkg$ does not commute with all Type~\ref{TypeI} elements. 
\end{proof}

The following lemma will be useful in the proof that $\salt$ is normal.

\begin{lem} \label{lem:inductionstep}
    Let $C, p,n$ be as described in Notation~\ref{Notation_cycle}. If $h \in \mathbb{Z}$ and $k \in \mathbb{N}$,
    \sloppy then there exists $t \in \mathbb{Z}$ such that $(h|_{C(0,\ell)})^{-1}(h|_{C(0,kn+\ell)})
    = a^{pt}|_{C(0,\ell)}$ for all $\ell \in \mathbb{N}$.
\end{lem}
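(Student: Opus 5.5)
The plan is to reduce the statement to a single identity in $\Z$ and then restrict both sides along $C^\infty$. Here $C(0,\ell)$ is read as $C^\infty(0,\ell)$. Set $h' \defeq h|_{C^{k}}$. Because $C^\infty(0,kn+\ell)=C^k\,C^\infty(0,\ell)$ and $|C^k|=kn$, Definition~\ref{D:ss}~\ref{item:S3} gives
\[
h|_{C^\infty(0,kn+\ell)} = h'|_{C^\infty(0,\ell)} \quad\text{for all } \ell\in\N .
\]
It therefore suffices to find $t\in\Z$ with $h'=h\,a^{pt}$, and then to check that restricting this identity along $C^\infty(0,\ell)$ produces the claimed formula. (If $k=0$, then $h'=h$ and $t=0$ works, so we may assume $k\geq 1$ where convenient.)

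To find $t$, I would invoke Lemma~\ref{lem:cycles to cycles}~\ref{it:cycles to cycles:self-similarity}, which applies since $C$ has no entrance. It gives $h\cdot C^\infty = h|_{C^k}\cdot C^\infty = h'\cdot C^\infty$. Hence $h^{-1}h'$ fixes $C^\infty$, and in particular fixes the vertex $r(C)$ on $C$.

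The step needing the most care is deducing from this that $h^{-1}h'\in\langle a^p\rangle$. The stabiliser of $r(C)$ in $\Z$ is a subgroup, so it equals $d\Z$ for some $d\geq 0$. It contains $a^p$, so $d\neq 0$ and $d\le p$. On the other hand $a^d$ fixes a vertex of $C$, so minimality of $p$ in Notation~\ref{Notation_cycle} forces $d\geq p$. Thus the stabiliser is exactly $p\Z$, and $h'=h\,a^{pt}$ for some $t\in\Z$ that depends only on $h$ and $k$, not on $\ell$.

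Finally, fix $\ell$ and write $\mu\defeq C^\infty(0,\ell)$. By Notation~\ref{Notation_cycle}, $a^{pt}$ fixes every finite path on $C$, so $a^{pt}\cdot\mu=\mu$. Definition~\ref{D:ss}~\ref{item:S5} then gives
\[
h'|_{\mu}=(h\,a^{pt})|_{\mu}=h|_{a^{pt}\cdot\mu}\;a^{pt}|_{\mu}=h|_{\mu}\;a^{pt}|_{\mu}.
\]
Since $\Z$ is abelian, this rearranges to $(h|_{\mu})^{-1}h|_{C^\infty(0,kn+\ell)}=a^{pt}|_{\mu}$. This holds for every $\ell$ with the same $t$, which is the claim. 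The argument uses neither pseudo-freeness nor the assumption $a^p|_C=a^p$.
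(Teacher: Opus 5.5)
Your proof is correct and follows essentially the paper's route: show that $h^{-1}h|_{C^k}$ fixes a vertex of $C$ and hence equals $a^{pt}$, then expand a restriction by the cocycle identity, using that $a^{pt}$ fixes $C(0,\ell)$. The differences are minor. The paper gets the fixed vertex directly from $h|_{C^k}\cdot s(C^k)=h\cdot s(C^k)$ rather than via Lemma~\ref{lem:cycles to cycles}, and it expands $(h^{-1}h|_{C^k})|_{C(0,\ell)}$ using the inverse formula $h^{-1}|_{h\cdot\mu}=(h|_{\mu})^{-1}$; you instead expand $(h\,a^{pt})|_{\mu}$ directly, which avoids that formula, and you spell out the stabiliser argument $\mathrm{Stab}(r(C))=p\Z$ that the paper leaves implicit.
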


\begin{proof}
    It
    follows from~\cite[Lemma 3.5(ii)]{Self-similar:LY21} that  
    \begin{equation*}
        (
        h^{-1} h|_{C^k}
        ) \cdot s(C^k) = (h^{-1}) \cdot (h|_{C^k} \cdot s(C^k)) = (h^{-1}) \cdot (h \cdot s(C^k)).
    \end{equation*}
    As the right-hand side equals $s(C^k) = s(C)$, this proves that $h^{-1} h|_{C^k}$ fixes a vertex on $C$ and must therefore be of the form $a^{pt}$ for some $t \in \mathbb{Z}$. Now fix $\ell \in \mathbb{N}$. Note that $C(0,\ell) = a^{pt} \cdot C(0,\ell) = (h^{-1} h|_{C^k}) \cdot C(0,\ell)$ implies $h \cdot C(0,\ell) = h|_{C^k} \cdot C(0,\ell)$. This justifies the equation marked $(\dagger)$ in the following computation, while the first step follows from \cite[Lemma 3.5(iii)]{Self-similar:LY21}:
        \begin{align*}
        (h|_{C(0,\ell)})^{-1}(h|_{C(0,kn+\ell)}) 
        &= h^{-1}|_{h \cdot C(0,\ell)} \ h|_{C^k}|_{C(0,\ell)} \\
        &\overset{(\dagger)}=  h^{-1}|_{h|_{C^k} \cdot C(0,\ell)} \ h|_{C^k}|_{C(0,\ell)} \\
        &= (h^{-1}h|_{C^k})|_{C(0,\ell)} \\
        &= a^{pt}|_{C(0,\ell)}. \qedhere
        \end{align*} 
\end{proof}

\begin{lem}\label{lem:Salt is normal}
Suppose that Assumption~\ref{it:Thm_Salt_Cartan:cycles} of Theorem~\ref{Thm_Salt_Cartan} holds. Then $\salt$ is normal in $\cGss[\mathbb{Z}]$. 
\end{lem}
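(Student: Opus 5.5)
Since $\ssym$ is normal by Lemma~\ref{lem:Ssym is normal}, and conjugation preserves $\Iso(\cGss[\Z])^\circ$, I would reduce to the extra pieces. Let $\mfkt\in\calZ(\alpha,a^{pq},\beta)$ be a Type~\ref{TypeII} element at a vertex on a cycle $C$ with $a^p|_C=a^p$, and let $\mfkg\in\cGss[\Z]$ be composable with $\mfkt$, that is, $s(\mfkg)=r(\mfkt)$. It suffices to show that $\mfkg^{-1}\mfkt\mfkg\in\salt$. The unit $r(\mfkt)$ is an infinite path ending in $C^\infty$. Its image $s(\mfkg)=\nu x$ has $x$ eventually a rotation of $h\cdot C^\infty=(h\cdot C)^\infty$ for some $h\in\Z$, by Lemma~\ref{lem:cycles to cycles}~\ref{it:cycles to cycles:self-similarity}. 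The cycle $C':=h\cdot C$ has no entrance by Lemma~\ref{lem:cycles to cycles}~\ref{it:cycles to cycles:entrance}.

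The conjugate lies in $\Iso(\cGss[\Z])^\circ$, and by Corollary~\ref{cor:the green corollary} it is either in $\ssym$ or a singleton $\calZ(\alpha',g',\beta')$ for some non-symmetric cycline triple at a vertex of $C'$. Its lag equals the lag $|\alpha|-|\beta|\in n\Z\setminus\{0\}$ of $\mfkt$. By Lemma~\ref{types_of_cycline_triples}, Type~\ref{TypeIII} triples have lag outside $|C'|\Z=n\Z$, so the conjugate must be of Type~\ref{TypeII} at $C'$. It remains to check that $C'$ satisfies $a^{p'}|_{C'}=a^{p'}$, where $p'$ is the minimal power of $a$ fixing a vertex of $C'$.

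Because $\Z$ is abelian, $a^k$ fixes a vertex of $C$ if and only if it fixes the corresponding vertex $h\cdot(\cdot)$ of $C'$, so $p'=p$. For the restriction I would use \ref{item:S5}. Writing $a^p h=h a^p$ gives
\[
(a^p)|_{h\cdot C}\,h|_C=(a^ph)|_C=(ha^p)|_C=h|_{a^p\cdot C}\,(a^p)|_C=h|_C\,a^p,
\]
using $a^p\cdot C=C$ and $a^p|_C=a^p$. Since $\Z$ is abelian, cancelling $h|_C$ yields $a^p|_{C'}=a^p$, provided the edges are labelled so that $C'=h\cdot C$ starts at the corresponding vertex. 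By Lemma~\ref{compute_cycle}~\ref{restrict_cycle_other_start}, the rotation does not matter. Hence $C'$ is one of the cycles in the union defining $\salt$, and the conjugate lies in $\salt$.

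The step I expect to be the main obstacle is relating the cycle carrying $s(\mfkg)$ to $C$. If $\mfkg$ has a nontrivial lag, then $x$ is a shift of $h\cdot C^\infty$ rather than $h\cdot C^\infty$ itself, and possibly $h|_{\mu}$ for a prefix $\mu$ rather than $h$. So I would write $\mfkg=(\mu(g\cdot x);\calT_{|\mu|}([g|_x]),|\mu|-|\nu|;\nu x)$ with $\mu(g\cdot x)$ ending in $C^\infty$, and apply the computation above with $g^{-1}$ or its restrictions. Lemma~\ref{lem:inductionstep} could control how the restriction varies along powers of $C$ if needed. Assumption~\ref{it:Thm_Salt_Cartan:cycles} is only used to know which cycles appear in the definition of $\salt$.
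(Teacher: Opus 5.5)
Your proof is correct in substance, but it takes a different route from the paper.

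\textbf{The paper's route.} The paper computes $\mfkg\mfkt\mfkg^{-1}$ explicitly. Writing $x=\nu'C^\infty$, it uses Lemma~\ref{lem:inductionstep} to collapse the $Q(\N,\Z)$-component to $\calT_{|\mu|+|\nu'|+nk}([a^{p(t+q)}|_{C^\infty}])$. This produces the explicit Type~\ref{TypeII} triple $\bigl(\mu(g\cdot\nu')(g|_{\nu'}\cdot C^k),a^{p(t+q)},\mu(g\cdot\nu')\bigr)$ at $\tilde C=g|_{\nu'}\cdot C$. It then checks $a^p|_{\tilde C}=a^p$ by exactly the \ref{item:S5} computation you give.

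\textbf{Your route.} You skip the $Q(\N,\Z)$-bookkeeping entirely. Conjugation preserves $\Iso(\cGss[\Z])^\circ$ and the lag. Corollary~\ref{cor:the green corollary} and Lemma~\ref{types_of_cycline_triples} then force a conjugate with lag in $n\Z\setminus\{0\}$ to be of Type~\ref{TypeII} at the image cycle, so only the restriction check remains.

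\textbf{Comparison.} Your argument is shorter and more conceptual, and it does not need Lemma~\ref{lem:inductionstep}. The paper's computation gives an explicit formula for the conjugate and, as written, does not use pseudo-freeness.

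\textbf{Three points to tighten.}

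First, the step you flag as the main obstacle is harmless. Your \ref{item:S5} computation works for every $h\in\Z$. By Lemma~\ref{compute_cycle}~\ref{restrict_cycle_other_start}, it also works for every rotation $C_i$ of $C$. So you only need that the unit of the conjugate ends in $(h\cdot C_i)^\infty$ for some $h$ and $i$. This follows at once from writing $\mfkg$ in standard form and applying Lemma~\ref{lem:cycles to cycles}~\ref{it:cycles to cycles:self-similarity}; the paper obtains $h=g|_{\nu'}$ this way. The precise group element is irrelevant.

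Second, fix the composability condition. For $\mfkg^{-1}\mfkt\mfkg$ you need $r(\mfkg)=s(\mfkt)$, not $s(\mfkg)=r(\mfkt)$. Your last paragraph in fact assumes the correct condition.

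Third, Corollary~\ref{cor:the green corollary} assumes pseudo-freeness, through \eqref{eq:int of Iso}, but the lemma as stated does not. You can remove this dependence. Any isotropy element at a unit ending in $C'^\infty$, where $C'$ is a cycle without entrance, lies in some cylinder $\calZ(\alpha',g',\beta')$. After extending $\beta'$, you may take $s(\beta')$ on $C'$. That cylinder is then a singleton, so the triple is automatically cycline. The lag argument with Lemma~\ref{types_of_cycline_triples} then goes through unchanged.
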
 

\begin{proof}
Suppose $\mfkt \in \salt$ and $\mfkg \in \cGss[\mathbb{Z}]$ are composable. Since $\mfkt \in \salt$, there are two cases:
the first is that $\mfkt \in \calZ( \alpha, h, \beta)$ for some symmetric cycline triple $(\alpha, h, \beta)$, so that $\mfkg\mfkt \mfkg^{-1} \in \ssym \subset \salt$ by Lemma~\ref{lem:Ssym is normal}, and we are done. The second case is that $\mfkt \in \calZ( \alpha, h, \beta)$ for a non-symmetric triple $( \alpha, h, \beta)$ of Type~\ref{TypeII} such that the cycle $C$ with $r(C)=s(\beta)$ has no entrance and satisfies where, as always, $p$ is the smallest positive integer such that $a^p$ fixes a vertex on $C$. By Assumption~\ref{it:Thm_Salt_Cartan:cycles}, $C$ automatically also satisfies Condition \eqref{TypeII_cycle_hypothesis}. Being of Type~\ref{TypeII} further means that $(\alpha,h,\beta)$ is either of the form $(\beta C^k, a^{pq},\beta)$ or $(\alpha, a^{pq},\alpha C^k)$ where $k\in \N^+$ and $q\in\Z$.
Without loss of generality, assume $\alpha = \beta C^k$, and let $n$ be the length of $C$.
Write
\begin{align*} 
\mfkt  &= (\beta C^\infty; \ \calT_{|\beta|+nk}([a^{pq}|_{C^\infty}]), \ nk; \ \beta C^\infty)
&&\text{and}
\\
\mfkg  &= (\mu (g\cdot x);  \ \calT_{|\mu|}([g|_{x}]), \ |\mu|-|\nu|; \ \nu x)
\end{align*}
for some $\mu, \nu \in E^*$, $g\in \mathbb{Z}$, and $x\in s(\nu)E^{\infty}$ which satisfy $s(\mu) = g \cdot s(\nu)$ and $\nu x = \beta C^\infty$.
Then a standard computation shows
\begin{align*}
    &\mfkg \mfkt \mfkg^{-1} 
    = 
    \left(
        \mu (g \cdot x);
        \mathcal{T}_{|\mu|}\left(
            [g|_x]
            \
            \mathcal{T}_{|\beta|-|\nu|+nk}
        \left([a^{pq}|_{C^\infty}]\right) 
            \
            \mathcal{T}_{nk}\left([g|_x]^{-1}\right)
        \right), nk; \mu (g \cdot x)
    \right).
\end{align*}

Now we focus our attention on the second component. Since $\nu x =\beta C^\infty$, 
we may write $x=\nu'C^\infty$ where $|\nu'| = |\beta|-|\nu|+nm$ for some $m \in \mathbb{N}$.
Thus, by \cite[Lemma 5.1]{Self-similar:LY21} and since $\mathbb{Z}$ is abelian, we 
can rewrite the function at which the shift-map $\calT_{|\mu|}$ is evaluated: 
\begin{align*}
&[g|_x]\ \mathcal{T}_{|\beta|-|\nu|+nk}([a^{pq}|_{C^\infty}]) \ \mathcal{T}_{nk}([g|_x]^{-1})\\
&= \mathcal{T}_{|\nu'|}
    \left(
        [(g|_{\nu'})|_{C^\infty}] 
        \ 
        \mathcal{T}_{nk}
        \left(
            [(g|_{\nu'})|_{C^\infty}]^{-1}
        \right)
    \right)
    \
    \mathcal{T}_{|\beta|-|\nu|+nk}([a^{pq}|_{C^\infty}]).
\end{align*}

Let us zoom in further and focus on the function at which the shift-map $\mathcal{T}_{|\nu'|}$ is evaluated. For $M\geq nk$, we have
\begin{align*} 
(g|_{\nu'})|_{C^\infty}
\ 
\mathcal{T}_{nk}\left((g|_{\nu'})|_{C^\infty}^{-1}\right) \colon \quad M\mapsto
    {} &  g|_{\nu'}|_{C(0,M)}\, (g|_{\nu'}|_{C(0,M-nk)})^{-1}.
\end{align*}
By Lemma~\ref{lem:inductionstep}
applied to $h:= g|_{\nu'}$ and $\ell:=M-nk\geq 0$,
there exists $t \in \mathbb{Z}$ such that 
\[(g|_{\nu'}|_{C(0,M-nk)})^{-1} \, g|_{\nu'}|_{C(0,M)}
= a^{pt}|_{C(0,M-nk)}.\]
Since $\Z$ is abelian, we have all in all
shown that 
\[
[(g|_{\nu'})|_{C^\infty}]
\ 
\mathcal{T}_{nk}\left([(g|_{\nu'})|_{C^\infty}^{-1}]\right)
=
\mathcal{T}_{nk}([a^{pt}|_{C^\infty}]).
\]
The second component of $\mfkg \mfkt \mfkg^{-1}$ simplifies to  
\begin{align*} \label{eqn:secondcomponent}
 &
 \mathcal{T}_{|\mu|}\left(\mathcal{T}_{|\nu'| +nk}([a^{pt}|_{C^\infty}]) \mathcal{T}_{|\beta|-|\nu|+nk}([a^{pq}|_{C^\infty}])\right) \\
 &= \mathcal{T}_{|\mu|+|\nu'|+nk}\left([a^{pt}|_{C^\infty}] \mathcal{T}_{-nm}([a^{pq}|_{C^\infty}])\right) 
 &&\text{ as } |\nu'| = |\beta|-|\nu|+nm \\
 &= \mathcal{T}_{|\mu|+|\nu'|+nk}\left([a^{pt}|_{C^\infty}] [a^{pq}|_{C^\infty}]\right)&&\text{ because } a^p|_C = a^p \text{ and } |C|=n\\
 &= \mathcal{T}_{|\mu|+|\nu'|+nk}([a^{p(t+q)}|_{C^\infty}]) && \text{ as } a^{pq} \cdot C = C
 \end{align*}
 Since $\Z$ is abelian and $a^p$ fixes all edges on $C$, it is easy to verify that
 \begin{equation}
     a^{p(t+q)}|_{C^\infty} = a^{p(t+q)}|_{g|_{\nu'C^k} \cdot C^\infty}.
 \end{equation}
So all in all,
\begin{align}\label{eq:2ndcompfinal,v2}
&\mfkg \mfkt \mfkg^{-1} 
= \left(
\mu (g \cdot x); \mathcal{T}_{|\mu|+|\nu'|+nk}([a^{p(t+q)}|_{g|_{\nu' C^k} \cdot C^\infty}]), nk; \mu (g \cdot x)
\right).
\end{align}

By Lemma~\ref{lem:cycles to cycles}\ref{it:cycles to cycles:length} and~\ref{it:cycles to cycles:self-similarity}, $\tilde{C}:=g|_{\nu'} \cdot C$ is a cycle that satisfies
\begin{align}
    \label{eq:tilde-C infty,1}
    \tilde{C}^\infty= 
    (g|_{\nu'} \cdot C)^\infty
    &
    =
    g|_{\nu'C^k} \cdot C^\infty
    &&\text{ by Lemma~\ref{lem:cycles to cycles}\ref{it:cycles to cycles:self-similarity}}
    \\
    \label{eq:tilde-C infty,2}
    &=
    g|_{\nu'} \cdot C^\infty.
\end{align}
Thus, using that $x=\nu' C^\infty$
and that $a^{p(t+q)}$ fixes $C$
and hence $\tilde{C}$,
we can rewrite the range (and source) of $\mfkg \mfkt \mfkg^{-1}$ in two ways: on the one hand,
\begin{align*}
    \mu (g \cdot x) & = \mu(g \cdot \nu')(g|_{\nu'} \cdot C^k)(g|_{\nu'C^k} \cdot C^\infty)
    \overset{\eqref{eq:tilde-C infty,1}}=
    \mu(g \cdot \nu')(g|_{\nu'} \cdot C^k)(a^{p(t+q)} \cdot \tilde{C}^\infty)
    \intertext{and on the other hand,}
    \mu (g \cdot x) & = 
    \mu(g \cdot \nu')(g|_{\nu'} \cdot C^\infty)
    \overset{\eqref{eq:tilde-C infty,2}}{=}
    \mu(g \cdot \nu')\tilde{C}^\infty,
\end{align*}
proving that 
\[\mfkg \mfkt \mfkg^{-1} \in \calZ\left(\mu(g \cdot \nu')(g|_{\nu'} \cdot C^k), a^{p(t+q)}, \mu(g \cdot \nu')\right).\]
If we can show that 
\begin{align}\label{eq: the alleged cycline triple}
\left(\mu(g \cdot \nu')(g|_{\nu'} \cdot C^k), a^{p(t+q)}, \mu(g \cdot \nu')\right)
\end{align}
is a cycline triple of Type~\ref{TypeII} and that the associated cycle $\tilde{C}$ satisfies $a^{\tilde{p}}|_{\tilde{C}} = a^{\tilde{p}}$ (where $\tilde{p}$ is the smallest positive integer such that $a^{\tilde{p}} \cdot \tilde{C} = \tilde{C}$), then we can conclude that the associated cylinder set is contained in $\salt$, which finishes the proof that  $\mfkg \mfkt \mfkg^{-1}\in\salt$. 

Since $\tilde{C}$ is a cycle with no entrance (Lemma~\ref{lem:cycles to cycles}\ref{it:cycles to cycles:entrance}), we have  $s( g \cdot \nu')E^\infty = \{\tilde{C}^\infty\}$, so $\tilde{C}$ is the only infinite path we need to consider to check that the triple at~\eqref{eq: the alleged cycline triple} is cycline: Since $\Z$ is abelian and $a^p$ fixes $C$, we have
$a^{p(t+q)} \cdot \tilde{C}^\infty = \tilde{C}^\infty$, which implies
\begin{align*}
    \mu(g \cdot \nu')(g|_{\nu'} \cdot C^k)(a^{p(t+q)} \cdot \tilde{C}^\infty) 
    &= \mu(g \cdot \nu')(g|_{\nu'} \cdot C^k)\tilde{C}^\infty ,
\end{align*}
as needed. The triple is furthermore of Type~\ref{TypeII} because $g|_{\nu'} \cdot C^k = \tilde{C}^k$ as a consequence of~\eqref{eq:tilde-C infty,2}.

Next, we will show that $a^{\tilde{p}}|_{\tilde{C}} = a^{\tilde{p}}.$ Note that any $h\in \mathbb{Z}$ fixes $C$ if and only if it fixes $\tilde{C}$, for
\[
h\cdot C = C 
\iff 
g|_{\nu'}\cdot (h\cdot C) = g|_{\nu'}\cdot C 
\iff
h\cdot \tilde{C} = \tilde{C}, 
\]
where the last equivalence follows from $\Z$ being abelian. Thus $\tilde{p}=p$, the smallest positive integer such that $a^p\cdot C=C$. Then 
\begin{align*}
    (a^p|_{g|_{\nu'} \cdot C})(g|_{\nu'}|_C) = (a^p g|_{\nu'})|_C = ( g|_{\nu'}a^p)|_C
    &= (g|_{\nu'}|_C) (a^p|_C) && \text{ as } a^p \cdot C = C \\
    &=(g|_{\nu'}|_C) a^p && \text{ as } a^p|_C = a^p. 
\end{align*}
 Canceling $g|_{\nu'}|_C$ on both sides, we see that $a^p = a^p|_{g|_{\nu'} \cdot C} = a^p|_{\tilde{C}}$, as desired. 
\end{proof}

To prove the claim in Theorem~\ref{Thm_Salt_Cartan} that $\salt$ satisfies all hypotheses of Theorem~\ref{thm:DWZ}, it only remains to verify:

\begin{lem}
Suppose that Assumption~\ref{it:Thm_Salt_Cartan:cycles} of Theorem~\ref{Thm_Salt_Cartan} holds. Then $\salt$ satisfies Equation~\eqref{eq:ricc,stronger} for all $ \mfkg\in \Iso(\cGss[\mathbb{Z}])^\circ$.
\end{lem}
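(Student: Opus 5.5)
We need to show: for every $\mfkg\in\Iso(\cGss[\Z])^\circ$, if some $\mfkt\in\salt$ satisfies $\mfkt^{-1}\mfkg\mfkt\neq\mfkg$, then the set $\{\mfkt^{-1}\mfkg\mfkt : \mfkt\in\salt\}$ is infinite. The plan is to reduce to the argument of Lemma~\ref{lem:Ssym (almost) immediately centralizing}. That lemma applies verbatim whenever the conjugating element can be taken in $\ssym$. It also applies when it produces infinitely many conjugates from powers of a single element.

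\textbf{Step 1: conjugating elements in $\ssym$.} Fix $\mfkg\in\Iso(\cGss[\Z])^\circ$ at the unit $x$, and suppose some $\mfkt\in\salt(x)$ does not commute with $\mfkg$. If such a $\mfkt$ can be chosen in $\ssym(x)$, then the proof of Lemma~\ref{lem:Ssym (almost) immediately centralizing} goes through unchanged. It only used that $\mfkt\in\ssym(x)$, that $G=\Z$ is abelian and torsion free, and that $\mfkg\in\cGss(x)$. It yields $\mfkt^{-p}\mfkg\mfkt^{p}\neq\mfkg$ for all $p\neq0$. Since $\mfkt^{-p}\mfkg\mfkt^{p}$ is obtained from $\mfkg$ by multiplying its $Q(\N,\Z)$-component by a factor depending on $p$ through $(t|_{y(0,d)})^p$, these conjugates are in fact pairwise distinct. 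Applying the same argument to $\mfkt^{p-p'}$ gives this. So the orbit is infinite.

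\textbf{Step 2: conjugating elements of Type~\ref{TypeII} in $\salt\setminus\ssym$, commuting case.} Now suppose every element of $\ssym(x)$ commutes with $\mfkg$, but some Type~\ref{TypeII} element $\mfkt\in\salt(x)$ does not. Then $x=\beta C^\infty$ for a cycle $C$ with no entrance and $a^p|_C=a^p$, so $\salt(x)$ is the abelian group generated by $\ssym(x)$ and the Type~\ref{TypeII} elements at $x$. I would first check that $\mfkg$ cannot be of Type~\ref{TypeI} or Type~\ref{TypeII}. Type~\ref{TypeI} elements lie in $\ssym$, and Type~\ref{TypeII} elements commute with $\mfkt$ by Lemma~\ref{lem:TypeIITypeIDontCommute} and Lemma~\ref{lem:TypeIICommute}. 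So $\mfkg$ is of Type~\ref{TypeIII}. But then Lemma~\ref{lem:TypeIIITypeIDontCommute_Salt} applies: it uses pseudo-freeness together with Assumption~\ref{it:Thm_Salt_Cartan:cycles}, including~\eqref{TypeII_cycle_hypothesis}. It gives a Type~\ref{TypeI} element of $\ssym(x)$ not commuting with $\mfkg$, contradicting the case assumption. Hence this case cannot occur, and Step~1 covers everything.

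\textbf{Step 3: the remaining case and the main obstacle.} The subtle point is that in Step~2 I must check that $\mfkg$ and $\mfkt$ are composable at the same unit. Both are singletons at vertices on the same no-entrance cycle, so the lemmas on $C$ apply. I must also handle a possibly non-symmetric $\mfkg$ at $x$ whose cycline triple lives on $C$ while its non-commutation is only witnessed by the specific $\mfkt$ in Lemma~\ref{lem:TypeIIITypeIDontCommute_Salt}. If pseudo-freeness were not assumed here (the lemma only assumes Assumption~\ref{it:Thm_Salt_Cartan:cycles}), I would instead argue directly. Write $\mfkt\in\calZ(\beta C^k,a^{pq},\beta)$ and compute $\mfkt^{-j}\mfkg\mfkt^{j}$ for $j\in\Z$ via Lemma~\ref{comm_rela}. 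The $Q(\N,\Z)$-component changes by $j$ times a fixed nontrivial class, because $\Z$ is torsion free and $\mfkt^j$ is again of Type~\ref{TypeII} with lag $jnk$. Therefore distinct $j$ give distinct conjugates. This direct computation, using Lemma~\ref{compute_cycle} and $a^p|_C=a^p$ to keep the restrictions periodic, is where I expect the real work to lie.
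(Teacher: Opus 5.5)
Your Steps~1 and~2 already form a complete and correct proof, by a route that differs from the paper's. The paper first splits on whether $\mfkg\in\salt$, and then on the cycle $C$ carrying $\mfkg$. If $a^p|_C\neq a^{\pm p}$, it appeals directly to the proof of Lemma~\ref{lem:Ssym (almost) immediately centralizing}. If $a^p|_C=a^p$ (so $\mfkg$ is of Type~\ref{TypeIII}), it conjugates $\mfkg$ by the explicit family $\mfks_q\in\calZ(\nu,a^{pq},\nu)$. It then uses \eqref{TypeII_cycle_hypothesis} to find a subpath of $C$ of length $r\equiv|\mu|-|\nu|$ modulo $|C|$ on which $a^p$ restricts to $a^{-p}$, and shows by hand that $\calT_{|\nu|}([a^{pq}|_{C^\infty}]^{-1})\calT_{|\mu|}([a^{pq}|_{C^\infty}])$ takes the value $a^{2pqk}$, with $k\neq0$, infinitely often.

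You instead isolate what the proof of Lemma~\ref{lem:Ssym (almost) immediately centralizing} actually shows: a single $\mfkt\in\ssym(x)$ that fails to commute with an arbitrary $\mfkg\in\cGss[\Z](x)$ already produces infinitely many distinct conjugates $\mfkt^{-p}\mfkg\mfkt^{p}$. That reduces the lemma to the implication ``$\mfkg$ commutes with $\ssym(x)$ $\Rightarrow$ $\mfkg$ commutes with $\salt(x)$''. You obtain this correctly from abelianness of $\salt$ (Theorem~\ref{thm:Smaxabelian}) together with Lemma~\ref{lem:TypeIIITypeIDontCommute_Salt}. Your argument is shorter and treats both of the paper's cases uniformly. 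It also makes clear that the paper's explicit computation is essentially a quantitative rerun of Lemma~\ref{lem:TypeIIITypeIDontCommute_Salt}. The paper's version buys self-containment and an explicit infinite family of conjugates.

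You should delete Step~3; no case remains there. Everything lives in the isotropy group at $x$, so composability is automatic. The witness supplied by Lemma~\ref{lem:TypeIIITypeIDontCommute_Salt} is the element of $\calZ(\nu,a^{p},\nu)$, which lies in $\ssym(x)$, and that is exactly the input Step~1 needs.

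In Step~2, it is cleaner to dispose of $\mfkg\in\salt$ in one stroke via abelianness of $\salt$, since a symmetric element at $x$ need not come from a triple at a vertex of $C$. You should also note that a non-symmetric $\mfkg$ at $x$ lives on a rotation of $C$, to which $a^p|_C=a^p$ and \eqref{TypeII_cycle_hypothesis} transfer.

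On pseudo-freeness: the statement only names Assumption~\ref{it:Thm_Salt_Cartan:cycles}, but the paper's own proof uses pseudo-freeness (to get $k\neq0$). So do Theorem~\ref{thm:Smaxabelian} and the type classification of $\Iso(\cGss[\Z])^\circ$ that you rely on. Since it is a hypothesis of Theorem~\ref{Thm_Salt_Cartan}, where this lemma is applied, simply assume it. Your sketched pseudo-freeness-free computation is not justified as written in any case. Conjugating by an element of lag $jnk$ also replaces the $Q(\N,\Z)$-component $H$ of $\mfkg$ by $\calT_{-jnk}(H)$. So the claim ``$j$ times a fixed class'' would need a further argument, for instance periodicity from $a^p|_C=a^p$ together with Lemma~\ref{lem:inductionstep}.
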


\begin{proof}
    We must show that for all $\mfkg \in \Iso(\cGss[\mathbb{Z}])^\circ$, the set 
    \[\mathrm{Ad}(\mfkg):= \{\mfks^{-1} \mfkg \mfks: \mfks \in \salt(s(\mfkg))\}
    \]
    is either $\{\mfkg\}$ or infinite. Fix $\mfkg \in \Iso(\cGss[\mathbb{Z}])^\circ$, so that $\mfkg \in \calZ( \mu, a^t, \nu)$ for some cycline triple $(\mu, a^t, \nu)$. If $\mfkg \in \salt$, then $\mathrm{Ad}(\mfkg)=\{\mfkg\}$ because $\salt$ is abelian by Theorem~\ref{thm:Smaxabelian}.
    
    So assume now that $\mfkg \not\in \salt$, so that $(\mu, a^t, \nu)$ is of Type~\ref{TypeII} or Type~\ref{TypeIII}, and $s(\nu)$ lies on a cycle $C$ without entrance (Corollary~\ref{conclusion}). Let $p$ be the smallest positive integer such that $a^p$ fixes a vertex on (and hence all of) $C$. By Assumption~\ref{it:Thm_Salt_Cartan:cycles},  we have two cases: The first is $a^p|_C \neq a^{\pm p}$, in which case the proof of Lemma~\ref{lem:Ssym (almost) immediately centralizing} shows that $\mathrm{Ad}(\mfkg)$ is either $\{\mfkg\}$ or infinite. The second is 
    $a^p|_C = a^{p}$, and  $C$ satisfies~\eqref{TypeII_cycle_hypothesis}. In this case, since $\mfkg\notin \salt$, $(\mu, a^t, \nu)$ must be of Type~\ref{TypeIII} by definition of $\salt$, so $t \in \mathbb{Z} \setminus p\mathbb{Z}$. Since the sets $\mathrm{Ad}(\mfkg)$ and  $\mathrm{Ad}(\mfkg\inv)$ have the same cardinality, we may replace $\mfkg$ with $\mfkg\inv$ and can therefore without loss of generality assume that $\mfkg$ is of the form
    \[\mfkg =\left(\mu (a^t \cdot C^\infty); \mathcal{T}_{|\mu|}([a^t|_{C^\infty}]), |\mu| - |\nu|; \nu C^\infty\right).\]
    For $q \in \mathbb{Z}$, define 
    \[ \mfks_q  = \left(\nu(a^{pq} \cdot C^\infty);\calT_{|\nu|}([a^{pq}|_{ C^\infty}]), \ 0; \ \nu C^\infty\right). \]
    Then $\mfks_q \in \salt(s(\mfkg))$ for all $q \in \mathbb{Z}$, and we claim that
    $\{\mfks_q^{-1} \mfkg \mfks_q \mid q \in \mathbb{Z}\}$ is infinite, proving that $\mathrm{Ad}(\mfkg)$ is infinite. For $q \in \mathbb{Z}$,  
\begin{equation*}
    \mfks_q^{-1} \mfkg \mfks_q 
    = \left(\nu  C^\infty;  \calT_{|\nu|}([a^{pq}|_{C^\infty}]^{-1})  \mathcal{T}_{|\mu|}([a^t|_{C^\infty}]) \calT_{|\mu|}([a^{pq}|_{C^\infty}]), |\mu|-|\nu|; \nu C^\infty\right).
    \end{equation*}
Thus, it suffices to show that the subset
\begin{equation}\label{eq:infinite set}
\left\{\calT_{|\nu|}([a^{pq}|_{C^\infty}]^{-1})   \calT_{|\mu|}([a^{pq}|_{C^\infty}]) \mid q \in \mathbb{Z}\right\}
\end{equation}
of $\mathcal{Q}(\N,\Z)$ contains infinitely many elements. 

Since $(\mu, a^t, \nu)$ is of Type \ref{TypeIII}, we know $|\mu| -  |\nu| \not\in |C|\mathbb{Z}$ by Lemma~\ref{types_of_cycline_triples}. We may thus let $r\in\{1,\dots,|C|-1\}$ be such that $|\mu| - |\nu| = r \mod |C|$. Since $C$ satisfies \eqref{TypeII_cycle_hypothesis}, there exists $N
\in\N
$ such that $a^p|_{C^\infty
(N,N+ r )} \neq a^p$. Since $C^\infty
(N,N+ r ) =  C^\infty
(|C|+N,|C|+N+ r )$, we can without loss of generality assume that $N> |\mu|$. Since the number $R$ in Lemma~\ref{compute_cycle}~\ref{restrict_cycle} is equal to $1$ by our assumption that $a^p|_C=a^p$, each of the integers $r_{1},\dots,r_{n}$ in Lemma~\ref{compute_cycle}~\ref{restrict_i} must be either $1$ or $-1$. As  $a^p|_{C^\infty
(N,N+r)} \neq a^p$, another application of  Lemma~\ref{compute_cycle} thus shows that $a^p|_{C^\infty
(N,N+ r )} = a^{-p}$. By 
Lemma~\ref{compute_cycle}~\ref{restrict_i} and~\ref{restrict_power_i} and since we assumed $(\mathbb{Z},E)$ to be pseudo-free,
there exists $k \in \mathbb{Z}\setminus\{0\}$ such that $a^{pq}|_{
C^\infty
(0,N)} = a^{pqk}$.

Fix $m \geq N+r+|\nu|+|\mu|$.
By Lemma~\ref{compute_cycle}~\ref{restrict_power_i}, we have \(a^{p\ell}|_{\alpha}  = (a^{p}|_{\alpha})^{\ell}\) for any $\ell \in\Z$ and finite subpath $\alpha \subset C^\infty$; in particular,
\(a^{p\ell}|_{C^\infty}  = (a^{p}|_{C^\infty})^{\ell}\). These observations explain the equations marked with $(\ast )$ in the following computations:
\begin{align*}
\calT_{|\nu|}( a^{pq}|_{C^\infty}^{-1}) (m )
&\overset{(\ast )}{=} a^{-pq}|_{C^\infty (0,m -|\nu|)} \\
&\overset{(\ast )}{=} 
(a^{pq}|_{C^\infty (0,N)})\inv|_{C^\infty (N,N+ r )}|_{C^\infty (N+ r , m -|\nu|)}\\
&=
a^{-pqk}|_{C^\infty (N,N+ r )}|_{C^\infty (N+ r , m -|\nu|)}
&\text{by choice of $k$}\\
&\overset{(\ast )}{=}
(a^{p}|_{C^\infty (N,N+ r )})^{-qk}|_{C^\infty (N+ r , m -|\nu|)}\\
&= a^{pqk}|_{C^\infty (N+ r ,  m -|\nu|)}&\text{by choice of $N$}.
\end{align*}
Similarly,
\begin{align*}
\calT_{|\mu|}( a^{pq}|_{C^\infty} )(m)
&= a^{pq}|_{C^\infty (0,N)}|_{C^\infty (N,m-|\mu|)} = a^{pqk}|_{C^\infty (N,m-|\mu|)},
\end{align*}
so that all in all
\begin{align*}
 \left(\calT_{|\nu|}( a^{pq}|_{C^\infty}^{-1}) \calT_{|\mu|}( a^{pq}|_{C^\infty} )\right)(m)  &=\left(a^{pqk}|_{C^\infty (N+ r ,  m -|\nu|)}\right)\left(a^{pqk}|_{C^\infty (N,m-|\mu|)}\right)
 \\
     &\overset{(\ast )}{=}\left(a^{p}|_{C^\infty (N+ r ,  m -|\nu|)} \, a^{p}|_{C^\infty (N,m-|\mu|)}\right)^{qk}.
\end{align*}
We now choose $m= N+|\mu|+\ell|C|$ for $\ell\in \N$ large enough (i.e., such that $|C|\ell \geq r+|\nu|$):
Since the number $R$ in Lemma~\ref{compute_cycle}~\ref{restrict_cycle} is equal to $1$ by our assumption that $a^p|_C=a^p$, the above equality combined with Lemma~\ref{compute_cycle}~\ref{restrict_cycle_other_start} implies
\begin{align*}
 \left(\calT_{|\nu|}( a^{pq}|_{C^\infty}^{-1}) \calT_{|\mu|}( a^{pq}|_{C^\infty} )\right)(N+|\mu|+\ell|C|)
 = a^{2pqk}.
\end{align*}
\sloppy 
Since $k\neq 0$,
this proves  that any two functions $\calT_{|\nu|}( a^{pq}|_{C^\infty}^{-1}) \calT_{|\mu|}( a^{pq}|_{C^\infty} )$ and $\calT_{|\nu|}( a^{pq'}|_{C^\infty}^{-1}) \calT_{|\mu|}( a^{pq'}|_{C^\infty} )$ for $q\neq q'$ in $\Z$ differ at infinitely many points $m\in\N$, meaning that they do not give rise to the same equivalence class in $\mathcal{Q}(\N,\Z)$. Thus, the set at~\eqref{eq:infinite set} is infinite, as needed.
\end{proof}

\bibliographystyle{plain}

\begin{thebibliography}{99} 

\bibitem{BarlakLi}S.\ Barlak and X.\ Li, ``Cartan subalgebras and the UCT problem,'' \emph{Adv. Math.} {\bf 316} (2017), 748--769.

\bibitem{BKQ:2017}E.\ B\'edos, S.\ Kaliszewski and J.\ Quigg, ``On Exel-Pardo algebras,'' \emph{J. Operator Theory} {\bf 78} (2017), no.~2, 309--345; MR3725509

\bibitem{BCF25} J.\ Brown, L.\ Clark and A.\ Fuller, ``Intermediate subalgebras of Cartan embeddings in rings and $C^*$-algebras", {\em Advances in Mathematics}
\textbf{481}, 2025, 110534. 

\bibitem{BEFPR:2021}J.\ Brown, R.\ Exel, A.\ Fuller, D.\ Pitts and S.\ Reznikoff, ``Intermediate $C^*$-algebras of Cartan embeddings,'' \emph{Proc. Amer. Math. Soc. Ser. B} {\bf 8} (2021), 27--41.


\bibitem{BNRSW:Cartan} J.\ Brown, G.\ Nagy, S.\ Reznikoff, A.\ Sims and D.\ Williams, ``Cartan subalgebras in $C^*$-algebras of Hausdorff \'etale groupoids,'' {\em Integral Equations Operator Theory}. \textbf{85}, 109-126 (2016). 

\bibitem{KK_duality_ss} N.\ Brownlowe, A.\ Buss, D.\ Gon\c calves, J.B.\ Hume, A. \ Sims and M.F.\ Whittaker, ``{$KK$}-duality for self-similar groupoid actions on graphs'', \textit{Trans. Amer. Math. Soc.} 377, 5513--5560 (2024).


\bibitem{Duwe:2025:Diag-pp} A.\ Duwenig, ``Non-traditional $C^*$-diagonals in twisted groupoid $C^*$-algebras'', to appear in {\em Indiana University Mathematics Journal} (2026).


\bibitem{DGNRW} A.\ Duwenig, E.\ Gillaspy, R.\ Norton, S.\ Reznikoff and S.\ Wright, ``Cartan subalgebras for
non-principal twisted groupoid $C^*$-algebras,'' {\em J. Funct. Anal.}. \textbf{279(6)} (2020).


\bibitem{DWZ:Twist} A.\ Duwenig, D.P.\ Williams and J.\ Zimmerman, ``Non-traditional Cartan Subalgebras in Twisted Groupoid $C^*$-Algebras,'' \emph{International Mathematics Research Notices}, Volume \textbf{2025}, Issue 4 (2025).


\bibitem{Self-similar:EP17} R.\ Exel and E.\ Pardo, ``Self-similar graphs, a unified treatment of Katsura and Nekrashevych $C^*$-algebras,'' Adv. Math. {\bf 306} (2017), 1046--1129.

\bibitem{Katsura_08} T.\ Katsura, ``A construction of actions on Kirchberg algebras which induce given actions
on their K-groups'', J. Reine Angew. Math., 617:27–65, (2008).


\bibitem{Self-similar:LY19} H.\ Li and D.\ Yang, ``KMS states of self-similar $k$-graph $\rm C^*$-algebras,'' \emph{J. Funct. Anal}. {\bf 276} (2019), no.~12, 3795--3831; MR3957999

\bibitem{LY21} H.\ Li and D. Yang, ``The ideal structures of self-similar $k$-graph $C^*$-algebras,''  \emph{Ergodic Theory Dynam. Systems} \textbf{41} (2021), no.~8, 2480--2507. 

\bibitem{Self-similar:LY21} H.\ Li and D.\ Yang, ``Self-similar k-graph $C^*$-algebras,'' \emph{International Mathematics Research Notices}, Volume \textbf{2021}, Issue 15, Pages 11270–11305 (2021). 


\bibitem{Li:EveryClass}X.\ Li, ``Every classifiable simple $\rm C^*$-algebra has a Cartan subalgebra,'' \emph{Invent. Math.} {\bf 219} (2020), no.~2, 653--699.

\bibitem{Xin_non-unique} X.\ Li, ``Constructing Menger manifold {$C^*$}-diagonals in classifiable {$C^*$}-algebras'', \textit{Int. Math. Res.
Not. IMRN,} 2022(23):18992–19053, (2022).

\bibitem{LiRenault} X.\ Li and J.\ Renault, ``Cartan subalgebras in ${\rm C}^*$-algebras. Existence and uniqueness,'' Trans. Amer. Math. Soc. {\bf 372} (2019), no.~3, 1985--2010.

\bibitem{MRW:1996:CtsTrace} P.\ Muhly, J.\ Renault and D.P.\ Williams, ``Continuous-trace groupoid $C^*$-algebras. III,'' {\em Trans. Amer. Math. Soc.}. \textbf{348}, 3621-3641 (1996). 

\bibitem{Nekrashevych_generic} V.\ Nekrashevych, ``Cuntz-{P}imsner algebras of group actions'', J. Operator Theory, \textbf{52}, 223--249 (2004). 

\bibitem{Nekrashev2009} V.\ Nekrashevych, ``$C^*$-algebras and self-similar groups,'' \emph{J. Reine Angew. Math.} {\bf 630} (2009), 59--123.

\bibitem{Nek05} V.\ Nekrashevych, ``Self-similar groups," American Mathematical Society, Providence, RI, 2005, xii+231.

\bibitem{pitts2021normalizers} D.\ Pitts, ``Normalizers and approximate units for inclusions of $C^*$-algebras,'' Indiana Univ. Math. J., \textbf{72}, 1849-1866 (2023).

\bibitem{Raad}A.\ Raad, ``A generalization of Renault's theorem for Cartan subalgebras,'' \emph{Proc. Amer. Math. Soc.} {\bf 150} (2022), no.~11, 4801--4809.

\bibitem{Rae05} I.\ Raeburn, ``Graph algebras," volume 103 of CBMS Regional Conference Series in Mathematics. Published for
the Conference Board of the Mathematical Sciences, Washington, DC; by the American Mathematical Society,
Providence, RI, 2005.

\bibitem{Renault:2023:Ext} J.\ Renault, ``Abelian twisted groupoid extensions,'' \textit{J.\ Operator Theory}, \textbf{89}, 249-283 (2023).

\bibitem{Renault:2008} J.\ Renault, ``Cartan subalgebras in $C^*$-algebras,'' Irish Math. Soc. Bull. No. 61 (2008), 29--63.

\bibitem{Tu:BCConj} J.\ Tu, ``La conjecture de Baum-Connes pour les feuilletages moyennables,'' \emph{$K$-Theory} {\bf 17} (1999), no.~3, 215--264.

\bibitem{Valente-Yang25} R.\ Valente and D.\ Yang, ``Semigroups of self-similar actions and higher rank Baumslag-Solitar semigroups,''
\textit{Proc. R. Soc. Edinburgh Sect. A: Math.}, Published online 2025: 1-30.
\end{thebibliography}

\end{document}